\documentclass{article}
\usepackage[dvipdfm,truedimen,left=25truemm,right=25truemm,top=30truemm,bottom=30truemm]{geometry}
\usepackage{amsmath,amssymb}
\usepackage{graphicx}
\usepackage{tikz}
\usepackage{booktabs}
\usepackage{pinlabel}
\usepackage{ytableau}
\usepackage{braket}

\usetikzlibrary{arrows.meta, angles, quotes, calc}
\usetikzlibrary{positioning}
\usepackage{amsthm}
\usepackage{comment}
\usepackage{tikz-3dplot}
\usepackage{ulem}
\usepackage[absolute,overlay]{textpos}
\usepackage[legacycolonsymbols]{mathtools}

\theoremstyle{definition}
\newtheorem{theorem}{Theorem}
\newtheorem*{theorem*}{Theorem}

\newtheorem{definition}[theorem]{Definition}
\newtheorem*{definition*}{Definition}

\newtheorem{prop}[theorem]{Proposition}
\newtheorem{lemma}[theorem]{Lemma}
\newtheorem{cor}[theorem]{Corollary}
\newtheorem{example}[theorem]{Example}
\newtheorem{rem}[theorem]{Remark}
\newtheorem{notation}[theorem]{Notation}

\numberwithin{theorem}{section}
\numberwithin{equation}{section}

\DeclareFontFamily{U}{mathx}{}
\DeclareFontShape{U}{mathx}{m}{n}{<-> mathx10}{}
\DeclareSymbolFont{mathx}{U}{mathx}{m}{n}
\DeclareMathAccent{\widehat}{0}{mathx}{"70}
\DeclareMathAccent{\widecheck}{0}{mathx}{"71}

\DeclareMathSymbol{\antishriek}{\mathord}{operators}{'74}

%newcommand

\newcommand{\dashededgea}[3]{
\begin{tikzpicture}[x=0.75pt,y=0.75pt,yscale=0.3,xscale=0.3, baseline=-3pt] 

%dashed edges
\draw [dash pattern={on 4pt off 3pt}, {#1}]  (10,0)--(90,0);

%white vertices
\draw  (0,0) circle (10) node [anchor=north] {{#2}};
\draw  (100,0) circle (10) node [anchor=north] {{#3}};
\end{tikzpicture}}

\newcommand{\dashededgeb}[3]{ %chord
\begin{tikzpicture}[x=0.75pt,y=0.75pt,yscale=0.3,xscale=0.3, baseline=-3pt] 

%dashed edges
\draw [dash pattern={on 4pt off 3pt}, {#1}, line width = 1pt]   (0,0)--(90,0);

%black vertices
\draw  [fill={rgb, 255:red, 0; green, 0; blue, 0 }  ,fill opacity=1 ] (0,0) circle (10) node [anchor=north] {{#2}};
\draw  [fill={rgb, 255:red, 0; green, 0; blue, 0 }  ,fill opacity=1 ] (100,0) circle (10) node [anchor=north] {{#3}};
\end{tikzpicture}}

\newcommand{\dashededgec}[3]{
\begin{tikzpicture}[x=0.75pt,y=0.75pt,yscale=0.3,xscale=0.3, baseline=-3pt] 

%dashed edges
\draw [dash pattern={on 4pt off 3pt}, {#1}]   (10,0)--(90,0);

%white vertices
\draw  [fill={rgb, 255:red, 0; green, 0; blue, 0 }  ,fill opacity=1 ] (0,0) circle (10) node [anchor=north] {{#2}} ;
\draw  (100,0) circle (10) node [anchor=north] {{#3}};
\end{tikzpicture}}

\newcommand{\solidedged}[3]{
\begin{tikzpicture}[x=0.75pt,y=0.75pt,yscale=0.3,xscale=0.3, baseline=-3pt] 

%dashed edges
\draw  [{#1}, line width =0.8 pt] (0,0)--(90,0);

%white vertices
\draw  [fill={rgb, 255:red, 0; green, 0; blue, 0 }  ,fill opacity=1 ]  (0,0) circle (10) node [anchor=north] {{#2}};
\draw  [fill={rgb, 255:red, 0; green, 0; blue, 0 }  ,fill opacity=1 ] (100,0) circle (10) node [anchor=north] {{#3}};
\end{tikzpicture}}

\newcommand{\solidedgee}[3]{
\begin{tikzpicture}[x=0.75pt,y=0.75pt,yscale=0.3,xscale=0.3, baseline=-3pt] 

%dashed edges
\draw  [{#1}] (0,0)--(90,0);

%white vertices
\draw  [fill={rgb, 255:red, 0; green, 0; blue, 0 }  ,fill opacity=1 ]  (-10, -10) rectangle (10, 10);
\draw (0,0) node [anchor=north] {{#2}};
\draw  [fill={rgb, 255:red, 0; green, 0; blue, 0 }  ,fill opacity=1 ] (90,-10) rectangle (110,10);
\draw (100,0)node [anchor=north] {{#3}};
\end{tikzpicture}}

%(k, g)=(2,1) l = -1, (n,j) = (odd,odd)

\newcommand{\negativedefectgraph}[0]{
\begin{tikzpicture}[x=1pt,y=1pt,yscale=-0.3,xscale=0.3,baseline=-85pt, line width=1pt]

\draw [dash pattern={on 3pt off 2pt}] [-stealth]  (90,270)..controls (170,220)..(245,265) ;
\draw [dash pattern={on 3pt off 3pt}]  [-stealth] (170,270).. controls (250, 220).. (325,265) ;

%solid edges
\draw [-stealth] (90, 270)--(170,335) ;
\draw [-stealth](170, 340)--(245,270) ;
\draw [stealth-](170, 270)--(170,340) ;

%black vertex
\draw  [fill={rgb, 255:red, 0; green, 0; blue, 0 }  ,fill opacity=1 ] (90,270) circle (5);
\draw (90, 290) node {$1$};
\draw  [fill={rgb, 255:red, 0; green, 0; blue, 0 }  ,fill opacity=1 ] (170,270) circle (5);
\draw (190, 290) node {$2$};
\draw [fill={rgb, 255:red, 0; green, 0; blue, 0 }  ,fill opacity=1 ] (250,270) circle (5);
\draw (250, 290) node {$3$};
\draw [fill={rgb, 255:red, 0; green, 0; blue,0 } ,fill opacity=1] (330,270) circle (5);
\draw (330, 290) node {$4$};
\draw [fill={rgb, 255:red, 0; green, 0; blue,0 } ,fill opacity=1] (168, 330) rectangle (178, 340);
\draw (190, 340) node {$5$};

\end{tikzpicture}}

%(k, g)=(2,1) l = -1, (n,j) = (odd,odd)

\newcommand{\negativedefectgraphb}[0]{
\begin{tikzpicture}[x=1pt,y=1pt,yscale=-0.3,xscale=0.3,baseline=-85pt, line width=1pt]

%dashed edges
\draw  [-stealth]  (170,265)..controls (210,220)..(245,265) ;
\draw   [-stealth] (170,275).. controls (210, 320).. (245,265) ;
\draw    [-stealth](255, 270)--(325,270) ;

\draw   [dash pattern={on 3pt off 2pt}] [-stealth] (90, 270)--(165,270) ;
\draw    [dash pattern={on 3pt off 2pt}] [-stealth] (325, 270)--(400,270) ;

%black vertex
\draw  [fill={rgb, 255:red, 0; green, 0; blue, 0 }  ,fill opacity=1 ] (90,270) circle (5);
\draw (90, 290) node {$1$};
\draw  [fill = black] (170,270) circle (5);
\draw  (170, 290) node {$3$};
\draw [fill = black]  (250,270) circle (5);
\draw (250, 290) node {$5$};
\draw [fill={rgb, 255:red, 0; green, 0; blue,0 } ,fill opacity=1] (330,270) circle (5);
\draw (330, 290) node {$4$};
\draw [fill={rgb, 255:red, 0; green, 0; blue,0 } ,fill opacity=1] (400,270) circle (5);
\draw (400, 290) node {$2$};

\end{tikzpicture}}

%(k, g)=(2,1), (n,j) = (odd,odd)
\newcommand{\graphC}[0]{
\begin{tikzpicture}[x=1pt,y=1pt,yscale=-0.3,xscale=0.3,baseline=-85pt, line width=1pt]

%dashed edges
\draw  [-stealth]  (170,265)..controls (210,220)..(245,265) ;
\draw   [-stealth] (170,275).. controls (210, 320).. (245,265) ;

\draw   [dash pattern={on 3pt off 2pt}] [-stealth] (90, 270)--(165,270) ;
\draw   [dash pattern={on 3pt off 2pt}] [-stealth](255, 270)--(325,270) ;

%black vertex
\draw  [fill={rgb, 255:red, 0; green, 0; blue, 0 }  ,fill opacity=1 ] (90,270) circle (5);
\draw (90, 290) node {$1$};
\draw  [fill = black] (170,270) circle (5);
\draw  (170, 290) node {$3$};
\draw [fill = black]  (250,270) circle (5);
\draw (250, 290) node {$4$};
\draw [fill={rgb, 255:red, 0; green, 0; blue,0 } ,fill opacity=1] (330,270) circle (5);
\draw (330, 290) node {$2$};

\end{tikzpicture}}

%(k, g)=(2,1), (n,j) = (odd,odd)
\newcommand{\graphDb}[0]{
\begin{tikzpicture}[x=1pt,y=1pt,yscale=-0.3,xscale=0.3,baseline=-85pt, line width=1pt]

\draw [dash pattern={on 3pt off 2pt}] [-stealth]  (90,270)..controls (130,220)..(165,265) ;
\draw [dash pattern={on 3pt off 3pt}]  [-stealth] (250 ,270).. controls (290, 220).. (325,265) ;

%solid edges
\draw [-stealth] (90, 270)--(165,270) ;
\draw [-stealth](170, 270)--(245,270) ;

%black vertex
\draw  [fill={rgb, 255:red, 0; green, 0; blue, 0 }  ,fill opacity=1 ] (90,270) circle (5);
\draw (90, 290) node {$1$};
\draw  [fill={rgb, 255:red, 0; green, 0; blue, 0 }  ,fill opacity=1 ] (170,270) circle (5);
\draw (170, 290) node {$2$};
\draw [fill={rgb, 255:red, 0; green, 0; blue, 0 }  ,fill opacity=1 ] (250,270) circle (5);
\draw (250, 290) node {$3$};
\draw [fill={rgb, 255:red, 0; green, 0; blue,0 } ,fill opacity=1] (330,270) circle (5);
\draw (330, 290) node {$4$};

\end{tikzpicture}}

%(k, g)=(2,1), (n,j) = (odd,odd)
\newcommand{\graphDc}[0]{
\begin{tikzpicture}[x=1pt,y=1pt,yscale=-0.3,xscale=0.3,baseline=-85pt, line width=1pt]

\draw [dash pattern={on 3pt off 2pt}] [-stealth]  (90,270)..controls (170,220)..(245,265) ;
\draw [dash pattern={on 3pt off 3pt}]  [-stealth] (170,270).. controls (250, 220).. (325,265) ;

%solid edges
\draw [-stealth] (90, 270)--(165,270) ;
\draw [-stealth](250, 270)--(325,270) ;

%black vertex
\draw  [fill={rgb, 255:red, 0; green, 0; blue, 0 }  ,fill opacity=1 ] (90,270) circle (5);
\draw (90, 290) node {$1$};
\draw  [fill={rgb, 255:red, 0; green, 0; blue, 0 }  ,fill opacity=1 ] (170,270) circle (5);
\draw (170, 290) node {$2$};
\draw [fill={rgb, 255:red, 0; green, 0; blue, 0 }  ,fill opacity=1 ] (250,270) circle (5);
\draw (250, 290) node {$3$};
\draw [fill={rgb, 255:red, 0; green, 0; blue,0 } ,fill opacity=1] (330,270) circle (5);
\draw (330, 290) node {$4$};

\end{tikzpicture}}

%(k, g)=(2,1), (n,j) = (odd,odd)
\newcommand{\graphD}[0]{
\begin{tikzpicture}[x=1pt,y=1pt,yscale=-0.3,xscale=0.3,baseline=-85pt, line width=1pt]

\draw [dash pattern={on 3pt off 2pt}] [-stealth]  (90,270)..controls (170,220)..(245,265) ;
\draw [dash pattern={on 3pt off 3pt}]  [-stealth] (170,270).. controls (250, 220).. (325,265) ;

%solid edges
\draw [-stealth] (90, 270)--(165,270) ;
\draw [-stealth](170, 270)--(245,270) ;

%black vertex
\draw  [fill={rgb, 255:red, 0; green, 0; blue, 0 }  ,fill opacity=1 ] (90,270) circle (5);
\draw (90, 290) node {$1$};
\draw  [fill={rgb, 255:red, 0; green, 0; blue, 0 }  ,fill opacity=1 ] (170,270) circle (5);
\draw (170, 290) node {$2$};
\draw [fill={rgb, 255:red, 0; green, 0; blue, 0 }  ,fill opacity=1 ] (250,270) circle (5);
\draw (250, 290) node {$3$};
\draw [fill={rgb, 255:red, 0; green, 0; blue,0 } ,fill opacity=1] (330,270) circle (5);
\draw (330, 290) node {$4$};

\end{tikzpicture}}

%(k, g)=(2,1), (n,j) = (odd,odd)
\newcommand{\graphE}[0]{
\begin{tikzpicture}[x=1pt,y=1pt,yscale=-0.3,xscale=0.3,baseline=-85pt, line width=1pt]

\draw [dash pattern={on 3pt off 2pt}] [-stealth]  (210,220)-- (245,265) ;
\draw [dash pattern={on 3pt off 3pt}]  [stealth-] (210,220)-- (325,265) ;
\draw [dash pattern={on 3pt off 3pt}]  [-stealth] (165, 270)--(210,220) ;

%solid edges

\draw [-stealth](170, 270)--(245,270) ;

%black vertex
\draw  [fill={rgb, 255:red, 0; green, 0; blue, 0 }  ,fill opacity=1 ] (170,270) circle (5);
\draw (170, 290) node {$1$};
\draw [fill={rgb, 255:red, 0; green, 0; blue, 0 }  ,fill opacity=1 ] (250,270) circle (5);
\draw (250, 290) node {$2$};
\draw [fill={rgb, 255:red, 0; green, 0; blue,0 } ,fill opacity=1] (330,270) circle (5);
\draw (330, 290) node {$3$};

%white vertex
\draw  (210,215) circle (5);
\draw (210, 200) node {$4$};

\end{tikzpicture}}

%(k, g)=(2,1), (n,j) = (odd,odd)
\newcommand{\graphF}[0]{
\begin{tikzpicture}[x=1pt,y=1pt,yscale=-0.3,xscale=0.3,baseline=-85pt, line width=1pt]

%dashed edges
\draw [dash pattern={on 3pt off 2pt}] [-stealth]  (170,265)..controls (210,220)..(245,265) ;
\draw [dash pattern={on 3pt off 3pt}]  [-stealth] (170,275).. controls (210, 320).. (245,265) ;

\draw [dash pattern={on 3pt off 2pt}] [-stealth] (90, 270)--(165,270) ;
\draw [dash pattern={on 3pt off 2pt}] [-stealth](255, 270)--(325,270) ;

%black vertex
\draw  [fill={rgb, 255:red, 0; green, 0; blue, 0 }  ,fill opacity=1 ] (90,270) circle (5);
\draw (90, 290) node {$1$};
\draw  (170,270) circle (5);
\draw (170, 290) node {$3$};
\draw (250,270) circle (5);
\draw (250, 290) node {$4$};
\draw [fill={rgb, 255:red, 0; green, 0; blue,0 } ,fill opacity=1] (330,270) circle (5);
\draw (330, 290) node {$2$};

\end{tikzpicture}}

 \newcommand{\ctext}[1]{\raise0ex\hbox{\textcircled{\scriptsize{#1}}}}

%\addtolength{\footskip}{10mm}

\begin{document} 
\title{On hidden face contributions of configuration space integrals for long embeddings}
\author{Leo Yoshioka\thanks{Graduate School of Mathematical Sciences, The University of Tokyo\newline\qquad e-mail:yoshioka@ms.u-tokyo.ac.jp}}
\maketitle

\begin{abstract}
Configuration space integrals are powerful tools for studying the homotopy type of the space of long embeddings in terms of a combinatorial object called a graph complex. It is unknown whether these integrals give a cochain map due to potential obstructions called hidden faces.  The purpose of this paper is to address these hidden faces by modifying configuration space integrals: we incorporate the acyclic bar complex of some dg algebra into the original graph complex, without changing its cohomology. Then, we give a cochain map from the new graph complex to the de Rham complex of the space of long embeddings modulo immersions, by combining the original configuration space integrals with Chen's iterated integrals. As the original complex, we choose quite a modified graph complex so that it is quasi-isomorphic to both the hairy graph complex and a graph complex introduced in the context of embedding calculus.  

\end{abstract}

\tableofcontents

%-----------------------------------
\section*{Introduction}
\addcontentsline{toc}{section}{Introduction}

A \textit{long embedding} is an embedding of $\mathbb{R}^j$ into $\mathbb{R}^n$ $(n\geq j\geq1)$ that coincides with the standard linear embedding outside a ball in $\mathbb{R}^j$. We write  $\mathcal{K}_{n,j} = \text{Emb} (\mathbb{R}^j, \mathbb{R}^n) $ for the space of long embeddings equipped with the usual $C^{\infty}$ topology.  As long embeddings are \textit{long immersions}, there is a map $\mathcal{K}_{n,j} \rightarrow \text{Imm} (\mathbb{R}^j, \mathbb{R}^n)$ to the space of long immersions.  Since the space $\text{Imm} (\mathbb{R}^j, \mathbb{R}^n)$ is well-studied, we often consider the difference between the two spaces
\[
\overline{\mathcal{K}}_{n,j} = \overline{\text{Emb}} (\mathbb{R}^j, \mathbb{R}^n) =  \text{hofib}_{\iota} (\mathcal{K}_{n,j} \rightarrow \text{Imm}(\mathbb{R}^j, \mathbb{R})),
\]
the homotopy fiber at the standard inclusion $\iota$. 
We mainly focus on the case $j \geq 2$, $n-j \geq 2$ but sometimes mention the case $j=1$.

One powerful tool to compute the homotopy type of the space of long embeddings $\overline{\mathcal{K}}_{n,j}$ is \textit{graph complexes}. In 2017, Fresse, Turchin and Willwacher \cite{FTW 1}, following Arone and Turchin \cite{AT 1, AT 2}, showed the surprising result that the homology of a graph complex $^\ast HGC_{n,j}$\footnote{In this paper, we always put $^{\ast}$, the symbol of \textit{dual}, to the left not to the right.}, called the \textit{hairy graph complex}, has the full dimensional information of of the rational homotopy group of $\overline{\mathcal{K}}_{n,j}$, when $n-j $ is greater than or equal to three. This result is obtained by a deep homotopy theory called Goodwillie-Klein-Weiss \textit{embedding calculus} \cite{GKW, GW, Wei}. Embedding caclulus gives the \textit{Taylor approximation} $\overline{\mathcal{K}}_{n,j} \rightarrow T_{\infty} \overline{\mathcal{K}}_{n,j}$, which is weakly equivelent when $n-j \geq 3$, $j \geq 1$. Then, the hairy graph complex is introduced by applying the rational homotopy theory to the \textit{derived mapping space models} of $T_{\infty} \overline{\mathcal{K}}_{n,j}$ \cite{AT 1, BW} and performing a fibrant replacement. As shown in \cite{AT 2}, another replacement, a cofibrant replacement, of the same origin gives another graph complex $^{\ast} HH_{n,j}$, which we mention later. 

On the other hand, from the 1990s to 2010s, Bott \cite{Bot}, Cattaneo, Rossi \cite{CR}, Sakai \cite{Sak} and Watanabe \cite{SW, Wat 1} introduced another graph complex $GC^{BCR}_{n,j} $ $(j \geq 2)$, which we call the \textit{BCR graph complex}. The complex is generated by \textit{BCR graphs}, which have two types of vertices (external and internal) and two types of edges (solid and dashed).
They gave a liner map from the BCR graph complex to the de Rham complex of $\mathcal{K}_{n,j}$ (and so to that of $\overline{\mathcal{K}}_{n,j}$)  through configuration space integrals
\[
I: GC^{BCR}_{n,j}  \longrightarrow \Omega_{dR} \mathcal{K}_{n,j} \quad \Gamma \longmapsto \int_{\text{Conf}_{\Gamma}} \omega(\Gamma).
\]
Fortunately, this approach is applicable to the mysterious codimension: the case $n-j = 2$. 
However, it is unknown whether the map gives a cochain map. The exception is the cases where the first Betti number of graphs is at most one as in \cite{Bot, CR, Sak, SW, Wat 1}. In fact, they gave non-trivial  cocycles of $\mathcal{K}_{n,j}$ or  $\overline{\mathcal{K}}_{n,j}$  for $n-j\geq 2$ from $0$ or $1$-loop BCR graph cocycles.

Configuration space integrals are superior in that they are applicable to the most mysterious case $n-j = 2$ and in that they give geometric cycles and cocycles. However, as we mentioned,  it is unknown whether they give a cochain map due to potential obstructions. These obstructions are integrals over specific faces of configuration spaces called \textit{hidden faces}:
\[
dI(\Gamma) -I d(\Gamma) = \int_{\partial_{hid} \text{Conf}_{\Gamma}} \omega(\Gamma). 
\]
Moreover, to show the non-triviality of the map $I$, we are required to compute the cohomology of $GC^{BCR}_{n,j}$, which is in general more difficult to compute than $HGC_{n,j}$ and $HH_{n,j}$. Arone and Turchin \cite{AT 2} mentioned that some of their graphs in $HH_{n,j}$ and  $HGC_{n,j}$ are very similar to BCR graphs. However, explicit relationships between $GC^{BCR}_{n,j}$ and $HH_{n,j}$ or $HGC_{n,j}$ have not been established yet.

\section*{}

In this paper, we introduce a new graph complex $DGC_{n,j}$, which we call the \textit{decorated graph complex}.  We define configuration space integrals using this complex and show they give a cochain map.  We also show $DGC_{n,j}$ is quasi-isomorphic to the hairy graph complex $HGC_{n,j}$ and Arone--Turchin's complex $HH_{n,j}$. 

Graphs of $DGC_{n,j}$ are analogs of BCR graphs but their external vertices are decorated by elements of the \textit{acyclic bar complex} of some dg algebra $A_{n,j}$,
\[
Z_{n,j} = B(A_{n,j}, A_{n,j}, \mathbb{R}) = A_{n,j}\otimes_{\tau} BA_{n,j}, 
\]
where $A_{n,j}$ is a dg algebra with a map to the space $\text{Inj}(\mathbb{R}^j, \mathbb{R}^n)$ of linear injective maps.
This construction is highly motivated by Tuchin's construction of the decorated graph complex associated with the space of long knots (modulo immersions) \cite{Tur 1} as well as by Sakai and Watanabe's correction term for anomalous hidden faces in \cite{SW}. 

The complex $Z_{n,j} = A_{n,j}\otimes_{\tau} BA_{n,j}$ is acyclic but plays a role in canceling contributions of hidden faces. The following is our first main result. 
\begin{theorem}[Theorem \ref{main theorem 1 on hidden faces2}]
\label{main theorem 1 on hidden faces}
When $n-j \geq 2$ and $j \geq 3$, there  exists a cochain map
\[
\overline{I}: DGC_{n,j} \rightarrow \Omega_{dR} (\overline{\mathcal{K}}_{n,j}). 
\] 
It holds even when $j = 2$, if $\overline{I}$ is restricted to the subspace of $DGC_{n,j}$ generated by decorated graphs with the first Betti number $g \leq 3$. 
\end{theorem}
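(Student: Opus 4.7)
The plan is to define $\overline{I}(\Gamma)$ for a decorated graph $\Gamma$ as a single integral combining the original Bott--Cattaneo--Rossi type configuration space integral with Chen's iterated integrals evaluated against the bar-complex decorations on external vertices. Given a point of $\overline{\mathcal{K}}_{n,j}$, represented by a long embedding $f$ together with a path from $\iota$ to $df$ in $\text{Inj}(\mathbb{R}^j, \mathbb{R}^n)$, the $A_{n,j}$-factor of each decoration is pulled back along this family of injective maps and the $BA_{n,j}$-factor is integrated as a Chen iterated integral over a simplex. Multiplying by the product form $\omega(\Gamma)$ on $\text{Conf}_{\Gamma}$ and performing fiber integration over the configuration space then produces a differential form on $\overline{\mathcal{K}}_{n,j}$.

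To prove the cochain property, I apply Stokes' theorem simultaneously in the configuration-space direction and in the simplex direction of the iterated integrals. Following the Fulton--MacPherson scheme, the configuration-space boundary decomposes into principal faces (collisions of two vertices along an edge, which recover the edge-contraction part of the graph differential), hidden faces (collisions of three or more vertices, or of an internal vertex with external vertices), and faces at infinity (controlled by the long-embedding condition and accounting for the edge-stretching part of the differential). Standard Kontsevich-type vanishing arguments, together with antisymmetry in the range $n - j \geq 2$, $j \geq 3$, kill most hidden faces outright.

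The crucial new ingredient is that the surviving hidden faces, where a subset of external vertices collapses onto a single point on the embedded copy of $\mathbb{R}^j$, produce exactly the bar differential applied to the decoration at the collapsed external vertex. This is by design of $A_{n,j}$: its twisting cochain $\tau$ into $BA_{n,j}$ is chosen so that the fiber integral over such a hidden face matches the bar-side Stokes contribution coming from the endpoint of the corresponding simplex. The dg algebra differential on $A_{n,j}$ absorbs the \emph{internal} Stokes contributions from the simplex, while the compatibility of $A_{n,j}$ with forms on $\text{Inj}(\mathbb{R}^j, \mathbb{R}^n)$ handles the faces at infinity. The matching of these two Stokes sides at each type of hidden face is the geometric heart of the argument, and this is precisely what was absent in the original BCR setting.

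The main obstacle, and the source of the hypotheses, is the analysis of the remaining anomalous hidden faces. For $j \geq 3$, antisymmetry kills the unmatched contributions of internal vertices of small valence, so all other anomalies are absorbed by the decorations. For $j = 2$, small-loop anomalies of the type treated by Sakai--Watanabe \cite{SW} persist, and the present construction of $A_{n,j}$ cancels them only through first Betti number $g \leq 3$; beyond this, higher-order obstructions would require enlarging the decoration algebra. The hardest step, which I expect to occupy the technical core of the paper, is the full combinatorial identification of signs together with the verification, face by face, that the boundary of the configuration-space integral plus the boundary of the iterated integral assemble into $\overline{I}$ applied to the graph differential on $DGC_{n,j}$.
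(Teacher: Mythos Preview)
Your outline has the right ingredients but misidentifies how they fit together, and in particular misattributes the source of the $j\geq 3$ restriction.

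First, hidden faces are not ``mostly killed'' by Kontsevich-type vanishing; rather, the horizontal differential $d_H$ of $DGC_{n,j}$ is \emph{defined} so that every face $\widetilde{C}_S$ with $|S|\geq 2$ contributes exactly the term $\overline{I}(\Gamma/S\cdot\Gamma_S)$. The factorisation $\widetilde{C}_S \cong C_{V/S}\times B_S$ over $\text{Inj}(\mathbb{R}^j,\mathbb{R}^n)$ together with the double push-forward formula gives $(-1)^{|\Gamma|+1}\int_{\widetilde{C}_S}\omega(\Gamma)=\sigma(S)\,\overline{I}(\Gamma/S\cdot\Gamma_S)$ for every $S$, so $\sum_S$ yields $\overline{I}(d_H\Gamma)$ on the nose. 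The rescaling and symmetry vanishings you invoke are not steps in the proof; they are encoded as relations of $A_{n,j}$, so the corresponding $\Gamma_S$ are already zero in the decoration. Conversely, the vertical differential $d_V$ (the bar differential on decorations) does \emph{not} arise from configuration-space faces at all: it comes from $d\omega(\Gamma)=\omega(d_V\Gamma)$, which holds because Chen's iterated integral $\overline{J}:Z_{n,j}\to\Omega_{dR}\big(P(\text{Inj}(\mathbb{R}^j,\mathbb{R}^n),\bullet,\iota)\big)$ is a dga map. Infinite faces contribute nothing to the differential; they simply vanish.

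Second, the hypothesis $j\geq 3$ (and the restriction $g\leq 3$ when $j=2$) has nothing to do with antisymmetry or Sakai--Watanabe small-loop anomalies. It enters in exactly two places: (i) Chen's theorem requires $A_{n,j}^0=\mathbb{R}$, and (ii) the infinite-face integrals must vanish for degree reasons. Both reduce to the same dimension count: a graph $\Gamma$ with $g\geq 1$ and no cut internal black vertex has at most $2(g-1)$ internal black vertices, forcing $[\Gamma]>0$ when $j\geq 3$; for $j=2$ this bound only gives $[\Gamma]>0$ once $g\leq 3$. Your proposed mechanism (higher-loop anomalies not cancelled by $A_{n,j}$) is not what obstructs the $j=2$ case in this paper.
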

This map is given by combining original configuration integrals with \textit{Chen's iterated integrals} \cite{Che 1, Che 2}: a standard way to construct, when a space $X$ is given, a cochain map from the bar complex of $\Omega_{dR}(X)$  to the de Rham complex of the path space of $X$. We discuss separately the case $j =2$ because a dimensional-reason argument for showing $A^0_{n,j} \cong \mathbb{R}$  does not work well when $j =2$. 

We can define another graph complex $PGC_{n,j}$ whose generators are graphs decorated by (a scalar multiple of) the unit of $A_{n,j}$. We call this complex, the \textit{plain graph complex}. The complex $PGC_{n,j}$ is a modified BCR graph complex in the sense that it is obtained from $GC^{BCR}_{n,j}$ by adding some graphs and imposing the condition that unsuitable graphs vanish. In particulr, there is a cochain map $GC^{BCR}_{n,j} \rightarrow PGC_{n,j}$. Although graphs of $DGC_{n,j}$ are more complicated than $PGC_{n,j}$, we have the following, which is our second main result. 

\begin{theorem}[Theorem \ref{maintheoremrestate}. Suggested by Turchin]
\label{main theorem 2 on hidden faces}
The projection $DGC_{n,j} \rightarrow PGC_{n,j}$ is a quasi-isomorphism.
\end{theorem}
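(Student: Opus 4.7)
The plan is to establish the quasi-isomorphism via a spectral sequence argument that exploits the acyclicity of the decoration space $Z_{n,j}$. Write the differential on $DGC_{n,j}$ as $d = d_{Z} + d_{\mathrm{gr}}$, where $d_{Z}$ is the two-sided bar differential acting vertex-wise on the decoration at each external vertex (preserving the underlying skeleton), and $d_{\mathrm{gr}}$ collects the BCR-style graph operations that modify the skeleton (edge contractions, resolution of dashed edges, and so on). Along the decomposition into isomorphism classes $[\Gamma]$ of undecorated skeleta, the complex splits (up to symmetries) as $\bigoplus_{[\Gamma]} Z_{n,j}^{\otimes V_{\mathrm{ext}}(\Gamma)}$, on which $d_{Z}$ is block-diagonal while $d_{\mathrm{gr}}$ is off-diagonal.

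The first step is to endow $DGC_{n,j}$ with a filtration, for instance by the number of edges of the underlying skeleton (together with a suitable weight to track non-unit decorations created by graph operations), with respect to which $d_{Z}$ is filtration-preserving while every term of $d_{\mathrm{gr}}$ strictly shifts the filtration in one direction. The filtration is bounded in each fixed cohomological degree because each edge and each bar contributes positively to that degree, so only finitely many skeleta and decoration lengths occur. Consequently, the associated spectral sequence converges strongly to $H^{*}(DGC_{n,j})$, and its $E_{0}$-page is $\bigoplus_{[\Gamma]} Z_{n,j}^{\otimes V_{\mathrm{ext}}(\Gamma)}$ with differential $d_{Z}$.

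Next I would compute $E_{1}$ using the acyclicity of $Z_{n,j}$: since $Z_{n,j} = A_{n,j}\otimes_{\tau} BA_{n,j}$ is the two-sided bar resolution of $\mathbb{R}$ over $A_{n,j}$, it is acyclic with $H^{0}(Z_{n,j}) = \mathbb{R}\cdot 1$. The K\"unneth formula identifies $E_{1}$ with the subspace in which every external vertex is decorated by the unit, which is canonically $PGC_{n,j}$ as a graded vector space. A direct inspection identifies the induced $d_{1}$-differential with the differential of $PGC_{n,j}$, since the only terms of $d_{\mathrm{gr}}$ that send unit-decorated graphs to unit-decorated graphs are, by construction, precisely those defining the differential of the plain complex. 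The natural projection $\pi\colon DGC_{n,j} \to PGC_{n,j}$ (sending a graph to itself when all its decorations are units and to zero otherwise) is a map of filtered complexes inducing the identity on $E_{1}$, so the comparison theorem for spectral sequences yields the desired quasi-isomorphism.

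The principal obstacle will be isolating the right filtration so that $d_{\mathrm{gr}}$ is genuinely strict. This is subtle because a graph operation applied to a unit-decorated graph can create non-unit decorations (for instance by pulling an infinitesimal form from $A_{n,j}$ along a contracted edge between external vertices), so the filtration must simultaneously monitor skeleton complexity and total bar length. A secondary bookkeeping issue is the interaction of $d_{\mathrm{gr}}$ with the $\tau$-twist in $Z_{n,j}$; but because the twisted differential still vanishes on the unit and respects the filtration, this affects only the identification of the surviving $E_{1}$-differential with the differential of $PGC_{n,j}$, which is a matter of matching definitions.
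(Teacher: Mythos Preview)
Your approach is essentially the same as the paper's: a spectral sequence argument in which the $E_0$-differential is the bar differential on decorations, whose acyclicity collapses $E_1$ to $PGC_{n,j}$.

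The one point where the paper is sharper is the choice of filtration, which resolves exactly the obstacle you flag in your last paragraph. The paper first restricts to fixed order $k$ and first Betti number $g$ (both preserved by the full differential), and then filters $DGC_{n,j}(k,g)$ by the number of vertices of the plain part $P(\Gamma)$. The horizontal differential $d_H$ contracts a subgraph $S$ with $|S|\ge 2$ to a single vertex (and records $\Gamma_S$ as a new decoration), so it \emph{strictly} decreases the vertex count; the vertical differential $d_V$ touches only decorations and preserves it. Thus $d_0 = d_V$ on the nose, with no need for an auxiliary weight tracking ``non-unit decorations created by graph operations.'' Boundedness of the filtration then follows from explicit vertex bounds in terms of $k$ and $g$ (white vertices $\le 2k$, external black $\le 7k$, internal black $\le 14k + 2(g-1)$), so convergence is automatic. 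Your edge-based filtration could presumably be made to work, but the vertex filtration makes the strictness of $d_{\mathrm{gr}}$ immediate and avoids the bookkeeping you anticipate.
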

We show this theorem by comparing the $E_1$-terms of spectral sequences computing the graph cohomologies  $H^{\ast}(DGC_{n,j})$ and $H^{\ast}(PGC_{n,j})$: the $E_0$-differntial of $DGC_{n,j}$ is the differential of the acyclic complex $Z_{n,j}$ and hence non-trivial decorations vanish at the $E_1$-page. 

Introducing the complex $PGC_{n,j}$ allows us to compare the graph complexes of embedding calculus and configuration space integrals. Let $PGC^{\prime}_{n,j}$ be the analog of $PGC_{n,j}$, which allows graphs with \textit{double edges} and \textit{loop edges}. The following result follows from the famous fact \cite{Kon 2, LV} that the cohomology  $H^{\ast}(\text{Conf}_{\bullet}(\mathbb{R}^n))$  of configuration spaces in $\mathbb{R}^n$ is quasi-isomorphic to \textit{Kontsevich's graph cooperad}, which we write $\overline{\text{Graph}}_n(\bullet)$,  and to its analog $\text{Graph}_n(\bullet)$ (allowing double edges and loop edges). 

\begin{theorem}[Theorem \ref{relationwithAroneTurchincomplex}]
\label{main theorem 3 on hidden faces}
The projections $PGC_{n,j} \rightarrow HH_{n,j}$ and  $PGC^{\prime}_{n,j} \rightarrow HH_{n,j}$ are quasi-isomorphisms. In particular, $PGC^{\prime}_{n,j} \rightarrow PGC_{n,j}$ is a quasi-isomorphism. 
\end{theorem}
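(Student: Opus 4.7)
The plan is to present all three complexes $PGC'_{n,j}$, $PGC_{n,j}$, and $HH_{n,j}$ uniformly as Hom-type complexes of symmetric sequences whose only structurally different ingredient is the cochain model used for $\mathrm{Conf}_\bullet(\mathbb{R}^n)$. In Arone--Turchin's construction, $HH_{n,j}$ appears as such a Hom complex whose $\mathbb{R}^n$-piece is the Arnold cooperad $H^*(\mathrm{Conf}_\bullet(\mathbb{R}^n))$; in $PGC_{n,j}$ the same piece becomes Kontsevich's graph cooperad $\overline{\mathrm{Graph}}_n$, and in $PGC'_{n,j}$ it becomes $\mathrm{Graph}_n$. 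The classical zig-zag of cooperad quasi-isomorphisms $\mathrm{Graph}_n \to \overline{\mathrm{Graph}}_n \to H^*(\mathrm{Conf}_\bullet(\mathbb{R}^n))$ should then yield the two projection quasi-isomorphisms.

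First I would make this identification explicit by decomposing a generator of $PGC_{n,j}$ or $PGC'_{n,j}$ into two pieces: the \emph{hair shape} (the external vertices together with the solid edges among them and the external stubs of dashed hairs) and the \emph{internal graph}, namely the subgraph formed by all internal vertices together with the dashed edges attached to external vertices, viewed as an element of $\overline{\mathrm{Graph}}_n$ or $\mathrm{Graph}_n$ of arity equal to the number of external vertices. The BCR contraction differential then splits as a sum of two commuting pieces: a hair-shape differential and a cooperad differential on the internal-graph side, the latter matching the Arnold-cooperad differential that appears in Arone--Turchin's description of $HH_{n,j}$. Filtering by hair-shape complexity and running the resulting spectral sequence, the $E_1$-page takes the form of a Hom-of-symmetric-sequences with coefficients in the cohomology of the $\mathbb{R}^n$-piece in each arity, so Kontsevich's theorem (applied arity-by-arity) forces the $E_1$-pages of all three spectral sequences to coincide. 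This shows both $PGC_{n,j} \to HH_{n,j}$ and $PGC'_{n,j} \to HH_{n,j}$ are quasi-isomorphisms; the "in particular" statement is immediate from two-out-of-three in the commuting triangle.

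The main obstacle will be the careful matching of differentials. On the BCR side, contracting a dashed edge may produce a loop edge or a double edge, which is killed in $PGC_{n,j}$ but retained in $PGC'_{n,j}$; I need to verify that this exactly reflects the distinction between $\overline{\mathrm{Graph}}_n$ and $\mathrm{Graph}_n$, and that the remaining contractions translate to the cooperad (co)composition differentials under the hair-shape decomposition. A related point is to check compatibility of this hair-shape filtration with the Hochschild-type bookkeeping present in Arone--Turchin's definition of $HH_{n,j}$, so that the three spectral sequences line up on the nose rather than only up to a zig-zag, and to make sure convergence holds within each fixed loop order and number of external vertices, where the complexes are finite-dimensional in each bidegree.
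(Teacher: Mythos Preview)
Your overall strategy is the same as the paper's: use the operadic description of the three complexes, filter, and apply Kontsevich's formality theorem $\mathrm{Graph}_n(\bullet)\simeq\overline{\mathrm{Graph}}_n(\bullet)\simeq H^*(\mathrm{Conf}_\bullet(\mathbb{R}^n))$ on the associated graded. The convergence argument via the $(k,g)$-decomposition is also exactly what the paper does.

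There is, however, a concrete oversight in your setup. You write that the three complexes differ \emph{only} in the cochain model for $\mathrm{Conf}_\bullet(\mathbb{R}^n)$, and your ``hair shape / internal graph'' decomposition treats the solid part as fixed data. But in the paper's operadic description the solid ($\mathbb{R}^j$) side varies as well: $PGC'_{n,j}$ uses $\mathrm{Graph}_j(l)$, $PGC_{n,j}$ uses $\overline{\mathrm{Graph}}_j(l)$, and $HH_{n,j}$ uses $H^*(\mathrm{Conf}_l(\mathbb{R}^j))$. In particular, plain graphs have \emph{internal black} vertices (trivalent or higher, purely solid), which your ``hair shape'' as described---external vertices plus solid edges among them---does not capture. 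Consequently, if you filter so that $d_0$ is only the $\mathrm{Graph}_n$ internal differential, your $E_1$-page is not yet $HH_{n,j}$: it still carries $\mathrm{Graph}_j$ (or $\overline{\mathrm{Graph}}_j$) in the $j$-slot rather than $H^*(\mathrm{Conf}_\bullet(\mathbb{R}^j))$.

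The paper's remedy is to filter by the number of \emph{external black vertices}. Both the $\mathrm{Graph}_n$ and the $\mathrm{Graph}_j$ internal differentials preserve this number (they contract edges incident to internal vertices of either colour), while the remaining piece $d_\tau$---contraction of solid chords between two external black vertices---decreases it by one. Hence $d_0=d_{\mathrm{Graph}_j}+d_{\mathrm{Graph}_n}$, and Kontsevich's theorem applied to \emph{both} factors simultaneously gives $E_1=HH_{n,j}$ on the nose. Your argument becomes correct once you either adopt this filtration or run a second spectral sequence to collapse the $j$-side.
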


On the other hand, by observing  Arone and Turchin's argument \cite{AT 2} carefully, we show the following. 
\begin{theorem} [Theorem \ref{relationwithhairygraphcomplex}]
\label{main theorem 4 on hidden faces}
The projection $PGC^{\prime}_{n,j} \rightarrow HGC_{n,j}$  is a quasi-isomorphism. 
\end{theorem}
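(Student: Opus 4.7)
The plan is to imitate the strategy of Theorems \ref{main theorem 2 on hidden faces} and \ref{main theorem 3 on hidden faces} by introducing an appropriate filtration on $PGC^{\prime}_{n,j}$ and identifying the resulting $E_1$-page with the hairy graph complex. First I would describe the projection $PGC^{\prime}_{n,j} \to HGC_{n,j}$ explicitly: a graph in $PGC^{\prime}_{n,j}$ whose external vertices are isolated (no solid edges among them, each attached to an internal vertex by a single dashed edge) corresponds naturally to a hairy graph, with each external vertex becoming a hair attached at the other endpoint of its dashed edge. Any graph whose external skeleton contains a solid edge should be sent to zero by the projection, and one has to check that this assignment is compatible with the sign/orientation conventions on both sides.

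Then I would filter $PGC^{\prime}_{n,j}$ by a quantity that records the complexity of the solid skeleton, such as the number of solid edges among external vertices. The $E_0$-differential is then the portion of the BCR differential acting on the skeleton alone. The core of the argument, the place where one must "observe Arone--Turchin carefully" as the introduction suggests, is the claim that for each fixed internal configuration (i.e., for fixed internal vertices and dashed edges) the associated skeletal subcomplex is acyclic except in the isolated-skeleton position. This acyclicity should be extracted from the analogous step in \cite{AT 2} comparing the Hochschild-type cofibrant resolution with the hairy fibrant resolution, reduced here to a contractibility statement for a marked interval whose markers may merge or separate.

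The main obstacle will be verifying this acyclicity and checking that the induced $d_1$-differential on the $E_1$-page agrees with the hairy graph differential under the identification of isolated external vertices with hairs, including the compatibility of splitting and contracting operations with the collapse of solid edges. A secondary technical point is handling double edges and loop edges uniformly on both sides, but since both $PGC^{\prime}_{n,j}$ and $HGC_{n,j}$ allow them, the matching should be direct once the isolated-skeleton model is set up. Once these are settled the spectral sequence collapses, and combined with Theorem \ref{main theorem 3 on hidden faces} one obtains the zig-zag $HGC_{n,j} \leftarrow PGC^{\prime}_{n,j} \to HH_{n,j}$ realizing the known Arone--Turchin equivalence between the two models of the Taylor tower, which is a useful consistency check.
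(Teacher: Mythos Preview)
Your approach has a genuine gap. The projection $PGC^{\prime}_{n,j} \to HGC_{n,j}$ kills not only graphs containing solid edges but also graphs in which some external black vertex carries two or more dashed edges; your description of the projection (``each attached to an internal vertex by a single dashed edge'') and your proposed filtration address only the solid skeleton and say nothing about this second phenomenon. More seriously, your key acyclicity claim is false as stated: if one isolates the part of the differential acting on the solid factor $\text{Graph}_j(l)$, its cohomology is $H^{\ast}(\text{Conf}_l(\mathbb{R}^j)) \cong Pois^c_{j-1}(l)$, which for $j\geq 2$ is far from concentrated in the ``no solid edges'' class. So a spectral sequence along the lines you describe would at best land you in the intermediate complex $MGC_{n,j}$ built from $H^{\ast}(\text{Conf}_\bullet(\mathbb{R}^j)) \otimes N\text{Graph}_n$, not in $HGC_{n,j}$.

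The paper proceeds differently and in two stages. First one passes from $PGC^{\prime}_{n,j}$ to $MGC_{n,j}$ by the same filtration-by-external-vertices argument as in Theorem~\ref{main theorem 3 on hidden faces}. The substantive second step $MGC_{n,j} \to HGC_{n,j}$ is where Koszul duality enters: one factors $H^{\ast}(\text{Conf}_{\bullet}(\mathbb{R}^j)) \cong Com^c \circ Lie^c$ and $N\text{Graph}_n \cong Com \circ M\text{Graph}_n$, reorganises the operadic composition so that a copy of $Lie^c[1] \circ Com$ sits in the middle, and then invokes the Koszul acyclicity $Lie^c[1] \circ Com \simeq \mathbb{R}$. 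A spectral sequence filtered by the number of \emph{white} vertices (not solid edges) then has $E_0$-differential equal to $d_{Lie^c[1]\circ Com}$ and yields $fHGC_{n,j}$ at $E_1$. This Koszul step is precisely the ``Arone--Turchin'' input you allude to; it is not reducible to a contractibility-of-a-marked-interval statement, and without it, or an equivalent acyclicity argument for $Lie^c[1] \circ Com$, the proof cannot be completed.
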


We write the top cohomology of $HGC_{n,j}$ by $H^{top}(HGC_{n,j})$  \footnote{See Definition \ref{defoftopdegreecohomology} for what ``top" means. The top homology is a subspace of the homologies of degrees $k(n-j-2) + (j-1)(g-1)$, where $g$ and $k$ are the first Betti number and the order of graphs, respectively. See Lemma \ref{degreeanddefect}.}.
The $2$-loop part of $H^{top}(HGC_{n,j})$ is well-studied in \cite{CCTW, Nak}, and is shown to be infinite-dimensional. (We recall their argument in Section \ref{computation of top homology}.)  Hence, combining Theorem \ref{main theorem 2 on hidden faces}, \ref{main theorem 3 on hidden faces} and \ref{main theorem 4 on hidden faces}, we have
\begin{cor}
The 2-loop part of $H^{top}(DGC_{n,j})$ is infinite-dimensional
\end{cor}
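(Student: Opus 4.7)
The plan is to chain together the quasi-isomorphisms established in Theorems \ref{main theorem 2 on hidden faces}, \ref{main theorem 3 on hidden faces}, and \ref{main theorem 4 on hidden faces} to transfer the known infinite-dimensionality of the $2$-loop part of $H^{top}(HGC_{n,j})$ (due to Conant--Costello--Turchin--Weed \cite{CCTW} and Nakatsuka \cite{Nak}, and recalled in Section \ref{computation of top homology}) to $H^{top}(DGC_{n,j})$.

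First, I would assemble the zigzag of quasi-isomorphisms
\[
DGC_{n,j} \xrightarrow{\sim} PGC_{n,j} \xleftarrow{\sim} PGC^{\prime}_{n,j} \xrightarrow{\sim} HGC_{n,j},
\]
where the first arrow is Theorem \ref{main theorem 2 on hidden faces}, the middle arrow is the last clause of Theorem \ref{main theorem 3 on hidden faces}, and the last arrow is Theorem \ref{main theorem 4 on hidden faces}. Passing to cohomology yields an isomorphism $H^{\ast}(DGC_{n,j}) \cong H^{\ast}(HGC_{n,j})$.

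Next, I would verify that each of the three projections preserves both the first Betti number $g$ of the underlying graph and the graph order $k$, so that by the degree formula $k(n-j-2) + (j-1)(g-1)$ of Lemma \ref{degreeanddefect} the cohomological degree is preserved as well. The projection $DGC_{n,j} \to PGC_{n,j}$ only replaces the decorations on external vertices by scalar multiples of the unit and leaves the underlying graph intact; the projection $PGC^{\prime}_{n,j} \to PGC_{n,j}$ simply kills graphs with double or loop edges, which does not change $g$ or $k$ on the surviving generators; and the projection $PGC^{\prime}_{n,j} \to HGC_{n,j}$ respects $g$ by the combinatorics of hair formation used in the proof of Theorem \ref{main theorem 4 on hidden faces}. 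Consequently, the zigzag restricts to a zigzag of quasi-isomorphisms on the subcomplexes spanned by graphs of first Betti number $2$ in the top degree, and the resulting cohomology isomorphism identifies the $2$-loop part of $H^{top}(DGC_{n,j})$ with that of $H^{top}(HGC_{n,j})$.

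Finally, invoking the results of \cite{CCTW, Nak}, namely that the $2$-loop part of $H^{top}(HGC_{n,j})$ is infinite-dimensional, proves the corollary. The main subtlety I anticipate is the careful bookkeeping of the loop-order and top-degree gradings across the three quasi-isomorphisms, especially at the step $PGC^{\prime}_{n,j} \to HGC_{n,j}$, where the solid-edge structure is converted into hairs: one must check that the notion of ``top'' cohomological degree (Definition \ref{defoftopdegreecohomology}) on the $DGC_{n,j}$ side is carried to the corresponding notion on the $HGC_{n,j}$ side, rather than being shifted or only respected up to a filtration. Once this matching is verified at the level of the degree formula of Lemma \ref{degreeanddefect}, the corollary is immediate.
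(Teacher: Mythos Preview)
Your proposal is correct and follows essentially the same route as the paper: the corollary is stated as an immediate consequence of combining Theorems \ref{main theorem 2 on hidden faces}, \ref{main theorem 3 on hidden faces}, and \ref{main theorem 4 on hidden faces} with the infinite-dimensionality of the $2$-loop part of $H^{top}(HGC_{n,j})$ from \cite{CCTW, Nak}, with no further argument given. Your extra care about matching the $(k,g)$-decompositions and the top-degree convention across the zigzag is exactly the bookkeeping the paper leaves implicit.
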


As in \cite{AT 2}, the proof of Theorem \ref{main theorem 4 on hidden faces} needs \textit{Koszul duality theory}. 
We can also directly compare the top homology of $^{\ast} HGC_{n,j}$ and the top homology of $^{\ast} PGC^{\prime}_{n,j}$ when $n-j$ is even, in a similar manner to Bar-Natan's \cite{Bar} poof of PBW (Poincar\'{e}--Birkhoff--Witt) isomorphisms between the space $\mathcal{B}$ of open Jacobi diagrams, and $\mathcal{A}(S^1)$, the space of Jacobi diagrams on the unit circle. This argument for the top homology is the content of the author's paper \cite{Yos 2}. See Proposition \ref{weakerrelationwithhairygraphcomplex}. 

We remark that the degrees of the $2$-loop part of the top cohomology of $HGC_{n,j}$ are concentrated in degree $j-1$ when $n-j=2$. As a result, we obtain well-defined  $(j-1)$-cocycles of $\overline{\mathcal{K}}_{j+2 ,j}$ through our configuration space integrals associated with $2$-loop graphs. In \cite{Yos 3}, in preparation, we will study pairing between these $(j-1)$-cocycles with $(j-1)$-cycles constructed similarly to \cite{Yos 1}. This approach is very likely to give an alternative proof of the non-finite generation of the rational homotopy group $\pi_{j-1}^{\mathbb{Q}} (\mathcal{K}_{j+2 ,j})_\iota$ ($j \geq 2$) established by Budney--Gabai and Watanabe \cite{BG, Wat 2}. See the proceedings \cite{Yos 4}.

Finally, let us mention the case $j=1$. The Taylor approximation $\overline{\mathcal{K}}_{n,1} \rightarrow T_{\infty} \overline{\mathcal{K}}_{n,1}$ is weakly equivalent when $n\geq 4$. Sinha \cite{Sin 1} gave a \textit{cosimplicial model} of this Taylor approximation. At the $E_1$-page of the Bousfield-Kan homology spectral sequence \cite{BK, Bou} associated with this cosimplicial space, we have a certain \textit{Hochschild complex}. We write this complex as $^\ast HH_{n,1}$ because it is an analog of $^\ast HH_{n,j}$. When $n\geq 4$, this spectral sequence converges and rationally collapses at the second page \cite{LVT}. In \cite{Tur 2}, 
Turchin introduced a hairy graph complex, which is the same as $^\ast HGC_{n,j}$, and also introduced another graph complex, which is an analog of $^{\ast} PGC^{\prime}_{n,j}$. Let us write them as $^\ast HGC_{n,1}$ and $^\ast PGC^{\prime}_{n,1}$. Turchin showed that these three complexes $^\ast HH_{n,1}$, $^\ast PGC^{\prime}_{n,1}$ and $^\ast HGC_{n,1}$ are weakly-equivelent. See subsection \ref{Operadic descriptions of  $PGC_{n,j}$, $HH_{n,j}$ and $HGC_{n,j}$} for the definition of these graph complexes. 

 On the other hand, Cattaeneo, Cotta-Ramusino and Longoni \cite{CCL},  following Bott-Taubes \cite{BT} established configuration integrals from another graph complex, which we write $GC^{CCL}_{n,1}$, to $\Omega_{dR}(\mathcal{K}_{n,1})$. This map is a cochain map when $n\geq 4$.  However, whether this map is a cochain map is unknown when $n=3$. We only know correction terms for the top degree introduced by Bott and Taubes \cite{BT}. 

Our approach might be useful to this case $n=3$, because we can define the decorated graph complex $DGC_{n,1} \simeq PGC^{\prime}_{n,1}$ and a cochain map from this complex to $\Omega_{dR} (\overline{\mathcal{K}}_{n,1})$ for $n\geq 3$ \footnote{By using the argument in \cite{Tur 1}, we can show $H^{\ast}(PGC^{\prime}_{n,1}) \cong  H^{\ast}(GC^{CCL}_{n,1}) \oplus \pi^{\mathbb{Q}}_{\ast} \Omega^2 (S^{n-1})$ while $\overline{\mathcal{K}}_{n,1} \simeq \mathcal{K}_{n,1} \times \Omega^2 S^{n-1}$.}.

\section*{}
This paper is organized as follows. 
In Section \ref{Preliminaries for decorated graph complexes}, we recall the definition of the graph cooperad $\text{Graph}_n(\bullet)$, which we use to describe several graph complexes. In Section \ref{The plain graph complex and its operadic descriotipn}, we define the plain graph complexes $PGC_{n,j}$, $PGC^{ \prime}_{n,j}$,  Arone--Turchin's complex $HH_{n,j}$ and the hairy graph complex $HGC_{n,j}$. We also give  (co)operadic descriptions of these complexes using $\text{Graph}_n(\bullet)$. We show Proposition \ref{main theorem 3 on hidden faces} and Proposition \ref{main theorem 4 on hidden faces} in this section. In Section \ref{computation of top homology}, we recall computation of the $2$ and $3$-loop parts of the top cohomology of $HGC_{n,j}$. In Section~\ref{The hidden face dg algebra and the decorated graph complex}, first, we define the hidden face algebra $A_{n,j}$. Then, we introduce the decorated graph complex generated by graphs which are decorated by elements of the bar complex of $A_{n,j}$. We show Theorem \ref{main theorem 2 on hidden faces} at the end of this section. In Section \ref{The configuration space integrals with iterated integrals}, we give a map from $DGC_{n,j}$ to the de Rham complex of $\overline{\mathcal{K}}_{n,j}$ and show Theorem \ref{main theorem 1 on hidden faces}.

\newpage

\begin{figure}[htpb]
   \centering
    \includegraphics [width =14cm] {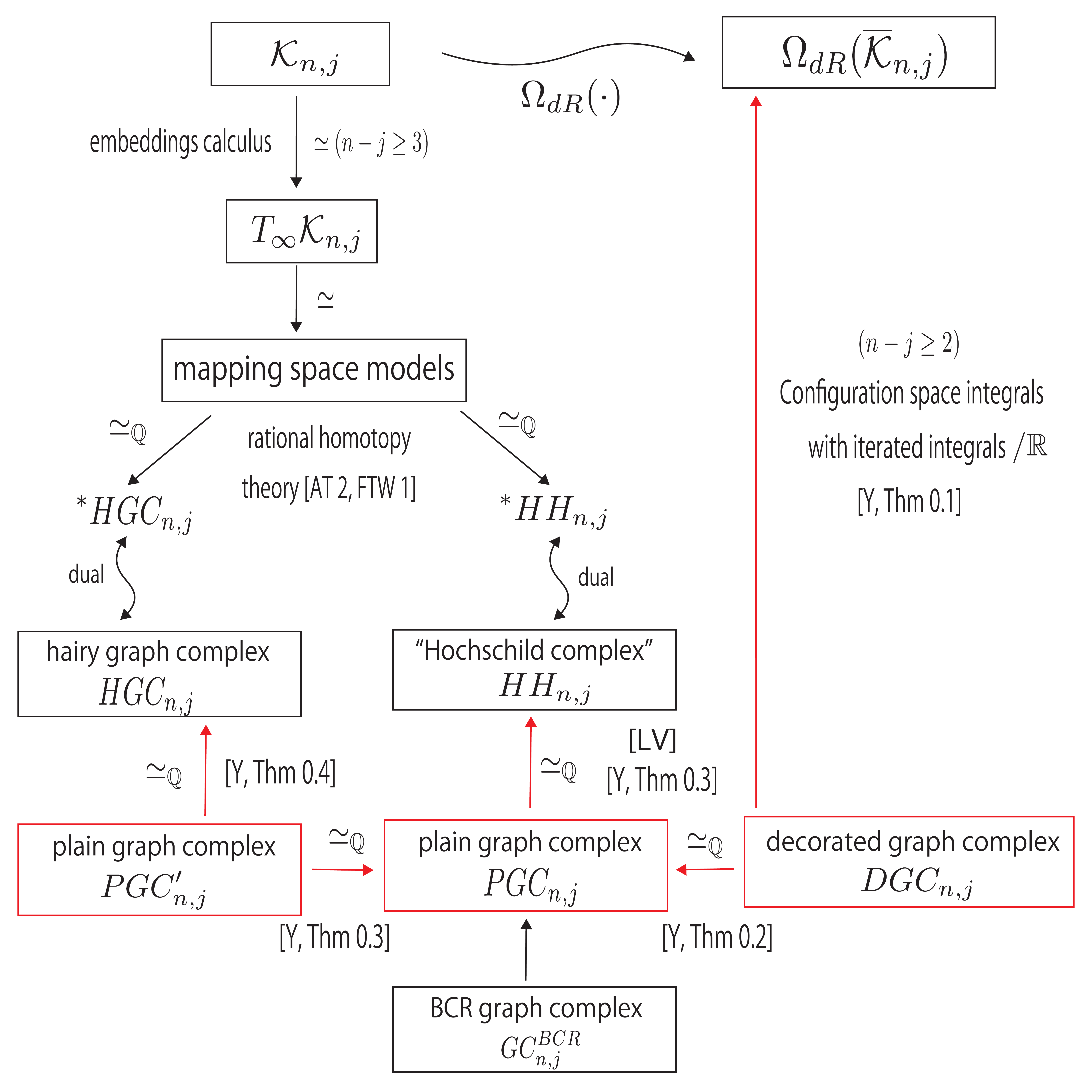}
    \caption{Overview ($j \geq 2$). The part drawn in red is the scope of this paper. }
    \label{overview2}
\end{figure}

\footnote{In \cite{AT 2}, $^{\ast}HGC_{n,j}$ and $^{\ast}HH_{n,j}$ are denoted by ${\mathcal{E}}^{j,n}_{\pi}$ and ${\mathcal{K}}^{j,n}_{\pi}$ respectevely.}

\newpage

\begin{figure}[htpb]
   \centering
    \includegraphics [width =15cm] {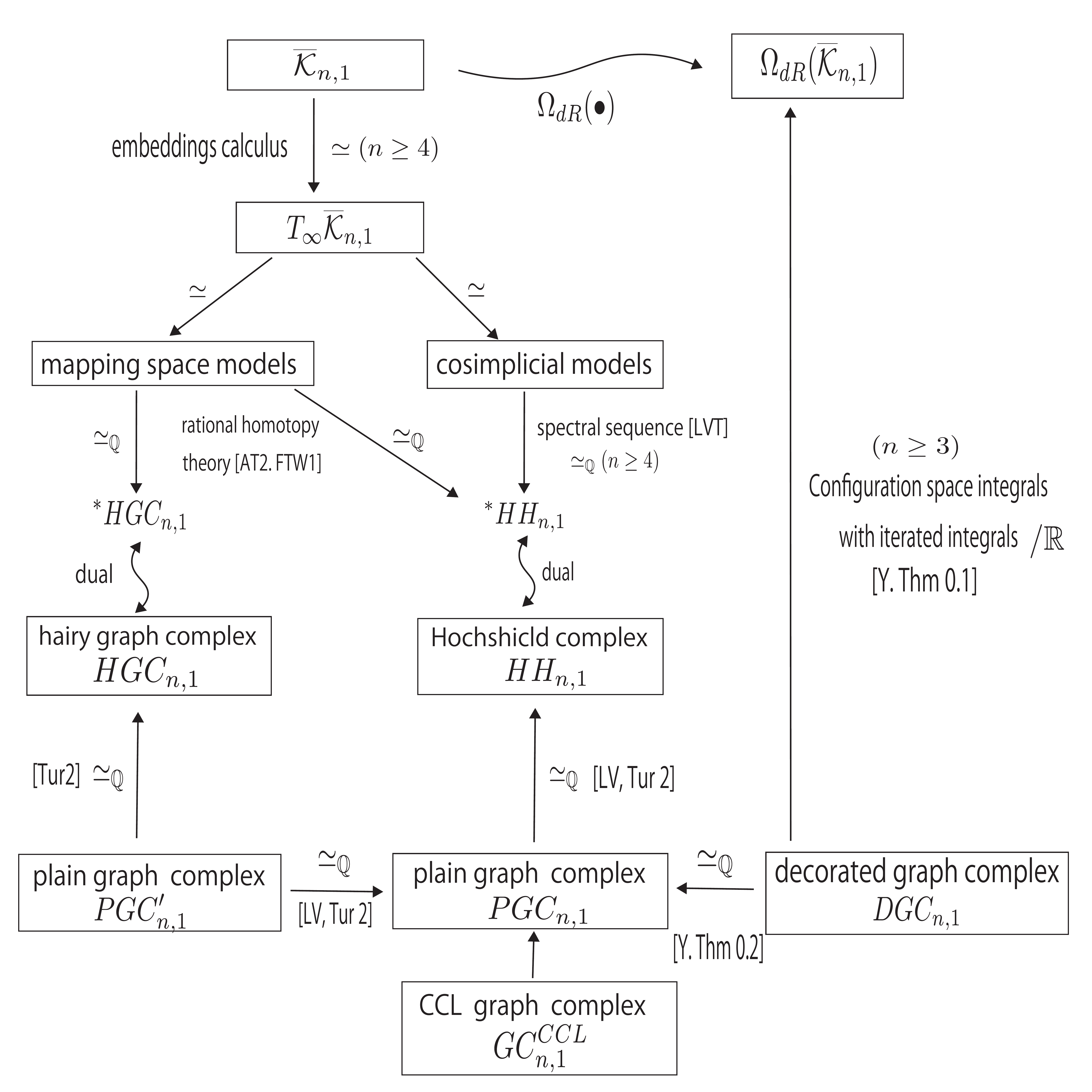}
    \caption{Overview ($j =1$)}
    \label{overview2}
\end{figure}

\footnote{In \cite{Tur 2}, $^{\ast}HGC_{n, 1}$, $^{\ast}PGC^{\prime}_{n, 1}$ and $^{\ast}HH_{n,1}$ are denoted by ${\mathcal{E}}_{1,n} = AM(P_n^{\bullet})$, $Tot (P_n^{\bullet})$ and $Tot (L_n^{\bullet})$ respectively. }

\newpage

\section*{Acknowledgement}

The author is deeply grateful to Victor Turchin for explaining to the author a lot about his works on embedding calculus and graph homologies. This paper is highly motivated by discussions with him. In particular, he suggested the proof of Theorem \ref{main theorem 2 on hidden faces}, Proposition \ref{main theorem 3 on hidden faces} and Proposition \ref{main theorem 4 on hidden faces} to the author. He also gave the author a lot of advice for better exposition throughout this paper.  The author would like to thank Keiichi Sakai for giving the author a great opportunity to talk at Shinshu Topology Seminar and discuss details on hidden faces. The author thanks Tadayuki Watanabe for discussing hidden faces and correction terms with the author. 
This paper was partly written during the author's three-month stay at Kansas State University and two-week stay at Shinshu University. The author appreciates their hospitality. 
This research was supported by Forefront Physics and Mathematics Program to Drive Transformation (FoPM), a World-leading Innovative Graduate Study (WINGS) Program, The University of Tokyo. This work was also supported by JSPS Grant-in-Aid for JSPS Fellows Grant Number 24KJ0565.

\newpage

\section{Preliminaries for the decorated graph complex}
\label{Preliminaries for decorated graph complexes}

\subsection{The space of long embeddings}

\begin{definition}
A \textit{long embedding} is an embedding $\mathbb{ R}^j \rightarrow \mathbb{R}^n$ which coincides with the standard linear embedding $\iota: \mathbb{R}^j \subset \mathbb{R}^n$ outside a disk in $\mathbb{R}^j$. %A long embedding $\mathbb{R}^n \rightarrow \mathbb{ R}^{n+2}$ is called a long $n$-knot. 
We equip the space $\text{Emb}(\mathbb{R}^j, \mathbb{R}^n)$ of long embeddings with the induced topology from the $C^{\infty}$ topology. We define the space of \textit{long immersions} $\text{Imm}(\mathbb{R}^j, \mathbb{R}^n)$ similarly. 
\end{definition}

\begin{definition}
\label{embeddings modulo immersions}
We write $\overline{\text{Emb}}(\mathbb{R}^j, \mathbb{R}^n)$ for the homotopy fiber of the inclusion
\[
\text{Emb}(\mathbb{R}^j, \mathbb{R}^n) \hookrightarrow \text{Imm}(\mathbb{R}^j, \mathbb{R}^n)
\]
at the standard linear embedding $\iota: \mathbb{R}^j \subset \mathbb{R}^n$. That is, an element of $\overline{\text{Emb}}(\mathbb{R}^j, \mathbb{R}^n)$  is a one-parameter family $\{\overline{K}_t\}_{t \in [0,1]}$ of long immersions which satisfies $\overline{K}_1 =~\iota$, $\overline{K}_0\in \text{Emb}(\mathbb{R}^j, \mathbb{R}^n$). Let 
\[
r:  \overline{\text{Emb}}(\mathbb{R}^j, \mathbb{R}^n)  \rightarrow  \text{Emb}(\mathbb{R}^j, \mathbb{R}^n)
\]
be the natural projection.

\end{definition}

\subsection{Bar complexes and Chen's iterated integrals}

The \textit{acyclic bar complexes} $B(A, A, \mathbb{R})$  are defined in subsection \ref{The bar complex}. 
Write  $\Omega(M, \ast, \ast)$ for the smooth loop space of a manifold $M$ and write $P(M, \bullet, \ast)$ for the path space which consists of smooth paths of $M$ with a free initial point and the fixed terminal point $\ast$. 

In \cite{Che 1,Che 2}, Chen related the cohomologies of loop spaces with the cohomologies of bar complexes.

\begin{theorem}\cite{Che 1, Che 2}
Let $M$ be a simply connected manifold. Let $A$ be a dg subalgebra of $\Omega_{dR}(M)$ which satisfies the following.
\begin{itemize}
\item The inclusion $i: A \rightarrow \Omega_{dR}(M)$ induces a quasi-isomorphism.
\item $A^0 = \mathbb{R}$.
\end{itemize}
Then, the map induced by the \textit{iterated integrals}
\[
J: B(\mathbb{R}, A, \mathbb{R}) \rightarrow \Omega_{dR}(\Omega(M, \ast, \ast)), \quad [a_1|a_2|\dots|a_k] \mapsto \int i(a_1) \dots i(a_k)
\]
is a quasi-isomorphism of dg algebras. 
\end{theorem}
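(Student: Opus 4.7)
The plan is to follow Chen's classical strategy (also formalizable via the Eilenberg--Moore spectral sequence): first verify that $J$ is a dg algebra chain map on the nose, then upgrade to a quasi-isomorphism by comparing the bar construction with a path-loop fibration.

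First I would check that $J$ is a morphism of dg algebras. The multiplicative structure on $B(\mathbb{R}, A, \mathbb{R})$ is given by the shuffle product $[a_1|\dots|a_k]\cdot[a_{k+1}|\dots|a_{k+\ell}] = \sum_{\sigma\in\mathrm{Sh}(k,\ell)}\pm[a_{\sigma(1)}|\dots|a_{\sigma(k+\ell)}]$, and a standard Fubini-type computation on the product of simplices $\Delta^k\times\Delta^\ell$ identifies it with pointwise product of iterated integrals on the loop space. For the chain map property, apply Stokes' theorem to $\int_{\Delta^k}\gamma^\ast a_1\wedge\dots\wedge\gamma^\ast a_k$: the interior differential contributes $\sum(-1)^\bullet [a_1|\dots|da_i|\dots|a_k]$, and the boundary terms $t_i = t_{i+1}$ contribute the multiplicative part $[a_1|\dots|a_ia_{i+1}|\dots|a_k]$ of the bar differential. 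The boundary faces $t_1=0$ and $t_k=1$ produce no contribution because the loop is based at $\ast$, which matches the fact that the outer terms of the bar differential involve multiplication into $\mathbb{R}$.

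For the quasi-isomorphism assertion, the plan is to compare two spectral sequences that arise from filtering by tensor word length on the algebraic side and by an analogous filtration on the geometric side. One clean packaging is via the contractible path space $P(M,\bullet,\ast)$: one extends $J$ to $\widetilde{J}\colon B(A,A,\mathbb{R}) \to \Omega_{dR}(P(M,\bullet,\ast))$, and observes that the fibration $\Omega(M,\ast,\ast)\hookrightarrow P(M,\bullet,\ast)\twoheadrightarrow M$ corresponds under the bar construction to the identification $B(\mathbb{R},A,\mathbb{R}) \cong \mathbb{R}\otimes_A B(A,A,\mathbb{R})$. Both the algebraic bar spectral sequence and the Eilenberg--Moore spectral sequence of the fibration have $E_2$-page $\mathrm{Tor}_{H^\ast(M)}(\mathbb{R},\mathbb{R})$. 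The map $J$ (and its two-sided extension $\widetilde{J}$) induces a morphism of these spectral sequences which, at the $E_1$-page, reduces to the quasi-isomorphism $A\xrightarrow{\sim}\Omega_{dR}(M)$ smashed together with itself, hence is an isomorphism on each page. Convergence of the Eilenberg--Moore spectral sequence is where the simple connectivity of $M$ enters crucially, and the hypothesis $A^0=\mathbb{R}$ (together with $A$ being cohomologically connected, which follows from simple connectivity of $M$) ensures the algebraic bar filtration is bounded in each total degree so its spectral sequence also converges.

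The main obstacle will be the convergence/comparison step rather than the signs: the algebraic maneuver of bar constructions is formal, but identifying the geometric filtration on $\Omega_{dR}(\Omega M)$ induced by Chen's iterated integrals with the Eilenberg--Moore filtration requires care, and it is exactly the point where the hypotheses ``simply connected'' and ``$A^0=\mathbb{R}$'' must be used to rule out pathological $E_\infty$-terms. Once those two spectral sequences are aligned, the proof concludes by Zeeman's comparison theorem. If instead one preferred a more elementary route, one could induct on a minimal Sullivan model of $M$, reducing to the case of Eilenberg--MacLane spaces $K(\mathbb{Q},n)$ where both sides are computed explicitly and the result is verified by hand.
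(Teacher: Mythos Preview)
The paper does not prove this theorem at all: it is stated as a classical result of Chen, with a bare citation to \cite{Che 1, Che 2}, and immediately afterward the paper says ``In this paper, we use a weaker proposition'' and moves on to the acyclic two-sided bar complex instead. So there is no proof in the paper to compare your proposal against.

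Your sketch is a reasonable outline of the standard argument (Stokes for the chain-map property, shuffle/Fubini for multiplicativity, Eilenberg--Moore or path-space comparison for the quasi-isomorphism), and it correctly identifies where the hypotheses $M$ simply connected and $A^0=\mathbb{R}$ are used. If you were asked to supply a proof, this would be the right shape; but for the purposes of this paper the theorem is treated as background and not re-proved.
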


In this paper, we use a weaker proposition as follows.
\begin{prop}
\label{factiteratedintegral1}
Let $M$ be a connected manifold. Let $A$ be a dg subalgebra of $\Omega_{dR}(M)$ such that $A_0 = \mathbb{R}$.
Then, the map induced by  the iterated integrals
\[
I: B(A, A, \mathbb{R}) \rightarrow \Omega_{dR}(P(M, \bullet, \ast))
\]
is a morphism of dg algebras. Note that the map is a quasi-isomorphism since both of the cohomologies vanish. 

\end{prop}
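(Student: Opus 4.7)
The plan is to follow Chen's classical argument for loop spaces, adapted to the based path space with a free initial point. First I would write down $I$ explicitly. Let $\mathrm{ev}\colon P(M,\bullet,\ast)\times[0,1]\to M$, $(\gamma,t)\mapsto\gamma(t)$ be the evaluation map, and $\mathrm{ev}_0$ its restriction to $t=0$. For a generator $a_0\otimes[a_1|\cdots|a_k]\in A\otimes_\tau BA$ I set
\[
I(a_0\otimes[a_1|\cdots|a_k]) \;=\; \mathrm{ev}_0^{\,\ast}(a_0)\,\wedge\,\int_{\Delta^k} \mathrm{ev}_{t_1}^{\,\ast}(a_1)\wedge\cdots\wedge\mathrm{ev}_{t_k}^{\,\ast}(a_k),
\]
where $\Delta^k=\{0\le t_1\le\cdots\le t_k\le 1\}$ and the integral is fibre integration along $\Delta^k$ inside $P(M,\bullet,\ast)\times\Delta^k$, with $\mathrm{ev}_{t_i}$ denoting evaluation at time $t_i$. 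Extend linearly.

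Next I would check compatibility with products. This reduces to two classical facts: $\mathrm{ev}_0^{\,\ast}$ is an algebra map, and the shuffle identity for iterated integrals expresses the product of two iterated integrals over $\Delta^p$ and $\Delta^q$ as a signed sum of $\binom{p+q}{p}$ iterated integrals, one for each shuffle, obtained by decomposing $\Delta^p\times\Delta^q$ into shuffle subsimplices. Together these give $I(\alpha\cdot\beta)=I(\alpha)\wedge I(\beta)$ for the shuffle product on $B(A,A,\mathbb{R})$.

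Then I would check compatibility with the differential via Stokes' theorem on $\Delta^k$. Applying the exterior differential to $I(\alpha)$ produces an interior contribution (from the commutation $d\,\mathrm{ev}^{\,\ast}=\mathrm{ev}^{\,\ast}\,d$) plus boundary contributions from the codimension-one faces of $\Delta^k$. The interior term matches $I$ applied to the internal differential of the bar word. An inner face $t_i=t_{i+1}$ collapses two evaluations into a single one, producing $\mathrm{ev}_{t_i}^{\,\ast}(a_i\wedge a_{i+1})$, which matches the bar term $\pm a_0[a_1|\cdots|a_ia_{i+1}|\cdots|a_k]$. The initial face $t_1=0$ couples $\mathrm{ev}_0^{\,\ast}(a_1)$ with the outer $\mathrm{ev}_0^{\,\ast}(a_0)$, yielding the left-action term $\pm(a_0a_1)[a_2|\cdots|a_k]$. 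The terminal face $t_k=1$ evaluates $a_k$ through the constant map to $\ast$; by the hypothesis $A_0=\mathbb{R}$ one may take all bar letters in the augmentation ideal $\bar A=A^{>0}$, for which this pullback vanishes, so the terminal face contributes nothing. Summing, $dI(\alpha)=I(d\alpha)$.

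The quasi-isomorphism statement is then automatic: $B(A,A,\mathbb{R})$ is the standard acyclic bar resolution, and $P(M,\bullet,\ast)$ deformation-retracts onto the constant path at $\ast$, so both complexes are acyclic. The main obstacle is Koszul sign bookkeeping through pullback, wedge product, and fibre integration, so that Stokes on $\Delta^k$ produces exactly the signs carried by the bar differential and the shuffle product; the one genuinely geometric input beyond sign checking is the vanishing of the terminal face, which is precisely where the hypothesis $A_0=\mathbb{R}$ is used.
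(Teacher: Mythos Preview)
Your proposal is correct and follows exactly the classical Chen argument. The paper does not actually give a proof of this proposition: it is stated in the preliminaries section as a known consequence of Chen's work \cite{Che 1, Che 2}, immediately after Chen's theorem for loop spaces, with only the parenthetical remark that both cohomologies vanish. Your write-up therefore supplies precisely the standard argument that the paper takes for granted, and your identification of the role of the hypothesis $A^0=\mathbb{R}$ (killing the terminal face $t_k=1$) is on the mark and is in fact the reason the paper later worries about degree-zero elements of $A_{n,j}$ in Section~\ref{Degree-zero elements}.
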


\subsection{Configuration spaces and their cohomology}

\begin{definition}[\textit{Configuration spaces}]
Define the \textit{$k$-points configuration space} of $\mathbb{R}^n$ by 
\[
\text{Conf}_{k}(\mathbb{R}^n) = (\mathbb{R}^n)^{\times k} \setminus \Delta
\]
where $\Delta$ is the fat diagonal $\bigcup_{1 \leq i\neq j \leq k} \{y_i = y_j\}$. We often write $C_k(\mathbb{R}^n)$ for $\text{Conf}_{k}(\mathbb{R}^n)$. 
\end{definition}

\begin{notation}
Let $n\geq 2$. Let $P_{ij}: \text{Conf}_k(\mathbb{R}^n) \rightarrow S^{n-1}$ $(1 \leq i \neq j \leq k)$ be the map assining the direction from the $i$-th point to the $j$-th point. We write $\omega_{ij}$ for the $(n-1)$-form of $\text{Conf}_k(\mathbb{R}^n)$ obtained by pulling back the standard volume form of $S^{n-1}$ by $P_{ij}$.
\end{notation}

F. R. Cohen. showed the following \cite{Coh}.

\begin{theorem}
\label{cohomologyofconfigurationspaces}
Let $n\geq 2$. The cohomology $H^{\ast}(\text{Conf}_k(\mathbb{R}^n))$ of the $k$ points configuration space is polynomialy generated by $\omega_{ij}$ $(1\leq i \neq j \leq k)$ (of degree $n-1$) with relations
\begin{itemize}
\item $\omega_{hi} \omega_{ij} + \omega_{ij}\omega_{jh} + \omega_{hi} \omega_{ij} = 0$ $(1\leq h \neq i \neq j \neq h \leq k),$ 
\item $\omega_{ij} = (-1)^{n} \omega_{ji}$ $(1\leq i \neq j \leq k)$,
\item $\omega_{ij}^2 = 0$.
\end{itemize}
The first relation is called the \textit{Arnol'd relation} \cite{Arn}. On the other hand $H^{\ast}(\text{Conf}_k(\mathbb{R}^1))$ is concentrated at  degree $0$ and $H^{0}(\text{Conf}_k(\mathbb{R}^1)) \cong  \mathbb{Z} \Sigma_k$, where $\Sigma_k$ is the $k$-th symmetric group.
\end{theorem}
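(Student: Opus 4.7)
The plan is to treat the two cases in the statement separately and, for $n\geq 2$, to use the Fadell--Neuwirth fibration together with Leray--Hirsch to reduce the computation to an inductive bookkeeping argument. First, the case $n=1$ is immediate: the configuration space $\mathrm{Conf}_k(\mathbb{R})$ decomposes as the disjoint union of the $k!$ open chambers $\{y_{\sigma(1)}<\dots<y_{\sigma(k)}\}$ indexed by $\sigma\in\Sigma_k$, and each chamber is convex and hence contractible. So $H^0\cong \mathbb{Z}\Sigma_k$ and all higher cohomology vanishes.

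For $n\geq 2$, I would run induction on $k$ using the forgetful fibration
\[
\pi_k: \mathrm{Conf}_k(\mathbb{R}^n)\longrightarrow \mathrm{Conf}_{k-1}(\mathbb{R}^n),
\]
whose fiber over a configuration $(y_1,\dots,y_{k-1})$ is $\mathbb{R}^n\setminus\{y_1,\dots,y_{k-1}\}$, homotopy equivalent to $\bigvee_{i=1}^{k-1}S^{n-1}$. The key observation is that the classes $\omega_{ik}$ $(1\leq i\leq k-1)$ on the total space restrict to a basis of the fiber cohomology. By Leray--Hirsch, this yields an $H^*(\mathrm{Conf}_{k-1}(\mathbb{R}^n))$-module isomorphism
\[
H^*(\mathrm{Conf}_k(\mathbb{R}^n))\cong H^*(\mathrm{Conf}_{k-1}(\mathbb{R}^n))\otimes H^*\!\Big(\bigvee_{k-1}S^{n-1}\Big).
\]
Iterating this gives the Poincaré polynomial $\prod_{i=1}^{k-1}(1+i\,t^{n-1})$, and will ultimately serve as the rank check at the end.

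Next I would verify the three relations directly from the geometric description of $\omega_{ij}$ as $P_{ij}^*\mathrm{vol}_{S^{n-1}}$. Antisymmetry $\omega_{ij}=(-1)^n\omega_{ji}$ is immediate since $P_{ji}$ is the composition of $P_{ij}$ with the antipodal map on $S^{n-1}$, which has degree $(-1)^n$. The relation $\omega_{ij}^2=0$ holds because $\mathrm{vol}_{S^{n-1}}^2$ is already zero for dimensional reasons on $S^{n-1}$, and pullback is multiplicative. For the Arnol'd relation, since the three forms involved pull back from $\mathrm{Conf}_3(\mathbb{R}^n)$ via the forgetful map $\mathrm{Conf}_k(\mathbb{R}^n)\to\mathrm{Conf}_3(\mathbb{R}^n)$, it suffices to check it on $\mathrm{Conf}_3(\mathbb{R}^n)$; there the identity is a single computation that can be carried out by restricting to the fiber of $\mathrm{Conf}_3(\mathbb{R}^n)\to\mathrm{Conf}_2(\mathbb{R}^n)$, or equivalently by integration over the Stokes boundary of $\mathrm{Conf}_3(\mathbb{R}^n)\to S^{n-1}$.

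The main obstacle is completeness, i.e.\ showing that the three relations generate all relations. The standard strategy is to produce an additive basis of the abstract algebra $E_{n,k}$ presented by the generators and relations, and to verify that its Poincaré polynomial matches $\prod_{i=1}^{k-1}(1+i\,t^{n-1})$ obtained above. A convenient basis is given by ``admissible'' monomials $\omega_{i_1 j_1}\cdots\omega_{i_r j_r}$ with $j_1<\dots<j_r$ and $i_s<j_s$; using the Arnol'd relation repeatedly one reduces any monomial to an $\mathbb{Z}$-linear combination of admissible ones, and counting admissible monomials factor by factor on the index $j_s$ produces exactly $\prod_{i=1}^{k-1}(1+i\,t^{n-1})$. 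Since the surjection $E_{n,k}\twoheadrightarrow H^*(\mathrm{Conf}_k(\mathbb{R}^n))$ is an isomorphism in each degree by this rank match, the presentation is complete.
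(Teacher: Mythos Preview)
The paper does not supply a proof of this theorem: it is stated as a known result and attributed to F.~R.~Cohen~\cite{Coh}, with the Arnol'd relation credited to~\cite{Arn}. So there is nothing in the paper to compare your argument against.

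That said, your plan is the standard and correct one. The Fadell--Neuwirth fibration combined with Leray--Hirsch gives the additive structure and the Poincar\'e polynomial $\prod_{i=1}^{k-1}(1+i\,t^{n-1})$; the three relations are verified directly on forms (with the Arnol'd relation reduced to $\mathrm{Conf}_3$); and the admissible-monomial basis of the abstract algebra $E_{n,k}$ matches the rank count, forcing the surjection $E_{n,k}\twoheadrightarrow H^*(\mathrm{Conf}_k(\mathbb{R}^n))$ to be an isomorphism. One small point worth tightening: for $n=2$ the fiber $\mathbb{R}^2\setminus\{k-1\text{ points}\}$ is not simply connected, so you should either invoke the cohomological version of Leray--Hirsch (which only needs the restriction map on cohomology to be surjective onto a free module, not simple connectivity) or note that the Serre spectral sequence collapses because the monodromy acts trivially on the fiber cohomology generated by the restricted $\omega_{ik}$. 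Either way the conclusion stands.
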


Elements of $H^{\ast}(\text{Conf}_k(\mathbb{R}^n))$ $(n\geq 2)$ can be drawn as follows. We consider graphs with $k$ ordered vertices and with ordered oriented edges. An oriented edge connecting the $i$-th and $j$-th vertices stands for $\omega_{ij}$. Loop edges are not allowed. Double edges are allowed, but they vanish by relations. See Figure  \ref{figcohomologyconfigurationspace} for an example.

\begin{figure}[htpb]
   \centering
    \includegraphics [width =5cm] {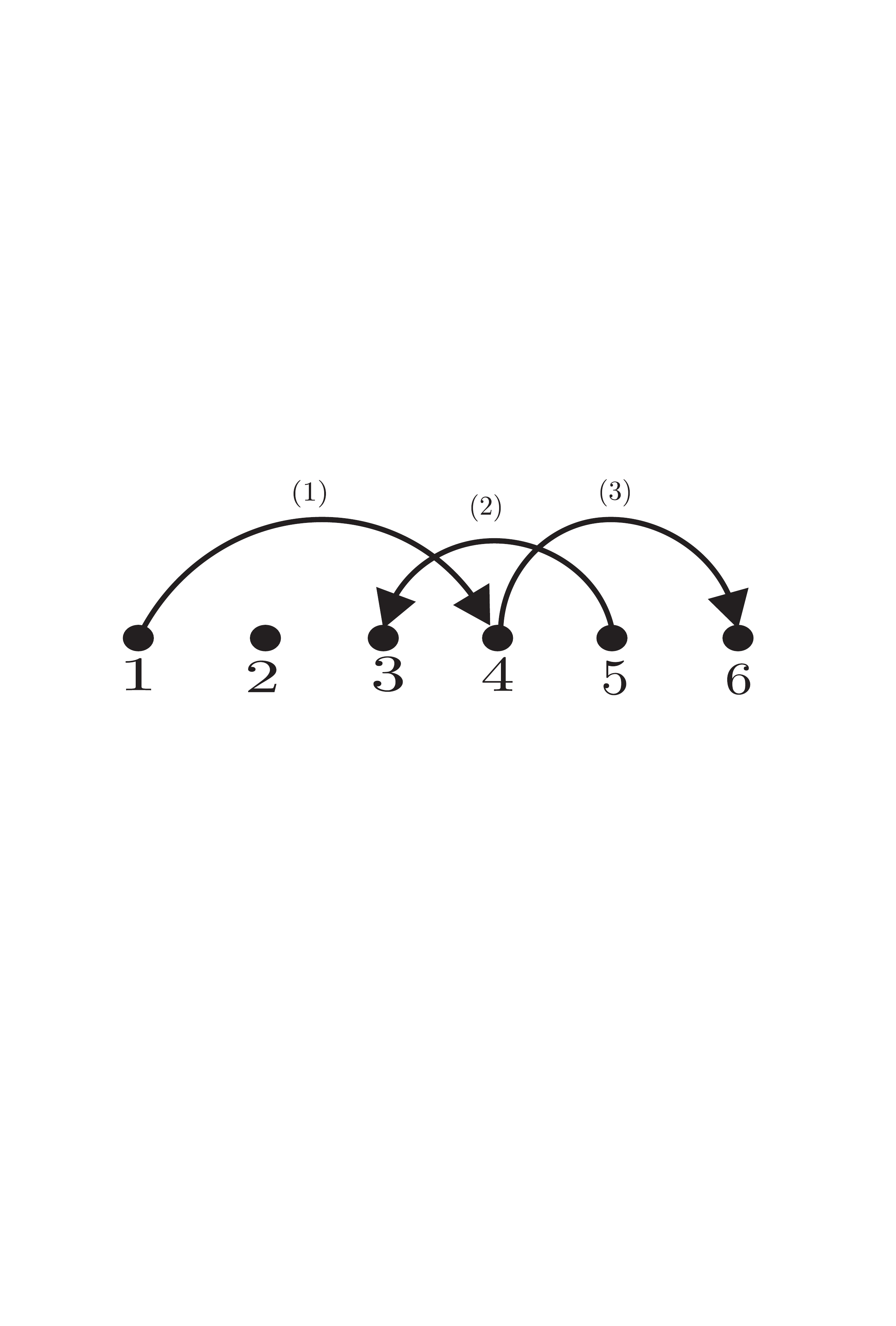}
    \caption{An element of $H^{\ast}(\text{Conf}_6(\mathbb{R}^n))$}
    \label{figcohomologyconfigurationspace}
\end{figure}

Then, an Arnold relation is drawn as follows.

\[
\begin{tikzpicture}[x=0.75pt,y=0.75pt,yscale=1,xscale=1, baseline=20pt, line width = 1pt] %solid version
%  edge
\draw [-Stealth]  (0,0)--(95,0) ;
\draw(50,0) node [anchor = north] {$(a)$} ;
\draw [-Stealth]  (100,0)--(50,48) ;
\draw (75,30) node [anchor = west]  {$(b)$} ;

% black vertex
\draw [fill={rgb, 255:red, 0; green, 0; blue, 0 }  ,fill opacity=1 ] (0,0.1) circle(4) node [anchor = south] {$h$}; 
\draw [fill={rgb, 255:red, 0; green, 0; blue, 0 }  ,fill opacity=1 ] (100,0) circle(4) node [anchor = south] {$i$}; ; 
\draw [fill={rgb, 255:red, 0; green, 0; blue, 0 }  ,fill opacity=1 ] (50,50) circle(4) node [anchor = south] {$j$} ; 
\end{tikzpicture}
+
\begin{tikzpicture}[x=0.75pt,y=0.75pt,yscale=1,xscale=1, baseline=20pt, line width = 1pt] 
\begin{scope}[xshift=100]
%  edge
\draw [-Stealth]  (95,0)--(50,50);
\draw (75,30) node [anchor = west]  {$(a)$} ;
\draw [-Stealth]  (50,50)--(5,0);
\draw (5,30) node [anchor = west]  {$(b)$} ;
% black vertex
\draw [fill={rgb, 255:red, 0; green, 0; blue, 0 }  ,fill opacity=1 ] (0,0.1) circle(4) (0,0) circle(5) node [anchor = south] {$h$}; ; 
\draw [fill={rgb, 255:red, 0; green, 0; blue, 0 }  ,fill opacity=1 ] (100,0) circle(4) node [anchor = south] {$i$} ; 
\draw [fill={rgb, 255:red, 0; green, 0; blue, 0 }  ,fill opacity=1 ] (50,50) circle(4) node [anchor = south] {$j$} ; 
\end{scope}
\end{tikzpicture}
+
\begin{tikzpicture}[x=0.75pt,y=0.75pt,yscale=1,xscale=1, baseline=20pt, line width = 1pt] 
\begin{scope}[xshift=200]
%  edge
\draw [-Stealth]  (0,0)--(95,0);
\draw(50,0) node [anchor = north] {$(b)$} ;
\draw [-Stealth]  (50,50)--(5,0);
\draw (5,30) node [anchor = west]  {$(a)$} ;
% black vertex
\draw [fill={rgb, 255:red, 0; green, 0; blue, 0 }  ,fill opacity=1 ] (0,0.1) circle(4) node [anchor = south] {$h$} ; 
\draw [fill={rgb, 255:red, 0; green, 0; blue, 0 }  ,fill opacity=1 ] (100,0) circle(4) node [anchor = south] {$i$} ; 
\draw [fill={rgb, 255:red, 0; green, 0; blue, 0 }  ,fill opacity=1 ] (50,50) circle(4) node [anchor = south] {$j$} ; 
\end{scope}
\end{tikzpicture}
= 0.
\]

Elements of $H^{\ast}(\text{Conf}_k(\mathbb{R}^1))$  can be drawn as $k$ ordered points on an oriented line:
\begin{center}
\begin{tikzpicture}[x=0.75pt,y=0.75pt,yscale=1,xscale=1, baseline=20pt, line width = 1pt] 
%  edge
\draw [-Stealth, line width = 0.5 pt]  (0,0)--(300,0);
% black vertex
\draw [fill={rgb, 255:red, 0; green, 0; blue, 0 }  ,fill opacity=1 ] (50,0.1) circle(4) node [anchor = south] {$1$} ; 
\draw [fill={rgb, 255:red, 0; green, 0; blue, 0 }  ,fill opacity=1 ] (150,0) circle(4) node [anchor = south] {$2$} ; 
\draw [fill={rgb, 255:red, 0; green, 0; blue, 0 }  ,fill opacity=1 ] (200,0) circle(4) node [anchor = south] {$3$} ; 
\draw [fill={rgb, 255:red, 0; green, 0; blue, 0 }  ,fill opacity=1 ] (100,0.1) circle(4) node [anchor = south] {$4$} ; 
\draw [fill={rgb, 255:red, 0; green, 0; blue, 0 }  ,fill opacity=1 ] (250,0) circle(4) node [anchor = south] {$5$} ; 
\end{tikzpicture}
\end{center}

\subsection{The graph cooperad $\text{Graph}_n(\bullet)$}
\label{graphcooperad}

We recall the definition of the \textit{graph cooperad} $\text{Graph}_n(\bullet)$ and its quotient $\overline{\text{Graph}}_n(\bullet)$ to describe several graph complexes.  The quotient version $\overline{\text{Graph}}_n(\bullet)$ is introduced by Kontsevich \cite{Kon 1} and is well studied by Lambrechts and Voli\'c \cite{LV}. The version $\text{Graph}_n(\bullet)$ is used , for example, in \cite{Tur 2, AT 2, FTW 2} to form graph complexes. $\text{Graph}_n(\bullet)$ and  $\overline{\text{Graph}}_n(\bullet)$ have a structure of a cooperad in the category of commutative dg algebras, or dg Hopf cooperads. 
They are quasi-isomorphic to the cohomology $H^{\ast}(\text{Conf}_{\bullet}(\mathbb{R}^n), \mathbb{Q})$ of the configuration spaces as dg Hopf cooperads. First, let $n \geq 2$.

%there exists a quasi-isomorphism $H_{\ast}(\mathcal{D}^n(\bullet), \mathbb{Q}) \rightarrow \text{Graph}_n(\bullet)$ of dg Hopf operads.  
\begin{definition}[Graphs]
\textit{Graphs} have two types of vertices, 
\textit{external}  \begin{tikzpicture} [baseline = -3pt]  \draw (0, 0) circle (0.05) [fill={rgb, 255:red, 0; green, 0; blue, 0}, fill opacity =1.0]; \end{tikzpicture}
and \textit{internal} \begin{tikzpicture}  [baseline = -2pt]   \draw (0, 0) rectangle (0.1, 0.1) [fill={rgb, 255:red, 0; green, 0; blue, 0}, fill opacity =1.0]; \end{tikzpicture}.
and one type of edges \begin{tikzpicture} \draw [x=0.75pt,y=0.75pt,yscale=0.3,xscale=0.3, baseline=-3pt]  [line width = 1pt](0,0)--(90,0); \end{tikzpicture}.
External vertices are ordered. 
Internal vertices must have at least three (half) edges while external vertices are allowed to have no edges. 
We assume that each component has at least one external vertex.
Double edges
\begin{tikzpicture}[x=1pt,y=1pt,yscale=1,xscale=1, baseline=-3pt, line width =1pt]
\draw  (0,0) .. controls (20,5) and (30,5)  .. (50,0);
\draw (0,0) .. controls (20,-5) and (30,-5)  .. (50,0);
\end{tikzpicture}
and loop edges
\begin{tikzpicture}[x=1pt,y=1pt,yscale=0.8,xscale=0.8, baseline=5pt, line width = 1pt]
\draw (0,0) .. controls (-20,20) and (20,20)  .. (0,0);
\end{tikzpicture}
are allowed.  
See Figure \ref{figofdiffofgraph}, \ref{figofproduct}, \ref{figofdecomposition} for examples of graphs.
\end{definition}

\begin{notation}
Write $E(\Gamma)$ for the sets of edges.
Write $V(\Gamma) (= V_i(\Gamma) \cup V_e(\Gamma))$, $V_i(\Gamma)$, $V_e(\Gamma)$ for the set of vertices, internal vertices and external vertices.
\end{notation}

\begin{definition}[Degrees]
The \textit{degree} of edges, internal vertices and external vertices are $n-1$, $-n$ and $0$ respectively. The degree $|\Gamma| = \deg(\Gamma)$ of a graph $\Gamma$ is defined by the sum of degrees $|\Gamma| = (n-1)|E(\Gamma)|-n|V_i(\Gamma)|$.
\end{definition}

\begin{definition}[Labels and orientations]
A \textit{label} of a graph $\Gamma$ consists of a choice of an ordering of the set $o(\Gamma) = E(\Gamma) \cup V_i(\Gamma)$ and a choice of orientations of edges. Each label gives an orientation of the underlying graph so that it depends only on parities of $n$. More precisely, if we change the ordering of two elements of degree $d$ and $d^{\prime}$ of $o(\Gamma)$, the orientation of a graph is changed by $(-1)^{d d^{\prime}}$. If we change the orientation of an edge, The orientation is changed by $(-1)^{n}$.
\end{definition}

\begin{definition}
Let $k \geq 1$ be an integer. $\text{Graph}_n(k)$ is a graded vector space over $\mathbb{Q}$ generated by graphs with $k$ external  vertices, equipped with the following orientation relation.
\begin{itemize}
\item If two graphs $\Gamma$ and $\Gamma^{\prime}$ with the same underlying graph have the same orientation (resp. the opposite orientations), then $\Gamma= \Gamma^{\prime}$ (resp. $\Gamma = -\Gamma^{\prime}).$
\end{itemize}
Let $\overline{\text{Graph}}_n(k)$ be the graded vector space obtained from $\text{Graph}_n(k)$ by adding the following relation. 
\begin{itemize}
\item A graph with double edges or loop edges vanishes. 
\end{itemize}
We regard $\text{Graph}_n(0)$ and $\overline{\text{Graph}}_n(0)$ as  the one-dimensional vector space generated by the empty graph. 
\end{definition}

\begin{definition}[The action of $\Sigma_k$ to $\text{Graph}_n(k)$]
The symmetric group $\Sigma_k$ acts from right on $\text{Graph}_n(k)$ as permutations of external vertices. The $i$-th external vertex of a graph $\Gamma$ is given a new label $\sigma^{-1} (i)$ as an external vertex of $\Gamma \sigma$.
\end{definition}

\begin{definition}[The differential of $\text{Graph}_n(k)$]
The differential $d: \text{Graph}_n(k) \rightarrow \text{Graph}_n(k)$ is defined by the sum of contractions of admissible edges. 
\[
d (\Gamma)  =  \sum_{\substack{e \in E^{a}(\Gamma)}} \sigma(e) \Gamma/e .
\]
See Figure \ref{figofdiffofgraph}. Here, the set $E^{a}(\Gamma) \subset E(\Gamma)$ of admissible edges consists of edges except for loop edges and edges connecting two external vertices. The sign $\sigma(e)$, which depends only on parities of $n$ and $j$, is characterized as the follows
\begin{itemize}
\item Let $e= (p,q)$ $q>p$. If the edge $e$ and the (internal) vertex $q$ is the first and the second of the ordering of $o(\Gamma) = E(\Gamma) \cup V_i(\Gamma)$ (and if the remaining ordering coincides with the ordering of $o(\Gamma/e)$), the sign $\sigma(e)$ is positive. 
\end{itemize}
We easily have that $d$ vanishes on orientation relations of $\text{Graph}_n(k)$ and it preserves the number of external vertices $k$. 

\begin{figure}[htpb]
   \centering
    \includegraphics [width =9cm] {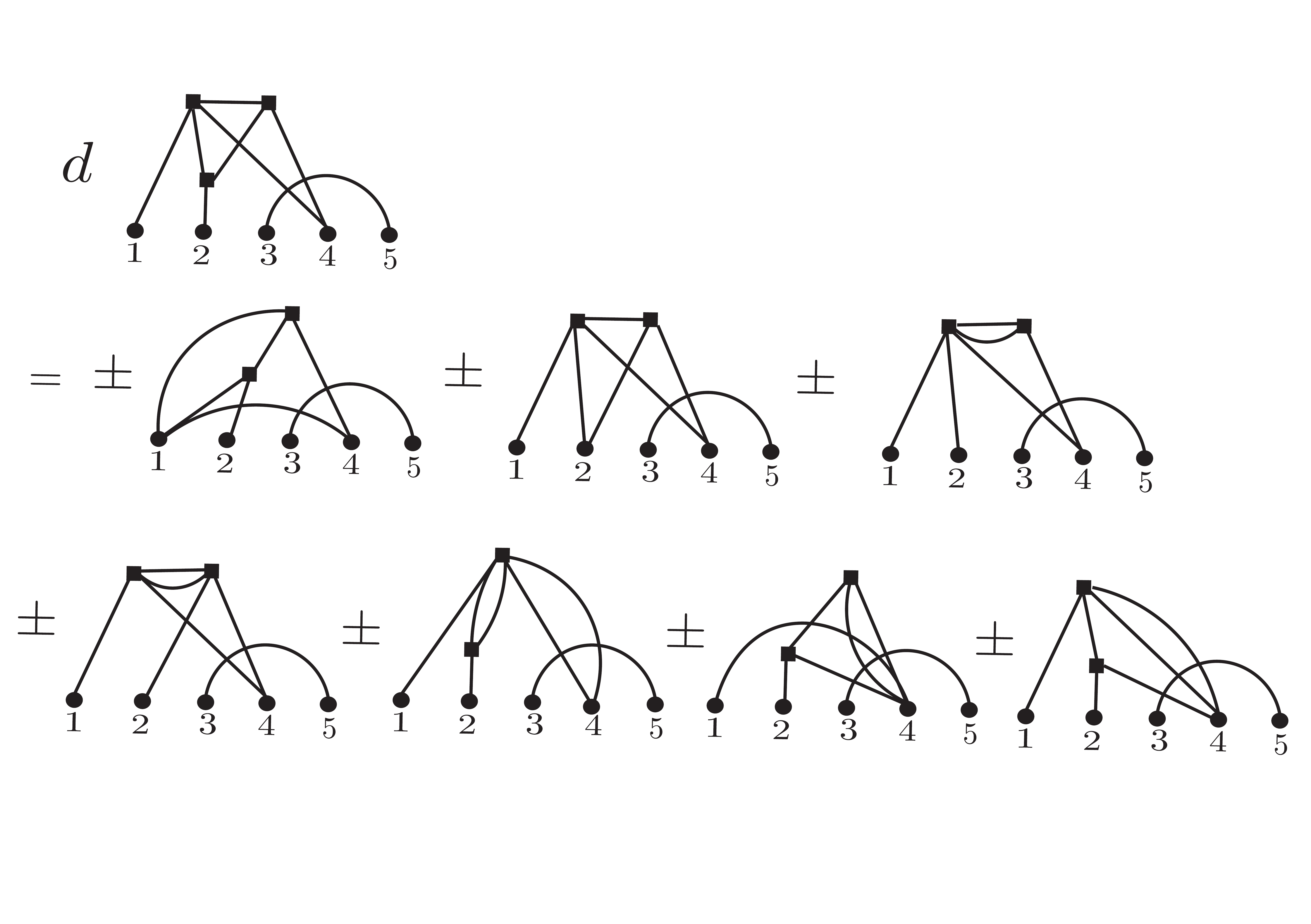}
    \caption{Example of the differential of a graph}
    \label{figofdiffofgraph}
\end{figure}

\end{definition}

\begin{definition}[The product of $\text{Graph}_n(k)$]
The product $\cdot : \text{Graph}_n(k) \otimes \text{Graph}_n(k)\rightarrow \text{Graph}_n(k)$ is defined by the \textit{superimposition} of graphs. See Figure \ref{figofproduct}. The label of the new graph is obtained as follows. The ordering of the set $o(\Gamma_1 \cdot \Gamma_2)$ is determined by the orderd set 
$o(\Gamma_1) \sqcup o(\Gamma_2)$. The edges of $\Gamma_1 \cdot \Gamma_2$ are  oriented consistently with those of $\Gamma_1$ and $\Gamma_2$.

\begin{figure}[htpb]
   \centering
    \includegraphics [width =9cm]  {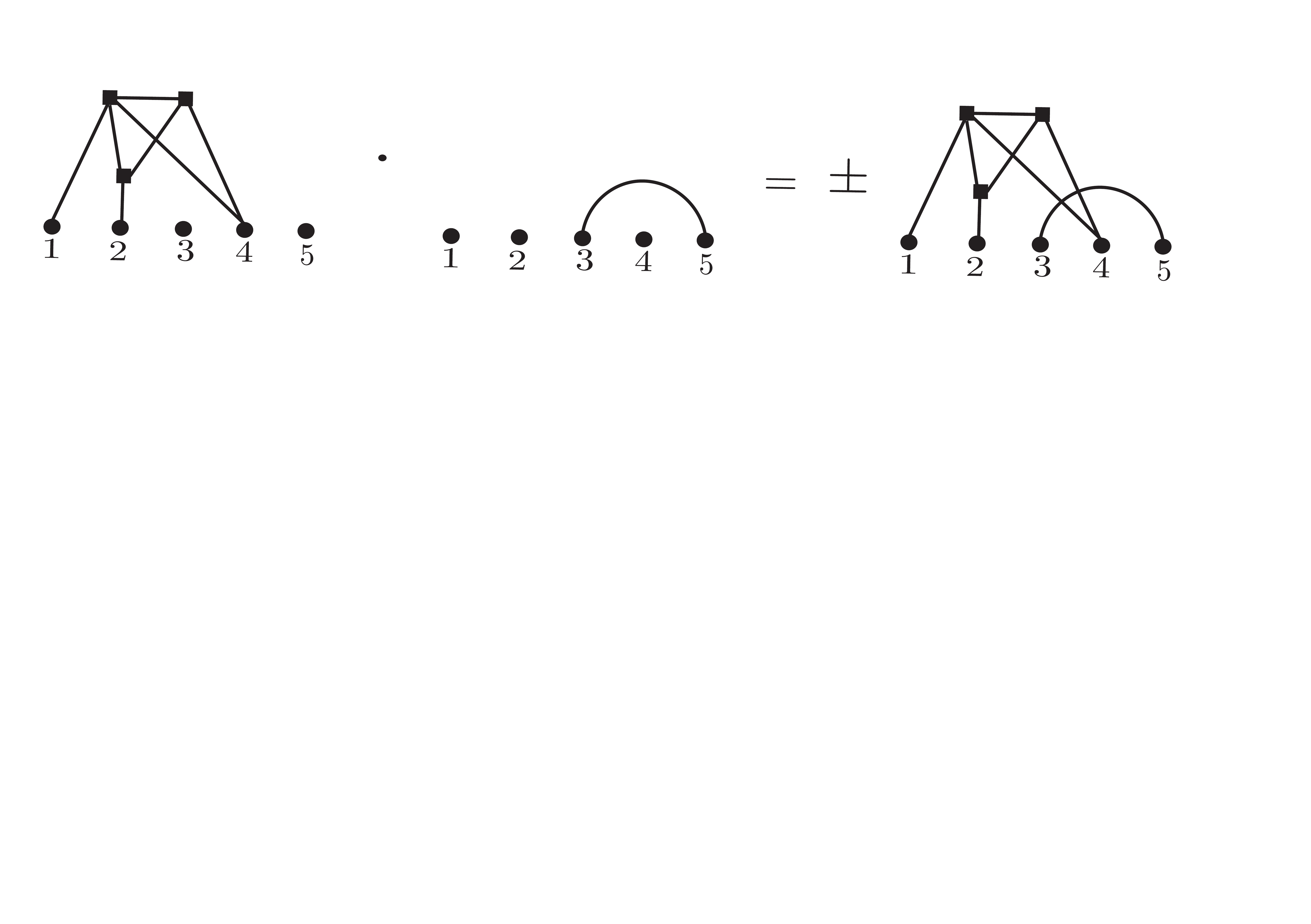}
    \caption{Example of the product of a graph}
    \label{figofproduct}
\end{figure}

\end{definition}

\begin{definition}[The decomposition of $\text{Graph}_n(\bullet)$]
Let $k, l, m \geq 0$ and $k = l+m-1$. 
The $i$-th $(1\leq i \leq k)$ decomposition
\[
^\ast \circ_i : \text{Graph}_n(k) \rightarrow  \text{Graph} _n(l) \otimes \text{Graph}_n(m)
\] is defined by
\[
^\ast \circ_ i \Gamma = \sum_{S} \sigma(S) \sum_{\Gamma_0} \Gamma/\Gamma_0 \otimes \Gamma_0,
\]
where $S$ runs over the set of subsets of $V(\Gamma)$ which includes from the $i$-th to the $(i+m-1)-$th external vertices, and $\Gamma_0$ runs graphs such that $V(\Gamma_0) = S$ and $E(\Gamma_0) \subset E(\Gamma_S)$. 
We set $\Gamma_0 = 0$, if $\Gamma_0$ has internal vertices with less than two edges or $\Gamma_0$ has no external vertices. See Figure \ref{figofdecomposition}.
Note that if $E(\Gamma_0) \subsetneq E(\Gamma_S)$, $\Gamma/\Gamma_0$ has a loop edge. 
The sign $\sigma(S)$ is characterized as follows. 
\begin{itemize}
\item If the ordering of $o(\Gamma)$ coincides with the ordered set $o(\Gamma/\Gamma_0) \sqcup o(\Gamma_0)$, the sign $\sigma(S)$ is positive. 
\end{itemize}

\begin{figure}[htpb]
   \centering
    \includegraphics [width =10cm] {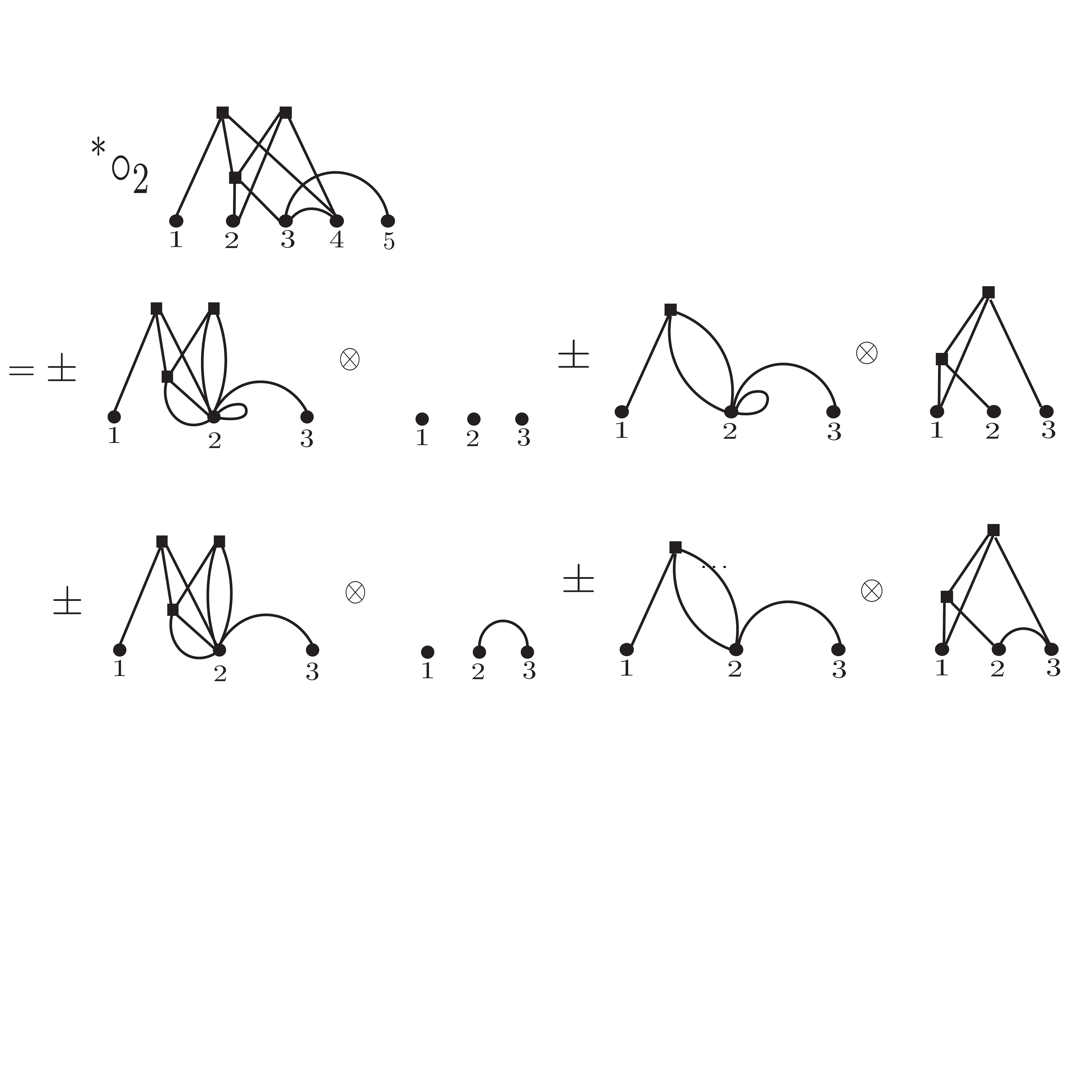}
    \caption{Example of the decomposition of a graph ($k = 5, l = 3, m=3$)}
    \label{figofdecomposition}
\end{figure}

\end{definition}

The differential, product, decomposition on $\text{Graph}_n(k)$ induce those of $\overline{\text{Graph}}_n(k)$. Since a graph with a loop edge vanishes, the decomposition of $\overline{\text{Graph}}_n(k)$ coincides with
\[
^\ast \circ_ i \Gamma = \sum_{S} \sigma(S)  \Gamma/\Gamma_S \otimes \Gamma_S.
\]

Recall that the cohomology $H^{\ast}(\text{Conf}_{\bullet}(\mathbb{R}^n))$ $(n\geq 2)$ of configuration spaces %is isomorphic to the Poisson cooperad $Poiss_{n-1}^c(\bullet)$, which 
is generated by graphs without internal vertices (Theorem \ref{cohomologyofconfigurationspaces}). We consider $H^{\ast}(\text{Conf}_{\bullet}(\mathbb{R}^n))$ as a dg Hopf cooperad with the trivial differential. 

\begin{theorem}\cite[Theorem 8.1]{LV}
The projection $\text{Graph}_n(\bullet) \rightarrow H^{\ast}(\text{Conf}_{\bullet}(\mathbb{R}^n))$ and $\overline{\text{Graph}}_n(\bullet) \rightarrow H^{\ast}(\text{Conf}_{\bullet}(\mathbb{R}^n))$ is quasi-isomorphic as dg Hopf cooperads. In particular, the projection $\text{Graph}_n(\bullet) \rightarrow \overline{\text{Graph}}_n(\bullet)$ is a quasi isomorphim.
\end{theorem}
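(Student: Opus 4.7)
The plan is to follow Kontsevich's formality approach as elaborated in \cite{LV}, building an explicit quasi-inverse via configuration-space integrals. First I would define an integration map
\[
I: \text{Graph}_n(k) \longrightarrow \Omega^*(\overline{\text{Conf}}_k(\mathbb{R}^n))
\]
valued in differential forms on the Fulton--MacPherson compactification: for $\Gamma$ with $k$ external and $m$ internal vertices, $I(\Gamma)$ is the fiber integral of $\prod_{e \in E(\Gamma)} p_e^*(\omega_{S^{n-1}})$ along the forgetful projection $\overline{\text{Conf}}_{k+m}(\mathbb{R}^n) \to \overline{\text{Conf}}_k(\mathbb{R}^n)$, where $p_e$ is the normalized direction map along the edge $e$. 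The label/orientation conventions on $\Gamma$ are designed precisely to match the orientation of the integration fiber, hence $I$ descends to the orientation relations.

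I would then verify that $I$ is a morphism of dg Hopf cooperads whose induced map on cohomology agrees with the algebraic projection to $H^*(\text{Conf}_\bullet(\mathbb{R}^n))$. The cochain identity $dI(\Gamma) = I(d\Gamma)$ comes from Stokes' theorem: Kontsevich's vanishing lemmas show that the only non-trivially contributing boundary faces of $\overline{\text{Conf}}_{k+m}$ are the \emph{principal} ones where exactly two vertices (with at least one internal, matching admissibility) collide, and these reproduce the edge contractions of $d\Gamma$. Multiplicativity follows from Fubini, and compatibility with the cooperadic coproduct $^{\ast}\!\circ_i$ is checked by matching Fulton--MacPherson restrictions to subconfigurations. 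Composing $I$ with the projection $\Omega^*(\overline{\text{Conf}}_k) \to H^*(\text{Conf}_k(\mathbb{R}^n))$ yields a dg Hopf cooperad morphism whose restriction to graphs without internal vertices recovers the Arnold presentation of Theorem \ref{cohomologyofconfigurationspaces}; compatibility with the Arnold relation itself is automatic, since the differential of the ``Y''-shaped graph with one trivalent internal vertex is precisely the Arnold sum.

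To upgrade this to a quasi-isomorphism, it then suffices to show that every cohomology class of $\text{Graph}_n(k)$ is represented by an internal-vertex-free graph. I would run a spectral sequence filtering by the number of internal vertices: since $d$ strictly decreases this count (admissibility forbids external-external edges), a standard induction --- combined with the fact that the internal-vertex-free subcomplex already surjects onto $H^*(\text{Conf}_k)$ --- concludes the argument. The same argument treats $\overline{\text{Graph}}_n(\bullet)$ once we observe that loop edges map to zero by antipodal symmetry of $\omega_{S^{n-1}}$ and double edges by $\omega_{ij}^2=0$; the quasi-isomorphism $\text{Graph}_n(\bullet) \to \overline{\text{Graph}}_n(\bullet)$ then follows by two-out-of-three in the commuting triangle with $H^*(\text{Conf}_\bullet(\mathbb{R}^n))$. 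The main technical obstacle is Kontsevich's vanishing of the hidden faces --- collapses of $\geq 3$ vertices (handled by an involution swapping two colliding internal points, producing sign-cancelling pairs) and escapes to infinity (handled by degree counting with $\deg \omega_{S^{n-1}} = n-1$) --- which is the heart of the argument and is executed in detail in \cite[Section 8]{LV}.
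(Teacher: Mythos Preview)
The paper does not prove this theorem; it is quoted from \cite{LV} as a known input, so there is no in-paper argument to compare against. Your outline is a faithful sketch of the Kontsevich/Lambrechts--Voli\'c strategy: define the integration map by fiber-integrating the edge forms over the internal vertices, use Stokes plus Kontsevich's vanishing lemmas to get the cochain identity, and check Hopf-cooperad compatibility by Fubini and restriction to boundary strata.

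The weak point is your third paragraph. Saying ``it then suffices to show that every cohomology class is represented by an internal-vertex-free graph'' only gives surjectivity of the projection on cohomology; you also need injectivity, i.e.\ that no internal-vertex-free cycle becomes a boundary once internal vertices are allowed (equivalently, that the Arnold relations exhaust all relations). Your spectral-sequence sentence does not supply this: filtering by the number of internal vertices makes $d_0 = 0$ (the differential strictly lowers that count), so $E_1$ is the full graded space and all the content is pushed to higher differentials --- ``a standard induction'' is not a proof here. In \cite{LV} the logic actually runs the other way: the purely algebraic quasi-isomorphism $\overline{\text{Graph}}_n(k) \to H^*(\text{Conf}_k(\mathbb{R}^n))$ is established first by an explicit combinatorial argument, and the integration map is then seen to be a quasi-isomorphism by two-out-of-three, not the reverse. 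A minor terminological slip: ``admissibility forbids external--external edges'' is not a constraint on the graphs themselves but on which edges the differential contracts; your conclusion that $d$ strictly decreases the internal-vertex count is nonetheless correct.
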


When $n=1$, we define $\text{Graph}_1(\bullet) = \overline{\text{Graph}}_1(\bullet) = H^{\ast}(\text{Conf}_{\bullet}(\mathbb{R}^n))$. So its elements are ordered $k$ points on an oriented line.

\section{The plain graph complex and its operadic description}
\label{The plain graph complex and its operadic descriotipn}

In this section, we define the plain graph complex $PGC_{n,j}$, and recall the definition of the hairy graph complex $HGC_{n,j}$ and the Hochschild type complex $HH_{n,j}$ which are introduced in \cite {AT 2} under different notation. After that, we describe these graph complexes in terms of the graph cooperad $\text{Graph}_n(\bullet)$ of the previous section. The contents of subsection \ref{The plain graph complex $PGC_{n,j}$}, \ref{The Hochschild-type complex $HH_{n,j}$} are for the case $j \geq 2$. The case $j =1$ is mentioned in subsection \ref{Operadic descriptions of  $PGC_{n,j}$, $HH_{n,j}$ and $HGC_{n,j}$}. Subsection \ref{The hairy graph complex $HGC_{n,j}$} is applicable to the case $j=1$.

\subsection{The plain graph complex $PGC_{n,j}$}
\label{The plain graph complex $PGC_{n,j}$}

\begin{definition}[Plain graphs]
\label{plain graphs}
\textit{Plain graphs} have three types of vertices
(white \begin{tikzpicture}[baseline = -3pt]  \draw (0, 0) circle (0.05); \end{tikzpicture},
external black \begin{tikzpicture} [baseline = -3pt]  \draw (0, 0) circle (0.05) [fill={rgb, 255:red, 0; green, 0; blue, 0}, fill opacity =1.0]; \end{tikzpicture},
internal black \begin{tikzpicture}  [baseline = -2pt]   \draw (0, 0) rectangle (0.1, 0.1) [fill={rgb, 255:red, 0; green, 0; blue, 0}, fill opacity =1.0]; \end{tikzpicture})
and two types of edges
(dashed \begin{tikzpicture}[x=0.75pt,y=0.75pt,yscale=0.3,xscale=0.3, baseline=-3pt]  \draw [dash pattern={on 4pt off 3pt}, line width = 1pt]   (0,0)--(90,0); \end{tikzpicture}, 
solid \begin{tikzpicture} \draw [x=0.75pt,y=0.75pt,yscale=0.3,xscale=0.3, baseline=-3pt]  [line width = 1pt](0,0)--(90,0); \end{tikzpicture}). 
White vertices have at least three dashed edges and no solid edge, while internal black vertices have at least three solid edges and no dashed edge. External black vertices have an arbitrary number of solid and dashed edges. We assume that each component has at least one external black vertex. 
Double edges 
\begin{tikzpicture}[x=1pt,y=1pt,yscale=1,xscale=1, baseline=-3pt, line width =1pt]
\draw  (0,0) .. controls (20,5) and (30,5)  .. (50,0);
\draw (0,0) .. controls (20,-5) and (30,-5)  .. (50,0);
\end{tikzpicture},
\begin{tikzpicture}[x=1pt,y=1pt,yscale=1,xscale=1, baseline=-3pt, line width =1pt]
\draw  [dash pattern = on 3pt off 3 pt] (0,0) .. controls (20,5) and (30,5)  .. (50,0);
\draw  [dash pattern = on 3pt off 3 pt] (0,0) .. controls (20,-5) and (30,-5)  .. (50,0);
\end{tikzpicture}
and loop edges
\begin{tikzpicture}[x=1pt,y=1pt,yscale=0.8,xscale=0.8, baseline=5pt, line width = 1pt]
\draw (0,0) .. controls (-20,20) and (20,20)  .. (0,0);
\end{tikzpicture},
\begin{tikzpicture}[x=1pt,y=1pt,yscale=0.8,xscale=0.8, baseline=5pt, line width = 1pt]
\draw [dash pattern = on 3pt off 3 pt] (0,0) .. controls (-20,20) and (20,20)  .. (0,0);
\end{tikzpicture}
are allowed. 

A plain graph is \textit{admissible} if every external black vertex has at least one dashed edge. See Figure \ref{figofplaingraph}. Note that a black vertex of an admissible plain graph is external if and only if it has at least one dashed edge. 
\begin{figure}[htpb]
   \centering
    \includegraphics [width =7cm] {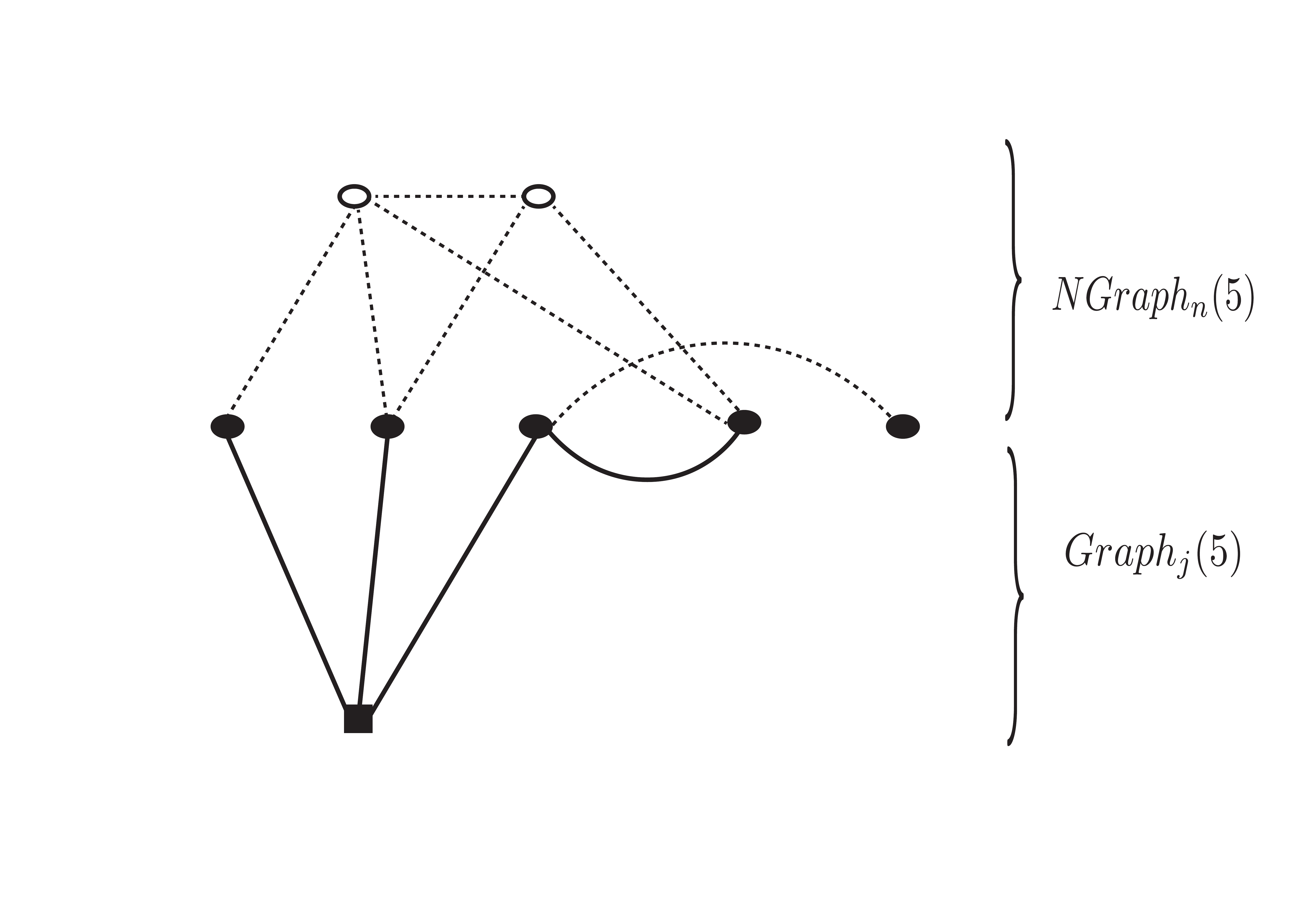}
    \caption{Example of an admissible plain graph}
    \label{figofplaingraph}
\end{figure}
\end{definition}

\begin{notation}
Write $E(\Gamma) (= E_{\theta}(\Gamma) \cup E_{\eta}(\Gamma))$, $E_{\theta}(\Gamma)$, $E_{\eta}(\Gamma)$ for the sets of edges, dashed edges and solid edges.
Write $V(\Gamma) (= W(\Gamma) \cup B(\Gamma))$, $W(\Gamma)$, $B(\Gamma) (= B_i(\Gamma) \cup B_e(\Gamma))$, $B_i(\Gamma)$ and $B_e(\Gamma)$ for the set of vertices, white vertices, black vertices, internal black vertices and external black vertices. 
\end{notation}

\begin{notation}
For a plain graph $\Gamma$, define the \textit{order} $k(\Gamma)$ by $|E_{\theta}(\Gamma)| - |W(\Gamma)|$. Write $g(\Gamma)$ for the first Betti number of $\Gamma$. If $\Gamma$ is connected, we have $g(\Gamma)-1 = |E(\Gamma)|-|V(\Gamma)|$. If $g(\Gamma) = g$, we say that $\Gamma$ is \textit{$g$-loop}. 

Arone and Turchin \cite{AT 1, AT 2} introduced \textit{Hodge degrees} $s$ and \textit{complexity} $t$ of graphs. The Hodge degree is defined by the number of solid components. The complexity is defined by the first Betti number of the graph obtained from an original graph by identifying the solid component with a single black (external) vertex. Hence we have
\begin{align*}
s(\Gamma) &= k(\Gamma) - g(\Gamma) + m(\Gamma) + g_{\eta}(\Gamma),\\
t (\Gamma) &= k(\Gamma),
\end{align*}
where $g_{\eta}(\Gamma)$ is the first Betti number of solid components and $m(\Gamma)$ is the number of components of $\Gamma$.
\end{notation}

\begin{definition}[Degrees]
The degrees of dashed edges, solid edges, white vertices and black vertices are $n-1$, $j-1$, $-n$, $-j$ respectively. 
The degree $|\Gamma| = deg(\Gamma)$ of a graph $\Gamma$ is defined by the sum of degrees
\[
|\Gamma| = (n-1) |E_{\theta}(\Gamma)| 
+ (j-1) |E_{\eta}(\Gamma)| 
-n |W(\Gamma)| 
-j |B(\Gamma)|.
\]
\end{definition}
 
 \begin{definition}[Labels and orientations]
 A label of a plain graph consists of a choice of an ordering of the set
 \[
 o(\Gamma) = E(\Gamma) \cup V(\Gamma)
 \]
 and a choice of orientations of edges. Each label gives an orientation of the underlying graph so that it depends only on parities of $n$ and $j$. More precisely, if we change the ordering of two elements of degree $d$ and $d^{\prime}$ of $o(\Gamma)$, the orientation of a graph is changed by $(-1)^{d d^{\prime}}$. If we change the orientation of an edge of degree $d$, The orientation is changed by $(-1)^{d+1}$.
 \end{definition}

\begin{definition} 
As a graded vector space, the space $PGC_{n,j}$ is defined by 
\[
PGC_{n,j}= \frac{\mathbb{Q} \{\text{connected labeled admissible plain graphs} \}}{\text{orientation relations, double edges, loop edges} }.
\]
Orientation relations identify two elements $\Gamma$ and $\Gamma^{\prime}$ (resp. $\Gamma$ and $-\Gamma^{\prime}$) if $\Gamma$ and $\Gamma^{\prime}$ are two labeled graphs with the same underlying graph and with the same (resp. opposite) orientation. The other relations cancel double edges and loop edges. 
\end{definition}

\begin{definition}[The differential of $PGC_{n,j}$]
\label{diffofPGC}
The differential $d_{PGC}$ of $PGC_{n,j}$ is defined by the sum of contractions of admissible edges. (See Figure  \ref{figofdifferential}.)
\[
d_{PGC} (\Gamma)  =  \sum_{\substack{e \in E^{a}(\Gamma)}} \sigma(e) \Gamma/e .
\]
Here, the set $E^{a}(\Gamma) \subset E(\Gamma)$ of admissible edges consists of edges except for double edges, loop edges,  \textit{chords} \dashededgeb{}{}{}: dashed edges which connect two (external) black vertices, 
and \textit{multiple edges}
\begin{tikzpicture}[x=1pt,y=1pt,yscale=1,xscale=1, baseline=-3pt, line width =1pt]
\draw  [fill={rgb, 255:red, 0; green, 0; blue, 0 }  ,fill opacity=1 ] (0,0) circle (2);
\draw  [fill={rgb, 255:red, 0; green, 0; blue, 0 }  ,fill opacity=1 ] (50,0) circle (2);
\draw  [dash pattern = on 3pt off 3 pt] (0,0) .. controls (20,5) and (30,5)  .. (50,0);
\draw  (0,0) .. controls (20,-5) and (30,-5)  .. (50,0);
\end{tikzpicture}:
pairs of a dashed edge and a solid edge which connect two (external) black vertices. The sign $\sigma(e)$, which depends only on parities of $n$ and $j$, is defined by
\[
\sigma(e) = (-1)^{|\Gamma| + 1}(-1)^{deg(V(\Gamma/e))}\Phi(e)\tau(e)\eta(e). 
\]
See Remark \ref{signexplanation} below and subsection \ref{Notation on orientations and signs} for the notation $\Phi(e)$, $\tau(e)$ and $\eta(e)$. We easily have that $d_{PGC}$ vanishes on relations of $PGC_{n,j}$ and that it preserves the order and the first bett number of graphs. \footnote{Alternatively, we can include double edges and multiple edges in $E^{a}(\Gamma)$, because contraction of these edges produce loop edges.}

\begin{figure}[htpb]
   \centering
    \includegraphics [width =9cm] {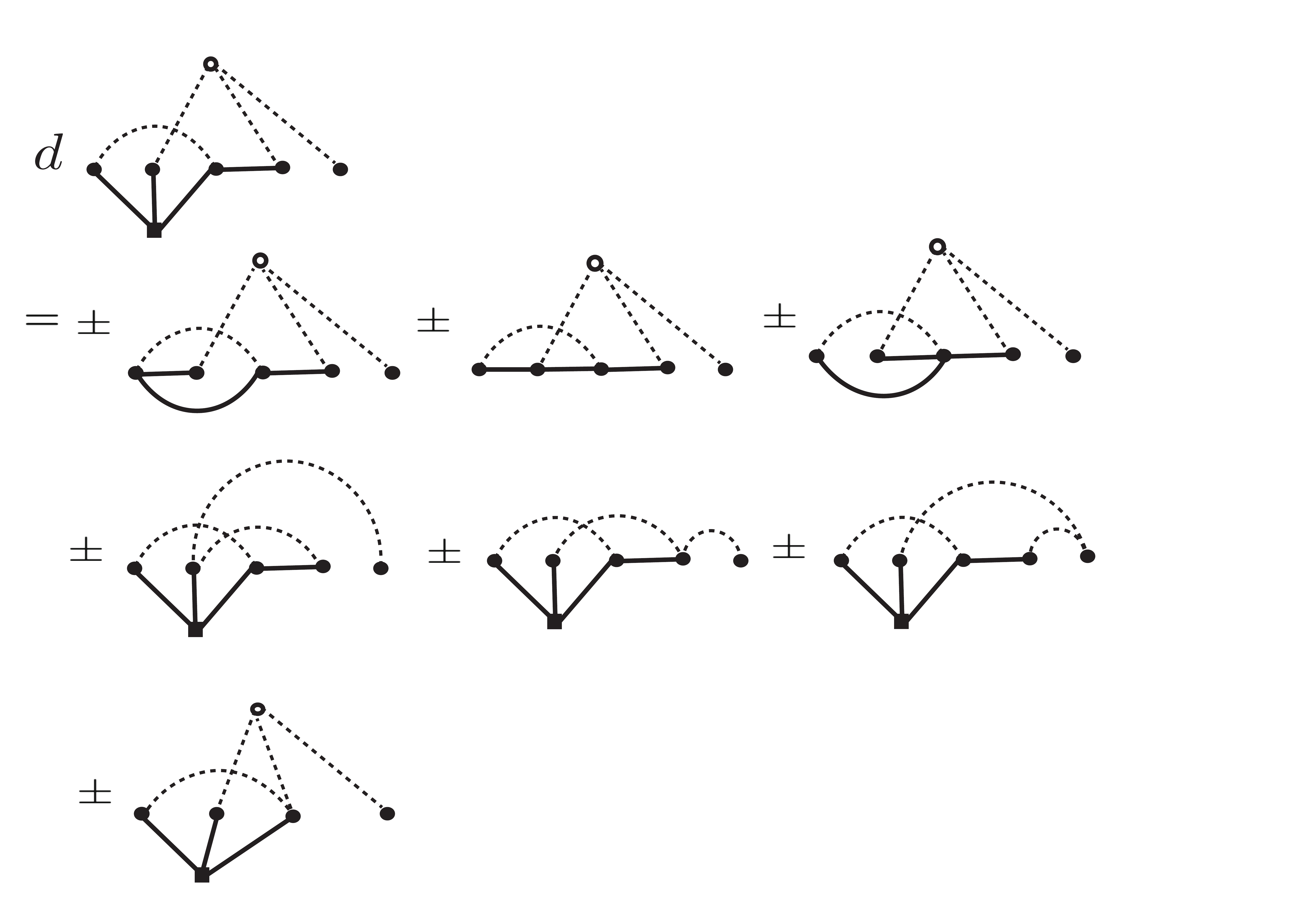}
     \caption{Example of the differential $d_{PGC}$}
      \label{figofdifferential}
\end{figure}
\end{definition}

\begin{rem}
\label{signexplanation}
Let $e = \{p, q\} (q>p)$ be an edge of a labeled plain graph $\Gamma$. Then, the sign $\sigma(e)$ is characterized as follows. Consider the ordered set
\[
\mu \sqcup o(\Gamma) = \mu \sqcup E(\Gamma)\sqcup \overline{V(\Gamma)}.
\]
Here, $\overline{V(\Gamma)}$ is the ordered set obtained by reversing the order of $V(\Gamma)$. Elements of $E(\Gamma)$ and $V(\Gamma)$ are graded. Each element of $E(\Gamma)$ is considered as an oriented edge. The variable $\mu$ at the head is a degree one element. Then,  $\sigma(e)$ is the sign which arises when permutating $\mu \sqcup  o(\Gamma)$ to
\[
E(\Gamma/e) \sqcup (\mu, (p, q), q) \sqcup \overline{V(\Gamma/e)}, 
\]
or equivalently, to
\[
E(\Gamma/e) \sqcup \overline{V(\Gamma/e)} \sqcup (\mu, (p, q), q) = (\mu, (p, q), q) \sqcup E(\Gamma/e) \sqcup \overline{V(\Gamma/e)}. 
\]
(Note that $(\mu, (p, q), q)$ has degree $0$). 
On the other hand, the sign convention in \cite{Sak} is taken so that $\sigma(e)$ is the sign which arises when permutating
\[
\mu \sqcup o^{\prime}(\Gamma) = \mu \sqcup V(\Gamma)\sqcup E(\Gamma).
\]
to
\[
V(\Gamma/e) \sqcup (\mu, q, (p, q)) \sqcup E(\Gamma/e).
\]
\end{rem}

\begin{lemma}
\label{proofofdifferential1}
$(PGC_{n,j}, d_{PGC})$ is a cochain complex: $d_{PGC}^2 = 0$.
\end{lemma}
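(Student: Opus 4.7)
The plan is to expand $d_{PGC}^2(\Gamma)$ as an iterated sum and pair off terms. Writing
\[
d_{PGC}^2(\Gamma) = \sum_{e_1 \in E^{a}(\Gamma)} \sum_{e_2 \in E^{a}(\Gamma/e_1)} \sigma_\Gamma(e_1)\,\sigma_{\Gamma/e_1}(e_2)\,(\Gamma/e_1)/e_2,
\]
I want to show that the contributions from the two orderings of each unordered pair $\{e_1, e_2\}$ cancel. Since contractions merge vertices without creating new edges, $(\Gamma/e_1)/e_2$ and $(\Gamma/e_2)/e_1$ share the same underlying graph, so the task reduces to verifying (a) that admissibility of the second contraction is symmetric in $(e_1, e_2)$, and (b) that the composite signs are opposite.

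For (a), I will run a short case analysis on the ways $e_1$'s contraction could make $e_2$ fail to lie in $E^{a}(\Gamma/e_1)$. If $e_2$ becomes a loop in $\Gamma/e_1$, then $e_1$ must be parallel to $e_2$ in $\Gamma$, but then $\{e_1, e_2\}$ is already a double or multiple edge and neither is admissible, a contradiction. If $e_2$ becomes part of a double or multiple edge with some $e_3 \in \Gamma$, then $\{e_1, e_2, e_3\}$ forms a triangle in $\Gamma$, and by the same triangle configuration $e_1$ becomes part of a double or multiple edge with $e_3$ in $\Gamma/e_2$. If $e_2$ becomes a chord in $\Gamma/e_1$, then since a dashed admissible edge has at least one non-external-black endpoint (i.e.\ a white vertex), the only configuration which turns both endpoints of $e_2$ external black after $e_1$'s contraction is one in which $e_1$ is itself a dashed edge from a white vertex to an external black vertex; a direct inspection shows that the symmetric configuration forces $e_1$ to become a chord in $\Gamma/e_2$ as well. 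Hence the "orphan" case never contributes.

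For (b), I will verify the sign identity $\sigma_\Gamma(e_1)\,\sigma_{\Gamma/e_1}(e_2) = -\sigma_\Gamma(e_2)\,\sigma_{\Gamma/e_2}(e_1)$ using the permutation characterization in Remark \ref{signexplanation}. Each contraction sign is the parity of a permutation that extracts a marker $\mu$, an edge, and a vertex to a fixed middle position of $\mu \sqcup o(\Gamma)$. Iterating, the composite sign for contracting $e_1$ then $e_2$ is the parity of a permutation on a list with two markers $\mu_1, \mu_2$, and the two orderings correspond to permutations differing by the transposition of the two blocks $(\mu_i, e_i, q_i)$. A straightforward degree count, using that each $\mu_i$ is of degree $1$ and the rest of each block contributes an order-independent degree shift, yields the desired factor of $-1$. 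Any term whose output contains a loop, double, or multiple edge vanishes automatically by the relations of $PGC_{n,j}$ and does not need to be paired.

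The main obstacle will be the sign computation in (b) when $e_1$ and $e_2$ share a vertex, because the vertex $q_2$ appearing in $\sigma_{\Gamma/e_1}(e_2)$ may well be the merged vertex $[p_1 q_1]$, which forces a careful convention on how the ordering of $V(\Gamma/e_i)$ is inherited from $V(\Gamma)$ before the permutation swap can be invoked. I expect this to reduce to the disjoint-edge argument once a consistent convention is fixed.
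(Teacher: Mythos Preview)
Your proposal is correct and follows the same route as the paper: expand the double sum, establish that the set of contractible ordered pairs is symmetric (the paper's Lemma~\ref{symmetry of P}, your part (a)), and verify the sign cancellation via the two-marker permutation argument of Remark~\ref{signexplanation} (the paper's Lemma~\ref{changing the order of the contraction}, your part (b)). One minor correction: graphs containing a multiple edge do \emph{not} vanish in $PGC_{n,j}$ (only loops and double edges do), but that remark is superfluous since your part (a) already pairs every surviving term.
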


\begin{definition}[The plain graph complex]
\label{The plain graph complex}
We call the complex $(PGC_{n,j}, d_{PGC})$ \textit{the plain graph complex}. This complex has a decomposition $PGC_{n,j} = \bigoplus_{k\geq 1} \bigoplus_{g \geq 0} PGC_{n,j} (k,g)$. This complex depends only on parities of $n$ and $j$ up to degree shifts. 
\end{definition}

\begin{proof}[proof of Lemma \ref{proofofdifferential1}]
Let $\Gamma$ be a labeled plain graph. Then
\begin{align*}
d^2 \Gamma = d\left( \sum_{e \in E^{a}(\Gamma)} \sigma(e, \Gamma) \Gamma/e \right)  = \sum_{e \in E^{a}(\Gamma)}  \sigma(e, \Gamma) d(\Gamma/e) 
\end{align*} 
Here, 
\begin{align*}
d(\Gamma/e) &=  \sum_{f \in E^{a}(\Gamma/e)} \sigma(f, \Gamma/e) ((\Gamma/e)/f) \\
& = \sum_{\substack{f \in E^{a}(\Gamma) \\ (e,f) \in P^{a}(\Gamma)}} \sigma(f, \Gamma/e) (\Gamma/(e \cup f)) \\
\end{align*}
Note that if $f \in E^{a}(\Gamma/e)$ then $f \in E^a(\Gamma)$.
The set $P^{a}(\Gamma)$ is the subset of $E^{a}(\Gamma) \times E^{a}(\Gamma)$ which consists of pairs of distinct edges $(e,f)$ such that $f \in E^{a}(\Gamma/e)$. As we see in Lemma \ref{symmetry of P}, this subset is symmetric. 
We have
\[
d^2 \Gamma  = \sum_{(e, f) \in P^{a}(\Gamma)} \sigma(e, \Gamma) \sigma(f, \Gamma/e) (\Gamma/(e \cup f)).
\]
The terms of $(e, f)$ and $(f,e)$ are canceled each other by Lemma \ref{changing the order of the contraction}. 
\end{proof}

\begin{lemma}
\label{symmetry of P}
If $(e,f) \in P^{a}(\Gamma)$, so is $(f,e) \in P^{a}(\Gamma)$.
\end{lemma}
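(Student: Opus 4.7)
The plan is to verify the symmetry of $P^a(\Gamma)$ by a case analysis on the number of vertices $e$ and $f$ share in $\Gamma$. By the symmetric form of the statement, it suffices to prove the contrapositive: assuming the standing hypotheses $e, f \in E^a(\Gamma)$ and $e \neq f$, if $e \notin E^a(\Gamma/f)$ then $f \notin E^a(\Gamma/e)$.

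If $e$ and $f$ share no vertex, then contracting one leaves the endpoints and incident vertex types of the other unchanged, so admissibility is preserved on both sides. If $e$ and $f$ share both endpoints, they are parallel; since $\Gamma$ has no double edges, $e$ and $f$ must have different types, and contracting $e$ would make $f$ a loop in $\Gamma/e$, contradicting $f \in E^a(\Gamma/e)$. So this case is vacuous.

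The substantive case is when $e = (p, q)$ and $f = (p, a)$ with $q \neq a$. I plan to enumerate the four possible ways $e$ can fail to lie in $E^a(\Gamma/f)$ (loop, double, chord, multiple edge) and show each produces a corresponding failure of $f$ in $\Gamma/e$. The loop mode is vacuous since $q \neq a$. The double mode requires a triangle-completing edge $g = (q,a)$ of the same type as $e$; in $\Gamma/e$ the edges $f$ and $g$ then become parallel, giving a double if their types match and a multiple edge otherwise, the latter using that $p$ and $a$ each carry edges of both types to conclude the relevant merged vertices are external black. The chord mode forces $p$ to be white (else $e$ would already be a chord in $\Gamma$), and then in $\Gamma/e$ the merged vertex $\{pq\}$ inherits externality from $q$, so $f$ becomes a chord between two external black vertices. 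The multiple-edge mode combines the double and chord arguments.

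The main difficulty will be this vertex-type bookkeeping in the one-shared-vertex case: one needs to track the white/external-black/internal-black labels of each vertex through both contractions and use the rule that an external label persists under contraction, so that the chord and multiple-edge conditions, which reference the external label of at least one endpoint, transfer consistently between $\Gamma/e$ and $\Gamma/f$.
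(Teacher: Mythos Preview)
Your proposal is correct and follows essentially the same route as the paper's own proof: both argue by contrapositive and split into the possible failure modes (loop, double/multiple, chord) of $e$ in $\Gamma/f$, showing each forces the corresponding failure of $f$ in $\Gamma/e$. Your version is simply more explicit about the shared-vertex case split and the vertex-type bookkeeping (e.g.\ why the merged vertex inherits the external-black label, and why the third vertex $a$ is external black in the chord case), which the paper leaves to the reader.
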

\begin{proof}
Asume $(e,f) \in P^{a}(\Gamma)$. Assume $e$ is not an admissible edge of $\Gamma/f$. Since  $e$ is an admissible edge of $\Gamma$, $e$ is not a loop edge of $\Gamma/f$. If $e$ is an edge of a double or multiple edge of $\Gamma/f$, then $f$ must be an edge of a double or multiple edge of $\Gamma/e$. If $e$ is a chord of $\Gamma/f$, then $f$ must be a chord of  $\Gamma/e$. In both cases, $f$ is not an admissible edge of $\Gamma/e$, which is a contradiction.
 \end{proof}

\begin{lemma}
\label{changing the order of the contraction}
Let $(e,f) \in P^{a}(\Gamma)$. Then $\sigma(e, \Gamma)\sigma(f, \Gamma/e) = - \sigma(f, \Gamma)\sigma(e, \Gamma/f)$.
\end{lemma}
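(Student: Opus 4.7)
The strategy is to apply the super-permutation characterization of $\sigma(e, \Gamma)$ given in Remark \ref{signexplanation} twice, introducing two independent degree-one formal auxiliary variables $\mu_1$ and $\mu_2$. First I would interpret $\sigma(e, \Gamma)\sigma(f, \Gamma/e)$ as follows: place $\mu_1$ at the head and apply the characterization to extract the block $(\mu_1, e, q_e)$; since this block has degree zero, a second auxiliary $\mu_2$ placed outside it can be commuted past it for free, after which one applies the characterization to $\mu_2 \sqcup o(\Gamma/e)$ to extract the block $(\mu_2, f, q_f^{\prime})$. The upshot is that $\sigma(e, \Gamma)\sigma(f, \Gamma/e)$ equals the sign of a super-permutation from $\mu_2 \mu_1 \sqcup o(\Gamma)$ to $(\mu_1, e, q_e)\sqcup (\mu_2, f, q_f^{\prime})\sqcup o(\Gamma/\{e,f\})$.

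Symmetrically, swapping the roles of $e$ and $f$ yields $\sigma(f, \Gamma)\sigma(e, \Gamma/f)$ as the sign of the super-permutation from $\mu_1 \mu_2 \sqcup o(\Gamma)$ to $(\mu_2, f, q_f)\sqcup (\mu_1, e, q_e^{\prime\prime})\sqcup o(\Gamma/\{e,f\})$. Two things are now in our favor. On the source side, $\mu_2\mu_1 = -\mu_1\mu_2$, which produces the desired sign $-1$. On the target side, the quotient $\Gamma/\{e,f\}$ is intrinsic, so the trailing factor $o(\Gamma/\{e,f\})$ is the same in both expressions. In particular, the set of discarded vertices $\{q_e, q_f^{\prime}\}$ coincides with $\{q_f, q_e^{\prime\prime}\}$, since these are precisely the two vertices identified away in $\Gamma/\{e,f\}$ regardless of contraction order.

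The remaining step is to show that the two target orderings agree as elements of the super-algebra, i.e., that the rearrangement of the two leading blocks contributes no sign beyond the $\mu_1 \leftrightarrow \mu_2$ swap. The central observation is that both $(\mu_i, -, -)$ blocks have total degree zero by Remark \ref{signexplanation}, so they can be commuted past one another and past any other element without incurring a sign. What remains is a careful case analysis on the intersection pattern of $e$ and $f$: either they are disjoint, share one endpoint, or share the contracted vertex. In each case one verifies that the internal reshuffling of vertex labels within the two blocks (tracking how $q_e, q_e^{\prime\prime}, q_f, q_f^{\prime}$ distribute among the two leading slots) is sign-neutral, using the degree-zero constraint on each block together with the compatibility of the two orderings on $V(\Gamma/\{e,f\})$ induced from $V(\Gamma)$.

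The main obstacle will be the case analysis in step three: when $e$ and $f$ share a vertex, the specific labels $q_f^\prime$ and $q_e^{\prime\prime}$ may differ from $q_f$ and $q_e$, and verifying that the resulting rearrangement of the two degree-zero blocks is trivial requires careful super-permutation bookkeeping. Once this is settled in every configuration, combining the source sign $-1$ with the trivial target sign gives $\sigma(e, \Gamma)\sigma(f, \Gamma/e) = -\sigma(f, \Gamma)\sigma(e, \Gamma/f)$, as claimed.
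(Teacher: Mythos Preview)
Your approach is essentially the paper's own proof: introduce two degree-one auxiliaries, read $\sigma(e,\Gamma)\sigma(f,\Gamma/e)$ and $\sigma(f,\Gamma)\sigma(e,\Gamma/f)$ as Koszul signs of super-permutations with a common target $o(\Gamma/\{e,f\})$ flanked by two degree-zero blocks, and observe that the only discrepancy is the swap of the two auxiliaries on the source side. The paper writes this out in two sentences, simply asserting that both products are the signs of permutations from $\nu\mu\, o(\Gamma)$ (resp.\ $\mu\nu\, o(\Gamma)$) to the same target, so that the sign $-1$ comes from $\nu\mu = -\mu\nu$.

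The one place where you are more careful than the paper is your third step: when $e$ and $f$ share their larger-labeled vertex, the removed vertices $q_f'$ and $q_e''$ need not coincide with $q_f$ and $q_e$, so the two degree-zero blocks are not literally the same pair in the two targets. The paper's notation suppresses this (it writes $(\nu,(r,s),s)$ and $(\mu,(p,q),q)$ as if $q,s$ were fixed), and your instinct that this deserves a short verification is sound. Under the paper's labeling convention (black vertices precede white ones), admissible edges always have their larger endpoint of the type matching the edge, so each block genuinely has degree zero; using this, the internal reshuffle in the shared-vertex case is indeed sign-neutral. So your anticipated case analysis is real but short, and once done, your argument and the paper's coincide.
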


\begin{proof}
We use the description of $\sigma(e)$ in Remark \ref{signexplanation}. Let $e = (p,q)$ $(q>p)$ and $f = (r,s)$ $(s>r)$.
The sign $\sigma(e, \Gamma)\sigma(f, \Gamma/e)$  (resp. $\sigma(f, \Gamma)\sigma(e, \Gamma/f)$) corresponds to the sign obtained by permutating the ordered set
\[
\nu \sqcup  \mu \sqcup E(\Gamma) \sqcup \overline{V(\Gamma)} \quad (\text{resp. $\mu \sqcup  \nu \sqcup E(\Gamma) \sqcup \overline{V(\Gamma)}$})
\]
to 
\begin{align*}
&E(\Gamma/ e \cup f) \sqcup \overline{V(\Gamma/ e \cup f)}  \sqcup (\nu, (r, s), s)) \sqcup (\mu, (p, q), q). \\
&\text{(resp. $E(\Gamma/  e \cup f) \sqcup \overline{V(\Gamma/  e \cup f)}  \sqcup (\mu, (p, q), q) \sqcup (\nu, (r, s), s))$}. \\
\end{align*}
Hence, the difference of $\sigma(e, \Gamma)\sigma(f, \Gamma/e)$ and $\sigma(f, \Gamma)\sigma(e, \Gamma/f)$ arises from the difference of $\nu \sqcup  \mu$ and  $\mu \sqcup  \nu$, which causes the negative sign. 
\end{proof}

We define another complex which is useful when comparing the plain graph complex with the hairy graph complex. 
\begin{definition}
Define the complex $PGC^{\prime}_{n,j}$ by 
\[
PGC^{\prime}_{n,j}= \frac{\mathbb{Q} \{\text{connected labeled admissible plain graphs} \}}{\text{orientation relations} }.
\]
That is, we allow double edges and loop edges. Note that when $n$ (resp. $j$) is odd, loop dashed edges (resp. loop solid edges) are canceled by orientation relations. On the other hand, when $n$ (resp. $j$) is even, double dashed edges (resp. double solid edges) are canceled by orientation relations.
\end{definition}

\begin{definition}
The differential of $PGC^{\prime}_{n,j}$ is defined in a similar way as $PGC_{n,j}$ by contractions of edges. We do not perform contractions on chords or loop edges. However, we do perform contractions on (solid edges of) multiple edges and on any edges of double edges if they are not chords. Contractions of multiple edges or double edges produce loop edges. \footnote{Note that a loop edge at an external black vertex of a plain graph corresponds to a double loop of a BCR graph in \cite{Sak, Yos 1}.}
\end{definition}

\begin{lemma}
\label{proofofdifferential2}
$(PGC^{\prime}_{n,j}, d_{PGC^{\prime}})$ is a cochain complex: $d_{PGC^{\prime}}^2 = 0$.
\end{lemma}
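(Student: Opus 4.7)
The plan is to follow the same strategy as Lemma \ref{proofofdifferential1}. Expanding, we have
\[
d_{PGC^{\prime}}^2 \Gamma = \sum_{(e,f) \in P^{a}(\Gamma)} \sigma(e,\Gamma)\,\sigma(f,\Gamma/e)\,\Gamma/(e \cup f),
\]
where $P^{a}(\Gamma) \subset E^{a}(\Gamma) \times E^{a}(\Gamma)$ consists of ordered pairs of distinct edges $(e,f)$ such that $e$ is admissible in $\Gamma$ and $f$ is admissible in $\Gamma/e$. Here, the admissible edges of $PGC^{\prime}_{n,j}$ are precisely those that are neither chords nor loop edges. The task reduces to proving the two analogues of Lemmas \ref{symmetry of P} and \ref{changing the order of the contraction} in this new setting, after which pairing $(e,f) \leftrightarrow (f,e)$ cancels every term of $d_{PGC^{\prime}}^2 \Gamma$.

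The sign identity $\sigma(e,\Gamma)\sigma(f,\Gamma/e) = -\sigma(f,\Gamma)\sigma(e,\Gamma/f)$ is purely formal: it depends only on the description of $\sigma(e)$ in Remark \ref{signexplanation} and on the anticommutativity of the two degree-one placeholders $\mu$ and $\nu$. Thus the proof of Lemma \ref{changing the order of the contraction} carries over verbatim, since it never invokes the specific exclusion list of inadmissible edges.

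The more substantial step is to verify the symmetry of $P^{a}(\Gamma)$ under $(e,f) \mapsto (f,e)$. The new subtlety compared with $PGC_{n,j}$ is that, in $PGC^{\prime}_{n,j}$, the solid edges of multiple edges and the edges of (non-chord) double edges are themselves admissible, so contractions can produce loop edges and double edges. Given $(e,f) \in P^{a}(\Gamma)$, I would verify four points: (i) $f$ is not a loop in $\Gamma$, since a loop persists under contraction of a distinct edge; (ii) $f$ is not a chord in $\Gamma$, since contracting an admissible edge cannot demote the external-black type of an endpoint of a chord; (iii) $e$ is not a loop in $\Gamma/f$, since this would force $\{e,f\}$ to form a double or multiple edge, and then $f$ would symmetrically be a loop in $\Gamma/e$, contradicting $(e,f) \in P^{a}(\Gamma)$; and (iv) $e$ is not a chord in $\Gamma/f$, by the symmetric vertex-type analysis used in (ii).

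The main obstacle is (iv) and the associated case analysis of how vertex types combine under contraction --- in particular, verifying that contracting an admissible dashed edge between a white vertex and an external black vertex behaves symmetrically, so that the predicate ``contracting $e$ makes $f$ a chord'' is symmetric in $e$ and $f$. Once these two analogues are established, the same pairing argument as in the proof of Lemma \ref{proofofdifferential1} yields $d_{PGC^{\prime}}^2 = 0$.
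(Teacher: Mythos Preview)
Your proposal is correct and follows essentially the same approach as the paper: reduce to the sign identity of Lemma~\ref{changing the order of the contraction} (which carries over unchanged) and to the symmetry of $P^a(\Gamma)$ with the new, shorter list of inadmissible edges (loops and chords only). The paper's proof is terser than yours---it simply asserts that if $e$ is a loop in $\Gamma/f$ then $f$ is a loop in $\Gamma/e$, and if $e$ is a chord in $\Gamma/f$ then $f$ is a chord in $\Gamma/e$---but your more explicit vertex-type analysis for (iv) is exactly what justifies that assertion, and your identification of the key case (a dashed edge between a white vertex and an external black vertex) is the only one that matters. Note also that once you define $P^a(\Gamma) \subset E^a(\Gamma)\times E^a(\Gamma)$, your points (i) and (ii) are automatic from the hypothesis $(e,f)\in P^a(\Gamma)$; only (iii) and (iv) require work.
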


\begin{proof}
The proof is similar to Lemma \ref{proofofdifferential1}. It is enough to show that Lemma \ref{symmetry of P} holds when $E^a(\Gamma)$ is replaced with the set of edges except for chords and loop edges. Asume $(e,f) \in P^{a}(\Gamma)$. Assume $e$ is not an admissible edge of $\Gamma/f$. If $e$ is a loop edge of $\Gamma/f$, then $f$ must be a loop edge of  $\Gamma/e$.  If $e$ is a chord of $\Gamma/f$, then $f$ must be a chord of  $\Gamma/e$. 
\end{proof}

Again, the complex $PGC^{\prime}_{n,j}$ only depends on parities of $n$ and $j$ up to degree shifts. It has a decomposition with respect to the order $k$ and the first Betti number $g$.

There is a cochain map $PGC^{\prime}_{n,j} \rightarrow PGC_{n,j}$ defined by the projection. We show in Theorem \ref{relationwithAroneTurchincomplex} that this map is quasi-isomorphic.

\subsection{Arone--Turchin's Hochschild-type complex $HH_{n,j}$}
\label{The Hochschild-type complex $HH_{n,j}$}

The complex $HH_{n,j}$ is defined by Arone and Turchin in \cite{AT 2}.

\begin{definition} [Chord graphs]
A plain graph is a \textit{chord graph} if it has neither white vertices nor internal black vertices. A chord graph is \textit{admissible} if it is admissible as a plain graph. 
\end{definition}

Labels and orientations of chord graphs are defined by labels and orientations of plain graphs. 

\begin{definition} [Arnold relations of chord graphs]
An \textit{Arnold relation} of labeled chord graphs is one of the relations in the following.
\begin{itemize}

\item 

\begin{tikzpicture}[x=0.75pt,y=0.75pt,yscale=1,xscale=1, baseline=20pt, line width = 1pt] %solid version
%  edge
\draw [-Stealth]  (0,0)--(95,0) ;
\draw(50,0) node [anchor = north] {$(a)$} ;
\draw [-Stealth]  (100,0)--(50,48) ;
\draw (75,30) node [anchor = west]  {$(b)$} ;

 ;
% black vertex
\draw [fill={rgb, 255:red, 0; green, 0; blue, 0 }  ,fill opacity=1 ] (0,0) circle(4) node [anchor = south] {$i$}; 
\draw [fill={rgb, 255:red, 0; green, 0; blue, 0 }  ,fill opacity=1 ] (100,0) circle(4) node [anchor = south] {$j$}; ; 
\draw [fill={rgb, 255:red, 0; green, 0; blue, 0 }  ,fill opacity=1 ] (50,50) circle(4) node [anchor = south] {$k$} ; 
\end{tikzpicture}
+
\begin{tikzpicture}[x=0.75pt,y=0.75pt,yscale=1,xscale=1, baseline=20pt, line width = 1pt] 
\begin{scope}[xshift=100]
%  edge
\draw [-Stealth]  (95,0)--(50,50);
\draw (75,30) node [anchor = west]  {$(a)$} ;
\draw [-Stealth]  (50,50)--(5,0);
\draw (5,30) node [anchor = west]  {$(b)$} ;

% black vertex
\draw [fill={rgb, 255:red, 0; green, 0; blue, 0 }  ,fill opacity=1 ] (0,0) circle(4) (0,0) circle(5) node [anchor = south] {$i$}; ; 
\draw [fill={rgb, 255:red, 0; green, 0; blue, 0 }  ,fill opacity=1 ] (100,0) circle(4) node [anchor = south] {$j$} ; 
\draw [fill={rgb, 255:red, 0; green, 0; blue, 0 }  ,fill opacity=1 ] (50,50) circle(4) node [anchor = south] {$k$} ; 
\end{scope}
\end{tikzpicture}
+
\begin{tikzpicture}[x=0.75pt,y=0.75pt,yscale=1,xscale=1, baseline=20pt, line width = 1pt] 
\begin{scope}[xshift=200]
%  edge
\draw [-Stealth]  (0,0)--(95,0);
\draw(50,0) node [anchor = north] {$(b)$} ;
\draw [-Stealth]  (50,50)--(5,0);
\draw (5,30) node [anchor = west]  {$(a)$} ;
% black vertex
\draw [fill={rgb, 255:red, 0; green, 0; blue, 0 }  ,fill opacity=1 ] (0,0) circle(4) node [anchor = south] {$i$} ; 
\draw [fill={rgb, 255:red, 0; green, 0; blue, 0 }  ,fill opacity=1 ] (100,0) circle(4) node [anchor = south] {$j$} ; 
\draw [fill={rgb, 255:red, 0; green, 0; blue, 0 }  ,fill opacity=1 ] (50,50) circle(4) node [anchor = south] {$k$} ; 
\end{scope}
\end{tikzpicture}
= 0.

\item 
\begin{tikzpicture}[x=0.75pt,y=0.75pt,yscale=1,xscale=1, baseline=20pt, line width = 1pt] % dashed version
%  edge
\draw [-Stealth, dash pattern = on 3 pt off 3 pt]  (0,0)--(95,0) ;
\draw(50,0) node [anchor = north] {$(a)$} ;
\draw [-Stealth, dash pattern = on 3 pt off 3 pt]  (100,0)--(50,48) ;
\draw (75,30) node [anchor = west]  {$(b)$} ;

 ;
% black vertex
\draw [fill={rgb, 255:red, 0; green, 0; blue, 0 }  ,fill opacity=1 ] (0,0) circle(4) node [anchor = south] {$i$}; 
\draw [fill={rgb, 255:red, 0; green, 0; blue, 0 }  ,fill opacity=1 ] (100,0) circle(4) node [anchor = south] {$j$}; ; 
\draw [fill={rgb, 255:red, 0; green, 0; blue, 0 }  ,fill opacity=1 ] (50,50) circle(4) node [anchor = south] {$k$} ; 
\end{tikzpicture}
+
\begin{tikzpicture}[x=0.75pt,y=0.75pt,yscale=1,xscale=1, baseline=20pt, line width = 1pt] 
\begin{scope}[xshift=100]
%  edge
\draw [-Stealth, dash pattern = on 3 pt off 3 pt]  (95,0)--(50,50);
\draw (75,30) node [anchor = west]  {$(a)$} ;
\draw [-Stealth, dash pattern = on 3 pt off 3 pt]  (50,50)--(5,0);
\draw (5,30) node [anchor = west]  {$(b)$} ;

% black vertex
\draw [fill={rgb, 255:red, 0; green, 0; blue, 0 }  ,fill opacity=1 ] (0,0) circle(4) (0,0) circle(5) node [anchor = south] {$i$}; ; 
\draw [fill={rgb, 255:red, 0; green, 0; blue, 0 }  ,fill opacity=1 ] (100,0) circle(4) node [anchor = south] {$j$} ; 
\draw [fill={rgb, 255:red, 0; green, 0; blue, 0 }  ,fill opacity=1 ] (50,50) circle(4) node [anchor = south] {$k$} ; 
\end{scope}
\end{tikzpicture}
+
\begin{tikzpicture}[x=0.75pt,y=0.75pt,yscale=1,xscale=1, baseline=20pt, line width = 1pt] 
\begin{scope}[xshift=200]
%  edge
\draw [-Stealth, dash pattern = on 3 pt off 3 pt]  (0,0)--(95,0);
\draw(50,0) node [anchor = north] {$(b)$} ;
\draw [-Stealth, dash pattern = on 3 pt off 3 pt]  (50,50)--(5,0);
\draw (5,30) node [anchor = west]  {$(a)$} ;
% black vertex
\draw [fill={rgb, 255:red, 0; green, 0; blue, 0 }  ,fill opacity=1 ] (0,0) circle(4) node [anchor = south] {$i$} ; 
\draw [fill={rgb, 255:red, 0; green, 0; blue, 0 }  ,fill opacity=1 ] (100,0) circle(4) node [anchor = south] {$j$} ; 
\draw [fill={rgb, 255:red, 0; green, 0; blue, 0 }  ,fill opacity=1 ] (50,50) circle(4) node [anchor = south] {$k$} ; 
\end{scope}
\end{tikzpicture}
= 0.

\end{itemize}

Here, the three graphs in each relation are labeled chord graphs and they coincide outside the drawn parts. 
\end{definition}

\begin{definition}
As a graded vector space, $HH_{n,j}$ is defined by 
\[
HH_{n,j}= \frac{\mathbb{Q} \{\text{connected labeled admissible chord graphs} \}}{\text{orientation relations, double edges, loop edges, Arnold relations} }.
\]
\end{definition}

\begin{definition}[The Hochschild-type complex]
Let $d_{HH}$ be the differential of $HH_{n,j}$ induced by $d_{PGC}$. We call the complex $(HGC_{n,j}, d_{HGC})$ the \textit{Hochshild-type complex}. 
\end{definition}

The reason why we call this complex ``Hochshild-type'' is explained in Remark \ref{whyHochschildcomplex}.

\subsection{The hairy graph complex $HGC_{n,j}$}
\label{The hairy graph complex $HGC_{n,j}$}

Refer to \cite {AT 2} for the dual of this complex.

\begin{definition} [Hairy graphs]
An admissible plain graph is a \textit{hairy graph}  if it has neither internal black vertices nor solid edges and if its external vertices have exactly one dashed edge
\begin{tikzpicture} [baseline = 0pt] 
 \draw [fill={rgb, 255:red, 0; green, 0; blue, 0}, fill opacity =1.0] (0, 0) circle (0.05); 
 \draw [dash pattern = on 3pt off 3 pt, line width = 1pt]  (0,0) -- (0,0.5);
 \end{tikzpicture}, called a \textit{hair}.  See Figure \ref{figofhairygraph}. Note that double edges and loop edges are allowed. (Both ends of these edges must be white.) 

\begin{figure}[htpb]
   \centering
    \includegraphics [width =7cm] {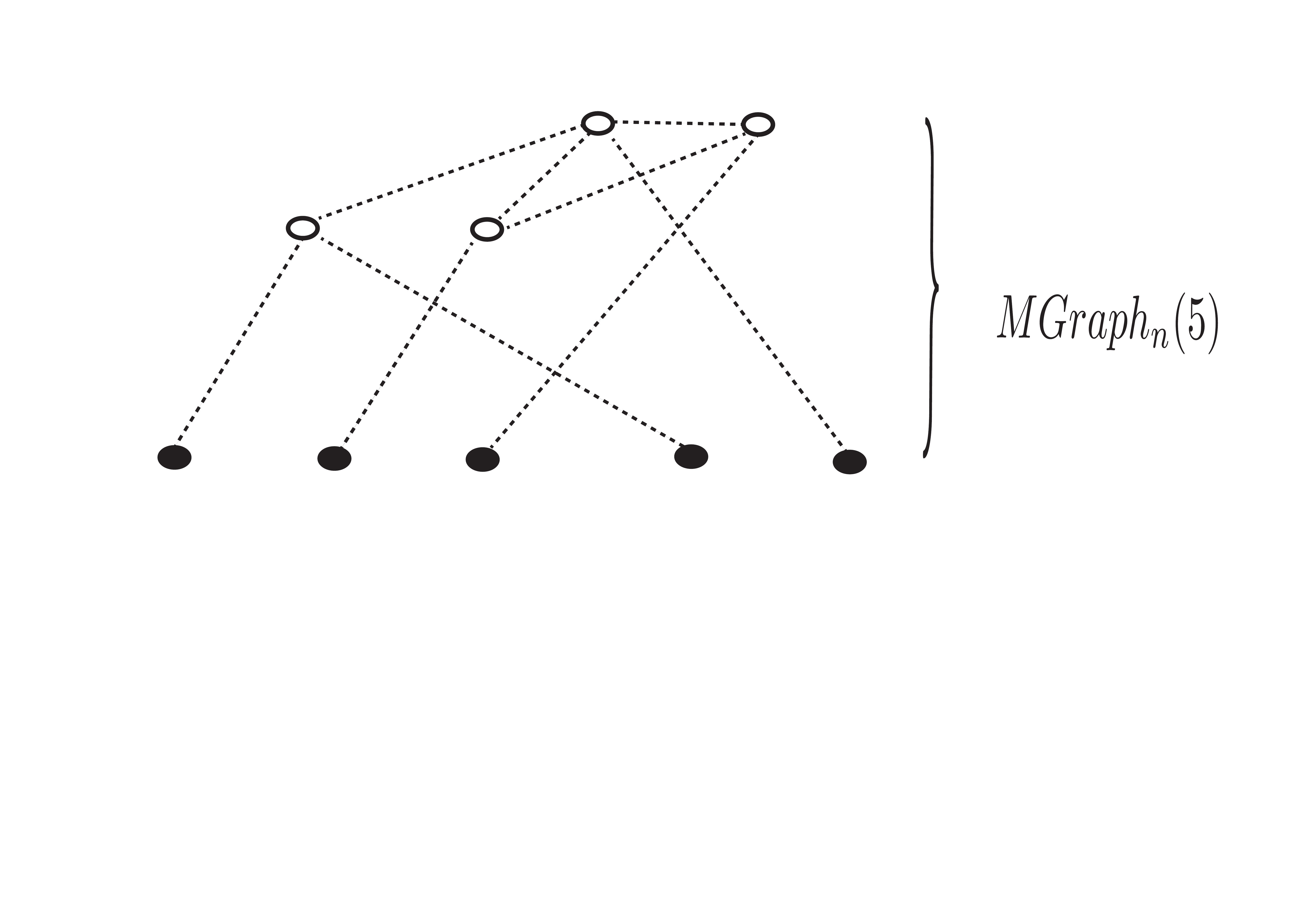}
    \caption{Example of a hairy graph}
    \label{figofhairygraph}
\end{figure}

\end{definition}

\begin{definition} 
As a graded vector space, 
\[
HGC_{n,j}= \frac{\mathbb{Q} \{\text{connected labeled hairy graphs} \}}{\text{orientation relations} }.
\]
\end{definition}

\begin{definition} [The differential of $HGC_{n,j}$]
The differential $d_{HGC}$ of $HGC_{n,j}$ is defined by the sum of contractions of edges except for loop edges and hairs. See Figure \ref{figofdifferential2}.

\begin{figure}[htpb]
   \centering
    \includegraphics [width =8cm] {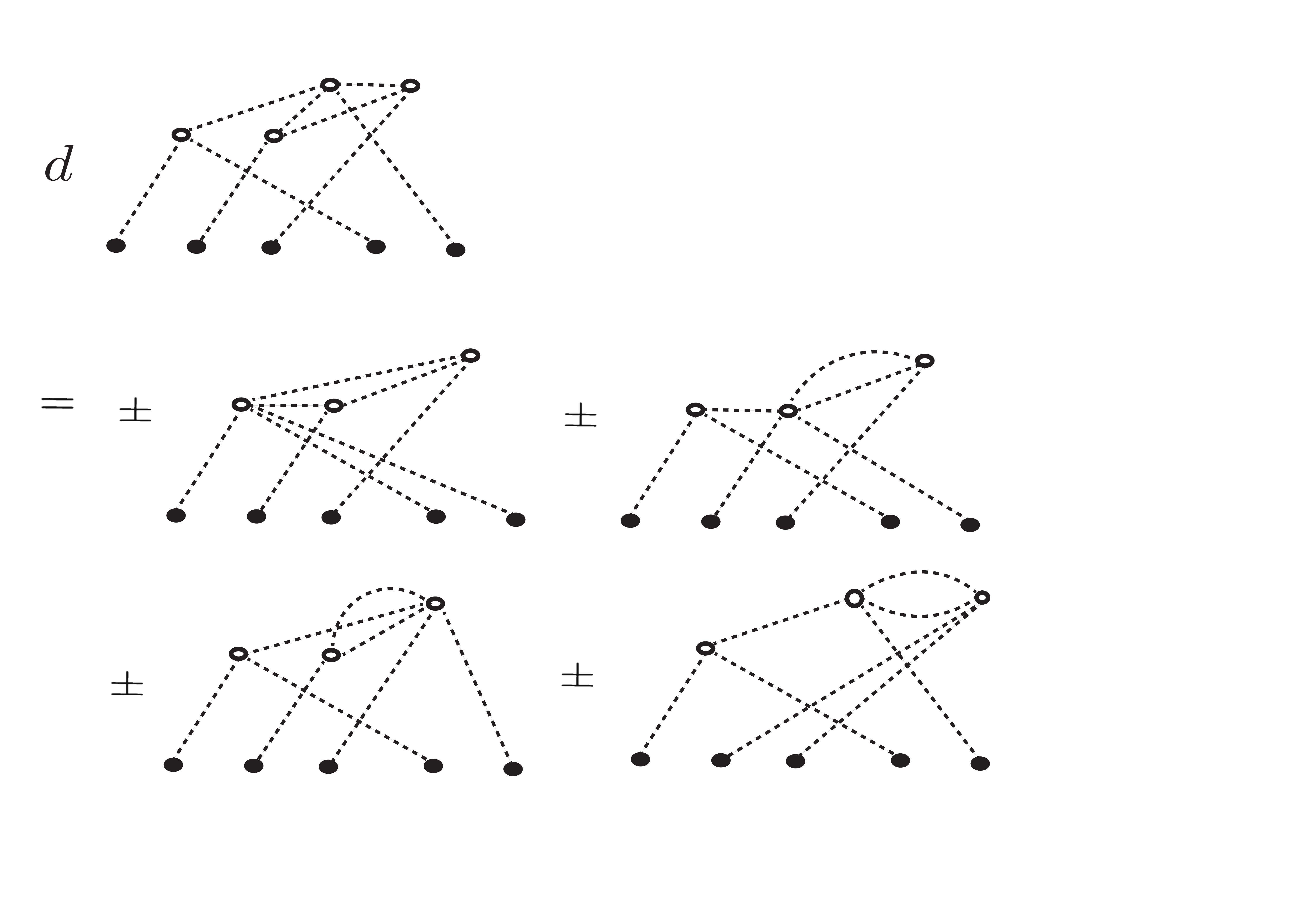}
     \caption{Example of the differential $d_{HGC}$}
      \label{figofdifferential2}
\end{figure}
\end{definition}

\begin{lemma}
\label{proofofdifferential3}
$(HGC_{n,j}, d_{HGC})$ is a cochain complex: $d_{HGC}^2 = 0$.
\end{lemma}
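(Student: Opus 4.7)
The plan is to mirror the argument for Lemma \ref{proofofdifferential1} (and \ref{proofofdifferential2}) almost verbatim, since the hairy graph complex is structurally just a subquotient of the plain graph complex world in which the only contractible edges live between white vertices and the sign conventions are inherited from Definition \ref{diffofPGC}. First I would expand
\[
d_{HGC}^2 \Gamma
= \sum_{e \in E^a(\Gamma)}\sigma(e,\Gamma)\, d_{HGC}(\Gamma/e)
= \sum_{(e,f)\in P^a(\Gamma)} \sigma(e,\Gamma)\,\sigma(f,\Gamma/e)\,\Gamma/(e\cup f),
\]
where $E^a(\Gamma)$ denotes the set of non-loop, non-hair edges and $P^a(\Gamma)\subset E^a(\Gamma)\times E^a(\Gamma)$ consists of pairs of distinct edges $(e,f)$ with $f\in E^a(\Gamma/e)$.

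Second, I would verify the analogue of Lemma \ref{symmetry of P}: the set $P^a(\Gamma)$ is symmetric. This is the only genuinely new check, but it is easy because in a hairy graph every non-hair edge runs between two white vertices. Contracting such an edge $e$ merges two white vertices of valence $\geq 3$ into a single white vertex of valence $\geq 4$, and cannot affect any external black vertex or any hair; in particular, it cannot turn another non-hair edge $f$ into a hair. The only way $e\in E^a(\Gamma)$ can fail to lie in $E^a(\Gamma/f)$ is therefore if $f$ becomes a loop edge after contracting $e$, which happens precisely when $e$ and $f$ share both endpoints (i.e.\ form a double edge). That condition is manifestly symmetric in $e$ and $f$, which gives the symmetry of $P^a(\Gamma)$.

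Third, I would establish the sign-cancellation
\[
\sigma(e,\Gamma)\,\sigma(f,\Gamma/e)=-\sigma(f,\Gamma)\,\sigma(e,\Gamma/f),
\]
exactly as in Lemma \ref{changing the order of the contraction}. Since the differential on $HGC_{n,j}$ uses the same sign rule $\sigma(e)=(-1)^{|\Gamma|+1}(-1)^{\deg(V(\Gamma/e))}\Phi(e)\tau(e)\eta(e)$ inherited from $PGC_{n,j}$, the very same bookkeeping with the auxiliary degree one variables $\mu$ and $\nu$ applies: the two orderings $\nu\sqcup\mu\sqcup o(\Gamma)$ and $\mu\sqcup\nu\sqcup o(\Gamma)$ needed to rewrite the two composite signs differ only by the transposition of $\mu$ and $\nu$, which contributes the desired extra minus sign.

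Combining the three steps, the terms indexed by $(e,f)$ and $(f,e)$ cancel in pairs inside $P^a(\Gamma)$, so $d_{HGC}^2\Gamma=0$. The only part of the argument that is not purely formal is the symmetry check in the second step, and the main obstacle there, namely that an edge contraction might convert a contractible edge into a hair, is ruled out by the combinatorial observation that all contractible edges in a hairy graph are white-to-white dashed edges and hence their contractions do not interact with the external black vertices carrying the hairs.
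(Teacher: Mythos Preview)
Your proposal is correct and follows essentially the same approach as the paper's proof, which simply says the argument is analogous to Lemmas \ref{proofofdifferential1} and \ref{proofofdifferential2} and records the single new observation that if $e$ is a hair of $\Gamma/f$ then $e$ was already a hair of $\Gamma$. Your second step is exactly the contrapositive of this, spelled out via the remark that all non-hair edges in a hairy graph are white-to-white.
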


\begin{proof}
The proof is similar to Lemma \ref{proofofdifferential1} and \ref{proofofdifferential2}. Note that if $e$ is a hair of $\Gamma/f$, $e$ is a hair of $\Gamma$. 
\end{proof}

\begin{definition}[The hairy graph complex]
We call the complex $(HGC_{n,j}, d_{HGC})$ the \textit{hairy graph complex}. It only depends on parities of $n$ and $j$ up to degree shifts. It has a decomposition with respect to the order $k$ and the first Betti number $g$.
\end{definition}

There is a cochain map $PGC^{\prime}_{n,j} \rightarrow HGC_{n,j}$ defined by the projection. 
\subsection{Notation on orientations and signs}
\label{Notation on orientations and signs}

Recall that a labeled plain graph has white vertices of degree $(-n)$, (external/internal) black vertices of degree $(-j)$, dashed edges of degree $(n-1)$ and solid edges of degree $(j-1)$.
We introduce some notation on orientations and signs that are used in the definition of the differential of graph complexes; Definition \ref{diffofPGC}, for example. This notation is analogous to that in \cite{LV}.

Without loss of generality, we assume that the ordering of $E(\Gamma)$ is chosen so that solid edges are the first. Similary we asssume that the ordering of $V(\Gamma)$ is chosen so that black vertices are the first.

\begin{notation}
Let $\Gamma$ be a labeled plain graph. Let $e$ be an edge of a graph. Define the sign $\tau(e, \Gamma) = \tau(e)$ by the sign which arises when $e$ jumps to the end of the ordered set $E(\Gamma)$. More specifically, if $e$ is the $i$-th edge and $e$ is solid, 
\begin{align*}
\tau(e) 
& = (-1)^{(j-1)(i-1) + (j-1)deg(E(\Gamma/e))}\\
&=  (-1)^{(j-1)(i-1) + (j-1)(|E_{\eta}(\Gamma)|-1) + (j-1)(n-1)|E_{\theta}(\Gamma)|}.\\
\end{align*}
If $e$ is the $i$-th edge and $e$ is dashed, 
\begin{align*}
\tau(e) 
& = (-1)^{(n-1)(i-|E_{\eta}(\Gamma)|-1) + (j-1)(n-1)|E_{\eta}(\Gamma)| + (n-1)deg(E(\Gamma/e))} \\
&=  (-1)^{(n-1)(i-|E_{\eta}(\Gamma)|-1) + (j-1)(n-1)|E_{\eta}(\Gamma)| + (j-1)(n-1)|E_{\eta}(\Gamma)| + (n-1)(|E_{\theta}(\Gamma)|-1)}
\end{align*}
\end{notation}

\begin{notation}
Define the sign $\Phi(e, \Gamma) = \Phi(e)$  by the sign which arises when the vertex of $e$ with a larger label jumps to the end of the set $V(\Gamma)$. More precisely, if $e = \{p, q\}$, $q>p$  and $q$ is black,
\begin{align*}
\Phi(e) &= (-1)^ {j(q-1) + jdeg(V(\Gamma/e))} \\
& =  (-1)^ {j(q-1) + j(s-1) + njt} 
\end{align*}
If If $e = (p, q)$, $q>p$  and $q$ is white,
\begin{align*}
\Phi(e) 
& = (-1)^ {n(q-s-1) + jns +  ndeg(V(\Gamma/e))} \\
&= (-1)^{n(q-s-1) + jns + jns + n(t-1)}
\end{align*}
 \end{notation}

\begin{rem}
Hence, if $n$ and $j$ are even, we have
\begin{equation*}
\tau(e) \Phi(e)= (-1)^{(i-1)+|\Gamma|+1} = (-1)^{i+|\Gamma|} 
\end{equation*}
If $n$ and $j$ are odd, 
\begin{equation*}
 \tau(e) \Phi(e) = (-1)^{(q-1)+|\Gamma|+1} = (-1)^{q + |\Gamma|} 
 \end{equation*}
 \end{rem}
 
\begin{comment}
If $n$ is even and $j$ is odd,
\begin{equation*}
\tau(e) \Phi(e) = 
\begin{cases}
 (-1)^{(q-1)+|\Gamma|+1} = (-1)^{q + |\Gamma|}   & \text{if $e$ is solid} \\
 (-1)^{s + (i-|E_{\eta}(\Gamma)|-1) + |\Gamma|+1} = (-1)^{s + (i-|E_{\eta}(\Gamma)|) + |\Gamma|}& \text{if $e$ is dashed and $q$ is white} 
\end{cases}
\end{equation*}
\end{comment}

\begin{notation}
Finally, define the sign $\eta(e)$ of $e=\{p,q\}$, $q>p$ by
\begin{equation*}
\eta(e) = 
\begin{cases}
1 & \text{if}\ e = (p,q), \\
(-1)^{j} & \text{if $e = (q,p)$ and $e$ is solid}, \\
(-1)^{n} & \text{if $e = (q,p)$ and $e$ is dashed}.\\
\end{cases}
\end{equation*}
\end{notation}

\begin{notation}
Let $\Gamma$ be a labeled plain graph. Let $S = \{s_1, \dots, s_{|S|}\}$ be an ordered subset of $V(\Gamma)$. Define the sign $\tau(S)$ by the sign which arises when edges of $\Gamma_S$ jumps to the end of the set $E(\Gamma)$. Define the sign $\Phi(S)$  by the sign which arises when the vertices of $S$ except for the first $s_1$ jump to the end of the set $V(\Gamma)$.
\end{notation}

\subsection{Operadic descriptions of  $PGC_{n,j}$, $HH_{n,j}$ and $HGC_{n,j}$}
\label{Operadic descriptions of  $PGC_{n,j}$, $HH_{n,j}$ and $HGC_{n,j}$}

In this subsection, we describe the graph complexes in the previous subsections in terms of the graph cooperads $\text{Graph}_n(\bullet)$ or $H^{\ast}(\text{Conf}_{\bullet}(\mathbb{R}^n))$ as in \cite{AT 1, AT 2}. We can define $PGC_{n,1}$,  $PGC^{\prime}_{n,1}$ $HH_{n,1}$ and $HGC_{n,j}$ using the same operadic descriptions as the case $j\geq 2$. (Recall that the definition of $\text{Graph}_n(\bullet)$ is different when $n=1$). The dual of $PGC^{\prime}_{n,1}$ is already introduced in \cite{Tur 2}, for example. First, we introduce several notation.

\begin{notation}[Normalizations]
Let $s_i: \text{Graph}_n(l-1) \rightarrow \text{Graph}_n(l)\ (1\leq i \leq l)$ be the map inserting an external black vertex as the $i$-th external vertex. 
We define the \textit{normalization} of $\text{Graph}_n(l)$ by
\[
N\text{Graph}_n(l) =  \text{Graph}_n(l)/ \sum_{1\leq i \leq l} Im (s_i).
\]
The normalization $N\overline{\text{Graph}}_n(l)$ and  $NH^{\ast}(\text{Conf}_l(\mathbb{R}^n))$ are defined similarly. 
\end{notation}

\begin{notation}
Write $\text{MGraph}_n(l)$ for the subspace of $\text{Graph}_n(l)$ generated by graphs which have $l$ external black vertices and their valence is exactly one. 
\end{notation}

\begin{comment}
\begin{notation}
Let $s_i: \text{Graph}_n(l) \rightarrow \text{Graph}_n(l-1)\ (1\leq i \leq l)$ be the map forgetting the $i$-th vertex. If a graph $\Gamma$ of $\text{Graph}_n(l)$ has edges at the $i$-th black vertex, $s_i(\Gamma) = 0$.
We define the \textit{normalization} of $\text{Graph}_n(l)$ by
\[
N\text{Graph}_n(l) = \bigcap_{i=1}^{l} ker (s_i).
\]
\end{notation}
\end{comment}

\begin{notation}
The sign representation $(\text{sign}_l)$ is the $1$-dimensional representation of  $\Sigma_l$ which maps odd permutations to $\times (-1)$ and maps even permutations to $ \times 1$. We assume the element of $(\text{sign}_l)$ has degree $(-l)$.
\end{notation} 

By inspection, we have the following. 

\begin{prop}[Operadic descriptions]
The plain graph complex $PGC_{n,j}$ is described as the subcomplex of 
\[
fPGC_{n,j} =  \bigoplus_{l\geq1} \left((\text{sign}_l)^{\otimes j} \otimes \overline{\text{Graph}}_j(l) \otimes N\overline{\text{Graph}}_n(l) \right) /\Sigma_l, 
\]
generated by connected graphs. The symmetric group $\Sigma_l$ acts diagonally. 
The differential $d_{PGC}$ is decomposed into three pairwise (anti-)commutative differentials
\[
d_{PGC} = d_{\overline{\text{Graph}}_j} + d_{\overline{\text{Graph}}_n} + d_\tau.
\]
The term $d_\tau$ corresponds to contractions of chords \solidedged{}{}{} (that is, edges between external vertices) of $\text{Graph}_j$.

Similarly, the other complex $PGC_{n,j}^{\prime}$ is described as the subcomplex of
\[
fPGC_{n,j}^{\prime} = \bigoplus_{l\geq1} \left((\text{sign}_k)^{\otimes j} \otimes \text{Graph}_j(l) \otimes N\text{Graph}_n(l) \right) /\Sigma_l, 
\]
generated by connected graphs. 
\end{prop}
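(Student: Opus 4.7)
The plan is to build an explicit isomorphism of graded vector spaces between $PGC_{n,j}$ and the connected subcomplex of the right-hand side, and then to check that the three natural pieces of the differential on the right-hand side reassemble to $d_{PGC}$ on the left. The argument for $PGC^{\prime}_{n,j}$ is completely parallel, using $\text{Graph}_\bullet$ in place of $\overline{\text{Graph}}_\bullet$.

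First I would describe the map on generators. Let $\Gamma$ be a connected labeled admissible plain graph whose external black vertices are labeled $1,\dots,l$. Split $\Gamma$ into its solid subgraph $\Gamma_\eta$ (the solid edges together with all black vertices of $\Gamma$; internal black vertices have valence $\geq 3$ in solid edges) and its dashed subgraph $\Gamma_\theta$ (the dashed edges together with all white vertices and all external black vertices of $\Gamma$). Then $\Gamma_\eta$ is naturally a representative of $\overline{\text{Graph}}_j(l)$ and $\Gamma_\theta$ is naturally a representative of $\overline{\text{Graph}}_n(l)$. Admissibility of $\Gamma$, which demands that every external black vertex is incident to at least one dashed edge, corresponds exactly to $\Gamma_\theta$ being nonzero in the normalization $N\overline{\text{Graph}}_n(l)$, since dividing by $\sum_i \mathrm{Im}(s_i)$ kills precisely those dashed graphs with a free external vertex. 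The requirement that $\Gamma$ is labeled amounts to keeping the labeling $1,\dots,l$ on the external vertices; symmetrizing over $\Sigma_l$ then forgets this labeling, which matches the convention in $PGC_{n,j}$ that external black vertices are unordered.

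Second, I would verify the degree and sign bookkeeping. Each external black vertex has degree $-j$ on the $PGC_{n,j}$ side, while external vertices of both $\overline{\text{Graph}}_j(l)$ and $\overline{\text{Graph}}_n(l)$ have degree $0$; the tensor factor $(\text{sign}_l)^{\otimes j}$ of total degree $-jl$ supplies the missing $-j$ per external vertex, and it also produces exactly the sign $(-1)^j$ per transposition of external vertices, which is the sign one gets on the $PGC_{n,j}$ side from the graded orientation when permuting two vertices of degree $-j$. The orientations built from the ordering of $o(\Gamma) = E(\Gamma) \cup V(\Gamma)$ on the plain-graph side translate into the product of orientations of the two factor graphs (orderings of $E_\eta(\Gamma) \cup V_i(\Gamma)$ on the $\overline{\text{Graph}}_j$ side, and of $E_\theta(\Gamma) \cup W(\Gamma)$ on the $\overline{\text{Graph}}_n$ side), up to the Koszul signs arising from interleaving; here I would use the convention mentioned in the text that solid edges precede dashed edges and black vertices precede white vertices, which makes the comparison direct. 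Relations from double edges and loop edges on each side of the tensor product match the corresponding relations of $PGC_{n,j}$, and multiple edges (a solid and a dashed edge on the same pair of external vertices) remain nontrivial and correspond to having an edge in each factor between the same pair of external vertices.

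Third, I would decompose the differential. The differentials of $\overline{\text{Graph}}_j(l)$ and $\overline{\text{Graph}}_n(l)$ each contract all non-loop edges that are not between two external vertices, so $d_{\overline{\text{Graph}}_j}$ on the right accounts for contractions of all non-loop solid edges incident to at least one internal black vertex in $\Gamma$, and $d_{\overline{\text{Graph}}_n}$ for contractions of all non-loop dashed edges incident to at least one white vertex. The one remaining class of contractible edges in $d_{PGC}$ consists of solid edges between two external black vertices that are not part of a multiple edge; these are precisely chords of the $\overline{\text{Graph}}_j(l)$ factor. Contracting such a chord merges two external vertices, which is a well-defined extra piece $d_\tau$ on the operadic side; if there is also a dashed edge between the same pair of externals (the multiple-edge case), this merging creates a dashed loop, which is killed in $\overline{\text{Graph}}_n$, matching the fact that $d_{PGC}$ excludes such contractions. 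Solid chords that sit between two external vertices with no dashed edge between them are contracted by both conventions and give the nonzero contribution of $d_\tau$. The three pieces $d_{\overline{\text{Graph}}_j}$, $d_{\overline{\text{Graph}}_n}$, $d_\tau$ anticommute pairwise because they act on disjoint classes of edges (so up to sign they commute strictly, and the signs turn into anticommutation via the standard Koszul rule on the tensor factors).

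The main obstacle will be the sign matching in the last step: one needs to check that the sign $\sigma(e)$ defined in Definition \ref{diffofPGC} via the combined ordering of $E(\Gamma) \cup V(\Gamma)$ agrees with the Koszul sign produced by applying $d_{\overline{\text{Graph}}_j}\otimes \mathrm{id}$, $\mathrm{id}\otimes d_{\overline{\text{Graph}}_n}$, or the chord-contraction $d_\tau$ on the operadic side, after accounting for the sign representation $(\text{sign}_l)^{\otimes j}$. This reduces to comparing the permutation sign of moving the contracted edge and its larger-labeled endpoint to the front (as in Remark \ref{signexplanation}) against the sign conventions built into the cooperad $\overline{\text{Graph}}_n(\bullet)$ and $\overline{\text{Graph}}_j(\bullet)$ of Section \ref{graphcooperad}. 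Given the convention that solid edges come first in $E(\Gamma)$ and black vertices first in $V(\Gamma)$, this is a routine, if tedious, bookkeeping computation, and I would carry it out case by case according to whether $e$ is solid or dashed and whether the endpoints are external or internal.
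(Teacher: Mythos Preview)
Your proposal is correct and, in fact, supplies considerably more detail than the paper does: the paper simply asserts the proposition ``by inspection'' without any further argument. Your explicit identification $\Gamma\mapsto(\Gamma_\eta,\Gamma_\theta)$, the matching of admissibility with the normalization $N\overline{\text{Graph}}_n(l)$, the degree/sign bookkeeping via $(\text{sign}_l)^{\otimes j}$, and the threefold splitting of the differential are exactly the content one would need to check to justify that inspection.
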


\begin{prop}
The complex $HH_{n,j}$ is the subcomplex of 
\[
fHH_{n,j} = \bigoplus_{l\geq1} \left((\text{sign}_l)^{\otimes j} \otimes H^{\ast}(\text{Conf}_l(\mathbb{R}^j)) \otimes NH^{\ast}(\text{Conf}_l(\mathbb{R}^n))\right) /\Sigma_l,
\]
generated by connected graphs. By applying Poincar\'e duality, $fHH_{n,j}$ has also the expression 
\[
HH_{n,j} = \bigoplus_{l \geq 1} \overline{H}_{-\ast}(\text{Conf}_l(\mathbb{R}^j)) \otimes N\left(H^{\ast}(\text{Conf}_l(\mathbb{R}^n))\right)/\Sigma_l, 
\]
where $\overline{H}_{-\ast}(\text{Conf}_k(\mathbb{R}^j))$ is the compactly supported homology.
\end{prop}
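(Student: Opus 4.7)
The plan is to check the identification generator-by-generator, relation-by-relation; the operadic description is really a careful bookkeeping statement, with no deep theorem beyond Cohen's presentation (Theorem \ref{cohomologyofconfigurationspaces}). The crucial observation is that an admissible chord graph has no white and no internal black vertices, so the edge set of a chord graph $\Gamma$ with $l$ ordered external vertices splits as a solid subgraph $\Gamma_\eta$ and a dashed subgraph $\Gamma_\theta$ sharing the same vertex set.

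First I would apply Cohen's theorem twice: modulo orientation, double-edge and Arnold relations, the solid piece $\Gamma_\eta$ is exactly a basis element of $H^{\ast}(\text{Conf}_l(\mathbb{R}^j))$, and similarly $\Gamma_\theta$ sits in $H^{\ast}(\text{Conf}_l(\mathbb{R}^n))$; the solid and dashed Arnold relations of $HH_{n,j}$ are precisely the Cohen relations in the two factors. The admissibility requirement --- every external vertex has at least one dashed edge --- is precisely the condition that $\Gamma_\theta$ is not in the image of any degeneracy $s_i$ inserting an isolated external vertex, placing $\Gamma_\theta$ in $NH^{\ast}(\text{Conf}_l(\mathbb{R}^n))$. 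No normalization is imposed on the solid factor since external black vertices are allowed to carry only dashed incidences. Connectedness of $\Gamma$ cuts down $fHH_{n,j}$ to $HH_{n,j}$ as stated.

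Next I would handle the $\Sigma_l$-action and the sign twist. Each of the $l$ external vertices has degree $-j$, giving the total vertex degree $-jl$ encoded by $(\text{sign}_l)^{\otimes j}$. A permutation $\sigma \in \Sigma_l$ of external labels acts naturally on the two $H^{\ast}(\text{Conf}_l(\mathbb{R}^{?}))$ factors, while on the chord-graph side it permutes $l$ degree $(-j)$ entries in the ordered vertex block of $o(\Gamma)$, producing the sign $\text{sgn}(\sigma)^{j}$ by the conventions of subsection \ref{Notation on orientations and signs}. Taking diagonal $\Sigma_l$-coinvariants thus identifies chord graphs differing only by relabeling, matching the $/\Sigma_l$ quotient of $fHH_{n,j}$. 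Finally, in a chord graph every dashed edge is a chord and so is excluded from $E^a(\Gamma)$; hence $d_{HH}$ only contracts admissible solid edges between external vertices. This is exactly the $d_\tau$ component of the operadic differential on the $\overline{\text{Graph}}_j(l)$-factor, specialized to graphs with no internal vertices, confirming the cochain isomorphism.

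For the Poincar\'e-dual expression, I would invoke that $\text{Conf}_l(\mathbb{R}^j)$ is an open orientable $jl$-manifold, so $H^{\ast}(\text{Conf}_l(\mathbb{R}^j)) \cong \overline{H}_{jl - \ast}(\text{Conf}_l(\mathbb{R}^j))$ in Borel--Moore/compactly supported homology. The total degree shift by $jl$ and the sign of the $\Sigma_l$-action on the orientation of $(\mathbb{R}^j)^l$ --- the block permutation of $l$ factors of dimension $j$ --- are together exactly the twist $(\text{sign}_l)^{\otimes j}$, which therefore gets absorbed to yield the untwisted formula. The main obstacle throughout is sign bookkeeping: verifying that the combined sign $\tau(e)\Phi(e)\eta(e)$ from subsection \ref{Notation on orientations and signs}, restricted to a solid chord between two external vertices, reassembles into the natural contraction differential on $H^{\ast}(\text{Conf}_l(\mathbb{R}^j))$, and dually into the expected Hochschild-type boundary on $\overline{H}_{-\ast}(\text{Conf}_l(\mathbb{R}^j))$. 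This is a term-by-term check that introduces no ingredients beyond those already used in the operadic description of $PGC_{n,j}$.
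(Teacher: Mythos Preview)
Your proposal is correct and is exactly the kind of unpacking the paper has in mind: the paper gives no proof beyond the phrase ``By inspection, we have the following'' preceding this block of propositions, and your generator-by-generator, relation-by-relation verification via Cohen's presentation, together with the Poincar\'e duality absorbing the $(\text{sign}_l)^{\otimes j}$ twist, is precisely what that inspection consists of.
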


\begin{rem}
 When $j=1$,  we say graphs of $fPGC_{n,j}$, $fPGC^{\prime}_{n,j}$ or $fHH_{n,j}$ are connected if they are not decomposed into two graphs when an arbitrary point on the oriented line is cut. 
\end{rem}

\begin{rem}
\label{whyHochschildcomplex}
Note that if $j = 1$, the complex $fHH_{n,j}$ is the dual of the \textit{Hoschild complex} of the Poisson operad $Poiss_{n-1}$, which completely describes the rational homology of $\overline{\mathcal{K}}_{n,j}$ when $n \geq 4$. See \cite {Sin 1, LVT}.
\end{rem}

\begin{prop} 
The hairy graph complex $HGC_{n,j}$ is described as the subcomplex of
\[
fHGC_{n,j} = \bigoplus_{l\geq1} \left((\text{sign}_l)^{\otimes j}  \otimes M\text{Graph}_n(l) \right) /\Sigma_l, 
\]
generated by connected graphs. 
\end{prop}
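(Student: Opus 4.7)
The plan is to exhibit, by inspection, a graded linear bijection between equivalence classes of labeled hairy graphs and equivalence classes of elements of $fHGC_{n,j}$, and to verify that degrees, orientations, and differentials all match. Both complexes decompose by the number $l$ of external vertices and by the first Betti number, so it suffices to work one $(l,g)$-stratum at a time.

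First I would construct the bijection at the level of graded vector spaces. A hairy graph with $l$ external black vertices consists of a dashed graph on some white vertices together with $l$ attached hairs; since each external black vertex carries exactly one dashed edge and no solid edges, the underlying graph — regarding white vertices as internal and external black vertices as external — is precisely an element of $M\text{Graph}_n(l)$. The $\Sigma_l$-quotient accounts for the fact that the external vertices of a hairy graph carry no intrinsic ordering (beyond their contribution to the orientation), and the twist $(\text{sign}_l)^{\otimes j}$, of total degree $-jl$, compensates for the degree discrepancy: an external black vertex has degree $-j$ in $HGC_{n,j}$ but an external vertex has degree $0$ in $\text{Graph}_n(l)$. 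The orientation relations match because swapping two external vertices multiplies the twist by $(-1)^{j}$, which is precisely the sign obtained by transposing two degree-$(-j)$ elements in the ordered set $o(\Gamma)$ used to orient a hairy graph.

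For the differential, the key observation is that in $M\text{Graph}_n(l)$ every external vertex has valence one, so there are no edges between two external vertices; hence the admissible edges in such a graph are either hairs or edges between two internal vertices. Contracting an internal--internal edge preserves $M\text{Graph}_n(l)$, whereas contracting a hair produces a graph whose external endpoint absorbs the internal endpoint, resulting in an external vertex of valence $\geq 2$, which lies outside $M\text{Graph}_n(l)$ and hence vanishes in the quotient defining $fHGC_{n,j}$. Consequently, the differential induced on $fHGC_{n,j}$ from that of $\text{Graph}_n(l)$ contracts exactly the edges that $d_{HGC}$ contracts, namely all edges except loops and hairs. Restricting to connected graphs on both sides then identifies $HGC_{n,j}$ with the subcomplex of $fHGC_{n,j}$ spanned by connected graphs, as claimed.

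The step I expect to require the most care is the sign comparison: one must verify that the sign $\sigma(e) = (-1)^{|\Gamma|+1}(-1)^{\deg(V(\Gamma/e))}\Phi(e)\tau(e)\eta(e)$ governing $d_{HGC}$ matches the sign produced by the differential of $\text{Graph}_n(l)$ after accounting for both the $(\text{sign}_l)^{\otimes j}$ twist and the $\Sigma_l$-quotient. This is a mechanical unwinding of the notation in Subsection \ref{Notation on orientations and signs}, entirely parallel to the corresponding verification for $PGC_{n,j}$; no new idea is needed, but the bookkeeping is the main source of potential error.
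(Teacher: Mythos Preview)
Your proposal is correct and is precisely the ``by inspection'' argument that the paper states but does not spell out. One small slip worth flagging: the claim that $M\text{Graph}_n(l)$ contains no edges between two external vertices fails for the single-edge graph on two univalent external vertices, but this is harmless since such an edge is non-admissible in $\text{Graph}_n$ and hence not contracted by either differential.
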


The notation $fHGC_{n,j}$ (\textit{full hairy graph complex}) is taken from \cite{FTW 1}, for example.

\subsection{Relationship between $PGC_{n,j}$, $HH_{n,j}$  and $HGC_{n,j}$}

Using the operadic description of graph complexes in the previous section, we show Theorem \ref{main theorem 3 on hidden faces} and Theorem \ref{main theorem 4 on hidden faces}. 

\begin{theorem}[Theorem \ref{main theorem 3 on hidden faces}]
\label{relationwithAroneTurchincomplex}
The projections $PGC^{\prime}_{n,j} \rightarrow PGC_{n,j}$ and  $PGC_{n,j} \rightarrow HH_{n,j}$ are quasi-isomorphims.
\end{theorem}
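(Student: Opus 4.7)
The plan is to deduce both quasi-isomorphisms from the cooperad-level quasi-isomorphisms $\text{Graph}_n(\bullet)\to \overline{\text{Graph}}_n(\bullet) \to H^{\ast}(\text{Conf}_\bullet(\mathbb{R}^n))$ of Lambrechts--Voli\'c (cited in Section~\ref{graphcooperad}), via a spectral-sequence argument on each of the full complexes $fPGC^{\prime}_{n,j}$, $fPGC_{n,j}$, $fHH_{n,j}$, and then to descend to the connected part. Since we work over $\mathbb{Q}$, the coinvariants $(-)/\Sigma_l$ are exact and commute with cohomology; and since internal vertices have valence at least three, for fixed order $k$ and loop number $g$ there are only finitely many labeled admissible plain graphs in each cohomological degree, so all filtrations below are bounded and the spectral sequences converge.

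For $PGC_{n,j}\to HH_{n,j}$, I would filter both full complexes by the number $|E_\eta(\Gamma)|$ of solid edges. The three pieces $d_{\overline{\text{Graph}}_j}$, $d_\tau$, $d_{\overline{\text{Graph}}_n}$ of the differential act on $|E_\eta|$ by $-1$, $-1$ and $0$ respectively, so the filtration is preserved and the $E_0$-differential on $fPGC_{n,j}$ is just $d_{\overline{\text{Graph}}_n}$ acting on the $Y$-factor. The same filtration on $fHH_{n,j}$ has $E_0$-differential equal to zero on the $H^\ast$-factor. The quasi-isomorphism of dg Hopf cooperads $\overline{\text{Graph}}_n(\bullet)\to H^\ast(\text{Conf}_\bullet(\mathbb{R}^n))$, tensored with the unchanging $\overline{\text{Graph}}_j(l)$-factor and descended through the exact functor $(-)/\Sigma_l$, yields an isomorphism on $E_1$-pages, and the standard comparison theorem promotes this to a quasi-isomorphism of total complexes.

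For $PGC^{\prime}_{n,j}\to PGC_{n,j}$, factor the projection through the intermediate complex $P_1$ in which only the $Y$-factor has been replaced by $\overline{\text{Graph}}_n$. The step $PGC^{\prime}_{n,j}\to P_1$ is handled exactly as above, using $\text{Graph}_n\to \overline{\text{Graph}}_n$ after filtering by $|E_\eta|$. For the remaining step $P_1\to PGC_{n,j}$, the correct filtration is by $|B_e(\Gamma)|+|E_\theta(\Gamma)|$: inspection shows $d_\tau$ strictly decreases $|B_e|$ (fusing two external black vertices) and $d_{\overline{\text{Graph}}_n}$ strictly decreases $|E_\theta|$, while only $d_{\text{Graph}_j}$ (respectively $d_{\overline{\text{Graph}}_j}$) preserves both. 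The $E_0$-page thus isolates the $X$-factor differential, and the cooperad quasi-isomorphism $\text{Graph}_j\to \overline{\text{Graph}}_j$ concludes this step.

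Finally, the passage from full to connected complexes is automatic: disjoint union at the external vertices identifies each of $fPGC^{\prime}_{n,j}$, $fPGC_{n,j}$, $fHH_{n,j}$ as the free graded-symmetric algebra over the connected subcomplex, and the projections are maps of such algebras, so the quasi-isomorphism restricts to the spaces of indecomposables. The main obstacle is precisely the choice of filtration for the second substep $P_1\to PGC_{n,j}$: a naive filtration by $|E_\eta|$ would leave $d_\tau$ in the $E_0$-page, whereas the known cooperadic quasi-isomorphisms only control the $\text{Graph}_j$-differential proper; this is why the more refined filtration $|B_e|+|E_\theta|$ is essential to push $d_\tau$ into higher filtration terms where it contributes only to later differentials and does not obstruct identification of the $E_1$-pages.
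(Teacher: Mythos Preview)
Your argument for $PGC_{n,j}\to HH_{n,j}$ has a genuine gap. After filtering by $|E_\eta|$, you correctly identify the $E_0$-differential on $fPGC_{n,j}$ as $d_{\overline{\text{Graph}}_n}$, so its $E_1$-page is $\bigoplus_l (\text{sign}_l)^{\otimes j}\otimes \overline{\text{Graph}}_j(l)\otimes NH^\ast(\text{Conf}_l(\mathbb{R}^n))/\Sigma_l$. But the $E_1$-page of $fHH_{n,j}$ is $fHH_{n,j}$ itself, whose first tensor factor is $H^\ast(\text{Conf}_l(\mathbb{R}^j))$, \emph{not} $\overline{\text{Graph}}_j(l)$. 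The projection on $E_1$ is therefore not an isomorphism of graded vector spaces; it is only a quasi-isomorphism with respect to the $E_1$-differential, which you have not analyzed. Your phrase ``tensored with the unchanging $\overline{\text{Graph}}_j(l)$-factor'' is exactly where the error lies: that factor does change under the map $PGC_{n,j}\to HH_{n,j}$.

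The paper fixes this by filtering instead by the number $l=|B_e(\Gamma)|$ of external black vertices. Only $d_\tau$ decreases $l$; both $d_{\overline{\text{Graph}}_j}$ and $d_{\overline{\text{Graph}}_n}$ preserve it. Hence the $E_0$-differential is the sum of the two internal cooperad differentials, and a single K\"unneth-type step (using both quasi-isomorphisms $\overline{\text{Graph}}_j\to H^\ast(\text{Conf}_\bullet(\mathbb{R}^j))$ and $\overline{\text{Graph}}_n\to H^\ast(\text{Conf}_\bullet(\mathbb{R}^n))$ simultaneously) identifies the $E_1$-page with $HH_{n,j}$ on the nose. The same filtration applied to $PGC'_{n,j}$ gives $PGC'_{n,j}\to HH_{n,j}$ and hence $PGC'_{n,j}\to PGC_{n,j}$ in one stroke, without your intermediate complex $P_1$ or your second filtration $|B_e|+|E_\theta|$. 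Ironically, that second filtration of yours already contains the correct idea: dropping the $|E_\theta|$-summand and filtering by $|B_e|$ alone is precisely the paper's move, and would have worked uniformly for both statements.
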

\begin{proof}
Recall that there are quasi-isomorphisms $\text{Graph}_n(l) \rightarrow H^{\ast}(\text{Conf}_l(\mathbb{R}^n))$ and $\overline{\text{Graph}}_l(\mathbb{R}^n) \rightarrow H^{\ast}(\text{Conf}_l(\mathbb{R}^n))$.
These maps induce projections $PGC^{\prime}_{n,j} \rightarrow HH_{n,j}$ and $PGC_{n,j} \rightarrow HH_{n,j}$. 
Consider the spectral sequence computing $H^{\ast}(PGC^{\prime}_{n,j})$, filtered by the number of external black vertices.
Then, the differential $d_0$ on the $E_0$-page coincides with the sum of the differentials of $\text{Graph}_n$ and  $\text{Graph}_j$.
Hence, the $E_1$-page coincides with $HH _{n,j}$. 
We can show that this spectral sequence is bounded and hence convergent, by using the decomposition with respect to the first Betti number $g$ and the order $k$.
Therefore, the projection $H^{\ast}(PGC^{\prime}_{n,j}) \rightarrow HH_{n,j}$  is quasi-isomorphism. Similarly the projection $H^{\ast}(PGC^{\prime}_{n,j}) \rightarrow HH_{n,j}$ is a quasi-isomorphism. 
In particular, the projection $PGC_{n,j} \rightarrow PGC_{n,j}$ is a quasi-isomorphim. 
This proof is similar to Theorem \ref{maintheoremrestate}. 
\end{proof}

Observe that the complexes $PGC^{\prime}_{n,j}$ and  $PGC_{n,j}$ have a decomposition with respect to the homotopy type of the solid part. Since the solid parts of graphs of $HH_{n,j}$ must be forests, we have the following. 

\begin{cor}
Only graphs whose solid parts are forests contribute to the cohomology of $PGC^{\prime}_{n,j}$ and  $PGC_{n,j}$. 
\end{cor}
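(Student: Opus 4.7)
The plan is to combine the quasi-isomorphism $PGC^{\prime}_{n,j} \simeq PGC_{n,j} \simeq HH_{n,j}$ from Theorem \ref{relationwithAroneTurchincomplex} with a direct-sum decomposition of all three complexes by the homotopy type of the solid part, and then observe that on the $HH_{n,j}$ side only forest solid parts can be non-zero.

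First I would check that the decomposition by homotopy type of the solid subgraph is well-defined, i.e.\ that it is preserved by the differential. Using the operadic description and the splitting $d_{PGC} = d_{\text{Graph}_j} + d_{\text{Graph}_n} + d_{\tau}$, this reduces to three local checks: $d_{\text{Graph}_n}$ acts only on the dashed/white data, so it leaves the solid subgraph untouched; $d_{\text{Graph}_j}$ contracts non-loop, non-chord solid edges inside the solid part, which is an edge-collapse and hence a homotopy equivalence of the underlying graph; and $d_\tau$ contracts a solid edge between two external black vertices, which is again a homotopy equivalence because the two endpoints already lie in the same solid component via the chord itself. The same argument applies on the $HH_{n,j}$ side, where the solid part is encoded by an element of $H^{\ast}(\text{Conf}_l(\mathbb{R}^j))$ and the Arnol'd, antisymmetry and $\omega_{ij}^2=0$ relations all preserve the solid homotopy type.

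Next I would argue that on the $HH_{n,j}$ side only the forest summands are non-zero. By Theorem \ref{cohomologyofconfigurationspaces}, any monomial in $H^{\ast}(\text{Conf}_l(\mathbb{R}^j))$ whose underlying graph contains a cycle vanishes: the triangle case $\omega_{hi}\omega_{ij}\omega_{hj}=0$ is a direct Arnol'd-plus-square-zero computation, and a short induction on the length of the shortest cycle handles the general case, since a single application of Arnol'd to two adjacent edges of a shortest cycle yields summands each containing either a strictly shorter cycle or a repeated edge, which one exposes as a vanishing submonomial by anticommuting the remaining factors. Consequently $H^{\ast}(\text{Conf}_l(\mathbb{R}^j))$ is spanned by forest monomials and the homotopy-type decomposition of $HH_{n,j}$ is concentrated on forest solid parts.

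Finally, the quasi-isomorphism of Theorem \ref{relationwithAroneTurchincomplex} respects this decomposition, since the filtration by the number of external black vertices used there has $E_0$-differential given by $d_{\text{Graph}_j}+d_{\text{Graph}_n}$, both of which preserve the solid homotopy type; hence it restricts to a quasi-isomorphism on each summand. The non-forest summands of $PGC^{\prime}_{n,j}$ and $PGC_{n,j}$ are therefore quasi-isomorphic to the zero complex, which proves the corollary. The one mildly subtle step is the cycle-vanishing induction in $H^{\ast}(\text{Conf}_l(\mathbb{R}^j))$; the rest is essentially a bookkeeping exercise against the operadic descriptions already established.
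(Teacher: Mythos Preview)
Your proposal is correct and follows the same approach as the paper. The paper's argument is just the sentence preceding the corollary: the complexes split by solid homotopy type, and since $HH_{n,j}$ is supported only on forest solid parts the quasi-isomorphism of Theorem~\ref{relationwithAroneTurchincomplex} forces the non-forest summands of $PGC_{n,j}$ and $PGC'_{n,j}$ to be acyclic. You supply more detail than the paper does---checking explicitly that each piece of $d_{PGC}$ and each defining relation of $HH_{n,j}$ preserves the solid homotopy type, and giving the standard Arnol'd-plus-square-zero induction showing that cycle monomials vanish in $H^{\ast}(\text{Conf}_l(\mathbb{R}^j))$---but the skeleton is identical.
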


As for $PGC^{\prime}_{n,j}$ and $HGC_{n,j}$, we first compare them on the \textit{top degree} with respect to a new grading called \textit{defect}. 

\begin{definition}
Let $\Gamma$ be a plain graph. Define the \textit{defect} of a vertex $v$ of $\Gamma$ by
\begin{equation*}
l(v) = \begin{cases}
|E_{\theta}(v)| -3 & (v\in W(\Gamma)) \\
|E_{\theta}(v)|-1 & (v\in B(\Gamma)).\\
\end{cases}
\end{equation*}
Here $|E_{\theta}(v)|$ is the number of dashed (half) edges that $v$ has. Note that an admissible plain graph is allowed to have an internal black vertex of negative defect, which is $-1$. 
The \textit{defect} $l(\Gamma)$ of $\Gamma$ is defined as $l(\Gamma) = \sum_{v\in V(\Gamma)}l(v)$ and is equal to $2 |E_{\theta}(\Gamma)| - 3|W(\Gamma)| - |B(\Gamma)|$. 
\end{definition}

The original degree of $\Gamma$ is determined by the order, the fitst Betti number and the defect. 

\begin{lemma}
\label{degreeanddefect}
Let $\Gamma$ be a connected $g$-loop plain graph of order $k$ and defect $l$.
Then the degree $|\Gamma|$ of a plain graph is equal to 
\[
|\Gamma| = k(n-j-2) + (g-1) (j-1) + l.
\]
\end{lemma}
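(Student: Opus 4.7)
The plan is a direct algebraic verification: start from the definition of the degree, and rewrite it as a linear combination of $k$, $g-1$ and $l$ using the Euler formula for a connected graph.

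First I recall the four defining quantities. By definition,
\[
|\Gamma| = (n-1)|E_{\theta}(\Gamma)| + (j-1)|E_{\eta}(\Gamma)| - n|W(\Gamma)| - j|B(\Gamma)|,
\]
and
\[
k(\Gamma) = |E_{\theta}(\Gamma)| - |W(\Gamma)|, \qquad l(\Gamma) = 2|E_{\theta}(\Gamma)| - 3|W(\Gamma)| - |B(\Gamma)|.
\]
The last formula for $l(\Gamma)$ follows by summing the per-vertex defects, since each dashed half-edge at a white (resp.\ black) vertex contributes $+1$, with a constant correction of $-3$ per white vertex and $-1$ per black vertex; each dashed edge has exactly two half-edges, giving the factor $2$ in front of $|E_{\theta}|$. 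Since $\Gamma$ is connected, Euler's formula gives
\[
g(\Gamma) - 1 = |E(\Gamma)| - |V(\Gamma)| = |E_{\theta}(\Gamma)| + |E_{\eta}(\Gamma)| - |W(\Gamma)| - |B(\Gamma)|.
\]

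Now I would just compare coefficients on the four basic quantities $|E_{\theta}|, |E_{\eta}|, |W|, |B|$ in the claimed expression
\[
k(n-j-2) + (g-1)(j-1) + l.
\]
The coefficient of $|E_{\theta}|$ is $(n-j-2) + (j-1) + 2 = n-1$; the coefficient of $|E_{\eta}|$ is $j-1$; the coefficient of $|W|$ is $-(n-j-2) - (j-1) - 3 = -n$; and the coefficient of $|B|$ is $-(j-1) - 1 = -j$. These match exactly the coefficients in the definition of $|\Gamma|$, which completes the verification.

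There is no real obstacle here: the computation is mechanical, and the only subtle point is making sure the formula $l(\Gamma) = 2|E_{\theta}| - 3|W| - |B|$ is read off correctly from the vertex-by-vertex definition (only dashed half-edges count, and white vertices carry the constant $-3$ while black vertices carry $-1$). I would state the Euler identity explicitly as a separate line before the coefficient comparison to make the use of connectedness transparent.
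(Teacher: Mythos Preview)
Your proof is correct and takes essentially the same approach as the paper: a direct algebraic verification using the definitions of $k$, $l$, $|\Gamma|$ and the Euler identity $g-1 = |E|-|V|$ for a connected graph. The only cosmetic difference is direction: the paper rewrites $|\Gamma|$ forward into the form $k(n-j-2)+(g-1)(j-1)+l$, whereas you expand the latter and match coefficients of $|E_\theta|$, $|E_\eta|$, $|W|$, $|B|$.
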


\begin{proof}
\begin{align*}
|\Gamma| &= (n-1)|E_{\theta}| + (j-1)|E_{\eta}| - n |W| - j |B| \\
& = (|E_{\theta}|-|W|)(n-j-2) + (|E_{\theta}| + |E_{\eta}| -|W| - |B|)(j-1) + (2|E_{\theta}|-3|W|-|B|) \\
& =   k(n-j-2) + (g-1)(j-1) + l.
\end{align*}
\end{proof}

\begin{definition}[The top degree cohomology]
\label{defoftopdegreecohomology}
Let $PGC^l_{n,j}$ be the subspace of $PGC_{n,j}$ generated by graphs of defect $l$. Then $PGC_{n,j}$ is a cochain complex graded by the defect $l$. Define the top degree cohomology $H^{top} (PGC_{n,j})$ by the $0$th cohomology with respect to this grading. A graph cocycle of top defect is often called a top graph coycle.  Define $H^{top} (PGC^{\prime}_{n,j})$, $H^{top} (HH_{n,j})$ and $H^{top} (HGC_{n,j})$ similarly. Note that if $l \leq -1$,  then $HGC^l_{n,j}$ and $HH^l_{n,j}$ are zero, because their graphs have no internal black vertices. 
\end{definition}

The following propositions are the aim of Section \ref{section top cohomology}. 

\begin{prop}
\label{weakerrelationwithhairygraphcomplex}
Assume that $n-j$ is even and $j \geq 2$. Then, the projection $PGC_{n,j}^{\prime} \rightarrow HGC_{n,j}$ induces the surjective map $H^{top} (PGC_{n,j}^{\prime}) \rightarrow H^{top} (HGC_{n,j})$. 
\end{prop}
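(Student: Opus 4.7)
The plan is to mimic Bar-Natan's proof of the PBW isomorphism $\mathcal{B}\cong\mathcal{A}(S^1)$, with the roles of open Jacobi diagrams ($\mathcal{B}$) and Jacobi diagrams on a circle ($\mathcal{A}(S^1)$) played by top hairy graphs and top plain graphs respectively. The detailed verification is the subject of the companion paper \cite{Yos 2}; here I sketch the strategy.

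First I would identify what the top cochains look like. A graph $\Gamma\in HGC^{0}_{n,j}$ has every white vertex of valence $3$ and every external black vertex carrying exactly one hair, so the underlying data is essentially an open Jacobi diagram with $k$ legs. A graph $\Gamma\in PGC^{\prime\,0}_{n,j}$ satisfies the same defect-$0$ constraints on whites and externals, but may in addition carry internal blacks (each of defect $-1$) and solid edges between black vertices, balanced by whites or externals of higher local defect. The projection $PGC^{\prime}_{n,j}\to HGC_{n,j}$ kills any plain graph containing a solid edge, an internal black, or an external black of dashed-valence greater than $1$, so it is already surjective at the level of defect-$0$ cochains.

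Next, given a hairy cocycle $\Gamma\in HGC^{0}_{n,j}$ with external blacks $b_1,\dots,b_k$, I would construct an explicit lift
\[
\chi(\Gamma)\;=\;\Gamma\;+\;(\text{correction terms})\;\in\;PGC^{\prime\,0}_{n,j},
\]
where the correction terms are symmetrized sums of plain graphs obtained by inserting solid tree-like subgraphs among $b_1,\dots,b_k$, weighted compatibly with the Jacobi/$\mathrm{IHX}$-type relations present in the $\mathrm{Graph}_j$-factor of $PGC^{\prime}$. Since every correction term carries a solid edge, it vanishes under the projection, so $\chi$ is automatically a chain-level section of $PGC^{\prime}_{n,j}\to HGC_{n,j}$ on defect-$0$ cochains.

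The main obstacle is showing that $\chi(\Gamma)$ is a cocycle in $PGC^{\prime}$ whenever $\Gamma$ is a cocycle in $HGC$. Viewing $\Gamma$ inside $PGC^{\prime}$, the differential $d_{PGC^{\prime}}(\Gamma)$ picks up, besides the dashed-between-whites contractions that vanish by $d_{HGC}(\Gamma)=0$, also the hair contractions, each of which raises defect by $1$ and produces a plain graph with an external black of dashed-valence $2$. The role of the correction terms is to have these unwanted hair-contractions cancelled by solid-edge contractions arising from the inserted solid trees; this cancellation is the direct analog, in our graph-complex setting, of the $\mathrm{STU}$ relation in the Jacobi-diagram calculus. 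The hypothesis $n-j$ even gives $\deg(\text{dashed})\equiv\deg(\text{solid})\pmod 2$, which is precisely what makes the $\mathrm{STU}$-type sign identities go through as in Bar-Natan's original argument. Once this is in hand, $\chi$ descends to a right inverse $H^{top}(HGC_{n,j})\to H^{top}(PGC^{\prime}_{n,j})$ of the projection, proving the claimed surjectivity. The delicate sign bookkeeping tying together the orientations on dashed and solid subgraphs, the $\mathrm{Graph}_j$ operadic differential, and the $\mathrm{STU}$-type cancellation is the main technical work, carried out in \cite{Yos 2}.
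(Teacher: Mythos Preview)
Your proposal is correct and follows essentially the same idea as the paper: both invoke the Bar--Natan PBW argument, deferring the detailed sign and STU-type bookkeeping to \cite{Yos 2}. The only presentational difference is that the paper dualizes to graph homology first --- setting $\mathcal{B}=H_{top}({}^{\ast}HGC_{n,j})$ and $\mathcal{A}_{PGC'}=H_{top}({}^{\ast}PGC'_{n,j})$, passing through an intermediate space $\mathcal{A}$ of ``good plain graphs'', and constructing a left inverse $\sigma_{\ast}$ of the inclusion $\chi_{\ast}\colon\mathcal{B}\to\mathcal{A}$ --- whereas you work directly at the cochain level and build a section $\chi$; these are dual versions of the same construction.
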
 

\begin{prop}
\label{weakerrelationwithhairygraphcomplex2}
Assume the above assumption. Let $H$ be a top graph cocycle of $HGC_{n,j}$. Then, there is a lift $H^{\prime}$ to $PGC_{n,j}^{\prime}$ which consists of graphs whose solid part are disjoint union of broken lines. 
\end{prop}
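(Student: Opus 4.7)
The plan is to construct $H'$ explicitly by a PBW-style symmetrization, directly modelled on Bar-Natan's isomorphism between open Jacobi diagrams $\mathcal{B}$ and circular chord diagrams $\mathcal{A}(S^1)$ \cite{Bar}. For a hairy graph $\Gamma$ with external black vertices $b_1, \dots, b_k$, define
\[
\phi(\Gamma) = \sum_{(P, <)} \epsilon_{(P,<)} \, \Gamma_{(P, <)} \ \in \ PGC^{\prime}_{n,j},
\]
where $(P, <)$ ranges over set partitions $P = \{S_1, \dots, S_r\}$ of $\{b_1, \dots, b_k\}$ equipped with a linear order $<$ on each block, $\Gamma_{(P,<)}$ is the plain graph obtained from $\Gamma$ by inserting, on each block, a broken line of solid edges in the chosen order (with the dashed part left untouched), and $\epsilon_{(P,<)}$ is an appropriate sign. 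Because these broken lines introduce no new internal black vertices, each $\Gamma_{(P,<)}$ has the same defect as $\Gamma$, so $\phi$ lands in the top-degree subcomplex. Extend $\phi$ linearly and set $H' = \phi(H)$.

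Showing that $H'$ lifts $H$ is immediate: the only term of $\phi(\Gamma)$ that survives the projection $PGC^{\prime}_{n,j} \to HGC_{n,j}$ is the one indexed by the trivial partition (all singletons), which is exactly $\Gamma$ with sign $+1$.

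The substantial step is the cocycle condition. Split $d_{PGC^{\prime}} = d_\theta + d_\eta$ according to whether the contracted edge is dashed or solid. On $\Gamma_{(P,<)}$ the solid broken lines meet no dashed edges, so $d_\theta$ acts purely on the dashed part and matches $d_{HGC}$ on the underlying hairy graph; hence $d_\theta(H') = \phi(d_{HGC}(H)) = 0$. For $d_\eta$, contracting a solid edge inside a block $S_\alpha$ merges two adjacent external vertices into a single external vertex placed on a shorter broken line, and the defect is preserved because no new internal vertex is produced. Grouping these terms by the resulting reduced data, one sees that each contribution is paired with the one coming from the datum $(P, <')$ obtained by swapping the two merged vertices. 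The parity hypothesis $n-j$ even is what forces the solid-edge sign $\sigma(e)$ of Definition \ref{diffofPGC} and the orientation sign produced by transposing two black vertices to be opposite on such a pair, yielding the required cancellation.

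The main obstacle will be precisely this sign bookkeeping for the $d_\eta$-cancellation: one must carefully compare the $\tau, \Phi, \eta$ signs of Subsection \ref{Notation on orientations and signs} against the symmetric-group signs $\epsilon_{(P,<)}$ used in the definition of $\phi$, exactly as in Bar-Natan's STU-based derivation of PBW. This is the content of \cite{Yos 2}; the hypothesis $n-j$ even is necessary because, for $n-j$ odd, the relevant transposition signs agree rather than cancel, so that the PBW symmetrization would no longer produce a cocycle.
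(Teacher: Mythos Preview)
Your overall strategy---a PBW-style symmetrization map $\phi$ lifting hairy cocycles to $PGC'_{n,j}$---is exactly the approach the paper has in mind (the paper defers the actual work to \cite{Yos 2}, invoking Bar-Natan's construction). But the cocycle argument you sketch has a genuine gap.

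The problem is the claim that $d_\theta(H') = \phi(d_{HGC}H) = 0$. In $PGC'_{n,j}$, the dashed part of the differential contracts every admissible dashed edge, and this \emph{includes the hairs}: a hair connects an external black vertex to a white vertex, so it is not a chord and hence is admissible. By contrast, $d_{HGC}$ explicitly does \emph{not} contract hairs (see the definition in Subsection~\ref{The hairy graph complex $HGC_{n,j}$}). So $d_\theta\phi(\Gamma)$ contains terms---contract the hair at $b_i$, merging $b_i$ with its trivalent white neighbour---that are not of the form $\phi(d_{HGC}\Gamma')$. At top defect the merged vertex acquires exactly two dashed edges plus the solid edges $b_i$ carried on the broken line, which is precisely the same local picture produced by your $d_\eta$ contractions. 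These hair-contraction terms therefore cannot be discarded: they must cancel \emph{against} the $d_\eta$ terms, not separately from them.

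This cross-cancellation is exactly the STU mechanism in Bar-Natan's argument. Contracting the stem of a trivalent white vertex attached to the broken line must match the signed difference of the two ways its remaining legs can be ordered on the line, and this forces a three-term identity, not a two-term pairing. Your pairing ``swap $b_i \leftrightarrow b_{i+1}$ and contract'' handles only part of $d_\eta H'$; the rest must absorb the hair contractions from $d_\theta$. Consequently the decomposition into separate vanishings $d_\theta H'=0$ and $d_\eta H'=0$ does not hold, and the real work is to organise the combined $d_\theta$--$d_\eta$ cancellation, which is where the sign hypothesis $n-j$ even actually enters.
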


Proposition \ref{weakerrelationwithhairygraphcomplex} is much weaker than Theorem \ref{relationwithhairygraphcomplex} below. However, Proposition \ref{weakerrelationwithhairygraphcomplex} is shown more directly while Theorem \ref{relationwithhairygraphcomplex} is shown by using \textit{Koszul duality theory}.
This direct proof in Section \ref{section top cohomology} allows us to provide a more precise result, Proposition \ref{weakerrelationwithhairygraphcomplex2}.

\begin{rem}
In \cite{Tur 2}, Turchin showed that $^{\ast}HGC_{n,1} \rightarrow\,^{\ast}PGC^{\prime}_{n,1}$ is quasi-isonorphic for any $n$.  Hence we have that $PGC^{\prime}_{n,1} \rightarrow HGC_{n,1}$ is a quasi-isomorphism. 
Note that the isomorphisn $H_{top}(^{\ast}HGC_{n,1}) \rightarrow H_{top}(^{\ast}PGC^{\prime}_{n,1})$ for odd $n$ is first established by Bar-Natan \cite{Bar}. 
\end{rem}

The complex $fHH_{n,j}$ is introduced by Arone and Turchin  \cite{AT 2} as the complex with the same origin as $fHGC_{n,j}$, that is, the derived mapping space between $\Omega$-modules:
\[
\underset{\Omega}{\text{Rmod}}^h (\widetilde{H}_{\ast}(S^{j\bullet}), \overline{H}_{\ast}(C_{\bullet}(\mathbb{R}^n)).
\]
The complexes $fHGC_{n,j}$ and $fHH_{n,j}$ are obtained from a fibrant replacement and a cofibrant replacement, respectively. 
So we have that $HH_{n,j}$ and $HGC_{n,j}$ are quasi-isomorphic. As a result, we have that $PGC_{n,j}$, $PGC^{\prime}_{n,j}, HGC_{n,j}$ and  $HH_{n,j}$ are quasi-isomorphic. 
Moreover, by carefully reading  \cite{AT 2}, we have the following.

\begin{theorem}[Theorem \ref{main theorem 4 on hidden faces}]
\label{relationwithhairygraphcomplex}
The projection $PGC^{\prime}_{n,j} \rightarrow HGC_{n,j}$ is a quasi-isomorphism. 
\end{theorem}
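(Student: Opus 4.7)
The plan is to adapt the Koszul-duality argument of Arone--Turchin \cite{AT 2} that establishes $HH_{n,j} \simeq HGC_{n,j}$, lifting it to work directly with $PGC^{\prime}_{n,j}$. Concretely, I would filter $PGC^{\prime}_{n,j}$ by the number of dashed edges (equivalently, by the size of the $N\text{Graph}_n$-decoration). Using the $(k,g)$-decomposition of $PGC^{\prime}_{n,j}$ as in the proof of Theorem \ref{relationwithAroneTurchincomplex}, this filtration is bounded on each summand, so the associated spectral sequence converges. On the $E_0$-page, only the pieces of the differential that fix the number of dashed edges survive, namely $d_{\text{Graph}_j}$ (contraction of internal solid edges) and $d_\tau$ (contraction of solid chords between external black vertices, which reduces by one the index $l$ of external vertices).

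The heart of the proof is computing the $E_0$-cohomology. For a fixed dashed skeleton in $N\text{Graph}_n(l)$, the $E_0$-complex reduces to an operadic bar-like construction built from the $\text{Graph}_j$-factor, with total differential $d_{\text{Graph}_j} + d_\tau$. By Koszul self-duality of the Poisson operad $e_j$ (whose Hopf-cooperadic model is $\text{Graph}_j$), the cohomology of this complex is concentrated on graphs whose solid part is trivial, i.e.\ has no internal black vertices and no solid edges at external vertices. This identifies the $E_1$-page with $HGC_{n,j}$, and the induced $E_1$-differential with $d_{HGC}$ since only $d_{\text{Graph}_n}$ remains to contract dashed edges.

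The projection $PGC^{\prime}_{n,j} \rightarrow HGC_{n,j}$ of the theorem respects the filtration (the kernel consists of graphs with non-trivial solid part, which lies in higher filtration or is killed at the $E_1$-page) and coincides with the identity on the $E_1$-page, so it is a quasi-isomorphism once the spectral sequence converges.

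The main obstacle is the $E_0$-cohomology computation: implementing the operadic Koszul duality for $e_j$ while keeping track of orientations, signs, and the global connectedness constraint (where dashed and solid edges combine into a single connected graph). Arone--Turchin's analysis of the Koszul resolution in \cite{AT 2} would need to be read through carefully and reproduced at the level of $PGC^{\prime}_{n,j}$ rather than $HH_{n,j}$; this is precisely the ``observation'' advertised in the introduction, and lifting their analysis to our connected-graph setting is where the main technical work lies.
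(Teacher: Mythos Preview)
Your overall strategy---a spectral sequence whose $E_1$-page collapses to $HGC_{n,j}$ via Koszul duality, following Arone--Turchin---matches the paper. But two specifics diverge from what the paper does, and one of them is a genuine gap.

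First, the paper does not go directly from $PGC^{\prime}_{n,j}$ to $HGC_{n,j}$. It factors through an intermediate complex
\[
MGC_{n,j} \;=\; \bigoplus_{l}\bigl((\text{sign}_l)^{\otimes j}\otimes H^{\ast}(\text{Conf}_l(\mathbb{R}^j))\otimes N\text{Graph}_n(l)\bigr)/\Sigma_l,
\]
obtained by replacing $\text{Graph}_j$ with its cohomology $Pois^{c}_{j-1}$. The quasi-isomorphism $PGC'_{n,j}\to MGC_{n,j}$ is the easy step, proved exactly as in Theorem~\ref{relationwithAroneTurchincomplex}. The Koszul argument is then run inside $MGC_{n,j}$, filtered by the number of \emph{white} vertices (not dashed edges).

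Second, and this is the gap: the Koszul duality invoked is not self-duality of $e_j$. The paper uses the $Com$--$Lie$ duality. Concretely, one decomposes
\[
Pois^{c}_{j-1}\cong Com^{c}\circ Lie^{c}_{j-1},\qquad N\text{Graph}_n\cong Com\circ M\text{Graph}_n,
\]
and after reorganising the indices the $d_0$-differential becomes the twisting differential of $Lie^{c}[1]\circ Com\simeq \mathbb{R}$. This is what forces the solid part to become trivial and each external black vertex to carry exactly one hair; the $Com^{c}$ factor that remains is absorbed into $M\text{Graph}_n$ to produce $HGC_{n,j}$. Self-duality of $Pois_{j-1}$ does not obviously give this: $N\text{Graph}_n$ is not a $Pois$-module in the relevant sense, and the mechanism that kills the solid structure is precisely that the $Lie^{c}$ piece of $Pois^{c}$ cancels against the $Com$ piece of $N\text{Graph}_n$.

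Your single filtration by dashed edges would force you to handle both the $\text{Graph}_j\to Pois^{c}$ reduction and the $Com$--$Lie$ cancellation simultaneously on $E_0$; this may be possible, but it is more delicate than you indicate, and the appeal to $e_j$ self-duality does not cover it. The paper's two-step approach with the white-vertex filtration and the $Com$--$Lie$ Koszul complex is what makes the computation tractable.
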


\begin{proof}
We introduce another graph complex $MGC_{n,j}$, defined by the subcomplex of 
\[
fMGC_{n,j} =  \bigoplus_{k\geq1} \left((\text{sign}_k)^{\otimes j} \otimes  H^{\ast}(\text{Conf}_k(\mathbb{R}^j))  \otimes N\text{Graph}_k(\mathbb{ R}^n) \right) /\Sigma_k, 
\]
generated by connected graphs. This is a complex obtained by taking both a cofibrant and a fibrant replacement of the derived mapping space. 
The projection $PGC^{\prime}_{n,j}  \rightarrow MGC_{n,j}$ is a quasi-isomorphism by a similar argument to Theorem \ref{relationwithAroneTurchincomplex}.

We show that the projection $MGC_{n,j} \rightarrow HGC_{n,j}$ $(j\geq2)$ is a quasi-isomorphism by \textit{Koszul duality theory}\footnote{Loday and Vallette's book \cite{LoVa} is a good reference for Koszul duality theory.}. 
Let $Com(\bullet)$, $Lie(\bullet)$, $Pois(\bullet)$ be the operads of (non-unital) commutative algebras, Lie algebras and Poisson algebras, respectively.
Let $Com^{c}(\bullet)$,  $Lie^{c}(\bullet)$, $Pois^{c}(\bullet)$ be the linear dual cooperads. 
Recall that elements of  $Lie^{c}(\bullet)$ are represented by trees (modulo Arnold relations) while elements of $Pois^{c}(\bullet)$ are represented by forests. 
It is known that the operad $Com$ is \textit{Koszul} and its \textit{Koszul dual} is $Lie$. So we have
\[
Lie^{c}[1] \circ Com \simeq \mathbb{R}, 
\]
as symmetric sequences over the category of cochain complexes. 
Here, $Lie^{c}[1]$ is the operadic suspension of $Lie^{c}$. Elements of $Lie^{c}[1]$ are represented by trees whose edge has degree $-1$. The differential of the left-hand side is twisted by the natural degree $+1$ morphism $Lie^{c}[1] \rightarrow Com$. 
The right-hand side is seen as the symmetric sequence
\[
\mathbb{R}(\bullet) = 
\begin{cases*}
0 & ($\bullet \neq 1$) \\
\mathbb{R} & ($\bullet = 1$)
\end{cases*}
\]
with the trivial differential.

Since 
\[
H^{\ast}(\text{Conf}_{\bullet}(\mathbb{R}^j)) \cong Pois^{c}(\bullet) \cong (Com^{c} \circ Lie^{c})(\bullet), 
\]
we have
\[
fMGC_{n,j} =  \bigoplus_{k\geq1} \left((\text{sign}_k)^{\otimes j} \otimes  (Com^{c} \circ Lie^{c})(k)  \otimes N\text{Graph}_n(k) \right) /\Sigma_k.
\]
Moreover, $N\text{Graph}_n(\bullet)$ is decomposed as
\[
N\text{Graph}_n(\bullet) \cong (Com \circ M\text{Graph}_n) (\bullet). 
\]
See Figure \ref{figofKoszulduality}. So we have
\begin{align*}
fMGC_{n,j}  &=  \bigoplus_{k\geq1} \left((\text{sign}_k)^{\otimes j} \otimes  (Com^{c} \circ Lie^{c})(k)  \otimes (Com \circ M\text{Graph}_n)(k) \right) /\Sigma_k. \\
& =  \bigoplus_{k\geq1} (\text{sign}_k)^{\otimes j} 
\otimes 
\left(
\bigoplus_{l^{\prime} \geq 1} Com^{c} (l^{\prime}) \otimes_{\Sigma_{l^{\prime}}}
       (
         \oplus Ind^{\Sigma_k}_{\Sigma_{j_1} \cdots \Sigma_{j_{l^{\prime}}}} Lie^{c}_{j-1}(j_1) \otimes \cdots \otimes Lie^{c}_{j-1}(j_{l^{\prime}})
       )
\right)\\
&\otimes
\left(
\bigoplus_{l \geq 1} 
       (
         \oplus Ind^{\Sigma_l}_{\Sigma_{i_1} \cdots \Sigma_{i_{k}}} Com(i_1) \otimes \cdots \otimes Com(i_k)
       )
       \otimes_{\Sigma_{l}} M\text{Graph}_n(l)
\right) / \Sigma_k \\
&= \bigoplus_{l\geq1} M\text{Graph}_n(l) \otimes \bigg(\bigoplus_{l^{\prime} \geq 1} Com^{c}(l^{\prime}) \otimes \Big( \bigoplus_{k\geq1} sign^{\otimes j}_k \otimes   \\
                         &  (\oplus Ind ^{\Sigma_k}_{\Sigma_{j_1}, \cdots, \Sigma_{j_{l^{\prime}}}} Lie^{c}_{j-1}(j_1) \otimes \cdots \otimes Lie^{c}_{j-1}(j_{l^{\prime}}))
                          \otimes (\oplus Ind ^{\Sigma_l}_{\Sigma_{i_1}, \cdots, \Sigma_{i_k}} Com(i_1) \otimes \cdots \otimes Com(i_k))
          \Big)
\bigg)\\
& / \Sigma_l, \Sigma_{l^{\prime}}, \Sigma_k.
\end{align*}
($Ind$ stands for the induced representation). By changing the labeling convention in an equivalent way, we have
\begin{align*}
fMGC_{n,j}  &=   \bigoplus_{l\geq1} M\text{Graph}_n(l) \otimes \bigg(\bigoplus_{l^{\prime} \geq 1} sign^{\otimes j}_{l^{\prime}} \otimes Com^{c}(l^{\prime}) \otimes   \\
                         & \Big( \bigoplus_{k\geq1}   (\oplus Ind ^{\Sigma_k}_{\Sigma_{j_1}, \cdots, \Sigma_{j_{l^{\prime}}}} Lie^{c}[1](j_1) \otimes \cdots \otimes Lie^{c}[1](j_{l^{\prime}}))
                          \otimes (\oplus Ind ^{\Sigma_l}_{\Sigma_{i_1}, \cdots, \Sigma_{i_k}} Com(i_1) \otimes \cdots \otimes Com(i_k))
          \Big)
\bigg)\\
& / \Sigma_l, \Sigma_{l^{\prime}}, \Sigma_k\\
& =  \bigoplus_{l\geq1} M\text{Graph}_n(l) \otimes \bigg(\bigoplus_{l^{\prime} \geq 1} sign^{\otimes j}_{l^{\prime}} \otimes Com^{c}(l^{\prime}) \otimes   \\
                         & \Big( \oplus Ind ^{\Sigma_l}_{\Sigma_{j_1}, \cdots, \Sigma_{j_{l^{\prime}}}} (Lie^{c}[1] \circ Com)(j_1) \otimes \cdots \otimes (Lie^{c}[1] \circ Com) (j_{l^{\prime}})
                                   \Big)
\bigg)\\
& / \Sigma_l, \Sigma_{l^{\prime}}. \\
\end{align*}
See Figure \ref{figofchangeofsign}.

\begin{figure}[htpb]
   \centering
    \includegraphics [width =8cm] {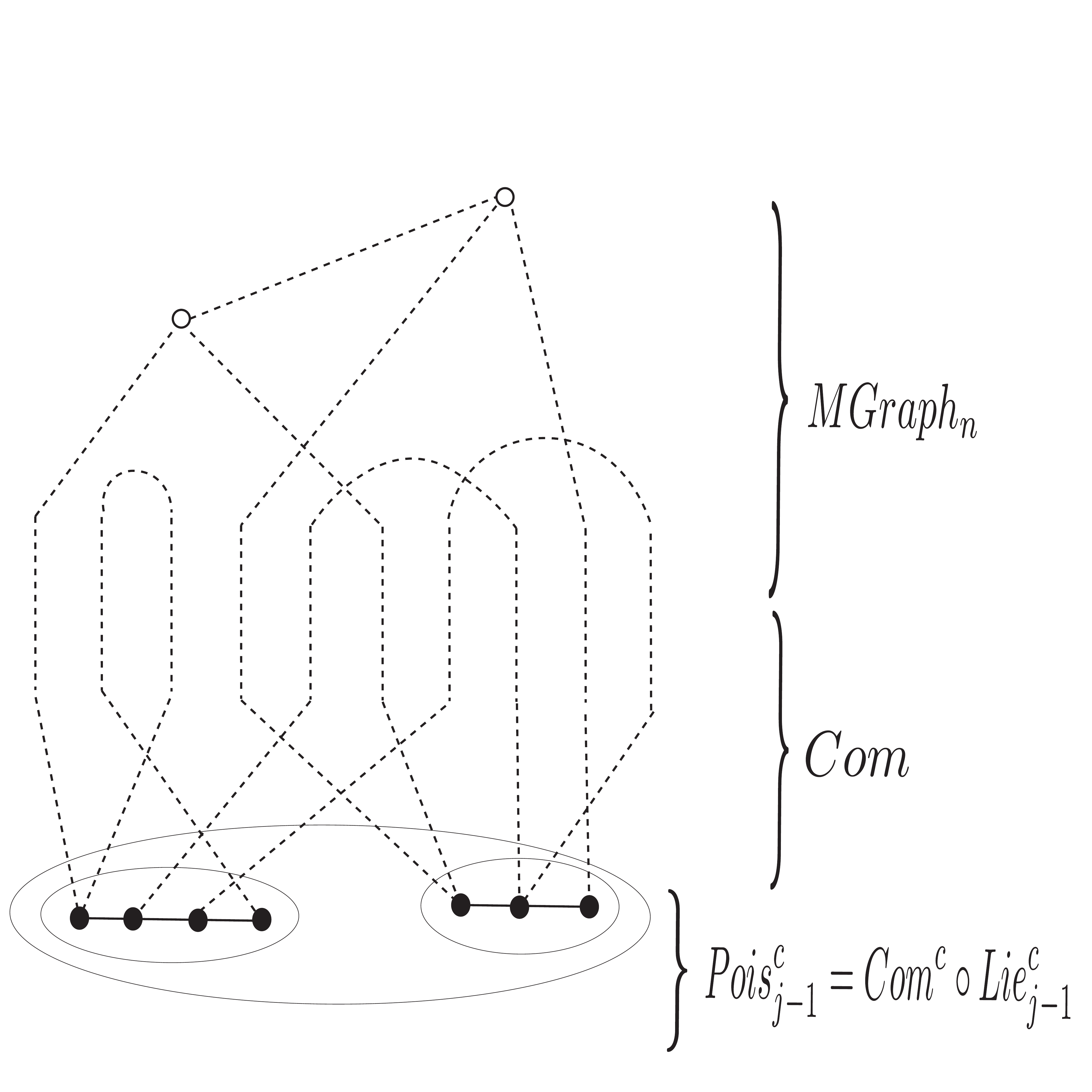}
     \caption{Decomposition of $N\text{Graph}_n(\bullet)$ ($l=10, k=7, l^{\prime} = 2$)}
      \label{figofKoszulduality}
\end{figure}

\begin{figure}[htpb]
\centering
 \includegraphics [width =16cm] {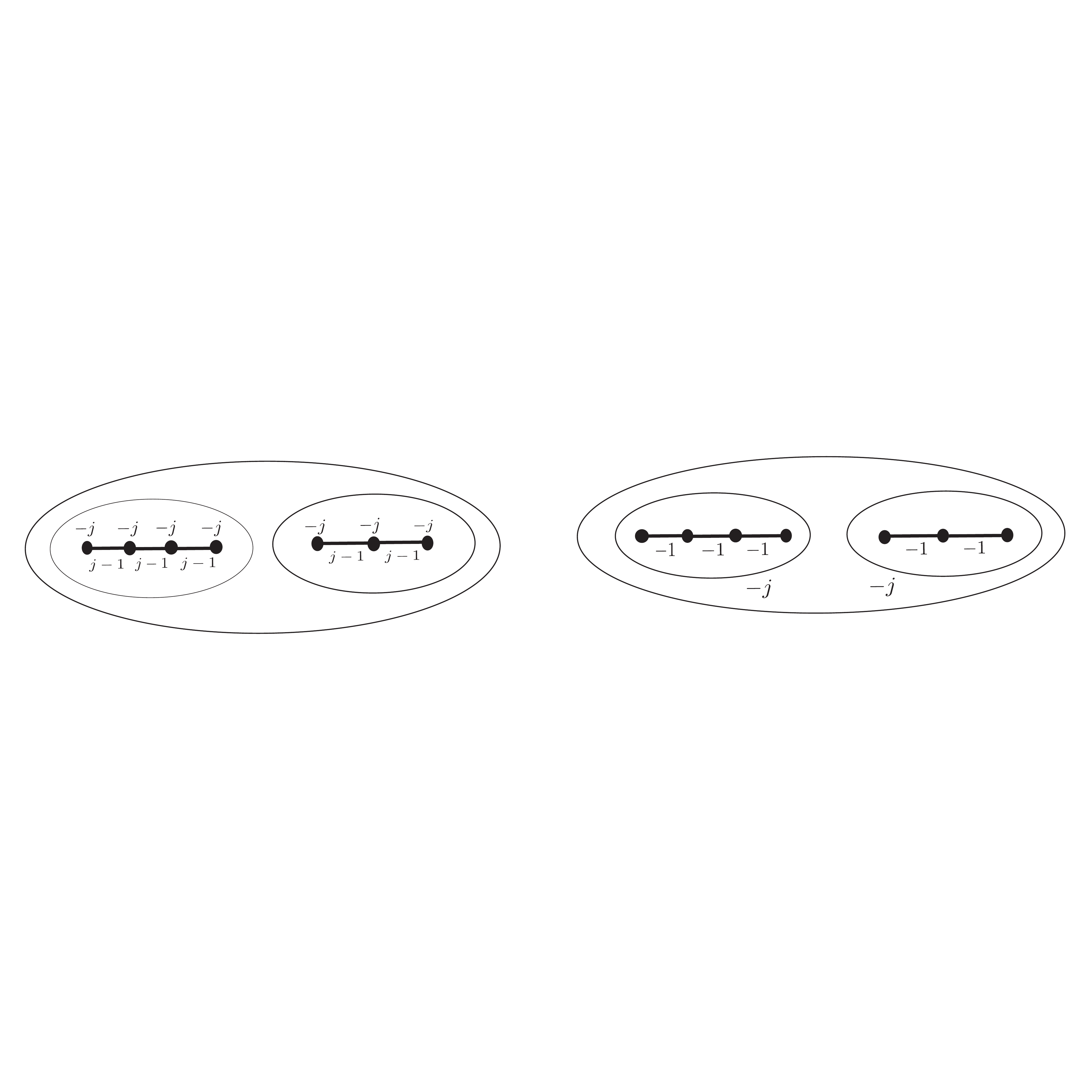}
  \caption{Change of sign convention}
   \label{figofchangeofsign}
\end{figure}

We consider the spectral sequence computing $H^{\ast}(fMGC_{n,j})$ filtered by the number of white vertices (of $M\text{Graph}_n$).
Then, $d_0$ differential  coincides with $d_{Lie^{c}[1]\circ Com}$. Hence, the $E_1$-term coincides with 
\begin{align*}
fMGC_{n,j}  & =  \bigoplus_{l\geq1} M\text{Graph}_n(l) \otimes \bigg(\bigoplus_{l^{\prime} \geq 1} sign^{\otimes j}_{l^{\prime}} \otimes Com^{c}(l^{\prime}) \otimes   
                          \Big( \oplus Ind ^{\Sigma_l}_{\Sigma_{j_1}, \cdots, \Sigma_{j_{l^{\prime}}}} \mathbb{R}(j_1) \otimes \cdots \otimes \mathbb{R} (j_{l^{\prime}})
                                   \Big)
\bigg)/ \Sigma_l, \Sigma_{l^{\prime}} \\
& =  \bigoplus_{l\geq1} M\text{Graph}_n(l) \otimes \bigg( sign^{\otimes j}_{l} \otimes Com^{c}(l) \otimes  
                          \Big( \mathbb{R} \otimes \cdots \otimes \mathbb{R}
                                   \Big)
\bigg) / \Sigma_l \\
& = fHGC_{n,j}
\end{align*}
By using the decomposition with respect to the order and the first Betti number of graphs, we can show this spectral sequence is bounded and hence convergent. 
\end{proof}

\begin{example}
When $n$ and $j$ are even, we have
\begin{align*}
H^{top}(HH_{n,j})(k=1,g=1) & = \mathbb{Q}
(
\begin{tikzpicture}[x=1pt,y=1pt,yscale=1,xscale=1, baseline=-3pt, line width =1pt]
\draw  [fill={rgb, 255:red, 0; green, 0; blue, 0 }  ,fill opacity=1 ] (0,0) circle (2);
\draw  [fill={rgb, 255:red, 0; green, 0; blue, 0 }  ,fill opacity=1 ] (50,0) circle (2);
\draw  [dash pattern = on 3pt off 3 pt] (0,0) .. controls (20,5) and (30,5)  .. (50,0);
\draw  (0,0) .. controls (20,-5) and (30,-5)  .. (50,0);
\end{tikzpicture}
),\\
H^{top}(PGC_{n,j})(k=1,g=1) & = \mathbb{Q}
(
\begin{tikzpicture}[x=1pt,y=1pt,yscale=1,xscale=1, baseline=-3pt, line width =1pt]
\draw  [fill={rgb, 255:red, 0; green, 0; blue, 0 }  ,fill opacity=1 ] (0,0) circle (2);
\draw  [fill={rgb, 255:red, 0; green, 0; blue, 0 }  ,fill opacity=1 ] (50,0) circle (2);
\draw  [dash pattern = on 3pt off 3 pt] (0,0) .. controls (20,5) and (30,5)  .. (50,0);
\draw  (0,0) .. controls (20,-5) and (30,-5)  .. (50,0);
\end{tikzpicture}
),\\
H^{top}(PGC^{\prime}_{n,j})(k=1,g=1) & = \mathbb{Q}
(
\begin{tikzpicture}[x=1pt,y=1pt,yscale=1,xscale=1, baseline=0pt, line width = 1pt]
\draw [dash pattern = on 3pt off 3 pt] (0,2) .. controls (-20,20) and (20,20)  .. (0,2);
\draw (0,0) circle (2);
\draw [dash pattern = on 2 pt off 2 pt] (0,-10) -- (0,-1);
\draw [fill = black] (0,-10) circle (2);
\end{tikzpicture}
- \begin{tikzpicture}[x=1pt,y=1pt,yscale=1,xscale=1, baseline=-3pt, line width =1pt]
\draw  [fill={rgb, 255:red, 0; green, 0; blue, 0 }  ,fill opacity=1 ] (0,0) circle (2);
\draw  [fill={rgb, 255:red, 0; green, 0; blue, 0 }  ,fill opacity=1 ] (50,0) circle (2);
\draw  [dash pattern = on 3pt off 3 pt] (0,0) .. controls (20,5) and (30,5)  .. (50,0);
\draw  (0,0) .. controls (20,-5) and (30,-5)  .. (50,0);
\end{tikzpicture}
),\\
H^{top}(HGC_{n,j})(k=1,g=1) & = \mathbb{Q}
(
\begin{tikzpicture}[x=1pt,y=1pt,yscale=1,xscale=1, baseline=0pt, line width = 1pt]
\draw [dash pattern = on 3pt off 3 pt] (0,2) .. controls (-20,20) and (20,20)  .. (0,2);
\draw (0,0) circle (2);
\draw [dash pattern = on 2 pt off 2 pt] (0,-10) -- (0,-1);
\draw [fill = black] (0,-10) circle (2);
\end{tikzpicture}
).\\
\end{align*}
\end{example}

\begin{example}
When $n$ and $j \geq 3$ are odd, we have
\begin{align*}
H^{top}(HH_{n,j})(k=2,g=1) & = \mathbb{Q}
(
\graphD{ } +\frac{1}{2}\graphDc{} 
),\\
H^{top}(PGC_{n,j})(k=2,g=1) & = \mathbb{Q}
(
 \graphE{} + \graphD{ } +\frac{1}{2}\graphDc{} 
),\\
H^{top}(PGC^{\prime}_{n,j})(k=2,g=1) & = \mathbb{Q}
(
\frac{1}{2} \graphF{} + \graphE{}  \\
&+ \graphD{ }  + \frac{1}{2} \graphDc{} 
),\\
H^{top}(HGC_{n,j})(k=2,g=1) & = \mathbb{Q}
(
\graphF{ }
).\\
\end{align*}
\end{example}

Note that 
\[
{PGC^{\prime}}^{(-1)}_{n,j}(k= 2, g=1) \ni \negativedefectgraph{}, \negativedefectgraphb{}
\]
And their images  by $d$ are  \graphD{} - 2 \graphDb{} and \graphC{} respectively. Hence, the above examples have another representative. For example, 
\begin{align*}
H^{top}(PGC_{n,j})(k=2,g=1) & = \mathbb{Q}
(
 \graphE{} + 2 \graphDb{} +\frac{1}{2}\graphDc{} 
). \\
\end{align*}

\subsection{Remaks on the map at the top cohomology $H^{top}(PGC^{\prime}_{n,j}) \rightarrow H^{top}(HGC_{n,j}$)}
\label{section top cohomology}

In this section, we see the following is a consequence of \cite{Yos 2}. 

\begin{theorem}[Proposition \ref{weakerrelationwithhairygraphcomplex}]
\label{reweakerrelationwithhairygraphcomplex}
Assume that $n-j$ is even and $j \geq 2$. The projection $PGC_{n,j}^{\prime} \rightarrow HGC_{n,j}$ induces the surjective map $H^{top} (PGC_{n,j}^{\prime}) \rightarrow H^{top} (HGC_{n,j})$. 
\end{theorem}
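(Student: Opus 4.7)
The plan is to imitate Bar-Natan's proof of the PBW isomorphism $\mathcal{A}(S^1) \cong \mathcal{B}$ between Jacobi diagrams on a circle and open Jacobi diagrams, as carried out in \cite{Yos 2}. The top-defect subspace of $HGC_{n,j}$ plays the role of $\mathcal{B}$ (legs are unordered), while the subspace of ${PGC^{\prime}}^{0}_{n,j}$ spanned by graphs whose solid parts are disjoint unions of oriented broken lines plays the role of $\mathcal{A}(S^1)$ (legs are linearly ordered along each component). Surjectivity will follow once I produce an explicit lift of every top hairy cocycle to a top plain cocycle whose solid part is a disjoint union of broken lines.

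Concretely, for a top-defect hairy graph $\Gamma$ with external black vertices $v_1, \dots, v_k$, I will define
\[
\sigma(\Gamma) \;=\; \frac{1}{k!}\sum_{\tau\in\Sigma_k}\Gamma_\tau,
\]
where $\Gamma_\tau$ is obtained from $\Gamma$ by adding a single oriented broken line of solid edges passing through $v_{\tau(1)},\dots,v_{\tau(k)}$ in that order. By Lemma~\ref{degreeanddefect}, each $\Gamma_\tau$ still has defect $0$, so $\sigma(\Gamma)\in {PGC^{\prime}}^{0}_{n,j}$, and by construction the projection $\pi\colon PGC^{\prime}_{n,j}\to HGC_{n,j}$ satisfies $\pi\circ\sigma = \mathrm{id}$. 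The hypothesis that $n-j$ is even is needed so that permuting the external vertices and reversing the orientation of path edges produce compatible signs, making $\sigma$ well-defined modulo orientation relations.

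The substantive step is to show that $\sigma$ intertwines the two differentials modulo the image of ${PGC^{\prime}}^{-1}_{n,j}$. Expanding $d_{PGC^{\prime}}\sigma(\Gamma)$, one recovers $\sigma(d_{HGC}\Gamma)$ from contractions of dashed edges in the hairy part, and is left with two extra families: contractions of a hair attached to a path vertex, and contractions of a solid edge of the path. Using the Arnold relation for the solid part (Theorem~\ref{cohomologyofconfigurationspaces} applied to the solid edges via the cooperadic description in subsection~\ref{Operadic descriptions of  $PGC_{n,j}$, $HH_{n,j}$ and $HGC_{n,j}$}), the solid-edge contraction can be rewritten as a sum of two three-valent configurations, yielding an STU/IHX-type identity. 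Averaging over $\Sigma_k$ then makes the extra families telescope and cancel against the $d_{PGC^{\prime}}$-boundary of a defect $(-1)$ element obtained by inserting a valence-two internal black vertex on the path, confirming that $\sigma(H)$ is a cocycle whenever $H$ is.

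The main obstacle will be the sign bookkeeping required for this cancellation, translating the conventions of subsection~\ref{Notation on orientations and signs} to broken-line configurations and checking that the STU-type identity carries the expected sign $(-1)^{n-j}$; this is precisely where the parity assumption $n-j$ even enters decisively. Once the cancellation is established, every top cocycle $H\in HGC_{n,j}$ admits the explicit lift $\sigma(H)\in {PGC^{\prime}}^{0}_{n,j}$, whose summands each have a single broken line as solid part, simultaneously giving surjectivity of $H^{top}(PGC^{\prime}_{n,j})\twoheadrightarrow H^{top}(HGC_{n,j})$ and the broken-line refinement asserted in Proposition~\ref{weakerrelationwithhairygraphcomplex2}.
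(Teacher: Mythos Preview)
Your overall strategy---imitating Bar-Natan's PBW argument---is precisely the route the paper takes (via \cite{Yos 2}). However, there is a genuine gap: the identity $\pi\circ\sigma=\mathrm{id}$ that your whole argument rests on is false at the chain level. Hairy graphs by definition have \emph{no} solid edges (Definition in subsection~\ref{The hairy graph complex $HGC_{n,j}$}), and the projection $\pi\colon PGC'_{n,j}\to HGC_{n,j}$ annihilates every graph with a nontrivial solid part. But each summand $\Gamma_\tau$ in your $\sigma(\Gamma)$ carries $k-1$ solid edges along the broken line, so $\pi(\sigma(\Gamma))=0$ whenever $k\geq 2$. Even if you succeed in proving that $\sigma$ sends cocycles to cocycles, you obtain $\pi_\ast\bigl([\sigma(H)]\bigr)=0\neq[H]$, so $\sigma(H)$ is \emph{not} a lift of $H$ and surjectivity of $\pi_\ast$ does not follow. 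The explicit example for $(k,g)=(2,1)$ with $n,j$ odd in the paper already illustrates this: the hairy generator $\graphF$ is lifted by a cocycle in $PGC'$ whose leading term is $\tfrac12\graphF$ itself (no solid edges), not by a graph with a solid edge joining the two legs.

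The paper circumvents this by dualising. On graph \emph{homology} the map whose dual is $\pi$ is the inclusion $i\colon\mathcal{B}=H_{top}({}^{\ast}HGC)\hookrightarrow\mathcal{A}_{PGC'}=H_{top}({}^{\ast}PGC')$, sending a hairy graph to itself (trivial solid part). One then introduces an auxiliary space $\mathcal{A}$ of ``good'' plain graphs with a map $\mathcal{A}_{PGC'}\to\mathcal{A}$, so that the composition $\chi_\ast\colon\mathcal{B}\to\mathcal{A}$ becomes the genuine analog of Bar-Natan's symmetrisation. The Bar-Natan argument then produces a \emph{left inverse} $\sigma_\ast\colon\mathcal{A}\to\mathcal{B}$ (the ``opening-up'' map), forcing $i$ to be injective and hence $\pi_\ast$ to be surjective. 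Your map is the analog of $\chi$, not of a section of $\pi$; the STU/IHX identities you invoke are exactly what is needed to build the left inverse of $\chi$ on the homology side, not to verify $\pi\circ\sigma=\mathrm{id}$. To repair your approach you should either work dually and exhibit the left inverse of $\chi_\ast$, or, if you insist on cohomology, construct a genuine cochain-level lift whose hairy component is the identity---which requires allowing the original hairy graph (with trivial solid part) as the leading term and adding correction terms, a rather different construction from your symmetrisation.
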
 

We see the corresponding statement in the language of graph homologies. 
We write $^\ast HGC_{n,j}$ , $^\ast PGC_{n,j}^{\prime}$ for the corresponding graph chain complexes
\footnote{Let $\Gamma$ be a connected labeled admissible plain graph without orientation reversing automorphisms. Let $^\ast \Gamma$ be the graph which is considered as an element of $^\ast PGC_{n,j}^{\prime}$. Then, as an element of $\text{Hom}(^\ast PGC^{\prime}_{n,j}, \mathbb{Q})$, $\Gamma$ is the dual of $^\ast \Gamma$ such that $<\Gamma, ^\ast \Gamma> = \text{Aut}(\Gamma)$.}:
\begin{align*}
HGC_{n,j} &= \text{Hom}(^\ast HGC_{n,j}, \mathbb{Q}), \\
PGC^{\prime}_{n,j} &= \text{Hom}(^\ast PGC^{\prime}_{n,j}, \mathbb{Q}).
\end{align*}
And let 
\begin{align*}
\mathcal{B} &= H_{top} (^\ast HGC_{n,j}), \\ %= Coker(d: (^\ast HGC_{n,j})_1 \longrightarrow (^\ast HGC_{n,j})_0)
\mathcal{A}_{PGC^{\prime}} &=  H_{top} (^\ast PGC^{\prime}_{n,j}). %Coker(d: (^\ast PGC^{\prime}_{n,j})_{1} \longrightarrow  ( ^\ast  PGC^{\prime}_{n,j})_{0})
\end{align*}

\begin{theorem}
\label{weakerrelationwithhairygraphcomplex3}
Assume $n-j$ is even and $j \geq 2$. Then, there is a left inverse of the map $\mathcal{B} \rightarrow \mathcal{A}_{PGC^{\prime}}$. 
\end{theorem}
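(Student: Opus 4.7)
The plan is to follow Bar-Natan's argument for the Poincar\'e--Birkhoff--Witt isomorphism $\mathcal{B}\cong\mathcal{A}(S^1)$ and construct, at the chain level, a symmetrization map
\[
s : {}^{\ast}PGC^{\prime}_{n,j}\big|_{\mathrm{top}} \longrightarrow {}^{\ast}HGC_{n,j}\big|_{\mathrm{top}}
\]
which descends to a homology-level left inverse of $p:\mathcal{B}\to\mathcal{A}_{PGC^{\prime}}$. Here $p$ dualizes the projection $PGC^{\prime}_{n,j}\to HGC_{n,j}$; at the chain level it is the natural inclusion of hairy chains into plain chains.

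The first step is to reduce to convenient representatives. By the dual of Proposition \ref{weakerrelationwithhairygraphcomplex2}, every top-degree class in $\mathcal{A}_{PGC^{\prime}}$ has a representative supported on plain graphs whose solid part is a disjoint union of broken lines $L_1,\dots,L_m$ of lengths $n_1,\dots,n_m$. For such a top graph $\Gamma$, I set
\[
s(\Gamma)=\frac{1}{\prod_i n_i!}\,\Gamma_{\mathrm{hairy}},
\]
where $\Gamma_{\mathrm{hairy}}$ is the hairy graph obtained by deleting every solid edge (each former external black vertex retains only its dashed hair, and the broken-line structure is forgotten). When $\Gamma$ is already purely hairy, $s(\Gamma)=\Gamma$, so $s\circ p$ is the identity on hairy chains and hence induces the identity on $\mathcal{B}$.

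The core technical step is verifying that $s$ sends top cycles to cycles. The chain differential on ${}^{\ast}PGC^{\prime}_{n,j}$ dualizes edge contraction, so it is edge expansion: either it inserts a new internal structure (a move that commutes with $s$ or lands outside the top-degree locus) or it permutes adjacent vertices along a broken line. The weights $1/n_i!$ are chosen precisely so that the expansions internal to each $L_i$, summed over positions, average out against one another exactly as in Bar-Natan's averaging argument. The hypothesis that $n-j$ is even is what makes the orientation signs in subsection \ref{Notation on orientations and signs} incurred by permuting adjacent external vertices on a broken line compatible with symmetric averaging; under the opposite parity the averaging would become antisymmetrization and collapse to zero.

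The main obstacle will be sign bookkeeping. Concretely, one must verify that for every expansion of a solid edge $(p,p{+}1)$ internal to some $L_i$, the corresponding term is matched and canceled by the image of the adjacent transposition in the symmetric average, while the residual boundary terms (those creating a new white or internal black vertex) map under $s$ to the boundary of $\Gamma_{\mathrm{hairy}}$ in ${}^{\ast}HGC_{n,j}$. Once this check is in hand, $s$ descends to a map $\mathcal{A}_{PGC^{\prime}}\to\mathcal{B}$, and $s\circ p=\mathrm{id}_{\mathcal{B}}$ exhibits the desired left inverse.
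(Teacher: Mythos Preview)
There is a genuine circularity in your argument. You invoke Proposition \ref{weakerrelationwithhairygraphcomplex2} to obtain broken-line representatives of classes in $\mathcal{A}_{PGC^{\prime}}$, but in the paper that proposition is explicitly a \emph{consequence} of the proof of the theorem you are trying to establish (see the sentence ``This proof allows us to prove Proposition \ref{weakerrelationwithhairygraphcomplex2}''). Moreover, even setting the circularity aside, Proposition \ref{weakerrelationwithhairygraphcomplex2} does not say what you claim: it asserts that a top cocycle of $HGC_{n,j}$ admits a broken-line lift to $PGC^{\prime}_{n,j}$, not that an arbitrary top homology class of ${}^{\ast}PGC^{\prime}_{n,j}$ has a broken-line representative. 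These are different statements (one concerns the image of $p$, the other the whole of $\mathcal{A}_{PGC^{\prime}}$), and the latter is precisely what you would need.

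Because your $s$ is only defined on broken-line graphs, you never obtain an honest map out of $\mathcal{A}_{PGC^{\prime}}$: you would need either a chain-level map defined on all admissible plain graphs, or a proof that the value of $s$ is independent of the chosen broken-line representative. The check you describe (``$s$ sends top cycles to cycles'') is not the relevant one; at the top degree in ${}^{\ast}HGC_{n,j}$ every chain is a cycle since there is nothing in defect $-1$. The actual content of the Bar-Natan argument is that symmetrization sends the \emph{relations} on the $PGC^{\prime}$ side (the images of the defect-$1$ differential, the analogues of STU) to combinations of IHX relations on the hairy side. The paper handles this by passing to an auxiliary space $\mathcal{A}$ of ``good plain graphs'' with a map $\mathcal{A}_{PGC^{\prime}}\to\mathcal{A}$, and then constructing the left inverse $\sigma_\ast$ of the composition $\mathcal{B}\to\mathcal{A}_{PGC^{\prime}}\to\mathcal{A}$; composing $\sigma_\ast$ with $\mathcal{A}_{PGC^{\prime}}\to\mathcal{A}$ then yields the desired left inverse of $p$. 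This intermediate step is exactly what lets one avoid the issue of defining the averaging on all plain graphs.
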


\begin{rem}
Such a left inverse gives an inverse of $\mathcal{B} \rightarrow \mathcal{A}_{PGC^{\prime}}$, since $\mathcal{B}$ and $\mathcal{A}_{PGC^{\prime}}$ are isomorphic and they are finite dimensional if we fix the order and the first Betti number of graphs. 
\end{rem}

\begin{proof}[Proof of Theorem \ref{weakerrelationwithhairygraphcomplex3}]
we introduce another space $\mathcal{A}$ of \textit{good plain graphs} with a map $\mathcal{A}_{PGC^{\prime}} \rightarrow \mathcal{A}$. In \cite{Yos 2}, we gave a left inverse $\sigma_{\ast}$ of the composition $\chi_{\ast}: \mathcal{B} \rightarrow \mathcal{A}_{PGC^{\prime}} \rightarrow \mathcal{A}$.  %See Proposition \ref{weakerrelationwithhairygraphcomplex} and \ref{weakerrelationwithhairygraphcomplex2}. 
This construction of the left inverse is analogous to Bar-Natan's \cite{Bar} construction of the inverse of PBW (Poincar\'{e}--Birkhoff--Witt) isomorphisms from the space $\mathcal{B}$ of open Jacobi diagrams to the space $\mathcal{A}(S^1)$ of Jacobi diagrams on the unit circle. 
\end{proof}

This proof allows us to prove
\begin{prop}[Proposition \ref{weakerrelationwithhairygraphcomplex2}]
\label{reweakerrelationwithhairygraphcomplex2}
Assume the above assumption. Let $H$ be a top graph cocycle $H$ of $HGC_{n,j}$. Then, there is a lift $H^{\prime}$ to $PGC_{n,j}^{\prime}$ which consists of graphs whose solid part are disjoint union of broken lines. 
\end{prop}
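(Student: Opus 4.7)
The plan is to refine the cohomological surjection in Theorem~\ref{reweakerrelationwithhairygraphcomplex} into a cochain-level construction by observing that good plain graphs form a subcomplex of $PGC^{\prime}_{n,j}$. Let $G \subset PGC^{\prime}_{n,j}$ denote the subspace spanned by plain graphs whose solid part is a disjoint union of broken lines. Such graphs have no internal black vertices, since an internal black vertex carries solid valence at least three and hence cannot lie on a broken line. I would then verify by case analysis that the differential preserves $G$: contractions of dashed edges do not alter the solid subgraph; contractions of solid edges shorten broken lines (the merged vertex has solid valence at most two); and contractions of solid edges of multiple edges produce only dashed loops. Thus $(G, d_{PGC^{\prime}}|_G)$ is a subcomplex.

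The proof of Theorem~\ref{reweakerrelationwithhairygraphcomplex} (equivalently, Theorem~\ref{weakerrelationwithhairygraphcomplex3}), following \cite{Yos 2}, provides a cochain map $\sigma : G \to HGC_{n,j}$ given by forgetting the broken-line structure of each solid component, together with a symmetrization $\chi : HGC_{n,j} \to G$ that sums over all broken-line arrangements of the external black vertices on a solid component. After appropriate normalization, these maps satisfy $\sigma \circ \chi = \mathrm{id}$ on the top cohomology, which is the Bar-Natan-style PBW identity dualizing the left inverse $\sigma_{\ast}$ used in the preceding theorems. Applying $\chi$ at the cochain level to the given top cocycle $H$ of $HGC_{n,j}$ yields a cocycle $H' = \chi(H) \in G$, hence a cocycle of $PGC^{\prime}_{n,j}$ supported on good plain graphs.

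It remains to verify that $H'$ projects to (the class of) $H$ under the natural map $PGC^{\prime}_{n,j} \to HGC_{n,j}$, which kills every graph that is not hairy. Among the summands of $\chi(H)$, the only term surviving this projection is the one in which every broken line has length zero, and this term equals $H$ up to the normalization constant fixed in \cite{Yos 2}. Hence after rescaling, $H'$ is a lift of $H$ to $PGC^{\prime}_{n,j}$ whose support lies in good plain graphs, as desired.

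The hardest point will be the precise cochain-level bookkeeping of signs and combinatorial factors in the Bar-Natan-style symmetrization, which is already carried out in \cite{Yos 2}. The present argument therefore amounts to citing that cochain-level construction and then observing that its output automatically lies in the subcomplex $G$ identified in the first step, so that the refined statement about the support of the lift follows at no extra cost.
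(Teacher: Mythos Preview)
Your approach is essentially the same as the paper's: both defer to the construction in \cite{Yos 2}, and the paper's own proof is literally the one-line remark that the argument establishing Theorem~\ref{weakerrelationwithhairygraphcomplex3} ``allows us to prove'' the present proposition. Your explicit identification of $G$ as a subcomplex of $PGC'_{n,j}$ is a useful step that the paper leaves implicit, and your case analysis for closure under $d_{PGC'}$ is correct.

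However, your description of the maps $\sigma$ and $\chi$ is imprecise and somewhat misleading. The lift cannot be a naive ``symmetrization over broken-line arrangements of the external black vertices'': simply inserting solid edges into a hairy graph increases the first Betti number $g$, whereas the lift must remain in the same $(k,g)$-component. As one sees in the paper's explicit example for $(k,g)=(2,1)$ with $n,j$ odd, the terms of the lift $H'$ replace white vertices by black ones and redistribute dashed and solid edges in a way that is considerably more intricate than an averaging over orderings. What \cite{Yos 2} supplies is the dual of the Bar-Natan-style inverse $\sigma_\ast$ (built by iterated STU-type manipulations), and it is this dual, not the easy symmetrization direction, that yields the cocycle-to-cocycle section landing in $G$. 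Since you correctly defer the construction to \cite{Yos 2}, this is not a fatal gap, but you should avoid describing the section by a formula that does not match what the lift actually is.
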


Although we focused on the case $n-j$ is even, the results of this section are very likely to be extended to the case $n-j$ is odd. 

\section{Review of computation of the top homologies for $g=2$ and  $g=3$}
\label{computation of top homology}

Nakatsuru, Conant, Costello, Turchin, Weed showed the following. 

\begin{theorem}\cite{Nak}\cite[Theorem 5.7] {CCTW}
\label{2loopcomputation}
Let $\sigma_1, \sigma_2, \sigma_3$ be the elementary symmetric polynomials of three variables. 
\begin{align*}
\sigma_1 &= x_1 + x_2 + x_3, \\
\sigma_2 &= x_1x_2 + x_2x_3 + x_1x_3, \\
\sigma_3 & = x_1x_2x_3, 
\end{align*}
and let $\Delta$ be the polynomial
\[
\Delta = (x_1-x_2)(x_1-x_3)(x_2-x_3).
\]
Then, when $n$ and $j$ are odd, 
\[
\mathcal{B}_{n,j}(g=2) \cong  \mathcal{B}^{even}_{n,j}(g=2) \cong\mathbb{Q}[\sigma_2, (\sigma_3)^2].
\]
When $n$ and $j$ are even, 
\[
\mathcal{B}_{n,j}(g=2) \cong \mathcal{B}^{even}_{n,j}(g=2) \cong  \Delta \sigma_3\mathbb{Q}[\sigma_2, (\sigma_3)^2], 
\]
where $\mathcal{B}^{even}_{n,j}(g=2)$ is the subspace of uni-trivalent hairy graphs with an even number of hairs. 
\end{theorem}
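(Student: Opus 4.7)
The plan is to explicitly enumerate representatives of $\mathcal{B}_{n,j}(g=2)$ as trivalent hairy graphs and identify them with certain polynomial invariants in three variables. First, I would observe by a direct Euler characteristic count that any 2-loop uni-trivalent hairy graph, after removing its hairs and smoothing 2-valent subdivisions, reduces to one of two topological types of trivalent graph with $b_1 = 2$: the theta graph $\Theta$ (two vertices joined by three parallel edges) or the dumbbell graph (two vertices, each with a loop, joined by a bridge). The IHX relation, which is exactly the content of the graph cycle condition $\partial H = 0$ at an internal trivalent vertex, allows one to express the dumbbell as a $\mathbb{Q}$-linear combination of theta graphs (the classical ``dumbbell equals theta'' argument in Jacobi diagram theory). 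Hence $\mathcal{B}_{n,j}(g=2)$ is spanned by graphs $\Theta(a_1, a_2, a_3)$, where $a_i \geq 0$ is the number of hairs attached to white subdivision vertices along the $i$-th rung of the theta.

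Having reduced to theta shapes, I would define a $\mathbb{Q}$-linear map
\[
\phi : \mathcal{B}_{n,j}(g=2) \longrightarrow \mathbb{Q}[x_1, x_2, x_3], \qquad \Theta(a_1, a_2, a_3) \longmapsto x_1^{a_1} x_2^{a_2} x_3^{a_3}.
\]
The key step is to identify the image of $\phi$ by analyzing the automorphism group $\mathrm{Aut}(\Theta) = S_3 \times \mathbb{Z}/2$: the $S_3$-factor permutes the three rungs, and the $\mathbb{Z}/2$-factor swaps the two central trivalent vertices (equivalently reverses each rung). Using the sign conventions of subsection \ref{Notation on orientations and signs}, I would compute the sign by which each automorphism acts on $\Theta(a_1, a_2, a_3)$ in $\mathcal{B}_{n,j}$. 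The expected outcome is that in the odd--odd case the $S_3$-action carries no sign (symmetric polynomials), while in the even--even case it carries the sign representation (alternating polynomials, hence of the form $\Delta \cdot P$ with $P$ symmetric). In both parity cases, the vertex-swap contributes the factor $(-1)^{a_1 + a_2 + a_3}$, which forces the total number of hairs to be even and produces the equality $\mathcal{B}_{n,j}(g=2) = \mathcal{B}^{even}_{n,j}(g=2)$.

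To complete the identification one uses a further IHX relation applied with one edge incident to a central vertex to produce the relation $\sigma_1 = x_1 + x_2 + x_3 = 0$ in the image (the ``hair-slide'' relation: moving a hair across a trivalent vertex equals the sum over the three rungs). Modding out by $\sigma_1$ turns the symmetric polynomials in the odd--odd case into $\mathbb{Q}[\sigma_2, \sigma_3]$, and the even-total-degree constraint from the vertex-swap restricts further to $\mathbb{Q}[\sigma_2, \sigma_3^2]$. In the even--even case, the alternating polynomials modulo $\sigma_1$ are $\Delta \cdot \mathbb{Q}[\sigma_2, \sigma_3]$, and imposing even total degree (noting $\deg \Delta = 3$) selects $\Delta \sigma_3 \cdot \mathbb{Q}[\sigma_2, \sigma_3^2]$. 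Injectivity of $\phi$ on the resulting quotient can be verified by exhibiting an explicit inverse that builds a theta-graph representative from each monomial.

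The main obstacle will be the detailed sign bookkeeping for the automorphism action on $\Theta(a_1, a_2, a_3)$: verifying that the $S_3$-action has exactly the claimed sign and, in particular, that the vertex-swap produces the factor $(-1)^{a_1+a_2+a_3}$. This requires carefully combining the contributions of reversing each path edge (degree $n-1$, hence sign $(-1)^n$ per reversal), of transposing the two degree-$(-n)$ central vertices, and of the induced reversal of the orderings of the subdivision vertices along each rung. A secondary difficulty is completeness: one must check that no relations beyond $\sigma_1 = 0$, the $S_3$-symmetry and the parity constraint are forced on the image of $\phi$, which is what pins down $\mathcal{B}_{n,j}(g=2)$ exactly rather than merely giving a surjection onto the stated polynomial space.
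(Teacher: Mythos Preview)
Your proposal is correct and follows essentially the same route as the paper: reduce to theta graphs $\Theta(p,q,r)\leftrightarrow x_1^p x_2^q x_3^r$, impose the IHX relation $x_1+x_2+x_3=0$, and analyze the $\mathfrak{S}_3\times\mathbb{Z}/2$ automorphism action to obtain symmetric (odd case) or alternating (even case) polynomials of even total degree. The paper's sign outcome is exactly what you predict (the $\mathfrak{S}_3$-sign is $(\operatorname{sign}\sigma)^{n+1}$ and the vertex-swap sends $x_i\mapsto -x_i$), and your explicit mention of first eliminating the dumbbell via IHX is a small elaboration the paper leaves implicit.
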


\begin{rem}
In \cite{CCTW}, $H_{\ast}(^{\ast} HGC_{n,j}(g=2))$ is computed for any degree and any parities of $n$ and $j$. 
\end{rem}

\begin{proof}[Proof of Theorem \ref{2loopcomputation}]

First, we can identify $\mathcal{B}_{n,j}(g=2)$ with the space of labeled graphs $\Theta(p,q,r)$ in Figure \ref{2looplabeledgraph}, modulo relations we later introduce.

\begin{figure}[htpb]
\begin{center}
\includegraphics[width = 10cm]{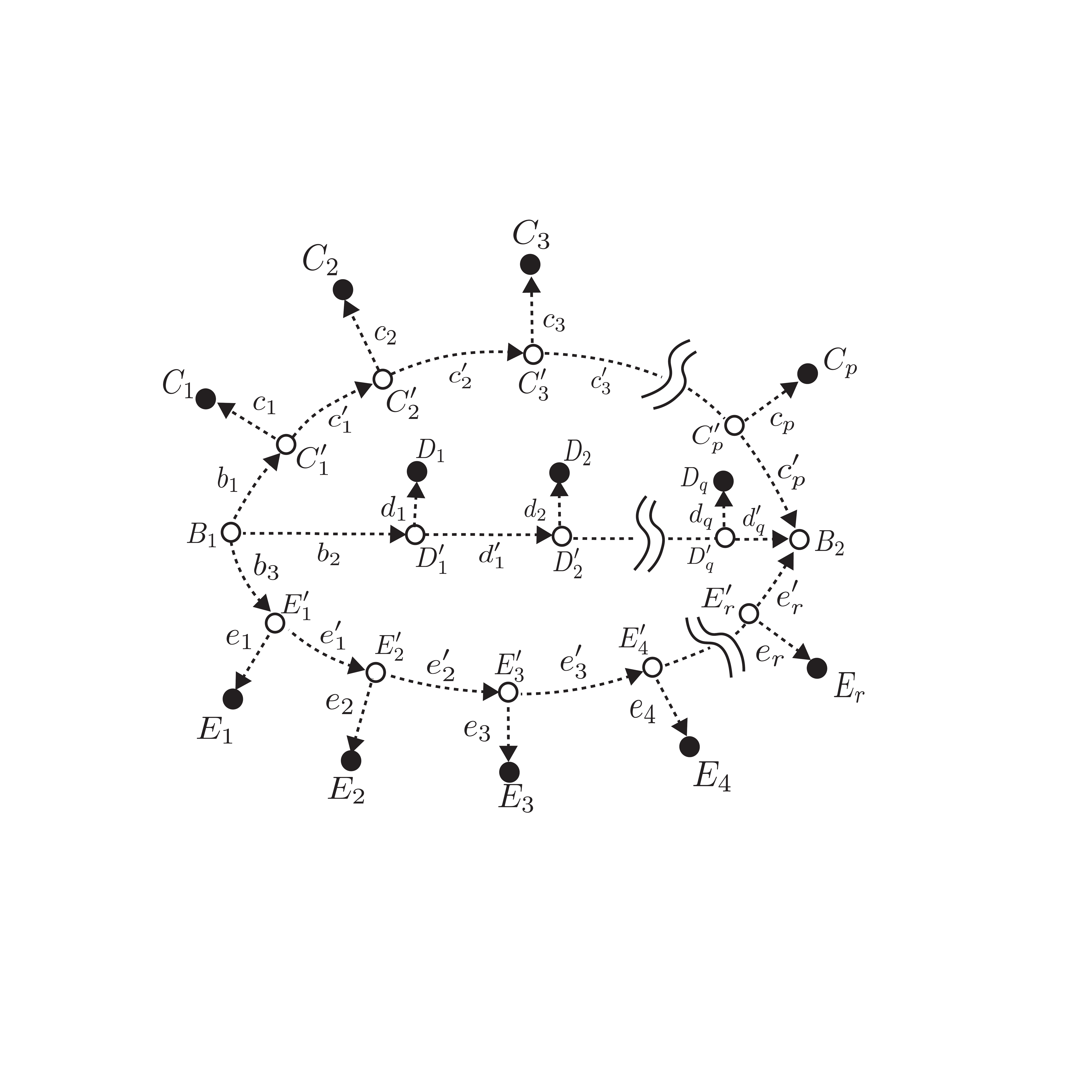}
\caption{The $2$-loop graph $\Theta(p,q,r)$}
\label{2looplabeledgraph}
\end{center}
\end{figure}

We write 
\begin{align*}
x_1 &=  \Theta(1,0,0),\\
x_2 &=  \Theta(0,1,0), \\
x_3 & = \Theta (0,0,1), \\
\end{align*}
and more generally, $x_1^p x_2^q x_3^r = \Theta(p,q,r)$. The relation corresponding to IHX relations of $\mathcal{B}_{n,j}(g=2)$ is 
\[
x_1 + x_2 + x_3 = 0.
\]
The relations corresponding to orientation relations are 
\begin{align*}
x_1^p x_2^q x_3^r &= (sign(\sigma))^{n+1} x_{\sigma(1)}^p x_{\sigma(2)}^q x_{\sigma(3)}^r \quad (\sigma \in \mathfrak{S}_3) ,\\
x_1^p x_2^q x_3^r &= (-x_1)^p (-x_2)^q (-x_3)^r,
\end{align*}
which correspond to the action of $\text{Aut}(\Theta) \cong \mathfrak{S}_3 \times  \mathbb{Z}/2\mathbb{Z}$; permutations of edges and the symmetry with respect to the vertical line. So when $n$ and $j$ are odd, we have
\begin{align*}
\mathcal{B}_{n,j}(g=2) & \cong \frac{\mathbb{Q}[x_1, x_2, x_3]}{( \mathfrak{S}_3 \times \mathbb{Z}_2),(x_1+x_2+x_3=0)} \\ 
& \cong\mathbb{Q}[x_1, x_2, x_3]^{\mathfrak{S}_3}_{even} / (x_1+x_2+x_3=0) \\
& = Sym (x_1, x_2, x_3)_{even}/ (x_1+x_2+x_3=0) \\
&  \cong \mathbb{Q}[\sigma_2, (\sigma_3)^2], 
\end{align*}
where $Sym$ means symmetric polynomials and $even$ means even degree.  On the other hand, when $n$ and $j$ are even, we have
\begin{align*}
\mathcal{B}_{n,j}(g=2) & \cong \frac{\mathbb{Q}[x_1, x_2, x_3]}{( \mathfrak{S}_3 \times \mathbb{Z}_2),(x_1+x_2+x_3 = 0)} \\ 
& \cong\mathbb{Q}[x_1, x_2, x_3]^{\mathfrak{S}_3}_{even} / (x_1+x_2+x_3 = 0) \\
& = Alt (x_1, x_2, x_3)_{even}/ (x_1+x_2+x_3 =0) \\
&  \cong \Delta \sigma_3  \mathbb{Q}[\sigma_2, (\sigma_3)^2].
\end{align*}
where $Alt$ means alternating polynomials. 
\end{proof}

Then, we proceed to study $\mathcal{B}_{n,j}(g=3)$. Moskovich and Ohtsuki showed the following.

\begin{theorem}\cite{MO} 
\label{3loopcomputation}
Let $\tau_1, \tau_2, \tau_3, \tau_4$ be the elementary symmetric polynomials of four variables: 
\begin{align*}
\tau_1 &= y_1 + y_2 + y_3 + y_4, \\
\tau_2 &= y_1y_2 + y_1y_3 + y_1y_4 + y_2y_3 + y_2y_4 + y_3y_4, \\
\tau_3 & = y_1y_2y_3 + y_1y_2y_4 +y_1y_3y_4 +  y_2y_3y_4, \\
\tau_4 & = y_1y_2y_3y_4, 
\end{align*}
and let $\Delta$ be the polynomial
\[
\Delta = (y_1-y_2)(y_1-y_3)(y_1-y_4)(y_2-y_3)(y_2-y_4)(y_3-y_4).
\]
Then, when $n$ and $j$ are odd, 
\[
\mathcal{B}_{n,j}(g=3) = \mathcal{B}^{even}_{n,j}(g=3) \cong\mathbb{Q}[\tau_2, (\tau_3)^2, \tau_4], 
\]
\end{theorem}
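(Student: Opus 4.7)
The plan is to mirror the strategy used for Theorem \ref{2loopcomputation}, but with the more intricate combinatorics required for the $3$-loop case. Recall that $\mathcal{B}_{n,j}(g=3) = H_{top}(^{\ast}HGC_{n,j})(g=3)$ is spanned by uni-trivalent hairy graphs of first Betti number $3$, modulo IHX and AS (orientation) relations. A trivalent graph of genus $3$ has $4$ internal vertices and $6$ edges, so the admissible ``skeletons'' (hairs removed) are $K_4$ and a small number of graphs with multi-edges.

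First, I would identify a canonical family $\Omega(p_1,p_2,p_3,p_4)$ of $3$-loop uni-trivalent hairy graphs, indexed by the numbers of hairs placed on $4$ distinguished edges of a fixed skeleton (the natural candidate is a graph with $4$ ``symmetric'' edges, e.g.\ a suitable subdivided skeleton or a $K_4$-type graph where the $\mathfrak{S}_4$-symmetry will be visible). The core step is a reduction lemma: by repeatedly applying IHX on internal edges, every $3$-loop uni-trivalent hairy graph can be pushed into the span of the $\Omega(p_1,p_2,p_3,p_4)$. Setting
\[
y_i \;=\; \Omega(\delta_{i1},\delta_{i2},\delta_{i3},\delta_{i4}),
\]
one then argues (as in the $g=2$ case, where one had $\Theta(p,q,r) = x_1^p x_2^q x_3^r$) that $\Omega(p_1,p_2,p_3,p_4) = y_1^{p_1} y_2^{p_2} y_3^{p_3} y_4^{p_4}$, so that the image of this family becomes a polynomial ring in $y_1,y_2,y_3,y_4$.

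Next, I would extract the linear and multiplicative relations forced by IHX and AS. A single well-chosen IHX at a distinguished vertex should yield the fundamental linear relation
\[
y_1+y_2+y_3+y_4 \;=\; 0,
\]
which is the $4$-variable analogue of $x_1+x_2+x_3=0$. The orientation relations then impose an action of $\mathrm{Aut}(\text{skeleton})\times \mathbb{Z}/2$ on monomials, where the $\mathfrak{S}_4$-factor permutes the $y_i$ (automorphisms of the skeleton) and the $\mathbb{Z}/2$-factor acts by $y_i\mapsto -y_i$ (edge-reversal). For odd $n$ and $j$, a sign computation analogous to the one in the proof of Theorem \ref{2loopcomputation} shows that permutations act trivially while $\mathbb{Z}/2$ forces the ``even total degree'' condition. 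Combining these,
\[
\mathcal{B}_{n,j}(g=3) \;\cong\; \frac{\mathbb{Q}[y_1,y_2,y_3,y_4]^{\mathfrak{S}_4}_{\mathrm{even}}}{(y_1+y_2+y_3+y_4)} \;\cong\; \mathbb{Q}[\tau_2,\tau_3,\tau_4]_{\mathrm{even}} \;\cong\; \mathbb{Q}[\tau_2,(\tau_3)^2,\tau_4],
\]
since $\tau_2$ and $\tau_4$ are of even degree in the $y_i$ while $\tau_3$ is of odd degree, so only even powers of $\tau_3$ survive.

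The main obstacle is the reduction to the canonical family $\Omega(p_1,p_2,p_3,p_4)$: for $g=2$ there is essentially only the theta-skeleton, whereas for $g=3$ several distinct trivalent skeletons exist, and one must use IHX to systematically collapse them all to a single skeleton with four labeled edges. This is a nontrivial ``graph surgery'' argument, and it is exactly the content of Moskovich--Ohtsuki's paper \cite{MO}, which I would invoke (or, alternatively, carry out by induction on the number of hair-free edges, using IHX to slide hairs along edges and merge or split subdiagrams until the standard form is reached). Once this reduction is in place, the remaining symmetry and invariant-theory computation proceeds exactly as in the $g=2$ case.
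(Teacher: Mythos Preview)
Your overall strategy---reduce to a polynomial ring in hair-placement variables and then quotient by IHX and automorphism symmetries---matches the paper, but your symmetry analysis is wrong in a way that leaves a genuine gap.

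The paper works with \emph{two} skeleton types, $\ctext{Y}$ (the tetrahedron $K_4$) and $\textcircled{=}$, each carrying six edge-variables. For the $\ctext{Y}$-type the six variables $x_1,\dots,x_6$ are cut down by the IHX relations at the four trivalent vertices to new variables $y_1,\dots,y_4$ satisfying $y_1+y_2+y_3+y_4=0$. The automorphism group of the tetrahedron is exactly $\mathfrak{S}_4$, with \emph{no} extra $\mathbb{Z}/2$ factor, and the orientation relation reads
\[
y_{\sigma(1)}^{p} y_{\sigma(2)}^{q} y_{\sigma(3)}^{r} y_{\sigma(4)}^{s} \;=\; (\mathrm{sign}\,\sigma)^{\,p+q+r+s}\; y_1^{p} y_2^{q} y_3^{r} y_4^{s}.
\]
So permutations do \emph{not} act trivially as you claim: monomials of even total degree give symmetric polynomials, while monomials of odd total degree give alternating polynomials. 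In particular $\mathcal{B}^{odd}_{n,j}(\ctext{Y})\cong \Delta\,\tau_3\,\mathbb{Q}[\tau_2,\tau_3^2,\tau_4]$ is nonzero, and there is no $\mathbb{Z}/2$ on this skeleton that you can invoke to kill it. Your extrapolation from the $2$-loop case fails because there the permutation sign was $(\mathrm{sign}\,\sigma)^{n+1}$, which is constant in the degree; here it is not.

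The actual vanishing $\mathcal{B}^{odd}_{n,j}(g=3)=0$ for odd $n,j$ comes from the interplay with the second skeleton. One has $\mathrm{Aut}(\textcircled{=})\cong D_8\times\mathbb{Z}/2$, and this $\mathbb{Z}/2$ does force $\mathcal{B}^{odd}_{n,j}(\textcircled{=})=0$; but then one must use the cross-skeleton IHX relation~($\ast$), which expresses $\textcircled{=}$-graphs in terms of $\ctext{Y}$-graphs, to propagate this vanishing. That propagation is exactly the nontrivial content of Moskovich--Ohtsuki, not merely the reduction to a canonical family. By contrast, the even part does reduce cleanly to $\mathcal{B}^{even}_{n,j}(\ctext{Y})$ via the induced map $\overline{\psi}$ (Lemma~\ref{induced map}), after which the symmetric-polynomial computation gives $\mathbb{Q}[\tau_2,\tau_3^2,\tau_4]$. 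Your outline conflates these two mechanisms by positing a single skeleton with four edges and an $\mathfrak{S}_4\times\mathbb{Z}/2$ symmetry that does not exist.
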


Furthermore, we show the following, which is a new result. 

\begin{theorem}
\label{3loopcomputation2}
When $n$ and $j$ are even, 
\[
\mathcal{B}^{odd}_{n,j}(g=3) \cong \Delta \tau_3\mathbb{Q}[\tau_2, (\tau_3)^2, \tau_4].
\]
\end{theorem}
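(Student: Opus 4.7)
The plan is to adapt Moskovich--Ohtsuki's proof of Theorem \ref{3loopcomputation} to the case where $n$ and $j$ are both even. Their proof parameterizes $\mathcal{B}_{n,j}(g=3)$ via monomials $y_1^{a_1}y_2^{a_2}y_3^{a_3}y_4^{a_4}$ on a fixed $3$-loop trivalent graph, identifies the IHX relations with the single symmetric relation $\tau_1 = y_1+y_2+y_3+y_4 = 0$, and imposes orientation relations arising from the symmetries of the underlying graph, which include an $\mathfrak{S}_4$-action by permutation and a $\mathbb{Z}/2$-action by the simultaneous sign change $y_i \mapsto -y_i$.

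Switching the parities of $n$ and $j$ from odd to even changes the signs in the orientation relations, in direct analogy with the $2$-loop case (Theorem \ref{2loopcomputation}). A permutation $\sigma\in\mathfrak{S}_4$ contributes the sign $(\operatorname{sign}\sigma)^{n+1}=\operatorname{sign}\sigma$, so that invariants become alternating rather than symmetric polynomials in the $y_i$. The $\mathbb{Z}/2$-action contributes a global sign that depends on the parities of the relevant vertex and edge degrees; for $n, j$ both even, this sign becomes $-1$ (as opposed to $+1$ in the $n, j$ odd case). The corresponding relation
\[
y_1^{a_1}y_2^{a_2}y_3^{a_3}y_4^{a_4} = -(-y_1)^{a_1}(-y_2)^{a_2}(-y_3)^{a_3}(-y_4)^{a_4}
\]
forces $a_1+a_2+a_3+a_4$ to be odd, so that $\mathcal{B}^{odd}_{n,j}(g=3)$ coincides with $\mathcal{B}_{n,j}(g=3)$ and equals the space of alternating polynomials of odd total degree in $\mathbb{Q}[y_1,\dots,y_4]/(\tau_1)$.

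This space is computed explicitly using the fact that alternating polynomials in four variables form the free $\mathbb{Q}[\tau_1,\tau_2,\tau_3,\tau_4]$-module generated by the Vandermonde $\Delta=\prod_{i<j}(y_i-y_j)$, which has degree $6$. Modulo $\tau_1=0$, the space of alternating polynomials is $\Delta\cdot \mathbb{Q}[\tau_2,\tau_3,\tau_4]$; since $\deg\tau_2=2$, $\deg\tau_3=3$, $\deg\tau_4=4$, the odd-degree part is exactly $\Delta\tau_3\cdot \mathbb{Q}[\tau_2,(\tau_3)^2,\tau_4]$, as claimed.

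The main obstacle will be the precise sign computation in the $\mathbb{Z}/2$ orientation relation. Since the degrees of white vertices ($-n$) and dashed edges ($n-1$) both flip parity when passing from the odd to the even case, all contributions to the sign from vertex transpositions, edge transpositions, and edge-orientation reversals must be recomputed in Moskovich--Ohtsuki's framework. Verifying that the net sign reverses (from $+1$ to $-1$) is the key step that distinguishes the even case from the odd case and accounts for the appearance of the extra factor $\tau_3$ in the final answer.
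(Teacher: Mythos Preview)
Your outline has a genuine gap: there are \emph{two} topologically distinct trivalent $3$-loop graphs, not one. The paper (following Moskovich--Ohtsuki) parameterizes $\mathcal{B}_{n,j}(g=3)$ by monomials on both the tetrahedron $\ctext{Y}$ (variables $y_i$) and the prism $\textcircled{=}$ (variables $z_i$), linked by an IHX relation~($\ast$). You cannot work on ``a fixed $3$-loop trivalent graph'' and ignore the other; the reduction to $\ctext{Y}$ alone is precisely the content of Lemma~\ref{induced map}, which shows that for $n,j$ even the map $\psi:\mathcal{D}_{n,j}(\textcircled{=})\to\mathcal{B}_{n,j}(\ctext{Y})$ descends to give $\mathcal{B}^{odd}_{n,j}(g=3)\cong\mathcal{B}^{odd}_{n,j}(\ctext{Y})$. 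This step is missing from your argument.

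Your sign bookkeeping is also off. The automorphism group of the tetrahedron is $\mathfrak{S}_4$, not $\mathfrak{S}_4\times\mathbb{Z}/2$: there is no graph automorphism realizing the simultaneous sign change $y_i\mapsto -y_i$, so the ``$\mathbb{Z}/2$-action'' you invoke to force odd degree does not exist on $\ctext{Y}$. (In the paper the relevant $\mathbb{Z}/2$ sits inside $\text{Aut}(\textcircled{=})\cong D_8\times\mathbb{Z}/2$ and is what kills $\mathcal{B}^{even}_{n,j}(\textcircled{=})$.) Likewise, the $\mathfrak{S}_4$ orientation relation on $\ctext{Y}$ is not $(\operatorname{sign}\sigma)^{n+1}$ by naive analogy with the $2$-loop $\Theta$; the paper computes it as $(\operatorname{sign}\sigma)^{p+q+r+s}$, independent of $n$. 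It happens that both your formula and the correct one yield ``alternating'' in the odd-degree case at hand, so your final polynomial computation of $\mathcal{B}^{odd}_{n,j}(\ctext{Y})$ as $\Delta\tau_3\,\mathbb{Q}[\tau_2,(\tau_3)^2,\tau_4]$ is right---but the route you propose to get there does not hold up.
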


\begin{proof}[Proof of Theorem \ref{3loopcomputation} and \ref{3loopcomputation2}]

First, we can identify $\mathcal{B}_{n,j}(g=3)$ with the space of labeled graphs $\ctext{Y}(p_1, p_2, p_3, p_4, p_5, p_6)$ in Figure \ref{3looplabeledgraph1} and $\textcircled{=}(p_1, p_2, p_3, p_4, p_5, p_6)$  in Figure \ref{3looplabeledgraph2}, modulo relations we later introduce.

\begin{figure}[htpb]
\begin{center}
\includegraphics[width = 12cm]{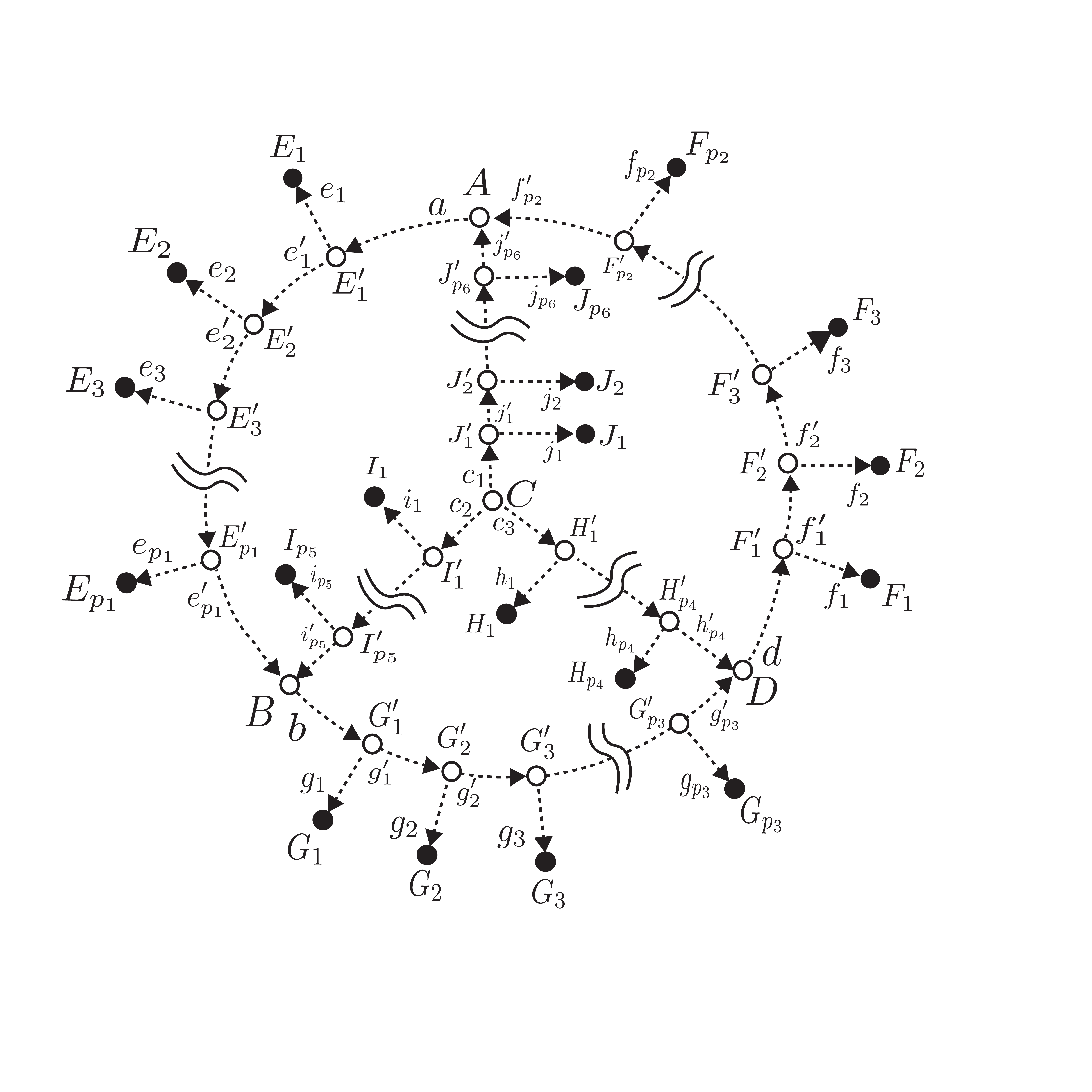}
\caption{The $3$-loop graph $\ctext{Y}(p_1, p_2, p_3, p_4, p_5, p_6)$}
\label{3looplabeledgraph1}
\end{center}
\end{figure}

\begin{figure}[htpb]
\begin{center}
\includegraphics[width = 13cm]{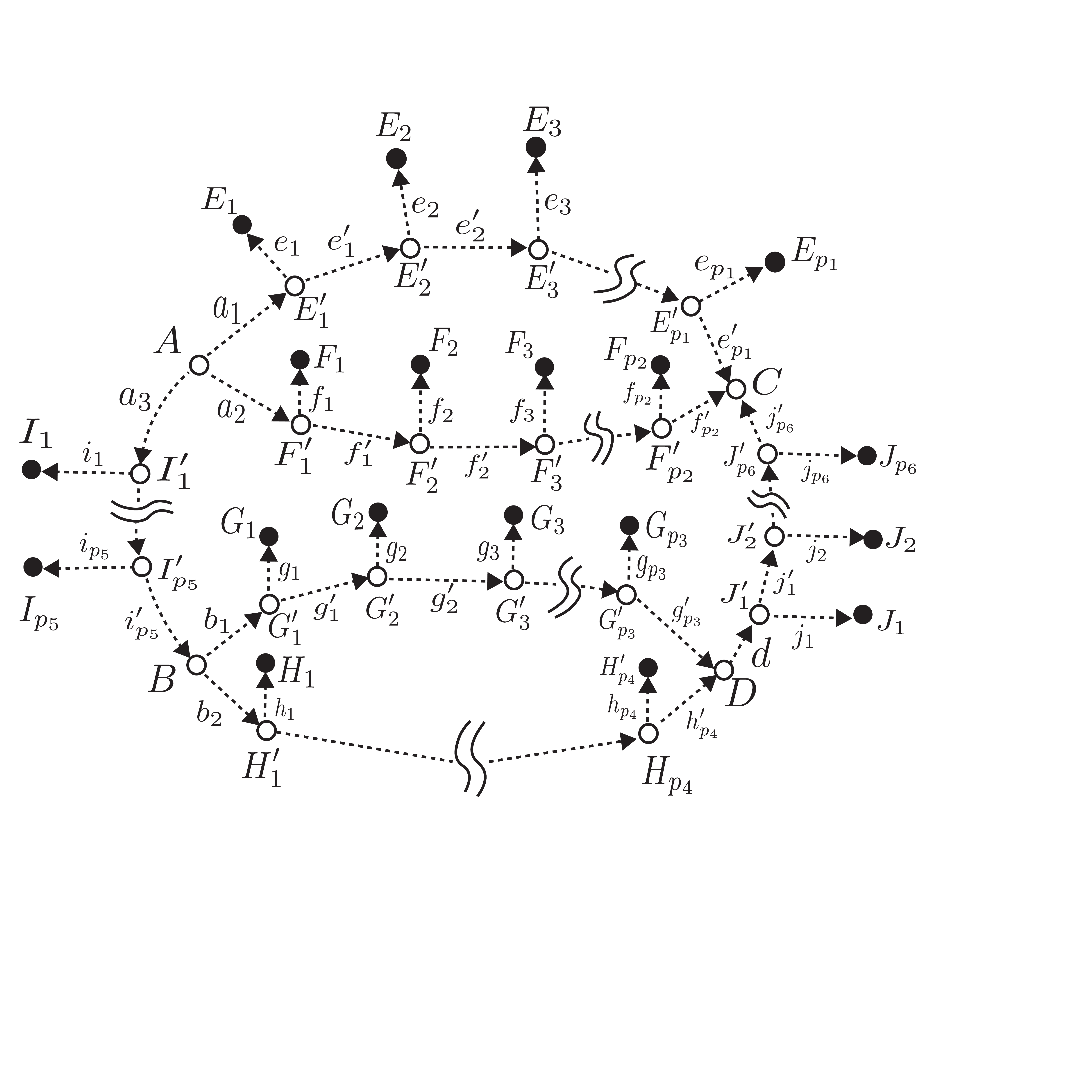}
\caption{The $3$-loop graph $\textcircled{=}(p_1, p_2, p_3, p_4, p_5, p_6)$}
\label{3looplabeledgraph2}
\end{center}
\end{figure}

We write 
\begin{align*}
x_i &=  \ctext{Y}(0, \dots,0,\overset{(i)}{1},0, \dots, 0),\\
z_i &=  \textcircled{=}(0, \dots,0,\overset{(i)}{1},0, \dots, 0).
\end{align*}
The relations corresponding to IHX relations of $\mathcal{B}_{n,j}$ are
 \[
\begin{cases}
x_1 - x_2 -x_6 = 0 \\
-x_1 + x_3 - x_5 = 0\\
x_4 + x_5 + x_6 = 0\\
-x_2 + x_3 +x_4 = 0,\\
\end{cases}
\begin{cases}
z_1 +z_2 + z_5 = 0 \\
z_3 + z_4 - z_5 = 0\\
z_1 + z_2 + z_6 = 0\\
z_3 + z_4 -z_6= 0\\
\end{cases}
\]
and the relation in Figure \ref{3looplabeledgraphIHX}, which is 
\begin{itemize}
\item [($\ast$)] $z_1^{p} z_2^{q} z_3^{r}  z_4^{s} =  x_1^{p} (-x_2)^{s} x_4^{r} x_5^{q} +(-1)^{n+1} x_1^{q} (-x_2)^{s} x_4^{r} x_5^{p}$.
\end{itemize}

\begin{figure}[htpb]
\begin{center}
\includegraphics[width = 18cm]{3looplabeledgraphIHX.pdf}
\caption{The IHX relation relating $\ctext{Y}(p_1, p_2, p_3, p_4, p_5, p_6)$ and $\textcircled{=}(p_1, p_2, p_3, p_4, p_5, p_6)$}
\label{3looplabeledgraphIHX}
\end{center}
\end{figure}

Define new variables $y_1, y_2, y_3, y_4$ by 
\[
\begin{cases}
y_1 = x_1 - x_5 + x_6  \\
y_2 = x_2 + x_4 - x_6 \\
y_3 = x_3 - x_4 + x_5 \\
y_4 = -x_1 - x_2 - x_3. \\
\end{cases}
\]
They satisfy $y_1+y_2+y_3+y_4= 0$ and $z_1+z_2+z_3+z_4= 0$. By the above IHX relations, $\mathcal{B}_{n,j}(g=3)$ is generated by elements of the form $z_1^{p_1} z_2^{p_2} z_3^{p_3}  z_4^{p_4}$ or $y_1^{q_1} y_2^{q_2} y_3^{q_3} y_4^{q_4}$.
Then, the relations corresponding to orientation relations are
\[
y_{\sigma(1)}^p y_{\sigma(2)}^q y_{\sigma(3)}^r y_{\sigma(4)}^s = {(sign(\sigma))}^{p+q+r+s} y_1^p y_2^q y_3^r y_4^s \quad (\sigma \in \mathfrak{S}_4),
\]
\[
\begin{cases}
z_1^p z_2^q z_3^r z_4^s  &= (-1)^{n+1}(-z_1)^p (-z_2)^q (-z_3)^r (-z_4)^s,  \\
z_1^p z_2^q z_3^r z_4^s &= (-1)^{n+1}  z_2^p z_1^q z_3^r z_4^s,  \\
z_1^p z_2^q z_3^r z_4^s &= (-1)^{n+1} z_1^p z_2^q z_4^r z_3^s, \\
z_1^p z_2^q z_3^r z_4^s &= z_4^p z_3^q z_2^r z_1^s, \\

\end{cases}
\]
which correspond to the action of $\text{Aut}(\ctext{Y}) \cong\mathfrak{S}_4$, permutations of faces of tetrahedron, and the action of  $\text{Aut}(\textcircled{=}) \cong D_8 \times  \mathbb{Z}/2\mathbb{Z}$. 
 Let $\mathcal{D}_{n,j}(\textcircled{=})$ be the space of $z_1^{p_1} z_2^{p_2} z_3^{p_3} z_4^{p_4}$ and let  $\mathcal{B}_{n,j}(\textcircled{=})$ be the quotient of $\mathcal{D}_{n,j}(\textcircled{=})$ by the action of  $\text{Aut}(\textcircled{=})$ and the relation $z_1+z_2+z_3+z_4=0$. Let $\mathcal{D}_{n,j}(\ctext{Y})$ be the space of $y_1^{q_1} y_2^{q_2} y_3^{q_3} y_4^{q_4}$ and let $\mathcal{B}_{n,j}(\ctext{Y})$ be the quotient of $\mathcal{D}_{n,j}(\ctext{Y})$ by the action of $\text{Aut}(\ctext{Y})$ and the relation $y_1+y_2+y_3+y_4 = 0$. 
 
 \begin{lemma}
 \label{induced map}
 Consider the map
 \[
 \psi: \mathcal{D}_{n,j}(\textcircled{=}) \rightarrow \mathcal{B}_{n,j}(\ctext{Y})
 \]
 defined from the relation ($\ast$). Then, this map $\psi$ is invariant by the action of the subgroup $D_8$ of $\text{Aut}(\textcircled{=})$. 
 Moreover, when $n$ and $j$ are odd, 
 \[
 \overline{\psi}:  \mathcal{B}^{even} _{n,j}(\textcircled{=}) \rightarrow \mathcal{B}^{even}_{n,j}(\ctext{Y})
 \]
 is induced. 
 On the other hand, when $n$ and $j$ are even
 \[
 \overline{\psi}:  \mathcal{B}^{odd} _{n,j}(\textcircled{=}) \rightarrow \mathcal{B}^{odd}_{n,j}(\ctext{Y})
 \]
 is induced. 
 \end{lemma}
 \begin{proof}
 The second and third statement follows because the action of $\mathbb{Z}/2\mathbb{Z}$ to  $\mathcal{D}^{even} _{n,j}(\textcircled{=})$ (and  $\mathcal{D}^{odd} _{n,j}(\textcircled{=})$)  is trivial when $n, j$: odd (resp. $n$, $j$: even). 
 \end{proof}
 
For both the case $n$, $j$: odd and the case $n$, $j$: even, we have
\begin{align*}
\mathcal{B}^{even}_{n,j}(\ctext{Y}) &\cong \frac{\mathbb{Q}[y_1, y_2, y_3,y_4]_{even}}{(y_1+y_2+y_3+y_4=0), \mathfrak{S}_4} \\
& \cong\mathbb{Q}[y_1, y_2, y_3, y_4]^{\mathfrak{S}_4}_{even} / (y_1+y_2+y_3+y_4=0) \\
& = Sym (y_1, y_2, y_3, y_4)_{even}/ (y_1+y_2+y_3 + y_4=0) \\
& \cong \mathbb{Q}[\sigma_2, ( \sigma_3)^2, \sigma_4], \\
\mathcal{B}^{odd}_{n,j}(\ctext{Y}) &\cong \frac{\mathbb{Q}[y_1, y_2, y_3,y_4]_{odd}}{(y_1+y_2+y_3+y_4=0), \mathfrak{S}_4} \\
& \cong\mathbb{Q}[y_1, y_2, y_3, y_4]^{\mathfrak{S}_4}_{odd} / (y_1+y_2+y_3+y_4=0) \\
& = Alt (y_1, y_2, y_3, y_4)_{odd}/ (y_1+y_2+y_3 + y_4=0) \\
& \cong \Delta \sigma_3 \mathbb{Q}[\sigma_2, ( \sigma_3)^2, \sigma_4]. \\
\end{align*}

Suppose $n$ and $j$ are odd. Then, by Lemma \ref{induced map} we have 
\[
\mathcal{B}^{even}_{n,j} \cong \mathcal{B}^{even}_{n,j}(\ctext{Y}) \cong \mathbb{Q}[\sigma_2, ( \sigma_3)^2, \sigma_4]. 
\]
On the other hand, 
\begin{align*}
\mathcal{B}^{odd}_{n,j}(\textcircled{=}) &\cong \frac{\mathbb{Q}[z_1, z_2, z_3,z_4]_{odd}}{(z_1+z_2+z_3+z_4=0), D_8 \times \mathbb{Z}/2\mathbb{Z}} \\
& \cong 0,
 \end{align*}
 due to the action of $\mathbb{Z}/2\mathbb{Z}$. Moskovich and Ohtsuki showed that 
 \[
 \mathcal{B}^{odd}_{n,j} \cong 0
 \]
 by studying the relation ($\ast$).

Suppose $n$ and $j$ are even. Then, by Lemma \ref{induced map} we have 
\[
\mathcal{B}^{odd}_{n,j} \cong \mathcal{B}^{odd}_{n,j}(\ctext{Y}) \cong \Delta \sigma_3 \mathbb{Q}[\sigma_2, ( \sigma_3)^2, \sigma_4]. 
\]
On the other hand, 
\begin{align*}
  \mathcal{B}^{even}_{n,j}(\textcircled{=}) &\cong \frac{\mathbb{Q}[z_1, z_2, z_3,z_4]_{even}}{(z_1+z_2+z_3+z_4=0), D_8 \times \mathbb{Z}/2\mathbb{Z}} \\
 & \cong 0,
 \end{align*}
 due to the action of $\mathbb{Z}/2\mathbb{Z}$.

\end{proof}

\section{The hidden face dg algebra and the decorated graph complex}
\label{The hidden face dg algebra and the decorated graph complex}

\subsection{The hidden face dg algebra $A_{n,j}$}
\label{The hidden face dg algebra $A_{n,j}$}

Let $\text{Inj}(\mathbb{R}^j, \mathbb{R}^n)$ be the space of linear injective maps from $\mathbb{R}^j$ to  $\mathbb{R}^n$.
Here, we define the hidden face dg algebra $A_{n,j} (j \geq 2)$. The algebra $A_{n,1}$ (with the trivial differential) is defined similarly. See remark \ref{hidden face dg algebra j=1}.  Later, we give a morphism of dg algebras  $I: A_{n,j} \rightarrow \Omega_{dR}^{\ast}\text{Inj}(\mathbb{R}^j, \mathbb{R}^n)$. 

\begin{definition}
\textit{Generalized plain graphs} have three types of vertices
(white \begin{tikzpicture}[baseline = -3pt]  \draw (0, 0) circle (0.05); \end{tikzpicture},
external black \begin{tikzpicture} [baseline = -3pt]  \draw (0, 0) circle (0.05) [fill={rgb, 255:red, 0; green, 0; blue, 0}, fill opacity =1.0]; \end{tikzpicture},
internal black \begin{tikzpicture}  [baseline = -2pt]   \draw (0, 0) rectangle (0.1, 0.1) [fill={rgb, 255:red, 0; green, 0; blue, 0}, fill opacity =1.0]; \end{tikzpicture})
and two types of edges (dashed, solid). White vertices have at least one dashed edge and no solid edge, while internal black vertices have at least one solid edge and no dashed edge. External black vertices have at least one (solid or dashed) edge. Double edges and loop edges are allowed.

A generalized plain graph is \textit{admissible} if every external black vertex has at least one dashed edge. 
\end{definition}

\begin{definition}
\label{defofA}
As a vector space, the algebra $A_{n,j}$ is generated by finite sequences of connected, labeled, admissible, generalized plain graphs. %with at least two vertices. 
The sequence $\emptyset$ of length $0$ is considered as an element of $A_{n,j}$.
The relations of $A_{n,j}$ are the following. 
\begin{itemize}
\item [(0)]  Relations that graphs with negative normalized degrees (see Definition \ref{normalized degree}) and sequences with normalized degrees more than $nj -1 = dim(\text{Inj}(\mathbb{R}^j, \mathbb{R}^n)) -1$ vanish. 
\item [(1)]  Orientation relations of graphs. 
\item [(2)]  Relations that graphs with double edges or loop edges vanish.
%\item [(3)]  Relations that graphs without black vertices and graphs without dashed edges vanish. 
\item [(3)]  Relations that graphs without external black vertices vanish if they have at least three vertices. 
\item [(4)]  Rescaling and symmetry relations which are defined in Definition \ref{rescaling relation} and Definition \ref{symmetry relation}.
\item [(5)]  Relations that the following three graphs with two vertices, $\dashededgea{-Stealth}{1}{2}$, $\dashededgec{-Stealth}{1}{2}$ and $\solidedgee{-Stealth}{1}{2}$ are identified with $\emptyset$. 
(On the other hand
 \dashededgeb{-Stealth}{1}{2} and 
\begin{tikzpicture}[x=0.75pt,y=0.75pt,yscale=1,xscale=1, baseline=-3pt, line width =0.8pt]
\draw  [fill={rgb, 255:red, 0; green, 0; blue, 0 }  ,fill opacity=1 ] (0,0) circle (3) node [anchor = north] {$1$};
\draw  [fill={rgb, 255:red, 0; green, 0; blue, 0 }  ,fill opacity=1 ] (50,0) circle (3) node [anchor = north] {$2$} ;
\draw  [dash pattern = on 3pt off 3 pt, -Stealth] (0,0) .. controls (20,10) and (30,10)  .. (50,0);
\draw  [-Stealth] (0,0) .. controls (20,-10) and (30,-10)  .. (50,0);
\end{tikzpicture}
may survive.  )

\item [(6)]   Permutation relations:
\[
\ytableausetup{centertableaux, mathmode, boxsize = 2em}
\begin{ytableau}
\Gamma & \Gamma^{\prime}  \\
\end{ytableau}
= (-1)^{[\Gamma][\Gamma^{\prime}]}\ 
\begin{ytableau}
\Gamma^{\prime} & \Gamma  \\
\end{ytableau}.
\]
The normalized degree $[\Gamma]$ is defined in Definition \ref{normalized degree}.
\end{itemize}

\end{definition}

\begin{rem}
With conditions (3)(4)(5) in Definition \ref{defofA}, we can see that $A_{n,j}$ is generated by (finite sequences of) admissible plain graphs in the original sense. 
\end{rem}

\begin{definition}[rescaling relations]
\label{rescaling relation}
We define rescaling relations as follows. 
Let a graph $\Gamma$ have a cut vertex $v$: a vertex such that if the vertex is removed, the graph is decomposed into at least two (non-empty) components:
\[
\Gamma = \Gamma_1 \cup_v \Gamma_2. 
\]
 If the cut vertex is white, additionally assume that  one component consists of only white vertices. Then $\Gamma \sim 0$. In particular, if a graph with at least three vertices has one of the following vertices, the graph vanishes.
\begin{itemize}
\item a white vertex with exactly one dashed edge
\item a black vertex with exactly one solid edge and no dashed edge
\item a (external) black vertex with exactly one chord
\end{itemize}
\end{definition}

\begin{definition}[symmetry relations]
\label{symmetry relation}

We define symmetry relations as follows.
Let $v_1, v_2$ be (possibly the same) two vertices of $\Gamma$. Let $\Gamma_S$ be a subgraph of $\Gamma$ with $v_1, v_2$ such that $\Gamma_S \setminus \{v_1, v_2\}$ is a connected component of $\Gamma\setminus\{v_1, v_2\}$:
\[
\Gamma = \Gamma_S \cup_{\{v_1, v_2\}} \Gamma^{\prime} \quad (\Gamma^{\prime} = (\Gamma \setminus \Gamma_S) \cup \{v_1, v_2\})
\]
If at least one of $v_1, v_2$ is white, assume that $\Gamma_S \setminus \{v_1, v_2\}$ consists of only white vertices. 
Consider the new graph $\overline{\Gamma}$ obtatined by reversing $\Gamma_S$, namely by attaching $v_1$ (resp. $v_2$) of $\Gamma_S$ to $v_2$ (resp. $v_1$) of $\Gamma^{\prime}$. Then 
\begin{align*}
\Gamma &\sim (-1)^{-n|W(S \setminus \{v_1, v_2\} )| - j|B(S\setminus \{v_1, v_2\})| +j|E_{\eta}(S)| + n |E_{\theta}(S)|}\ \overline{\Gamma}
% = 
\end{align*}

\begin{figure}[htpb]
   \centering
    \includegraphics [width =7cm] {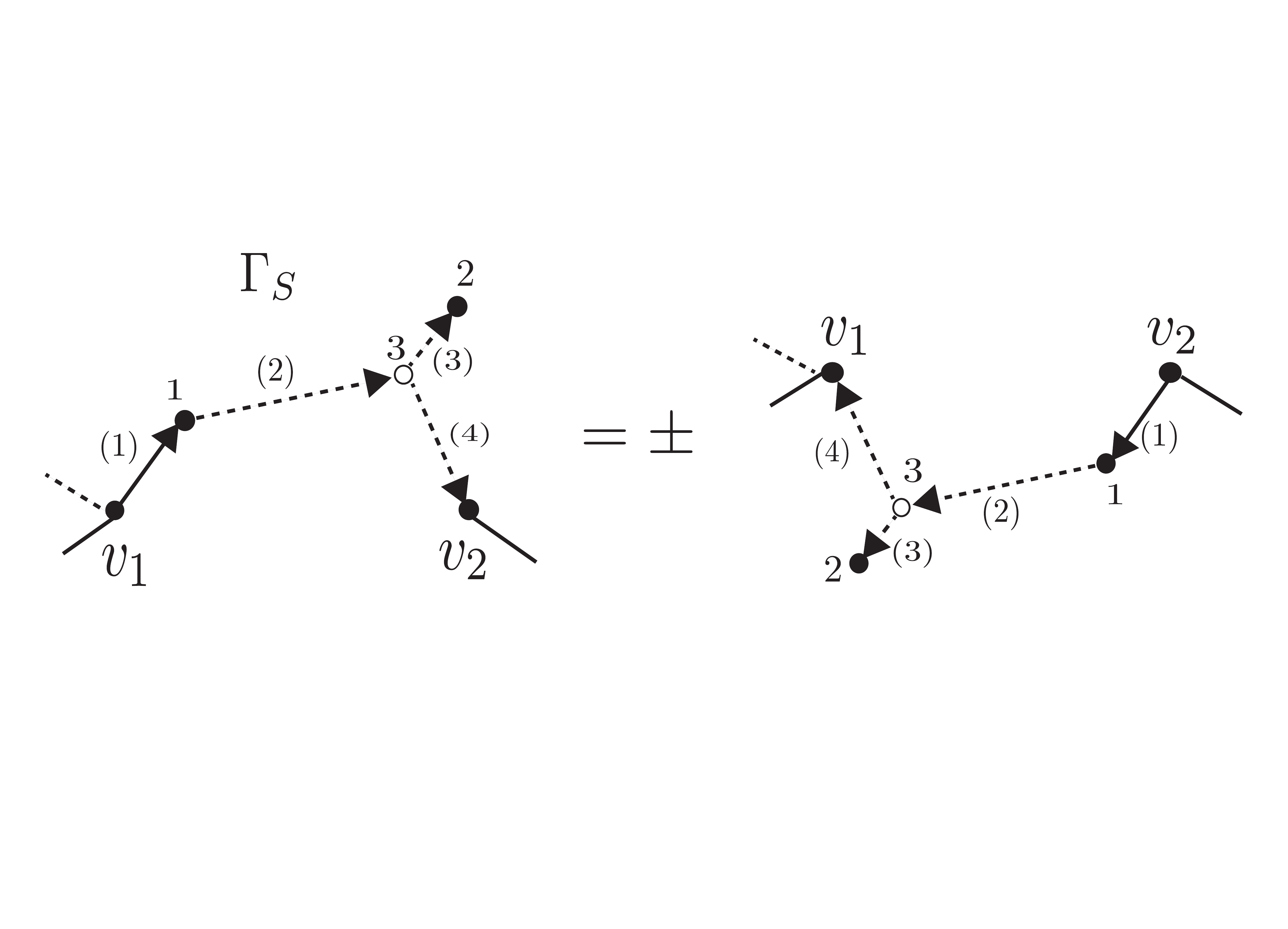}
    \caption{An example of symmetry relations}
    \label{figofsymmetryrelation}
\end{figure}

In particular,  if a graph has a white vertex with exactly two dashed edges or it has a black vertex with two solid edges and no dashed edge, the graph vanishes. This relation is well-known and is introduced by \cite{Kon 1}. 
\end{definition}

\begin{definition}
\label{normalized degree}
For a connected graph $\Gamma$ of $A_{n,j}$, define the normalized degree $\overline{deg}(\Gamma) = [\Gamma]$ of $\Gamma$  by 
\[
[\Gamma] = (n-1) |E_{\theta}(\Gamma)| 
+ (j-1) |E_{\eta}(\Gamma)| 
-n |W(\Gamma)| 
-j |B(\Gamma)|
+ (j+1)
\]
if $\Gamma$ has at least one black vertex, and by 
\[
[\Gamma] = (n-1) |E_{\theta}(\Gamma)| 
+ (j-1) |E_{\eta}(\Gamma)| 
-n |W(\Gamma)| 
-j |B(\Gamma)|
+ (n+1)
\]
if $\Gamma$ has  no black vertex.
The degrees of general elements of $A_{n,j}$ are defined by the sum of degrees of graphs. 
\end{definition}

\begin{definition}

\begin{itemize}
\item The product of $A_{n,j}$ is defined by
\[
\ytableausetup{centertableaux, mathmode, boxsize = 2em}
\begin{ytableau}
a \\
\end{ytableau}
\cdot
\begin{ytableau}
a^{\prime} \\
\end{ytableau}
= 
\begin{ytableau}
a & a^{\prime}  \\
\end{ytableau}\ ,
\]
which is a graded commutative product with the unit $\emptyset$.
\item The augmentation $\eta: A_{n,j} \rightarrow \mathbb{R}$ is defined by
\begin{equation*}
\eta (a) = 
\begin{cases}
x & (\text{if}\ a = x \cdot \emptyset,\ x \in \mathbb{R}) \\
0 &(\text{otherwise}).
\end{cases}
\end{equation*}.
\end{itemize}
\end{definition}

\begin{definition}
The differential of a connected, labeled, admissible generalized plain graph $\Gamma$ of $A_{n,j}$ is defined by 
\begin{equation*}
d \Gamma = 
\sum_{\substack{S \subsetneq V(\Gamma) \\  |S| \geq 2}} \sigma(S) \Gamma/S \cdot \Gamma_S.
\end{equation*}
In the graph $\Gamma_S$, black vertices without dashed edges are regarded as internal. The sign $\sigma(S)$ is defined by 
\begin{align*}
\sigma(S) &= (-1)^{[\Gamma]+1} \tau(S) \Phi(S) (-1)^{\overline{deg}(V(\Gamma/S))(1+[\Gamma_S])}.
\end{align*}

In particular, the differential of graphs with exactly two vertices vanishes. 
Exceptionally, we define that the differential of graphs with negative normalized degrees vanishes. 
The differential is extended to $A_{n,j}$ by setting
\[
d(\Gamma \cdot \Gamma^{\prime}) = d(\Gamma) \cdot \Gamma^{\prime} + (-1)^{[\Gamma]} \Gamma \cdot d(\Gamma^{\prime}).
\]
\end{definition}

Then, we have
\begin{prop}
The differential vanishes on the space $N_{n,j}$ generated by relations in Definition \ref{defofA}. 
\end{prop}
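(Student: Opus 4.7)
The plan is to verify, relation by relation from Definition~\ref{defofA}, that $d$ sends each type of relation into $N_{n,j}$, so that $d$ descends to $A_{n,j}$. I would first dispose of the bookkeeping relations. The degree bounds of (0) are automatic: graphs of negative normalized degree have $d$ declared to be zero, and the image under $d$ of a sequence of normalized degree $>nj-1$ has degree $>nj$, still killed by (0). The orientation relations (1) follow from the design of $\tau(S)$, $\Phi(S)$, $\eta(e)$ and the $(-1)^{[\Gamma]+1}$ and $(-1)^{\overline{\deg}(V(\Gamma/S))(1+[\Gamma_S])}$ factors in $\sigma(S)$, along the lines of the sign analysis collected in subsection~\ref{Notation on orientations and signs}. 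The graded-commutativity relations (6) are respected because $d$ is extended to products by the graded Leibniz rule. The two-vertex relations (5) are immediate: for $|V(\Gamma)|=2$ there is no $S\subsetneq V(\Gamma)$ with $|S|\ge 2$, so $d\Gamma=0$, matching $d\emptyset=0$.

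I would then treat the double/loop edge relation (2). Given a double edge with endpoints $u,v$ in $\Gamma$, the two parallel edges stay together in every term $\Gamma/S\cdot\Gamma_S$: if $\{u,v\}\subseteq S$ they are inside $\Gamma_S$; if $\{u,v\}\cap S=\varnothing$ they are inside $\Gamma/S$; if exactly one of $u,v$ is in $S$, the two edges become parallel edges between the collapsed vertex $v_S$ and the remaining endpoint of $\Gamma/S$. Loop edges are similar. In each case one factor carries a double or loop edge, so the whole term lies in $N_{n,j}$.

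For relation (3), a graph $\Gamma$ with no external black vertex and at least three vertices, the key observation is that external-ness is inherited: no black vertex of $\Gamma_S$ acquires a dashed edge it did not have in $\Gamma$, and the collapsed vertex $v_S$ of $\Gamma/S$ inherits the type from $S$ and is not newly external. Whenever either $\Gamma_S$ or $\Gamma/S$ still has $\ge 3$ vertices, (3) kills the term. The finitely many borderline configurations $|V(\Gamma)|=3$, $|S|=2$ reduce the two-vertex factors using (5) and show that the remaining factor (obtained by contracting two of three vertices of $\Gamma$) inherits a double edge, so the term is eliminated by (2).

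The main obstacle is relation (4). For a rescaling configuration $\Gamma=\Gamma_1\cup_v\Gamma_2$, I would partition the sum $d\Gamma=\sum_S\sigma(S)\,\Gamma/S\cdot\Gamma_S$ into three classes: subsets $S$ contained in a single side with $v\notin S$, subsets $S$ containing $v$, and subsets $S$ straddling the two sides without containing $v$. In the first class $\Gamma/S$ still has $v$ as a cut vertex of the same type, so it vanishes by rescaling; in the second $\Gamma_S$ inherits a cut-vertex structure at $v$ and vanishes by rescaling. The delicate third class produces a disconnected $\Gamma_S=\Gamma_S^1\cdot\Gamma_S^2$, which I would match with the $S\cup\{v\}$ term from the second class so that the two sign contributions cancel modulo (4). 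The symmetry relations of Definition~\ref{symmetry relation} are handled by pairing each term in $d\Gamma$ with its image under the flip along the symmetry subgraph in $d\overline\Gamma$, and checking that the discrepancy $\sigma(S)-(-1)^{\cdots}\sigma(\overline S)$ matches the prescribed symmetry sign $(-1)^{-n|W(S\setminus\{v_1,v_2\})|-j|B(S\setminus\{v_1,v_2\})|+j|E_\eta(S)|+n|E_\theta(S)|}$. I expect the hard part to be purely the sign bookkeeping: tracking the orderings of $E(\cdot)$ and $\overline{V(\cdot)}$ through subgraph, contraction and flip, following the conventions in subsection~\ref{Notation on orientations and signs}, and verifying that the cases ``$S\supseteq\{v_1,v_2\}$'' and ``$S\cap\{v_1,v_2\}=\varnothing$'' combine with the correct relative sign.
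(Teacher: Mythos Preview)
Your handling of (0)--(3), (5), (6) and the symmetry half of (4) is in line with the paper's reasoning. The gap is in the rescaling half of (4).

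Your class-(b) claim is too strong: if $v\in S\subseteq V(\Gamma_1)$, then $\Gamma_S$ is a subgraph of $\Gamma_1$ and $v$ need not be a cut vertex of $\Gamma_S$ at all. For $S\subsetneq V(\Gamma_1)$ with $v\in S$ it is instead $\Gamma/S$ that carries a cut vertex (the collapsed point separates $\Gamma_2\setminus\{v\}$ from $\Gamma_1\setminus S$), so those terms still die. But for the two extreme values $S=V(\Gamma_1)$ and $S=V(\Gamma_2)$ one obtains $\Gamma/S\cdot\Gamma_S=\Gamma_2\cdot\Gamma_1$ and $\Gamma_1\cdot\Gamma_2$, and \emph{neither} factor has a cut vertex. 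The paper's proof is precisely that these two terms, and only these, survive and then cancel against each other. Your three-class scheme never isolates this pair, so you miss the one cancellation that actually matters.

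Your class-(c) pairing also fails as stated: the proposed partner $S\cup\{v\}$ contains $v$ together with vertices on both sides, so $v$ \emph{is} a cut vertex of $\Gamma_{S\cup\{v\}}$ and that term is already zero modulo (4); it cannot be reused to absorb the straddling contribution.
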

\begin{proof}
We only check it for relations (2) and (4). Other cases are rather obvious. 

(2) If a sequence has a graph with double or loop edges, the sequences after the differential also have such a graph. 

(4) Let a sequence have an  graph $\Gamma$ with a cut verte $v$:
\[
\Gamma = \Gamma_1 \cup_v \Gamma_2.
\]
The contraction which breaks this cut vertex is the one that decomposes $\Gamma$ into the two components $\Gamma_1$ and $\Gamma_2$. That is, the cases $S = V(\Gamma_1)$ or  $S = V(\Gamma_2)$. 
However, these two contributions cancel each other.  
Next, consider a pair of sequences related by a symmetry relation. Let $\Gamma =  \Gamma_T \cup_{\{v_1, v_2\}} \Gamma^{\prime}$ and $\overline{\Gamma}$ be the graph to which the symmetry relation is applied. 
Say, $\Gamma \sim (-1)^n \overline{\Gamma}$.  Consider the contraction $d_S(\Gamma)$ of $\Gamma$ that breaks $\Gamma_T$. That is, the case $(S \setminus \{v_1, v_2\})  \cap T \neq \emptyset$, $T \not\subset S$. 
Then, by considering symmetry relations of both $\Gamma_S$ and  $\Gamma/S$, we see $d_S\Gamma \simeq (-1)^n d_S \overline{\Gamma}$ holds. 
\end{proof}

\begin{rem}
Let $S = (s_1, \dots, s_{|S|})$ be an ordered subset of $\Gamma$, where $s_1$ is a black vertex. Then, the sign $\sigma(S)$ is characterized as follows. Consider the ordered set
\[
-\mu \sqcup o(\Gamma) = -\mu \sqcup E(\Gamma)\sqcup \overline{V(\Gamma)} \sqcup \iota_{s_1} \sqcup \iota.
\]
Here, $\overline{V(\Gamma)}$ is the ordered set obtained by reversing the order of $V(\Gamma)$. Elements of $E(\Gamma)$ and $V(\Gamma)$ are graded. The valuable $\mu$ at the head and $\iota$ at the tail are elements of degree one while $\iota_{s_1}$ is an element of degree $j$. The variables $\iota$ and $\iota_{s_1}$ correspond to the rescaling and translation of configuration spaces. Then $\sigma(S)$ is the sign which arises when permutating $- \mu \sqcup  o(\Gamma)$ to
\[
E(\Gamma/S)  \sqcup \overline{V(\Gamma/S)} \sqcup \iota_{s_1} \sqcup \iota \sqcup (E(\Gamma_S) \sqcup \overline{V(\Gamma_{S}) \setminus s_1 } \sqcup \mu ).
\]
\end{rem}

\begin{prop}
$d^2 = 0$.
\end{prop}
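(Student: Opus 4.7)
The plan is to prove $d^2=0$ on a single connected generalized plain graph $\Gamma$; the Leibniz extension of $d$ then forces $d^2=0$ on arbitrary sequences. Unfolding, one has
\[
d^2\Gamma \;=\; \sum_{S\subsetneq V(\Gamma),\,|S|\geq 2} \sigma(S)\Bigl[\,d(\Gamma/S)\cdot\Gamma_S \;+\;(-1)^{[\Gamma/S]}\,\Gamma/S\cdot d(\Gamma_S)\,\Bigr].
\]
The terms produced on the right are, as sequences, of exactly two shapes: either three factors $\Gamma/(S_1\sqcup S_2)\cdot\Gamma_{S_2}\cdot\Gamma_{S_1}$ coming from two disjoint subsets $S_1,S_2\subset V(\Gamma)$, or three factors $\Gamma/S'\cdot\Gamma_{S'}/S\cdot\Gamma_{S}$ coming from a strict chain $S\subsetneq S'\subsetneq V(\Gamma)$. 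The strategy is to match each term with exactly one partner of opposite sign.

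\textbf{Step 1: disjoint pairs.} Expanding $d(\Gamma/S)$, write $V(\Gamma/S)=(V(\Gamma)\setminus S)\cup\{*_S\}$ and split the sum over $T\subsetneq V(\Gamma/S)$ according to whether $*_S\in T$ (Case B) or not (Case A). In Case A, $T$ is a subset of $V(\Gamma)$ disjoint from $S$, and $(\Gamma/S)/T=\Gamma/(S\cup T)$, $(\Gamma/S)_T=\Gamma_T$. Given two disjoint subsets $S_1,S_2$ with $|S_i|\geq 2$, the Case A contribution with $(S,T)=(S_1,S_2)$ gives a sequence that agrees, after applying graded commutativity $\Gamma_{S_2}\cdot\Gamma_{S_1}=(-1)^{[\Gamma_{S_1}][\Gamma_{S_2}]}\Gamma_{S_1}\cdot\Gamma_{S_2}$, with the Case A contribution $(S,T)=(S_2,S_1)$. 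I will verify that the product of intrinsic signs $\sigma(S_1)\sigma_{\Gamma/S_1}(S_2)$ and $\sigma(S_2)\sigma_{\Gamma/S_2}(S_1)$ differs by $-(-1)^{[\Gamma_{S_1}][\Gamma_{S_2}]}$, so these two contributions cancel.

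\textbf{Step 2: nested chains.} In Case B, a subset $T\ni *_S$ corresponds bijectively to an $S'$ with $S\subsetneq S'\subsetneq V(\Gamma)$ via $S'=S\sqcup(T\setminus\{*_S\})$; then $(\Gamma/S)/T=\Gamma/S'$ and $(\Gamma/S)_T=\Gamma_{S'}/S$. The resulting sequence $\Gamma/S'\cdot\Gamma_{S'}/S\cdot\Gamma_S$ coincides with the term obtained from the Leibniz half $(-1)^{[\Gamma/S']}\Gamma/S'\cdot d(\Gamma_{S'})$ by taking $U=S\subset S'$. I will verify the sign identity
\[
\sigma(S)\,\sigma_{\Gamma/S}(T) \;=\; -\,(-1)^{[\Gamma/S']}\,\sigma(S')\,\sigma_{\Gamma_{S'}}(S),
\]
so that the two occurrences cancel. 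Every Case B term and every Leibniz term has a partner in this way, and no term is orphaned because the assumption $|S|,|T|\geq 2$ transports exactly to the condition $S\subsetneq S'\subsetneq V(\Gamma)$ with $|S|\geq 2$ and $|V(\Gamma)\setminus S|+1 = |V(\Gamma/S')|+|S'\setminus S|\geq 2$.

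\textbf{Main obstacle and tool.} The genuine work is the sign bookkeeping in Steps 1 and 2; everything else is combinatorics on subsets. The characterization of $\sigma(S)$ from the remark preceding the statement — as the sign of the permutation from $-\mu\sqcup E(\Gamma)\sqcup\overline{V(\Gamma)}\sqcup\iota_{s_1}\sqcup\iota$ to a rearranged ordered set — is built exactly for this argument. For a nested pair $S\subset S'$ I introduce two copies $(\mu_1,\iota_{s_1},\iota^{(1)})$ and $(\mu_2,\iota_{s'_1},\iota^{(2)})$ of the auxiliary variables and compute the composite permutation two ways: the difference of signs collapses to the single transposition $\mu_1\leftrightarrow\mu_2$, which yields the desired $(-1)$. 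For disjoint $S_1,S_2$ the analogous computation reduces likewise to the transposition of $\mu_1$ past $\mu_2$ together with the commutation of the two blocks $(\mu,\iota_{s_1},\iota)$, reproducing precisely the graded-commutativity sign $(-1)^{[\Gamma_{S_1}][\Gamma_{S_2}]}$ times $(-1)$. Cases where a contraction would produce a subgraph with double or loop edges, or a quotient with such, are absorbed by relations (2) of Definition \ref{defofA}, whose compatibility with $d$ has just been verified, so they cause no orphaned terms. This completes the outline; the actual proof amounts to making the two sign identities above explicit.
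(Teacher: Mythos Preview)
Your proposal is correct and follows essentially the same approach as the paper: you identify the same two families of terms (disjoint pairs $S_1,S_2$ and nested chains $S\subsetneq S'$), pair them off exactly as the paper does, and reduce the cancellation to the two sign identities that the paper isolates as a separate Lemma, proving them via the same orientation-set characterization of $\sigma(S)$ from the preceding Remark. Your treatment is slightly more explicit about the Leibniz extension to sequences and about relations (2) absorbing degenerate contractions, but the core argument is identical.
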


\begin{proof}
Let $\Gamma$ be an admissible plain graph. Then
\begin{align*}
d^2 \Gamma &= d\left( \sum_{\substack{S \subsetneq V(\Gamma) \\  |S| \geq 2}} \sigma(S, \Gamma) \Gamma/S \cdot \Gamma_S \right) \\
& = \sum_{\substack{S \subsetneq V(\Gamma) \\  |S| \geq 2}} \sigma(S, \Gamma) \left( d(\Gamma/S) \cdot \Gamma_S + (-1)^{[\Gamma/S]} \Gamma/S \cdot d \Gamma_S \right). \\
\end{align*} 
Here, 
\begin{align*}
d(\Gamma/S) &=  \sum_{\substack{T \subsetneq V(\Gamma/S) \\  |T| \geq 2}} \sigma(T, \Gamma/S) \left((\Gamma/S)/T\right) \cdot (\Gamma/S)_T \\
& = \sum_{\substack{T \subset V(\Gamma)\setminus S \\  |T| \geq 2}} \sigma(T, \Gamma/S) \left(\Gamma/(S \cup T)\right) \cdot \Gamma_T \\
& + \sum_{\substack{S \subsetneq T \subsetneq  V(\Gamma) \\ }} \sigma(T/S, \Gamma/S) \left(\Gamma/T\right) \cdot (\Gamma_T)/S, \\
d(\Gamma_S) &= \sum_{\substack{T \subsetneq S \\  |T| \geq 2}} \sigma(T, \Gamma_S) (\Gamma_S)/T \cdot \Gamma_T.
\end{align*}
So we have
\begin{align*}
d^2 \Gamma & = \sum_{\substack{S, T \subsetneq V(\Gamma) \\  |S|, |T| \geq 2,\ S\cap T = \emptyset}} \sigma(S, \Gamma) \sigma(T, \Gamma/S) (\Gamma/(S\cup T)) \cdot \Gamma_T \cdot \Gamma_S \\
& + \sum_{\substack{S \subsetneq T \subsetneq V(\Gamma) \\  |S| \geq 2}} \sigma(S, \Gamma) \sigma(T/S, \Gamma/S) (\Gamma/T) \cdot (\Gamma_T)/S \cdot \Gamma_S \\
&  + (-1)^{[\Gamma/T]}\sum_{\substack{S \subsetneq T \subsetneq V(\Gamma) \\  |S| \geq 2}} \sigma(T, \Gamma) \sigma(S, \Gamma_T) (\Gamma/T) \cdot (\Gamma_T)/S \cdot \Gamma_S
\end{align*}
These terms are canceled by the following lemma.
\end{proof}

\begin{lemma}
Let $S = (s_1, \dots, s_{|S|})$ and $T = (t_1, \dots, t_{|T|})$  be the ordered subset of $S$ and $T$.
\begin{itemize}
\item If $S \cap T = \emptyset$, we have
\[
\sigma(S, \Gamma) \sigma(T, \Gamma/S) = -(-1)^{[\Gamma_S][\Gamma_T]}\sigma(T, \Gamma)  \sigma (S, \Gamma/T). 
\]
\item If $S \subsetneq T$, we have 
\[
\sigma(S, \Gamma) \sigma(T/S, \Gamma/S) = -(-1)^{[\Gamma/T]}\sigma(T, \Gamma)  \sigma (S, \Gamma_T). 
\]
\end{itemize}
\end{lemma}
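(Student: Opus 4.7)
My plan is to prove both identities by the same technique used for Lemma \ref{changing the order of the contraction}: reinterpret each factor $\sigma(S,\Gamma)$ as the sign of a permutation of a specific ordered graded set built from $o(\Gamma)$ plus the auxiliary decoration variables $\mu$ (degree $1$), $\iota$ (degree $1$), and $\iota_{s_1}$ (degree $j$), as in the remark preceding the lemma. Then I iterate: a product $\sigma(S,\Gamma)\sigma(T,\ldots)$ is computed as the sign of the composite permutation from a starting set with two sets of decorations (written $\mu,\iota,\iota_{s_1}$ and $\nu,\iota',\iota_{t_1}$) to a final "fully-decomposed" ordered set. The relative sign between the two sides of each identity is then obtained by comparing the two targets (or the two starting sets, as convenient) as permutations of the same underlying ordered set.

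For the disjoint case $S\cap T=\emptyset$, starting from $-\nu\sqcup -\mu\sqcup o(\Gamma)$, the product $\sigma(S,\Gamma)\sigma(T,\Gamma/S)$ is the sign of a composite permutation whose target has the block $\bigl(E(\Gamma_T)\sqcup\overline{V(\Gamma_T)\setminus t_1}\sqcup \nu\bigr)$ appearing before $\bigl(E(\Gamma_S)\sqcup\overline{V(\Gamma_S)\setminus s_1}\sqcup\mu\bigr)$, whereas in the target for $\sigma(T,\Gamma)\sigma(S,\Gamma/T)$ these two blocks are swapped. A direct count (already sketched in the text for the normalized degree of $A_{n,j}$) shows that each of these two blocks has total degree equal to $[\Gamma_S]$ and $[\Gamma_T]$ respectively, so swapping them contributes $(-1)^{[\Gamma_S][\Gamma_T]}$. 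An additional factor of $-1$ arises from transposing the two degree-one decorations $-\mu$ and $-\nu$ at the head of the starting set, which is needed to match the order in which the two contractions are nominally applied. Combining the two factors gives exactly the required sign $-(-1)^{[\Gamma_S][\Gamma_T]}$.

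For the nested case $S\subsetneq T$, the final "fully-decomposed" ordered set is in fact identical for both sides: it consists of the data of $\Gamma/T$, the data of $(\Gamma_T)/S$, and the data of $\Gamma_S$, each equipped with the appropriate decoration variables. Hence the comparison reduces to tracking in which order the $-\mu$ and $-\nu$ are produced at the head, together with how the auxiliary $\iota_{t_1}$ gets repositioned past the block $(\Gamma_T)/S$ in one of the two ways of doing the computation. The transposition of the two head decorations again contributes $-1$, and the extra $(-1)^{[\Gamma/T]}$ arises from moving the outermost $\mu$-decorated block past the ordered data of $\Gamma/T$ (whose total normalized degree is $[\Gamma/T]$), which is only necessary when we "extract $T$ first and then cut out $S$ from inside" rather than "cut $S$ first and then reach the remainder of $T$".

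The bookkeeping is tedious but entirely mechanical once the permutation viewpoint is adopted; the main obstacle is being careful with the reversed orderings $\overline{V(\cdot)}$ and with the distinguished black vertex $s_1$ (respectively $t_1$) whose degree-$j$ variable $\iota_{s_1}$ (respectively $\iota_{t_1}$) must be tracked --- in particular, one has to verify that in both iterated computations the $\iota$-variables inherited by the iterated quotient $(\Gamma/S)/(T/S)=\Gamma/T$ are the same as those inherited by $\Gamma/T$ directly, so that no hidden discrepancy of degree-$j$ symbols arises. Once this check is made, the remaining permutation signs compose exactly as claimed, and both identities follow.
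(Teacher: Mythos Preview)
Your approach is correct and is essentially identical to the paper's own proof: the paper also interprets each $\sigma(S,\Gamma)$ as the Koszul sign of a permutation of the ordered graded set $-\mu\sqcup E(\Gamma)\sqcup\overline{V(\Gamma)}\sqcup\iota_{v_1}\sqcup\iota$, composes two such permutations by prepending a second variable $\nu$, and then compares sources and targets. For the disjoint case the paper obtains the factor $(-1)^{[\Gamma_S][\Gamma_T]}$ from swapping the two tail blocks of degrees $[\Gamma_S]$ and $[\Gamma_T]$ and the factor $-1$ from transposing the head variables $\mu,\nu$; for the nested case the targets coincide and the extra $(-1)^{[\Gamma/T]}$ is exactly the cost of moving the degree-one $\mu$ past the $\Gamma/T$ data, as you describe. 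Your additional remark about checking consistency of the $\iota$-variables is a point the paper leaves implicit but is indeed necessary for the argument to go through.
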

\begin{proof}

(1) The sign $\sigma(S, \Gamma)\sigma(T, \Gamma/S)$  (resp. $\sigma(T, \Gamma)\sigma(S, \Gamma/T)$) corresponds to the sign obtained by permutating the ordered set
\[
\nu \sqcup  \mu \sqcup E(\Gamma) \sqcup \overline{V(\Gamma)} \sqcup \iota_{v_1} \sqcup  \iota  \quad (\text{resp. $\mu \sqcup  \nu \sqcup E(\Gamma) \sqcup \overline{V(\Gamma)}  \sqcup \iota_{v_1} \sqcup  \iota $})
\]
to 
\[
E(\Gamma/ (S \cup T)) \sqcup \overline{V(\Gamma/ (S \cup T))} \sqcup \iota_{v_1} \sqcup  \iota  
\sqcup \left(E(\Gamma_T) \sqcup \overline{V(\Gamma_{T}) \setminus t_1 } \sqcup \nu \right)\sqcup \left(E(\Gamma_S) \sqcup \overline{V(\Gamma_{S}) \setminus s_1} \sqcup \mu \right). 
\]
(resp.
$E(\Gamma/ (S \cup T)) \sqcup \overline{V(\Gamma/ (S \cup T))} \sqcup \iota_{v_1} \sqcup  \iota 
\sqcup \left(E(\Gamma_S) \sqcup \overline{V(\Gamma_{S}) \setminus s_1 } \sqcup \mu \right)\sqcup \left(E(\Gamma_T) \sqcup \overline{V(\Gamma_{T}) \setminus t_1} \sqcup \nu \right). 
$)

(2) The sign $\sigma(S, \Gamma)\sigma(T/S, \Gamma/S)$ corresponds to the sign permutating the original orientation set 
\[
\nu \sqcup  \mu \sqcup E(\Gamma) \sqcup \overline{V(\Gamma)} \sqcup \iota_{v_1} \sqcup  \iota  \quad 
\]
to 
\[
E(\Gamma/ T) \sqcup \overline{V(\Gamma/ T)} \sqcup \iota_{v_1} \sqcup  \iota 
\sqcup \left(E(\Gamma_T/S) \sqcup \overline{V(\Gamma_{T}/S) \setminus t_1 } \sqcup \nu \right)\sqcup \left(E(\Gamma_S) \sqcup \overline{V(\Gamma_{S}) \setminus s_1} \sqcup \mu \right). 
\]
On the other hand, the sign $(-1)^{[\Gamma/T]} \sigma(T, \Gamma)  \sigma (S, \Gamma_T)$ corresponds to the sign permutation the original orientation set
\[
\mu \sqcup  \nu \sqcup E(\Gamma) \sqcup \overline{V(\Gamma)} \sqcup \iota_{v_1} \sqcup  \iota  \quad 
\]
to 
\[
E(\Gamma/ T) \sqcup \overline{V(\Gamma/ T)} \sqcup \iota_{v_1} \sqcup  \iota  
\sqcup \left(E(\Gamma_T/S) \sqcup \overline{V(\Gamma_{T}/S) \setminus t_1 } \sqcup \nu \right)\sqcup \left(E(\Gamma_S) \sqcup \overline{V(\Gamma_{S}) \setminus s_1} \sqcup \mu \right). 
\]

\end{proof}

\begin{notation}
For a finite sequence (product) $a = \Gamma_1\cdots\Gamma_l$ of labeled plain graphs, define the order $k(a)$ of $a$ by the sum of the orders of the graphs $\Gamma_i$. Define $g(a)$ similarly. 
It is easy to see that the differential of $A_{n,j}$ preserves $k$ and $g$. 
Hence, the dg algebra $A_{n,j}$ has a decomposition as a complex
\[
A_{n,j} = \bigoplus_{k\geq 1} \bigoplus_{g \geq 0} A_{ n,j}(k, g). 
\]
\end{notation}

\begin{rem}
\label{hidden face dg algebra j=1}
Let us mention the case $j = 1$. 
Let $\Gamma$ be a connected plain graph (on an oriented line) with at least three vertices. 
In this case, $\Gamma$ can survive in $A_{n,1}$ only when each white vertex has exactly three dashed edges and each black vertex has exactly one dashed edge. 
In fact, if one of the vertices has more edges, the normalized degree of $\Gamma$ exceeds the dimension of the sphere $S^{n-1}$. As a consequence, the algebra $A_{n,j}$ is generated by finite sequences of connected uni-trivalent graphs. It has the trivial differential and the trivial multiplication. Moreover, if $n\geq 4$, only (finite sequences of) the graph 
\begin{tikzpicture}[x=0.7pt,y=0.7pt,yscale=-0.3,xscale=0.3,baseline=-55pt, line width=1pt]
%dashed edges 
\draw [dash pattern={on 3pt off 2pt}] [-stealth]  (90,270)..controls (170,220)..(245,265) ;

%solid edges
\draw [-stealth] (50, 270)--(300,270) ;

%black vertex
\draw  [fill={rgb, 255:red, 0; green, 0; blue, 0 }  ,fill opacity=1 ] (90,270) circle (5);
\draw (90, 290) node {$1$};
\draw [fill={rgb, 255:red, 0; green, 0; blue, 0 }  ,fill opacity=1 ] (250,270) circle (5);
\draw (250, 290) node {$2$};
\end{tikzpicture}
and $\emptyset$  survives. If $n = 3$, by symmetry relations with respect to one vertex, we have $A_{n,1}(2k) = 0$.
\end{rem}

\subsection{The acyclic bar complex $B(A_{n,j} ,A_{n,j}, \mathbb{R}) = A_{n,j}  \otimes_{\tau} BA_{n,j}$}
\label{The bar complex}

Let $A$ be a dg algebra with an augmentation $\eta: A \rightarrow \mathbb{R}$. For example, take $A_{n,j}$ of Section \ref{The hidden face dg algebra $A_{n,j}$} as $A$. We write $\overline{A}$ for the augmentation ideal.

\begin{definition}
As a vector space $B(A, A, \mathbb{R}) = A \otimes_{\tau} BA $ is the tensor (co)algebra of $A$. So $A \otimes_{\tau} BA $ is generated by elements of the form 
\[
a_0 [s a_1 | s a_2 | \dots | s a_k ].
\]
Here, $k$ is an integer and $a_0 \in A$, $a_1, \dots, a_k \in \overline{A}$. The symbol $s$ stands for the degree $(-1)$ shift. The degree of $z = a_0 [s a_1 | s a_2 | \dots | s a_k ]$ is defined by $ \sum_{i = 0}^k deg(a_i) -k$. 
\end{definition}

\begin{definition}[The differentials of bar complexes]
The differential of $A \otimes_{\tau} BA$ is defined by $d_{A \otimes_{\tau} BA} = d_A + d_{BA} + d_{\tau}$, where
\begin{align*}
d_A (a_0 [s a_1 | s a_2 | \dots | s a_k]) = &\ d a_0 [s a_1 | s a_2 | \dots | s a_k]\\
 + & \sum_{i=1}^k (-1)^{|a_0| + (\nu_{i-1} +1)} a_0 [s a_1 | s a_2 | \dots |s (d a_i)| \dots | s a_k]. \\
d_{BA} (a_0 [s a_1 | s a_2 | \dots | s a_k]) = & \sum_{i=1}^{k-1} (-1)^{|a_0| + (\nu_{i-1} +1)} a_0 [s a_1 | s a_2 | \dots |s (a_{i-1} a_i)| \dots | s a_k] \\
d_{\tau} (a_0 [s a_1 | s a_2 | \dots | s a_k]) = &\ (-1)^{|a_0| + 1} a_0 a_1 [s a_2 | \dots | s a_k].
\end{align*}
Here, the sign $\nu_i$ is defined by 
\[
\nu_i = \sum_{l=1}^i |sa_l| = \sum_{l=1}^i |a_l| - i.
\]
\end{definition}

\begin{definition}[The products of bar complexes]

We define the product of $A \otimes_{\tau} BA$ as follows.

\begin{align*}
&a_0[s a_1| \cdots | s a_l] \cdot b_0 [s b_1 | \cdots | s b_m] \\
 = & (-1) ^{\nu_l |b_0|} \sum_{\sigma \in Sh(l, m)}  \varepsilon (\sigma, |sa_1|, \cdots, |sa_l|, |sb_1|, \cdots, |sb_m|) \ a_0 b_0 [\sigma(s a_1, \cdots, s a_l, sb_1, \cdots, sb_m)] \\
\end{align*}
Here $Sh(l, m)$ is the set of $(l,m)$-shuffles. The sign $\varepsilon(\sigma, n_1, \cdots, n_k)$ $(\sigma \in \Sigma_k)$ is the sign which arises when we permute the ordered set ($x_1, \cdots, x_k)$ $(|x_i| = n_i)$  of graded elements by $\sigma$. 
\end{definition}

\begin{definition}
Let $A_{n,j}$ be as in Definition \ref{The hidden face dg algebra $A_{n,j}$}. We write $Z_{n,j}$ for the dg algebra $B(A_{n,j}, A_{n,j}, \mathbb{R}) = A_{n,j} \otimes_{\tau} BA_{n,j}$.
\end{definition}

\begin{notation}
Using the decomposition of $A_{n,j}$, we have the decomposition of $Z_{n,j} = A_{n,j} \otimes_{\tau} BA_{n,j}$
\[
Z_{n,j} = \bigoplus_{k\geq 1} \bigoplus_{g \geq 0} Z_{n,j}(k, g). 
\]
$Z_{n,j}(k, g)$ is generated by elements of the form $a_0[sa_1|\cdots|s a_l]$ such that each $a_i$ is a finite sequence of labeled plain graphs and $\sum k(a_i) = k$, $\sum g(a_i) = g$. 
\end{notation}

%-------------------------------------------------------------

\subsection{Degree-zero elements of $A_{n,j}$}
\label{Degree-zero elements}
In subsection \ref{subsection iterated integrals}, we define the iterated integrals
\[
A_{n,j} \otimes_{\tau} BA _{n,j} \rightarrow \Omega_{dR}^{\ast} P(\text{Inj}(\mathbb{R}^j, \mathbb{R}^n), \bullet, \iota).
\]
This map is well-defined when the condition  $A^{0}_{n,j} = \mathbb{R}$ is satisfied. So we study what elements of degree $0$ the algebra $A_{n,j}$ has. The existence of internal black vertices (when $j \geq 2$) makes the situation a little complicated. 

Recall that if $\Gamma$ is  a $g$-loop plain graph of order $k$ and of defect $l$, then the normalized degree $[\Gamma]$ of a plain graph is equal to 
\[
[\Gamma] = |\Gamma| +(j+1) = k(n-j-2) + (g-1) (j-1) + l + (j+1).
\]

\begin{prop}
\label{numberofinternal}
Let $\Gamma$ be a plain graph of $g\geq 1$. If the number of internal black vertices is greater than $2(g-1)$, then $\Gamma$ has at least one cut (internal) black vertex. Thus $\Gamma = 0 \in A_{n,j}$. 
\end{prop}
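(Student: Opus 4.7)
The plan is to prove the contrapositive: assume no internal black vertex of $\Gamma$ is a cut vertex, and deduce $b_i := |V_i(\Gamma)| \leq 2(g-1)$. Combined with the rescaling relation (Definition \ref{rescaling relation}), which already forces $\Gamma \sim 0$ in $A_{n,j}$ as soon as $\Gamma$ has a black cut vertex, this yields the proposition.

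First I would reduce to the case in which every internal black vertex has degree at least $3$: an internal black vertex with exactly one solid edge is killed by the rescaling relation, and one with exactly two solid edges is killed by the symmetry relation (Definition \ref{symmetry relation}), so otherwise $\Gamma = 0 \in A_{n,j}$ already and there is nothing to prove. What remains is a purely combinatorial claim about a connected graph whose internal black vertices all have degree $\geq 3$ and none of which are cut vertices.

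Next I would invoke the block decomposition. Because each internal black vertex $u$ is not a cut vertex, it lies in a unique block $B_u$, and all of its $\geq 3$ incident edges lie in $B_u$; in particular $B_u$ is not a bridge, so every vertex of $B_u$ has degree $\geq 2$ within $B_u$. For any \emph{internal-containing} block $B$ with $n_B$ vertices, $m_B$ edges, and $b_B$ internal black vertices, a handshake count gives
\[
2 m_B = \sum_{v \in V(B)} \deg_B(v) \geq 3 b_B + 2 (n_B - b_B),
\]
whence
\[
g(B) = m_B - n_B + 1 \geq \frac{b_B}{2} + 1.
\]

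Finally I would use the classical additivity $g(\Gamma) = \sum_B g(B)$ over the block decomposition. The hypothesis $b_i > 2(g-1) \geq 0$ forces $b_i \geq 1$, so at least one internal-containing block exists; summing the inequality above over all internal-containing blocks yields
\[
g \geq \sum_{B \text{ int.-cont.}} g(B) \geq \frac{b_i}{2} + 1,
\]
i.e.\ $b_i \leq 2(g-1)$, which is the desired contrapositive. The main subtlety I anticipate is verifying that the relations of $A_{n,j}$ really do allow us to restrict to the case in which every internal black vertex has degree $\geq 3$; once that reduction is made, the block-and-handshake argument is essentially a one-line calculation.
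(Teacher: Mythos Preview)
Your argument is correct and is genuinely different from the paper's. The paper does not use the block decomposition at all; instead it proves a slightly stronger statement by induction on the number $L$ of tracked valency-$\geq 3$ vertices: remove one such vertex $v_1$, record how many half-edges go to each resulting component, and bound how many former cut vertices can be ``healed'' when $v_1$ is reinserted. This yields, for any set $\{v_1,\dots,v_L\}$ of valency-$\geq 3$ vertices in a graph of first Betti number $g$, at least $L-2(g-1)$ of them are cut vertices. Your route is cleaner for the statement actually needed: once no internal black vertex is a cut vertex, each sits in a single block that is not a bridge, and the handshake/additivity computation immediately gives $b_i\le 2(g-1)$. The paper's inductive proof buys the quantitative refinement (a lower bound on the number of cut vertices, valid even for disconnected graphs), which you do not get, but which is also never used later.

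Two remarks. First, your preliminary reduction ``every internal black vertex has degree $\geq 3$'' is already built into the definition of plain graphs (Definition~\ref{plain graphs}), so that step is unnecessary---though harmless---here. Second, for the block argument to go through cleanly you should also note that loop edges can be excluded (relation~(2) in Definition~\ref{defofA}); otherwise a single loop at an internal black vertex can sit in its own block and the inequality $g(B)\geq b_B/2+1$ can fail. The paper's inductive argument has the same implicit no-loop assumption in its base case $L=1$, so this is not a defect of your approach relative to the paper's.
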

\begin{proof}
%When $g=0$, it holds because every internal black vertex is a cut vertex. In other cases, 
We show a stronger statement for more general graphs. We show that for any (not necessarily connected) graph $\Gamma$ of any first Betti number $g\geq 1$, and for any set $\{v_1, \dots v_L\}$ $(L \geq 1)$ of vertices with valency greater than or equal to $3$, the number of cut vertices of $\{v_1, \dots v_L\}$ is greater than or equal to  $L - 2(g-1)$. 
%It holds when $g=1$ because every $v_i$ is a cut vertex. 
We show it by induction on $L$. It holds for any graph if $L=1$. 
Consider a graph $\Gamma$ of fixed $g\geq 1$. Let  $\{v_1, \dots v_L\}$ $(L \geq 2)$ be as above.  Consider the (not necessarily connected) graph $\Gamma^{\prime}$ obtained from $\Gamma$ by removing $v_1$ and its adjacent half edges $e_1, \dots, e_k$ $(k \geq 3)$.
Let $\{\Gamma^{\prime}_1, \dots \Gamma^{\prime}_{N}\}$ be the set of components of $\Gamma^{\prime}$ to which these half edges are attached. Let $k_1, \dots, k_N \geq 1$ be the number of half edges of $\{e_1, \dots, e_k\}$ which are attached to respective components. Let $g^{\prime} = g(\Gamma^{\prime})$. See Figure \ref{figofcutvertex}.

\begin{figure}[htpb]
   \centering
    \includegraphics [width =8cm] {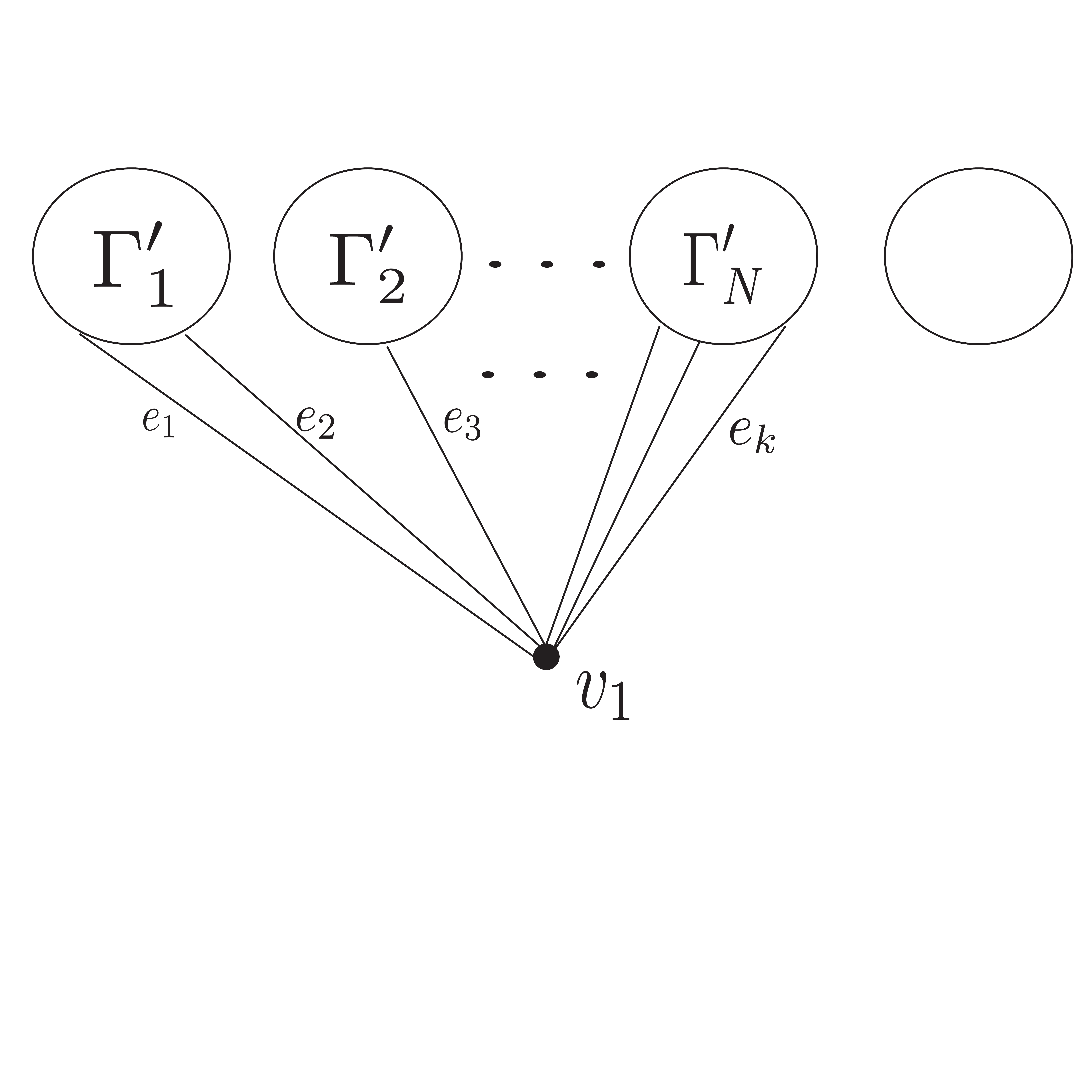}
    \caption{The graph $\Gamma$ and $\Gamma^{\prime}$ }
    \label{figofcutvertex}
\end{figure}

Then we have
\[
g-g^{\prime} = \sum_{i=1}^N (k_i-1).
\]
First, assume $g^{\prime} \geq 1$. By induction hypothsis, the graph $\Gamma^{\prime}$ has at least $(L-1) - 2(g^{\prime}-1)$ cut vertices of $\{v_2, \dots v_L\}$. If the vertex $v_1$ and its adjacent edges are attached, they decrease the number of cut vertices by at most 
\[
\sum_{i=1}^N k_i - \alpha, 
\]
where $\alpha$ is the number of $k_i$ such that $k_i=1$. The new vertex $v_1$ may or may not be a cut vertex. 
Hence, the number of cut vertices of $\Gamma$ is greater than or equal to
\begin{align*}
&(L-1) - 2(g^{\prime}-1) - \sum_{i=1}^N k_i + \alpha \\
= &(L-1) - 2(g-1) + 2(g-g^{\prime})  - \sum_{i=1}^N k_i + \alpha \\
= &  (L-1) - 2(g-1) +  \sum_{i=1}^N (k_i-1) -N + \alpha
\end{align*}
On the other hand, since $k_i \geq 1$, 
\[
\sum_{i=1}^N (k_i-1) -N + \alpha \geq 0.
\]
The equality holds when $k_i \leq 2$ for any $i =1, \dots, N$. This condition means that $v_1$ is a cut vertex. So $\Gamma$ has at least $L -2(g-1)$ cut vertices. 

Finally, assume that $g^{\prime} = 0$. In this case, $\Gamma^{\prime}$ has $(L-1)$ cut vertices. 
If the vertex $v_1$ and its adjacent edges are attached,  they decrease the number of cut vertices by at most 
\[
\sum_{i=1}^N (k_i - 2)  +  \alpha.
\]

Thus, the number of cut vertices of $\Gamma$ is greater than or equal to 
\begin{align*}
&(L-1) - \sum_{i=1}^N (k_i-2) - \alpha \\
= &(L-1) - 2(g-1) + 2g  - \sum_{i=1}^N (k_i-2) - \alpha -2 \\
= &  (L-1) - 2(g-1) +  \sum_{i=1}^N (k_i-1) + N - \alpha -2
\end{align*}
On the other hand, since $g \geq 1$, at least one $k_i$ satisfies $k_i \geq 2$. 
\[
\sum_{i=1}^N (k_i-1) + N - \alpha -2 \geq 0.
\]
The equality holds when $k_i = 1$ for any $i =1, \dots, N$ except for one, which is $2$. This condition again means that $v_1$ is a cut vertex. So $\Gamma$ has at least $L -2(g-1)$ cut vertices.  

\end{proof}

\begin{prop}
\label{Degree-zero elements prop}
Let $\Gamma$ be an admissible plain graph ($|V(\Gamma)| \geq 3$) such that the normalized degree $[\Gamma]$ is zero. If $j \geq 3$ or $j=1$, then $\Gamma = 0 \in A_{n,j}$. If $j=2$ and $g < k(n-4) + 4$, then $\Gamma = 0 \in A_{n,j}$ .
\end{prop}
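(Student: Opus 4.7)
The plan is to combine the degree identity of Lemma \ref{degreeanddefect} with Proposition \ref{numberofinternal}. Let $I$ denote the number of internal black vertices of $\Gamma$. After imposing the rescaling and symmetry relations of Definitions \ref{rescaling relation} and \ref{symmetry relation}, every white vertex carries at least three dashed edges and every external black vertex carries at least one dashed edge, so each contributes $l(v)\geq 0$; every internal black vertex carries no dashed edges and contributes $l(v)=-1$. Summing, $l(\Gamma)\geq -I$. Substituting into the hypothesis
\[ [\Gamma]=k(n-j-2)+(g-1)(j-1)+l(\Gamma)+(j+1)=0 \]
yields the key inequality
\[ I\;\geq\;k(n-j-2)+(g-1)(j-1)+(j+1). \]

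The case $j=1$ is handled by observing that in $A_{n,1}$ no admissible plain graph with $|V(\Gamma)|\geq 3$ has an internal black vertex --- this is the content of the remark following the definition of $A_{n,j}$, where the dimension bound $[\Gamma]\leq n-1$ excludes them. Thus $I=0$, and the key inequality reads $0\geq k(n-3)+2\geq 2$, which is absurd. For $j\geq 2$ and $g=0$, the inequality forces $I\geq k(n-j-2)+2\geq 2$; but in a tree on $\geq 3$ vertices, an internal black vertex has valency $\geq 3$ (after the relations) and is therefore a non-leaf, hence a cut vertex. Being black, the rescaling relation makes $\Gamma=0$.

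For $j\geq 2$ and $g\geq 1$, I apply Proposition \ref{numberofinternal}. If $I>2(g-1)$ then $\Gamma$ has a cut internal black vertex and hence $\Gamma=0$; otherwise $I\leq 2(g-1)$, and combining with the key inequality gives
\[ k(n-j-2)+(g-1)(j-3)\;\leq\;-(j+1). \]
For $j\geq 3$ the left-hand side is nonnegative (using $n\geq j+2$ and $g\geq 1$) while the right-hand side is strictly negative, a contradiction. For $j=2$ this reads $k(n-4)-(g-1)\leq -3$, i.e.\ $g\geq k(n-4)+4$, contradicting the hypothesis $g<k(n-4)+4$.

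The main obstacle will be rigorously justifying the bound $l(\Gamma)\geq -I$ once all the rescaling and symmetry relations of $A_{n,j}$ are taken into account: this is a short vertex-by-vertex check that needs to handle the possibility that cancellations among relations might further constrain the valences. The $j=1$ case also requires extracting the absence of internal black vertices directly from the degree-bound relation in the definition of $A_{n,1}$, rather than from the operadic description (which is not formally asserted for the dg algebra $A_{n,j}$ itself). Once these two bookkeeping points are settled, the rest of the argument is elementary arithmetic.
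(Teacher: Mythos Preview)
Your proof is correct and follows essentially the same route as the paper: bound the defect from below by $-I$ (equivalently by $-2(g-1)$ via Proposition~\ref{numberofinternal}), substitute into the normalized-degree formula, and derive positivity of $[\Gamma]$ unless $\Gamma$ has a black cut vertex. The paper's write-up only explicitly treats $j\geq 3$ and $j=2$ in the $g\geq 1$ case, so your separate handling of $j=1$ is in fact slightly more careful; the concerns you flag in your final paragraph are not real obstacles---the bound $l(\Gamma)\geq -I$ is exactly the vertex-by-vertex check you outline, and no further cancellations among the relations interfere.
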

\begin{proof}
First, assume $g=0$. If $\Gamma$ has an internal black vertex, it is a cut vertex, so $\Gamma = 0$. Otherwise, the normalized degree 
\[
[\Gamma] =   k(n-j-2) + (g-1)(j-1) + l +(j+1) = k(n-j-2) + l + 2
\]
is positive since $l \geq 0$. 
Let $g \geq 1$. 
%Assume $|V(\Gamma)| \geq 3$ and $\Gamma \neq  0 \in A$. Then $\Gamma$ is an admissible plain graph. 
From Lemma \ref{numberofinternal}, we can assume that the number of internal black vertices is at most $2(g-1)$. Hence, if $j \geq 3$, we have
\begin{align*}
[\Gamma] &=   k(n-j-2) + (g-1)(j-1) + l +(j+1) \\
&\geq k(n-j-2) + (j+1) > 0,
\end{align*}
since $l \geq -2(g-1)$. On the other hand, if $j=2$, 
\begin{align*}
[\Gamma] &=   k(n-j-2) + (g-1)(j-1) + l +(j+1) \\
&\geq k(n-j-2) + (j+1) -(g-1) \\
& = k(n-4)+3-(g-1),
\end{align*}
which is positive if $g < k(n-4) + 4$.
\end{proof}

\begin{cor}
\label{cordegree0}
If $j \geq 3$, elements of $A_{n,j}$ of degree $0$ are scalar multiple of 
\[
\emptyset = \dashededgea{}{}{} = \dashededgec{}{}{} = \solidedgee{}{}{}. 
\]
If $j =1$, elements of $A_{n,j}$ of degree $0$ are scalar multiple of 
\[
\emptyset = \dashededgea{}{}{} = \dashededgec{}{}{}.
\]
If $j = 2$, similar results hold for the restriction $A_{n,j}(g \leq 3)$.
\end{cor}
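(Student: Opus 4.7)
The plan is to reduce the statement directly to Proposition~\ref{Degree-zero elements prop} together with a short case analysis on the number of vertices. Any element $x \in A_{n,j}$ of total degree zero is, by the definition of the product, a finite linear combination of sequences $\Gamma_1 \cdots \Gamma_l$ of connected admissible generalized plain graphs, and the degree adds: $[\Gamma_1 \cdots \Gamma_l] = \sum_i [\Gamma_i]$. Relation (0) of Definition~\ref{defofA} forces every non-vanishing factor to satisfy $[\Gamma_i] \geq 0$, so for a degree-zero product we must have $[\Gamma_i] = 0$ for every factor. It therefore suffices to show that every connected admissible generalized plain graph $\Gamma$ with $[\Gamma] = 0$ is a scalar multiple of $\emptyset$.

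The case $|V(\Gamma)| \geq 3$ is handled by Proposition~\ref{Degree-zero elements prop}: if $j \geq 3$, it gives $\Gamma = 0$ outright; if $j = 1$, the very restrictive list of surviving graphs from Remark~\ref{hidden face dg algebra j=1} makes the assertion immediate; and if $j = 2$, the proposition applies under $g < k(n-4) + 4$, which is compatible with the restriction $g \leq 3$ of the corollary when $n \geq 4$ (i.e.\ whenever $n - j \geq 2$, which is the standing assumption). A graph with exactly one vertex is impossible, because every vertex of a generalized plain graph carries at least one half-edge, so a one-vertex graph would have a loop edge and be killed by relation (2).

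The remaining case is $|V(\Gamma)| = 2$. Here the plan is to enumerate the admissible two-vertex configurations by their numbers $d_\theta$ of dashed and $d_\eta$ of solid edges between the two vertices, compute $[\Gamma]$ from Definition~\ref{normalized degree}, and solve $[\Gamma] = 0$. For two white vertices joined by $d$ dashed edges one gets $(d-1)(n-1)$; for an external black and a white vertex with $d$ dashed edges one gets $(d-1)(n-1)$; and for two black vertices (at least one external) the degree is $(j+1) + d_\theta(n-1) + d_\eta(j-1) - 2j$. Combined with relations (1) and (2) (orientation and double/loop edges), the only degree-zero solutions with $d_\theta + d_\eta = 1$ are exactly the three graphs listed in relation (5), each of which is identified with $\emptyset$. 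All other two-vertex configurations with $d_\theta + d_\eta \geq 2$ are seen to have strictly positive normalized degree under the given hypotheses on $j$ (and, when $j = 2$, on $g$).

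The one nontrivial check is this last enumeration: making certain that no surviving two-vertex graph with multiple edges has $[\Gamma] = 0$. This is the main, although purely combinatorial, obstacle, since the answer depends on the $(j+1)$ versus $(n+1)$ shift in $[\Gamma]$ and on the precise statement of relation (5). Once this finite check is carried out, the reduction above completes the proof for all three cases of the corollary.
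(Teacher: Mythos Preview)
Your proposal is correct and matches the paper's intended argument: the corollary is stated without proof precisely because it is meant to follow from Proposition~\ref{Degree-zero elements prop} together with relation~(5) for the two-vertex graphs and relation~(0) for the reduction to single factors, exactly as you outline. Two minor remarks: for $j=1$ and $|V(\Gamma)|\geq 3$ you need not appeal to Remark~\ref{hidden face dg algebra j=1}, since Proposition~\ref{Degree-zero elements prop} already gives $\Gamma=0$ in that case; and your final paragraph is overly cautious, as the degree formulas you wrote down already settle the two-vertex enumeration (the only degree-zero survivors with $|V|=2$ are the graphs in relation~(5), since any additional edge of the same type is killed by relation~(2) and the mixed case $d_\theta=d_\eta=1$ has degree $n-1>0$).
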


\subsection{The decorated graph complex $DGC_{n,j}$}

We define the new graph complex $DGC_{n,j}$. This new complex is generated by \textit{decorated graphs}.
\begin{definition}
A \textit{decorated graph} is a plain graph whose external black vertices are decorated by an element of $Z = A_{n,j} \otimes_{\tau} BA_{n,j}$. A decorated graph is labeled if its plain part is labeled. Let  $\Gamma$ be a labeled decorated graph. We write the plain part as $P(\Gamma)$ and write the decorated part as $D(\Gamma) = z_1 \otimes \cdots \otimes z_l$, if $\Gamma$ has $l$ external black vertices and the $i$-th external vertex is decorated by $z_i \in Z$. We write $\Gamma = P(\Gamma) \otimes D(\Gamma)$. See Figure \ref{figofdecoratedgraph}. 
\end{definition}

\begin{figure}[htpb]
   \centering
    \includegraphics [width =7cm] {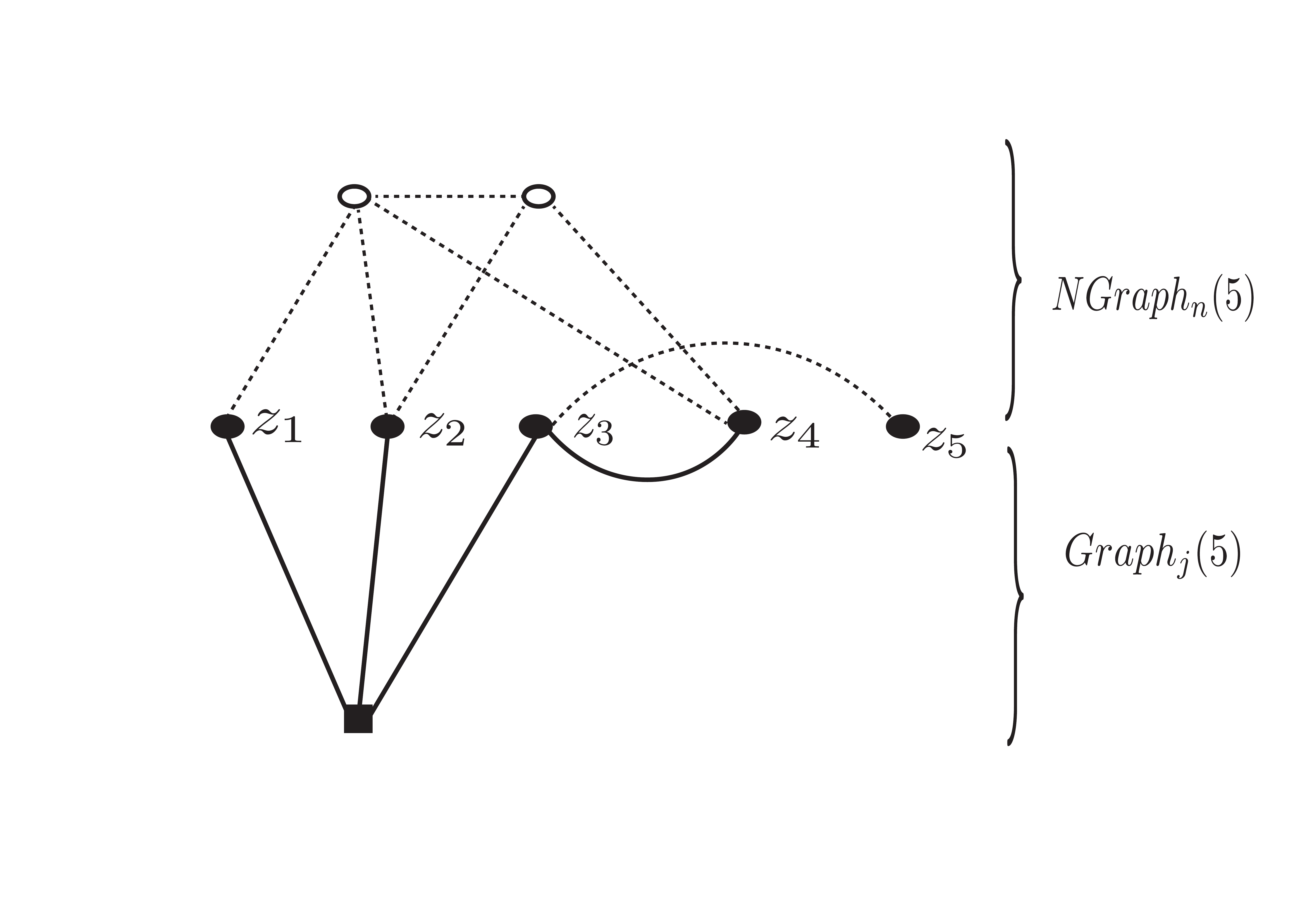}
    \caption{Example of an admissible decorated graph}
    \label{figofdecoratedgraph}
\end{figure}

\begin{definition}
A decoration $z$ of an external black vertex is called \textit{trivial} if $z$ is obtained by scalar multiplication of the unit of $A_{n,j}$. A decorated graph is admissible if any external black vertex of it has either a dashed edge or a non-trivial decoration. Note that the plain part of an admissible decorated graph is not necessarily admissible. That is, we allow external black vertices without dashed edges for the plain part. 
\end{definition}

\begin{definition}
As a graded vector space, 
\[
DGC_{n,j}= \frac{\mathbb{Q} \{\text{Connected labeled admissible decotated graphs} \}}{\text{Relations} },
\]
where relations consist of orientation relations, decoration relations, double edges, small loops. Orientation relations and decoration relations are defined below. 
\end{definition}

\begin{definition}
Orientation relations of $DGC_{n,j}$ are defined in the same way as the plain graph complex, except for relations on exchanging labels of two external black vertices. 
Assume two external black vertices  $v$ and $v^{\prime}$ have decorations $z$ and $z^{\prime}$ of homogeneous degree. Then, if we exchange labels of $v$ and $v^{\prime}$, we multiplly the sign $(-1)^j (-1)^{|z||z^{\prime}|}$. 
\end{definition}

\begin{definition}
We define decoration relations as follows.  For a decorated graph $\Gamma = P\otimes (z_1 \otimes \dots \otimes z_l)$ and $\Gamma^{\prime} =  P \otimes ({z_1}^{\prime} \otimes \dots \otimes {z_l}^{\prime})$ with the same plain part, 
\[
\Gamma + \Gamma^{\prime} \sim P \otimes (z_1 + {z_1}^{\prime} \otimes \dots \otimes z_l + {z_l}^{\prime}), 
\]
and
\[
a \Gamma = P\otimes (z_1 \otimes \dots \otimes a z_i \otimes \dots \otimes z_l)
\]
for any $ a \in \mathbb{Q} $ and $i = 1, \dots,l$.
\end{definition}

Let us define the differential of decorated graphs. 

\begin{notation}
Let $S = \{s_1, \dots, s_{|S|}\}$ be an ordered subset of $ \{1, \dots, l\}$, $|S| \geq 2$. Define the map
\[
m_S: Z^{l} \rightarrow Z^{l-S+1}
\]
by multiplying the $s_1$-th, $s_2$-th, $\dots$ and $s_{|S|}$-th terms. For example,
\[
m_{1,3}(z_1 \otimes z_2 \otimes z_3) = (-1)^{|z_1||z_2|}z_1z_3 \otimes z_2,
\]
if $z_2$ and $z_3$ are homogeneous. 
\end{notation}

\begin{notation}
Let $\Gamma$ be a labeled decorated graph such that $D(\Gamma) = z_1 \otimes \cdots \otimes z_l$. Let $S$ be a subset of $V(\Gamma)$, $|S| \geq 2$. Define a new labeled decorated graph $\Gamma/S$ by 
\[
\Gamma/S = P(\Gamma)/S \otimes m_S(z_1 \otimes \cdots \otimes z_l).
\]
\end{notation}

\begin{notation}
Define the right $i$-th action of $A_{n,j}$ to $Z^{\otimes l}$ as follows. If $z_1, \dots, z_l \in Z$ and $a \in A_{n,j}$, and if $a$ and $z_i$ are homogeneous, 
\begin{align*}
(z_1 \otimes \cdots \otimes z_l) \cdot_i a & = (z_1 \otimes \cdots \otimes z_l) \cdot (1 \cdots \otimes 1 \otimes a  \otimes 1  \cdots \otimes 1) \\
&= (-1)^{ |a|(|z_{i}| + \cdots + |z_l|)}(z_1 \cdots \otimes z_{i-1} \otimes a z_i \otimes z_{i+1}  \cdots \otimes z_l).
\end{align*}
\end{notation}

\begin{notation}
Let $\Gamma$ be a labeled palin graph and let $S=\{s_1, \dots s_l\}$ be an orderd subset of $V(\Gamma)$. Let $\Gamma_S$ be a subgraph of $\Gamma$ generated by $S$. We consider $\Gamma_S$ as an element of $A_{n,j}$. Define the new decorated graph $\Gamma/S \cdot \Gamma_S$ by
\[
\Gamma/S \cdot \Gamma_S = P(\Gamma/S) \otimes \left(D(\Gamma/S) \cdot_{s_1} \Gamma_S\right)
\]
See Figure \ref{figofsubgraphcontraction}.
\end{notation} 

\begin{figure}[htpb]
   \centering
    \includegraphics [width =9cm] {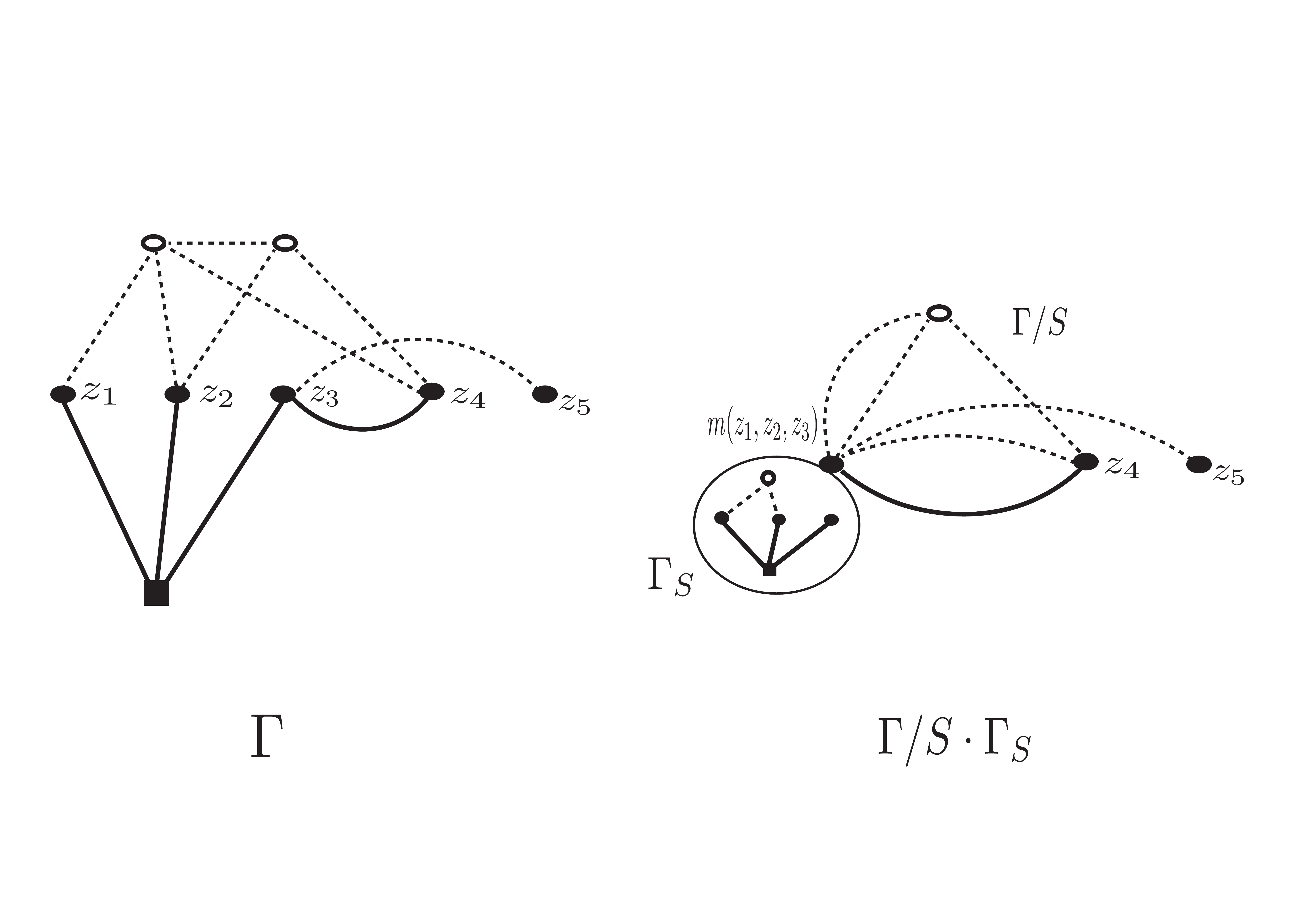}
    \caption{Example of a contraction $\Gamma/S \cdot \Gamma_S$}
    \label{figofsubgraphcontraction}
\end{figure}

\begin{definition}
The differential $d_{DGC}$ of $DGC_{n,j}$  is decomposed into two commutative differentials
\[
d_{DGC} = d_{H} + d_{V}.
\]
The first differential $d_{H}$, which we call the \textit{horizontal differential}, is defined by the sum of contractions of subgraphs. 
\[
d_{H} (\Gamma) = \sum_{\substack{S\subset V(P(\Gamma)) \\ |S| \geq 2}} \sigma (S)\ (\Gamma/S \cdot \Gamma_S).
\]
The sign $\sigma (S)$ is defined  by
\begin{align*}
\sigma(S) &= (-1)^{|\Gamma|+1} \tau(S) \Phi(S) (-1)^{deg(V(P(\Gamma/S)))} \\
& = (-1)^{|\Gamma|+1} \tau(S) \Phi(S) (-1)^{-js-nt+ deg(S)-j} \ (\text{if $S$ has a black vertex}).
\end{align*}
This sign $\sigma(S)$ is taken so that $d_{H}$ is compatible with configuration space integrals for decorated graphs that we later define:
\[
(-1)^{|\Gamma|+1} \int_{\tilde{C}_{S}} \omega(\Gamma) = \sigma (S)\ \overline{I}(\Gamma/S \cdot \Gamma_S).
\]
The second differential $d_{V}$,  which we call the \textit{vertical differential}, is the differential induced by $d_{Z}$:
\begin{align*}
d_{V} (\Gamma) &= (-1)^{deg(E(P(\Gamma)))} P(\Gamma) \otimes d_{Z}(D(\Gamma))\\
& = (-1)^{(j-1)|E_{\eta}(\Gamma)| + (n-1)|E_{\theta}(\Gamma)|} P(\Gamma) \otimes d_{Z}(D(\Gamma)) .
\end{align*}
 \end{definition}
 
 \begin{rem}
 $DGC_{n,j}$ has a decomposition 
 \[
DGC = \bigoplus_{k \geq 1} \bigoplus_{g\geq 0} DGC(k, g)
 \]
$DGC(k, g)$ is generated by decorated graphs such that 
\begin{align*}
k(P(\Gamma)) + k(D(\Gamma)) &= k, \\
g(P(\Gamma)) + g(D(\Gamma)) &= g.
\end{align*}
 \end{rem}
 
 \subsection{An operadic description of $DGC_{n,j}$}
 
\begin{prop}
Let $A_{n,j}$ be the hidden face dg algebra. Write $Z$ for the bar complex $A_{n,j} \otimes_{\tau} BA_{n,j} \simeq \mathbb{Q}$ of $A_{n,j}$. 
Then the decorted graph complex $\text{DGC}_{n,j}$ is expressed as the subcomplex of 
\[
DGC_{n,j} = \bigoplus_{l\geq1} \left((\text{sign}_l)^{\otimes j} \otimes \overline{\text{Graph}}_j(l) \otimes N(\overline{\text{Graph}}_n(l) \otimes (Z)^{\otimes l})\right) /\Sigma_l.
\]
generated by connected graphs.
\end{prop}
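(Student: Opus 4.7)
The plan is to verify this operadic description by direct inspection, in close parallel to the already-established operadic description of $PGC_{n,j}$. I would proceed in three steps: matching generators, matching the admissibility condition with the normalization $N$, and matching the differentials.

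For generators, a labeled admissible decorated graph $\Gamma$ with exactly $l$ external black vertices decomposes canonically into three pieces: its solid part, viewed as an element of $\overline{\text{Graph}}_j(l)$ (with the $(\text{sign}_l)^{\otimes j}$ factor accounting for the signs that arise when permuting the $l$ degree-$(-j)$ external vertices); its dashed part, an element of $\overline{\text{Graph}}_n(l)$; and the decoration $z_1 \otimes \cdots \otimes z_l \in Z^{\otimes l}$, where $z_i$ decorates the $i$-th external vertex. The $\Sigma_l$-quotient identifies different orderings of the external vertices, and in particular absorbs the orientation relation for swaps of two decorated external vertices, where the $(-1)^{j}(-1)^{|z_i||z_{i'}|}$ factor appearing in the definition of $DGC_{n,j}$ matches the Koszul sign for permuting graded elements of $Z^{\otimes l}$ together with the degree-$(-j)$ sign from $(\text{sign}_l)^{\otimes j}$.

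The normalization $N$ corresponds exactly to admissibility. The insertion maps $s_i$ act on $\overline{\text{Graph}}_n(l-1) \otimes Z^{\otimes(l-1)} \to \overline{\text{Graph}}_n(l) \otimes Z^{\otimes l}$ by adjoining an $i$-th external vertex with no incident dashed edge and the unit $\emptyset \in A_{n,j} \subset Z$ as decoration; the subspace $\sum_i \mathrm{Im}(s_i)$ is precisely the subspace of pairs (dashed part, decoration) in which some external vertex lacks both a dashed edge and a non-trivial decoration. Quotienting out this image thus yields exactly the admissibility condition defining $DGC_{n,j}$. Connectedness is imposed separately as in the $PGC_{n,j}$ description.

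For the differentials, the natural differential on the right-hand side splits as $d_{\overline{\text{Graph}}_j} + d_{\overline{\text{Graph}}_n} + d_Z + d_\tau$, where the first two are the internal edge-contraction differentials, $d_Z$ acts on each tensor factor of the decoration, and $d_\tau$ is a twisting differential coupling the cooperadic decomposition ${}^{\ast}\!\circ_{s_1}$ of $\overline{\text{Graph}}_n(l)$ with the $A_{n,j}$-module structure on $Z$ via its $s_1$-th action. Under the identification above, $d_V$ matches $d_Z$ (with the prefactor $(-1)^{\deg E(P(\Gamma))}$ coming from commuting $d_Z$ past the edge orderings); the single-edge contractions in $d_H$ correspond to $d_{\overline{\text{Graph}}_j} + d_{\overline{\text{Graph}}_n}$ together with the chord-contraction piece already appearing in the $PGC_{n,j}$ case; and the subgraph contractions with $|S|\geq 2$, which push $\Gamma_S$ into the $s_1$-th decoration, correspond to the new twisting piece $d_\tau$.

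The main technical obstacle is sign bookkeeping: verifying that the sign $\sigma(S)$ in the definition of $d_H$ equals the composite of the cooperadic decomposition sign of ${}^{\ast}\!\circ_{s_1}$ in $\overline{\text{Graph}}_n(l)$, the bar-complex twisting sign in $Z$, the Koszul signs from the degree shifts on external vertices encoded in $(\text{sign}_l)^{\otimes j}$, and the sign arising from the $s_1$-th action on $Z^{\otimes l}$. This is a careful but routine computation following the same pattern already used in the $PGC_{n,j}$ operadic description; the edge cases (contractions producing loop edges, which vanish in $\overline{\text{Graph}}_n$, and subgraphs containing exactly one external vertex, which are exactly the single-edge contractions) are handled uniformly by the same argument, so once the signs are matched the two complexes coincide as claimed.
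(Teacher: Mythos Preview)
Your proposal is correct and follows essentially the same approach as the paper: the paper offers no explicit proof of this proposition, treating it as a direct inspection parallel to the operadic description of $PGC_{n,j}$, and your three-step verification (generators, normalization-versus-admissibility, differentials) is exactly what such an inspection amounts to when spelled out. One minor phrasing issue: in your discussion of $d_H$, all subgraph contractions already have $|S|\geq 2$, so the dichotomy you want is between those $S$ for which $\Gamma_S$ collapses to the unit $\emptyset$ in $A_{n,j}$ via relation (5) (yielding the ordinary edge-contraction differentials) and those for which $\Gamma_S$ remains a nontrivial element of $A_{n,j}$ (yielding the twisting piece); but this does not affect the correctness of the argument.
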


There are canonical projections of vector spaces 
\begin{align*}
&P: DGC(l) \rightarrow pPGC(l) / \Sigma_l  , \\
&D: DGC(l) \rightarrow Z^{{\otimes} l}/ \Sigma_p.
\end{align*}

Here, $pPGC_{n,j}$ is the graph complex of possibly non-admissible graphs:
\[
pPGC_{n,j}= \frac{\mathbb{Q} \{\text{connected labeled plain graphs} \}}{\text{orientation relations, double edges, loop edges} }.
\]

 \subsection{Relationship between $DGC_{n,j}$ and $PGC_{n,j}$}
 We show that the decorated graph complex $DGC_{n,j}$ and the plain graph complex $PGC_{n,j}$ are quasi-isomorphic (Theorem \ref{main theorem 2 on hidden faces}). We use the decomposition with respect to the order and the first Betti number of graphs. 

\begin{theorem}[Theorem \ref{main theorem 2 on hidden faces}, suggested by Victor Turchin]
\label{maintheoremrestate}
The projection  
\[
p: DGC(k,g) \rightarrow PGC(k,g)
\]
is a quasi-isomorphism.
\end{theorem}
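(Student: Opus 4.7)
The plan is to construct a spectral sequence on $DGC(k,g)$ whose $E_1$-page is identified with $PGC(k,g)$ and whose $d_1$ coincides with $d_{PGC}$, and then apply the spectral sequence comparison theorem to the projection $p$. The natural filtration is by the number of vertices in the plain part,
\[
F^p DGC(k,g) = \{\Gamma : |V(P(\Gamma))| \leq p\}.
\]
The vertical differential $d_V$ acts only on decorations and so preserves $|V(P(\Gamma))|$, while the horizontal differential $d_H$ contracts a subgraph of size $|S|\geq 2$ and strictly decreases $|V(P(\Gamma))|$ by $|S|-1$. Hence $\{F^p\}$ is a filtration by subcomplexes, and the induced differential on $E_0^p = F^p/F^{p-1}$ is exactly $d_V$.

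To compute $E_1$, fix a plain graph $P$ with external black vertices $v_1,\dots,v_l$. Because admissibility is a per-vertex condition and the decoration relations are multilinear in each slot, the decoration space attached to $P$ is $\bigotimes_{i=1}^l Z_i$, where $Z_i = Z_{n,j}$ if $v_i$ has at least one dashed edge in $P$, and $Z_i = Z_{n,j}/\mathbb{R}\cdot\emptyset$ otherwise. The bar complex $Z_{n,j} = B(A_{n,j},A_{n,j},\mathbb{R})$ is a free resolution of $\mathbb{R}$, so $H^*(Z_{n,j}) = \mathbb{R}$ concentrated in degree $0$ on $\emptyset$; the long exact sequence of $0 \to \mathbb{R}\cdot\emptyset \to Z_{n,j} \to Z_{n,j}/\mathbb{R}\cdot\emptyset \to 0$ then gives $H^*(Z_{n,j}/\mathbb{R}\cdot\emptyset)=0$. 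By Künneth, the $d_V$-cohomology of $\bigotimes_i Z_i$ equals $\mathbb{R}$, concentrated in degree $0$ on the all-$\emptyset$ decoration, if every external vertex of $P$ carries at least one dashed edge, and vanishes otherwise. Summing over $P$, this shows $E_1 \cong PGC(k,g)$ as graded vector spaces, with the isomorphism induced by $p$.

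The differential $d_1$ is the part of $d_H$ that decreases $|V(P)|$ by exactly one, i.e.\ contractions of two-vertex subgraphs $e = \{v,w\}$. On a plain graph with trivial decorations such a contraction produces $P/e \otimes (D \cdot_v \Gamma_e)$. Relation (5) of Definition \ref{defofA} identifies $\Gamma_e$ with $\emptyset$ precisely for the three admissible edge types $\dashededgea{}{}{}$, $\dashededgec{}{}{}$ and $\solidedgee{}{}{}$, yielding a plain graph with trivial decoration; for chords $\dashededgeb{}{}{}$, multiple edges, double edges and loops the element $\Gamma_e$ is non-trivial in $A_{n,j}$ and hence vanishes in $E_1$. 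This matches exactly the set of admissible edges contracted by $d_{PGC}$, and the sign $\sigma(e)$ occurring in $d_H$ for $|S|=2$ restricts to the sign used in $d_{PGC}$. Thus $d_1$ coincides with $d_{PGC}$ under the identification $E_1\cong PGC(k,g)$.

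Finally, the filtration is bounded on $DGC(k,g)$: the valence constraints on white and internal black vertices force $|W|\leq 2k(P)$ and $|B_i| \leq 2|B_e| + 2g(P) - 2 - 2k(P)$, while the smallest non-trivial decoration has order at least one, so the number of external black vertices of $P$ without dashed edges is bounded by $k(D) \leq k$; together these give a uniform bound on $|V(P(\Gamma))|$ in terms of $k$ and $g$. Filtering $PGC(k,g)$ by the same recipe on $|V|$—for which $d_0$ is automatically zero, so $E_1(PGC) = PGC(k,g)$ with $d_1 = d_{PGC}$—and applying the standard spectral sequence comparison theorem to the filtration-preserving map $p$, we conclude that $p$ is a quasi-isomorphism. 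The main obstacle I anticipate is the admissibility bookkeeping at the $E_1$ stage, namely verifying that plain parts with undashed external vertices genuinely contribute acyclic factors $Z_{n,j}/\mathbb{R}\cdot\emptyset$ so that those plain parts die at $E_1$, together with the tedious but routine sign-matching between the surviving $|S|=2$ summand of $d_H$ and $d_{PGC}$.
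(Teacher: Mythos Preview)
Your proposal is correct and follows essentially the same approach as the paper: filter $DGC(k,g)$ by the number of vertices of the plain part so that $d_0=d_V$, use acyclicity of $Z_{n,j}$ to identify $E_1$ with $PGC(k,g)$, verify $d_1=d_{PGC}$ via relation~(5) of Definition~\ref{defofA}, and conclude by boundedness of the filtration. Your write-up is in fact a bit more explicit than the paper's in two places: you justify why non-admissible plain parts vanish at $E_1$ through the acyclicity of $Z_{n,j}/\mathbb{R}\cdot\emptyset$, and you make the final step a genuine spectral-sequence comparison by equipping $PGC(k,g)$ with the matching filtration, whereas the paper leaves both of these implicit in the one-line proof of Theorem~\ref{keylemma}.
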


We use a bounded spectral sequence to prove this theorem. 

\begin{definition}
\label{defoffiltration}
Define a decreasing filtration of $DGC(k,g)$ by 
\[
F^p DGC(k,g) = \bigoplus_{-p \geq l \geq 1} DGC(k,g, l) \quad p \in \mathbb{Z}
\]
Here $DGC(k,g, l)$ is the subspace of $DGC(k,g)$ generated by decorated graphs whose plain part has exactly $l$ vertices. 
\end{definition}

\begin{lemma}
This filtration is bounded. That is, for any degree $n$, there exists $p$ such that 
\[
DGC^n(k,g) = F^p DGC^n(k,g).
\]
\end{lemma}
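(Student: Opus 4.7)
The plan is to show that for fixed $k$, $g$, and cochain degree $N$ (I write $N$ rather than $n$ to avoid clashing with the ambient dimension), the number of plain-part vertices $l := |V(P(\Gamma))|$ is bounded above by a constant depending only on $N, k, g, n, j$; taking any $p \leq -l_{\max}$ then gives $DGC^N(k,g) = F^p DGC^N(k,g)$.

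First I will derive a closed-form expression relating $l$ to the degree of the plain part. A decorated graph can be connected only if its plain part is, since decorations are labels on external vertices and cannot join components; hence the Euler relation $|E(P(\Gamma))| = l - 1 + g_P$ applies. Combining this with the definition $\deg P(\Gamma) = (n-1)|E_\theta| + (j-1)|E_\eta| - n|W| - j|B|$ and using $|E_\theta| - |W| = k_P$ yields
\[
\deg P(\Gamma) = k_P(n-j) + (g_P-1)(j-1) - l, \qquad k_P := k(P(\Gamma)) \leq k, \quad g_P := g(P(\Gamma)) \leq g.
\]
So bounding $l$ from above reduces to bounding $\deg P(\Gamma) = N - \sum_i \deg(z_i)$ from below, where $D(\Gamma) = z_1 \otimes \cdots \otimes z_l$.

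Next I will bound $\sum_i \deg(z_i)$ from above using two ingredients. The first is the claim that every non-trivial element of $\overline A_{n,j}$ has order $\geq 1$: a case analysis of single graphs with $|E_\theta| = |W|$ against relations (2), (3), (5) and the rescaling/symmetry relations (Definitions \ref{rescaling relation}, \ref{symmetry relation}) shows that the pure-solid case collapses to the unit (via $\solidedgee{}{}{} = \emptyset$ or relation (3)) and the $|W| \geq 1$ case is incompatible with whites being forced to valence $\geq 3$. Writing $z_i = a_0[sa_1|\cdots|sa_m]$ for a non-trivial decoration, this forces the bar length $m \leq k(z_i)$ and bounds the number of non-trivially-decorated vertices by $k_D \leq k$. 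The second ingredient is the top-degree cutoff $[a_\ell] \leq nj - 1$ from Definition \ref{defofA}(0), which gives $\deg(z_i) \leq (nj-1)(k(z_i)+1)$ and, after summing, $\sum_i \deg(z_i) \leq 2k(nj-1)$.

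Assembling the two steps produces the explicit upper bound
\[
l \leq k(n-j) + (g-1)(j-1) + 2k(nj-1) - N,
\]
depending only on $N, k, g, n, j$, which completes the plan. The main subtlety will be the ``non-trivial elements of $\overline{A}_{n,j}$ have order $\geq 1$'' step, which is a finite but slightly tedious case check against the defining relations of $A_{n,j}$; once that is settled, the rest is Euler-characteristic bookkeeping combined with the degree cutoff already built into $A_{n,j}$.
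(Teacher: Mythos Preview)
Your proof is correct, but it takes a genuinely different route from the paper.

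The paper bounds the number $l$ of plain-part vertices \emph{uniformly} in terms of $k$ and $g$ alone, independent of the cochain degree. It does this by separately bounding $|W(P(\Gamma))|\le 2k$, $|E_\theta(P(\Gamma))|\le 3k$, $|B_e(P(\Gamma))|\le 7k$ and $|B_i(P(\Gamma))|\le 14k+2(g-1)$ via direct valence counts; the only structural input from $A_{n,j}$ is that there are at most $k$ non-trivially decorated vertices (equivalently $k_P\le k$). In particular the paper never uses relation~(0) (the $nj-1$ degree cutoff) at all.

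Your argument instead expresses $l$ via an Euler-characteristic identity $\deg P(\Gamma)=k_P(n-j)+(g_P-1)(j-1)-l$ and then controls $\deg P(\Gamma)=N-\sum_i\deg(z_i)$ from below by combining two facts: (i) every nonunit generator of $A_{n,j}$ has order $\ge 1$ (which you correctly extract from relations~(2),~(3),~(5) and the rescaling/symmetry relations), and (ii) the explicit degree cap $[a]\le nj-1$ from relation~(0). This yields a bound for $l$ that depends on $N$ as well as on $k,g$. That is exactly what the lemma asks for, so it suffices; the paper simply proves more, namely that the whole filtration $F^\bullet DGC(k,g)$ has finite length. Your approach has the virtue of being a clean degree/Euler-characteristic computation once the two inputs are in hand; the paper's has the virtue of producing the stronger uniform statement with nothing beyond elementary vertex counting.
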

\begin{proof}
We show that we can take the lower bound, which depends on $k$ and $g$ but does not depend the degree $n$.
First, we have $|E_{\theta}(P(\Gamma))| \leq 3k$ and $|W(P(\Gamma))| \leq 2k$. In fact, since the order of each decoration is at least $1$, 
\begin{align*}
\frac{1}{2}|W(P(\Gamma))| & = \frac{3}{2}|W(P(\Gamma))| - |W(P(\Gamma))|  \\
&\leq |E_{\theta}(P(\Gamma))| - |W(P(\Gamma))| \\
&\leq |E_{\theta}(\Gamma)| - |W(\Gamma)| = k.
\end{align*}
Moreover, since the number of (non-trivial) decorations is at most $k$,
\[
|B_e(P(\Gamma))| \leq 2 |E_{\theta}(P(\Gamma))| + k \leq 7k.
\]
On the other hand,
\begin{align*}
\frac{1}{2}|B_i(P(\Gamma))|  &\leq  |E_{\eta}(P(\Gamma))| - |B_i(P(\Gamma))| \\
&=  |B_e(P(\Gamma))| + |W(P(\Gamma))|  - |E_{\theta}(P(\Gamma))| + (g-1) \\
& \leq 7k +(g-1).
\end{align*}
\end{proof}

Recall that the $E_1$-term of the spectral sequence associated with the filtration of definition \ref{defoffiltration} is 
\[
E^{p,q}_1 = H^{p+q} ( F^p DGC^{\ast} / F^{p+1} DGC^{\ast}). 
\]
Recall that $d_{DGC}$ is decomposed as $d_{DGC} = d_H + d_V$. The horizontal differential $d_H$ decreases the number of vertices of plain parts, while the vertical differential $d_V$ preserves it. So we have 
\begin{lemma}
$E_1^{p,q}$ is isomorphic to the $(p+q)$th cohomology of the complex 
\[
d_V: DGC(k,g, -p) \rightarrow DGC(k, g, -p).
\]
\end{lemma}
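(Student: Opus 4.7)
The plan is to directly compute the $E_0$-page and $d_0$-differential of the spectral sequence associated to the filtration in Definition \ref{defoffiltration}. By definition of $F^p DGC^{\ast}(k,g)$ as the span of decorated graphs whose plain part has at most $-p$ vertices, the associated graded $F^p/F^{p+1}$ is canonically identified, as a graded vector space, with $DGC(k,g,-p)$: a decorated graph contributes to this quotient precisely when its plain part has exactly $-p$ vertices. Thus $E_0^{p,q} \cong DGC^{p+q}(k,g,-p)$, and the only remaining task is to identify which parts of $d_{DGC}$ descend to $E_0$.

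Next I would analyze how the decomposition $d_{DGC} = d_H + d_V$ interacts with the filtration. In each term $\Gamma \mapsto \sigma(S)\,\Gamma/S \cdot \Gamma_S$ of $d_H$, the subset $S \subset V(P(\Gamma))$ has $|S|\geq 2$, and the plain part $P(\Gamma/S)$ has exactly $|V(P(\Gamma))| - (|S|-1) < |V(P(\Gamma))|$ vertices. Hence $d_H$ strictly decreases the plain-part vertex count, so $d_H(F^p) \subset F^{p+1}$ and $d_H$ vanishes on the associated graded. On the other hand, by the very definition $d_V(\Gamma) = (-1)^{\deg E(P(\Gamma))}\, P(\Gamma)\otimes d_Z(D(\Gamma))$, the plain part is left entirely unchanged, so $d_V$ preserves the plain-part vertex count and hence $d_V(F^p) \subset F^p$. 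The sign $(-1)^{\deg E(P(\Gamma))}$ depends only on $P(\Gamma)$, so $d_V$ descends unambiguously to a nonzero differential on $F^p/F^{p+1}$, which under the identification above is exactly $d_V$ acting on $DGC(k,g,-p)$.

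Combining these two observations gives $d_0 = d_V$ on $E_0$, so
\[
E_1^{p,q} = H^{p+q}(E_0^{p,\ast},\, d_0) = H^{p+q}\bigl(DGC(k,g,-p),\, d_V\bigr),
\]
as claimed. The argument is essentially bookkeeping; the one key observation is that every term of $d_H$ merges $|S| \geq 2$ vertices of the plain part into a single vertex, so the vertex count drops by at least one. This is what forces $d_H$ into strictly higher filtration and makes $d_0$ reduce to $d_V$. Since boundedness of the filtration was already established in the preceding lemma, no convergence issue intervenes at this step.
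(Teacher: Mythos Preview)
Your proof is correct and follows exactly the same approach as the paper: the paper's justification is the single sentence preceding the lemma, observing that $d_H$ decreases the number of plain-part vertices while $d_V$ preserves it, and you have simply spelled out this observation in detail. There is nothing to add.
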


Theorem \ref{maintheoremrestate} follows from the thorem below. 

\begin{theorem}
\label{keylemma}
The $E_1$-term of the spectral sequence $E$ is 
\[
E_1^{p,q} = PGC^{p+q}(-p).
\]
Here, $PGC^{p+q}(-p)$ is generated by plain graphs of degree  $(p+q)$ with $-p$ vertices.
The differential 
\[
d_1^{p,q}: E_1^{p,q} \rightarrow E_1^{p+1 ,q}
\]
is equal to  $d_{PGC}$.
\end{theorem}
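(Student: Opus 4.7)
The strategy is to exploit that, on the associated graded, only the vertical differential $d_V$ acts, and that the two-sided bar construction $Z = A_{n,j}\otimes_\tau BA_{n,j}$ is quasi-isomorphic to $\mathbb{R}$ concentrated in degree zero and represented by the unit of $A_{n,j}$.

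First, I would note that since $d_H$ strictly decreases the number of plain vertices while $d_V$ preserves it, the $E_0$-differential on $F^p DGC(k,g)/F^{p+1}DGC(k,g)$ is exactly $d_V$, which acts only on the decoration part of a graph (up to the overall sign coming from the edges of $P(\Gamma)$). On a summand corresponding to a fixed plain graph $P$ having $l$ external black vertices, the decoration space is a $\Sigma_l$-equivariant quotient of $Z^{\otimes l}$, and $d_V$ coincides up to sign with the total tensor differential. By K\"unneth together with $H^*(Z) = \mathbb{R}$, the cohomology of $Z^{\otimes l}$ is $\mathbb{R}$ in degree zero, represented by the all-unit decoration; since $\Sigma_l$ acts trivially on this one-dimensional space, taking (co)invariants changes nothing. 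Combining this with the admissibility condition of $DGC_{n,j}$ --- which forbids external black vertices having neither a dashed edge nor a non-trivial decoration --- forces the surviving plain graphs to be admissible in the sense of $PGC_{n,j}$. This identifies $E_1^{p,q}$ as a vector space with $PGC^{p+q}(-p)$.

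Next, I would identify $d_1$ with $d_{PGC}$. A class in $E_1$ is represented by a plain graph $\Gamma$ with trivial (unit) decoration, so $d_V\Gamma = 0$ and hence $d_{DGC}\Gamma = d_H\Gamma$. Terms of $d_H\Gamma$ with $|S|\geq 3$ drop by at least two filtration steps and contribute nothing to $d_1$. For $|S|=2$, the subgraph $\Gamma_S$ is a single edge viewed as an element of $A_{n,j}$. By the relations $\dashededgea{}{}{} = \dashededgec{}{}{} = \solidedgee{}{}{} = \emptyset$, contracting a non-chord, non-multiple edge yields $\Gamma/e$ with trivial decoration, reproducing exactly a summand of $d_{PGC}$. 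Contracting a chord or a multiple edge instead produces a non-trivial decoration (for instance $\dashededgeb{}{}{}$), which is $d_V$-exact in $Z$ because $d_\tau(1 \cdot [s\,\dashededgeb{}{}{}]) = \pm \dashededgeb{}{}{}$; such terms therefore vanish in $E_1$, matching the fact that $E^a(\Gamma)$ in Definition \ref{diffofPGC} excludes precisely chords and multiple edges.

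The main obstacle I anticipate is sign bookkeeping: one must verify that the sign $\sigma(S)$ from the definition of $d_H$, when specialized to $|S|=2$ and $\Gamma_S = \emptyset$, reduces to the sign $\sigma(e)$ defining $d_{PGC}$. This is a direct but delicate computation using Remark \ref{signexplanation} together with the explicit formulae for $\tau(S)$ and $\Phi(S)$ from subsection \ref{Notation on orientations and signs}, and it is the step where the sign conventions chosen earlier are cashed in.
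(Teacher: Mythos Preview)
Your proposal is correct and follows essentially the same approach as the paper: compute $E_1$ via the acyclicity of $Z=A_{n,j}\otimes_\tau BA_{n,j}$, use the admissibility condition to land in $PGC$ rather than $pPGC$, and then identify $d_1$ with $d_{PGC}$ by analyzing the $|S|=2$ terms of $d_H$. In fact your account of the $d_1$ step is more explicit than the paper's own proof, which only spells out the $E_1$ identification. One small imprecision: for $|S|=2$ the subgraph $\Gamma_S$ need not be a single edge (if the two vertices are joined by a multiple edge, $\Gamma_S$ carries both a dashed and a solid edge), but your conclusion is still correct since that $\Gamma_S$ is again a non-unit element of $A_{n,j}$ and hence vanishes in $E_1$.
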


Theorem \ref{keylemma} is a consequence of acyclicity of $Z$.

\begin{prop}
The complex $(Z, d_Z)$ is acyclic. That is,  $ (Z, d_Z)$ is quasi-isomorphic to the complex $\mathbb{Q}$ with the trivial differential. The cohomology $H^{\ast}(Z)$ is generated by the trivial decoration $1 \in A_{n,j}$. Similarly, the complexes $(Z^{\otimes k}, d_Z)$ and $(Z^{\otimes k}/\Sigma_k, d_Z)$ are acyclic. 
\end{prop}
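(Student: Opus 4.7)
The complex $Z = A_{n,j} \otimes_\tau BA_{n,j}$ is an instance of the classical two-sided bar construction $B(A,A,\mathbb{R})$ attached to the augmented dg algebra $A = A_{n,j}$, viewed as a functorial free resolution of the trivial $A$-bimodule $\mathbb{R}$, tensored back with $A$. The acyclicity of such a bar construction is a well-known fact from homological algebra; I would reprove it here by writing down an explicit contracting homotopy, so that the whole argument is self-contained and clearly extends to tensor powers. The statement about the generator is that $H^{\ast}(Z) \cong \mathbb{R}$ is spanned by the class of $1 \otimes [\,]$, where $1 \in A_{n,j}$ is the unit (the empty sequence $\emptyset$).

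\textbf{Contracting homotopy.} Writing $\bar a_0 := a_0 - \eta(a_0)\cdot 1 \in \bar A$, I would define a degree $(-1)$ linear map $h: Z \to Z$ by
\begin{equation*}
h\bigl(a_0[sa_1\,|\,sa_2\,|\,\cdots\,|\,sa_k]\bigr) \;=\; (-1)^{|a_0|}\,1\cdot\bigl[s\bar a_0\,|\,sa_1\,|\,\cdots\,|\,sa_k\bigr],
\end{equation*}
with the convention that the right-hand side vanishes when $\bar a_0 = 0$. A direct Koszul sign computation, using the explicit formulas for $d_A$, $d_{BA}$ and $d_\tau$ recalled in Subsection 5.2, shows
\begin{equation*}
d_Z\,h + h\,d_Z \;=\; \mathrm{id}_Z - \pi,
\end{equation*}
where $\pi: Z \to Z$ is the projection onto the one-dimensional subspace $\mathbb{R}\cdot (1\otimes[\,])$. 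The mechanism is the usual cancellation: the ``new first bar slot'' $s\bar a_0$ interacts with $d_\tau$ (resp.\ $d_{BA}$, resp.\ $d_A$) applied to $hz$ to reproduce the $a_0$-terms (resp.\ the first-slot multiplication, resp.\ the $d_A a_0$-term) of $z$, while everything else commutes past $h$ up to sign. This immediately gives $H^{\ast}(Z) \cong \mathbb{R}\cdot[1\otimes[\,]]$, proving the first claim.

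\textbf{Tensor powers and symmetric coinvariants.} For $Z^{\otimes k}$, the cleanest route is Künneth over the field $\mathbb{R}$, giving $H^{\ast}(Z^{\otimes k}) \cong H^{\ast}(Z)^{\otimes k} \cong \mathbb{R}$, with generator the class of $(1\otimes[\,])^{\otimes k}$; equivalently, the standard iterated contracting homotopy $h_k := \sum_{i=1}^{k} (\pi^{\otimes (i-1)} \otimes h \otimes \mathrm{id}^{\otimes(k-i)})$ assembles into an explicit contraction onto $(1\otimes[\,])^{\otimes k}$. For the quotient $Z^{\otimes k}/\Sigma_k$, I would invoke that we work in characteristic zero: the $\Sigma_k$-action is semisimple, and the averaging operator $e = \frac{1}{k!}\sum_{\sigma \in \Sigma_k} \sigma$ is a cochain-level idempotent identifying $Z^{\otimes k}/\Sigma_k$ with $(Z^{\otimes k})^{\Sigma_k}$ and exhibiting it as a direct summand. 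Hence taking $\Sigma_k$-coinvariants is exact and commutes with cohomology, so $H^{\ast}(Z^{\otimes k}/\Sigma_k) \cong (H^{\ast}(Z^{\otimes k}))_{\Sigma_k} \cong \mathbb{R}$, generated by the image of $(1\otimes[\,])^{\otimes k}$.

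\textbf{Main obstacle.} There is no conceptual obstacle: the result is standard. The only mildly delicate point is bookkeeping the Koszul signs in the verification of $d_Z h + h d_Z = \mathrm{id} - \pi$, since the paper's sign convention for $d_\tau$ carries a $(-1)^{|a_0|+1}$ and the suspension $s$ shifts degrees; this is tedious but routine. One minor subtlety worth flagging in the write-up is that the homotopy $h$ is only defined modulo the projection $\pi$ (since $h(1\otimes[\,]) = 0$), which is precisely why $\pi$, not $0$, appears on the right-hand side.
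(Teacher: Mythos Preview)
Your approach is essentially the same as the paper's: the paper also writes down the explicit contracting homotopy $h(a_0[sa_1|\cdots|sa_k]) = [s\bar a_0|sa_1|\cdots|sa_k]$ (without your extra sign $(-1)^{|a_0|}$, and stating the identity as $hd - dh = \mathrm{id}$ rather than $dh + hd = \mathrm{id} - \pi$, which amounts to the same thing up to sign conventions and the treatment of the unit). Your write-up is in fact more complete, since the paper does not spell out the K\"unneth and characteristic-zero averaging arguments for $Z^{\otimes k}$ and $Z^{\otimes k}/\Sigma_k$ that you provide.
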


\begin{proof}
Define $h: Z \rightarrow Z$ by 
\[
a_0[sa_1|sa_2| \dots |sa_k] \mapsto [s\overline{a_0}| sa_1|sa_2| \dots |sa_k],
\]
where $a_0 = \overline{a_0} + \varepsilon(a_0) \in \overline{A}_{n,j} \oplus \mathbb{R}$.
Then we have $hd-dh = id$.
\end{proof}

\begin{proof}[Proof of Theorem \ref{keylemma}]
Consider the map
\begin{align*}
P\otimes D: H^{\ast}(DGC(k,g, -p), d_V) &\rightarrow pPGC(k,g, -p) \otimes H^{\ast}(Z^{\otimes (-p)} / \Sigma_{(-p)}) \\
& \xrightarrow{\cong} pPGC(k,g, -p)
\end{align*}
This map is injective and its image lies in $PGC(k,g, -p)$. In fact, if the plain part $P(\Gamma)$ vanishes, the decorated graph also vanishes. On the other hand, any admissible plain graph is  always a cocycle of $(DGC(k,g, -p), d_V)$. 
\end{proof}

The following property is used in the pairing argument.
\begin{prop}
\label{keypropforpairing}
Let $\Gamma$ be a decorated graph of order $\leq k$.
Assume that the plain part $P(\Gamma)$ has at least $2k$ external vertices with dashed edges. 
Then $\Gamma$ is a plain graph without white vertices. Moreover, $\Gamma$ has exactly $k$ dashed edges and $2k$ external black vertices. 
\end{prop}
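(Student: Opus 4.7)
The plan is to argue from a dashed half-edge count at the specified $2k$ external black vertices, combined with the order inequality $k(P(\Gamma)) + k(D(\Gamma)) \le k$. Recall that white vertices of a plain graph carry at least three dashed half-edges, internal black vertices carry none, and each of the $2k$ specified external blacks carries at least one. Any further external black vertices of $P(\Gamma)$ contribute nonnegatively. Since each dashed edge supplies exactly two dashed half-edges, the resulting incidence inequality is
\[
2|E_\theta(P(\Gamma))| \;\geq\; 3|W(P(\Gamma))| + 2k.
\]

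Next I would feed this into the order bound. Using $k(P(\Gamma)) = |E_\theta(P(\Gamma))| - |W(P(\Gamma))|$ and $k(D(\Gamma)) \ge 0$, the hypothesis $k(\Gamma) \le k$ gives
\[
k \;\ge\; k(P(\Gamma)) \;=\; |E_\theta(P(\Gamma))| - |W(P(\Gamma))| \;\ge\; \tfrac{1}{2}|W(P(\Gamma))| + k,
\]
which forces $|W(P(\Gamma))| = 0$. With no white vertices, the incidence inequality reduces to $2|E_\theta(P(\Gamma))| \ge 2k$, so $|E_\theta(P(\Gamma))| \ge k$, and comparing with $|E_\theta(P(\Gamma))| = k(P(\Gamma)) \le k$ pins down $|E_\theta(P(\Gamma))| = k$ and $k(D(\Gamma)) = 0$.

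Finally, I would read off the remaining statements from the vanishing of $k(D(\Gamma))$ using the structure of $Z = A_{n,j} \otimes_\tau BA_{n,j}$. An element $a_0[sa_1|\cdots|sa_l]$ with $a_i \in \overline{A}_{n,j}$ for $i \ge 1$ has order $\sum_i k(a_i)$, and since any graph in $\overline{A}_{n,j}$ (i.e.\ not a scalar multiple of the unit $\emptyset$) carries at least one dashed edge and hence order $\ge 1$, the equality $k(D(\Gamma)) = 0$ forces each decoration to be a scalar multiple of $\emptyset$. Thus $\Gamma$ is effectively a plain graph. Admissibility of $\Gamma$ then demands that every external black vertex of $P(\Gamma)$ be incident to a dashed edge, and the saturation $2|E_\theta(P(\Gamma))| = 2k$ of the half-edge count forces exactly $2k$ external black vertices, each incident to precisely one dashed edge, together with exactly $k$ dashed edges in total.

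The only subtle point is the last-step claim that \emph{non-trivial} decorations have strictly positive order; this follows from the fact that the augmentation $\eta$ of $A_{n,j}$ is concentrated on $\mathbb{R}\cdot \emptyset$, so every element of $\overline{A}_{n,j}$ is a combination of graphs of order $\ge 1$. Once this is noted, the rest of the argument is a short incidence count together with one application of admissibility.
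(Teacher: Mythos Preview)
Your argument is correct and follows essentially the same route as the paper: both proofs combine the dashed half-edge incidence inequality $2|E_\theta(P(\Gamma))| \geq 3|W(P(\Gamma))| + 2k$ with the order bound $|E_\theta(P(\Gamma))| - |W(P(\Gamma))| \leq k$ to force $|W(P(\Gamma))| = 0$, $|E_\theta(P(\Gamma))| = k$, and $k(D(\Gamma)) = 0$, then conclude triviality of the decorations and read off the exact vertex count from saturation of the incidence inequality. One small point: your justification that elements of $\overline{A}_{n,j}$ have order $\geq 1$ does not quite follow from the augmentation alone; rather, it uses that after the relations of $A_{n,j}$ (in particular (3), (4), (5)) any nonzero graph may be taken to have white vertices of valence $\geq 3$, whence order $\geq 0$ with equality forcing the graph to reduce to $\emptyset$ --- but this is exactly what the paper is tacitly using as well.
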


\begin{proof}
First, we have
\[
\frac{3}{2}|W(P(\Gamma))| + k - |W(P(\Gamma))| \leq |E_{\theta}(P(\Gamma))| - |W(P(\Gamma))| \leq k.
\]
Thus $|W(P(\Gamma)| = 0$ and $|E_{\theta}(P(\Gamma)| = |E_{\theta}(P(\Gamma)| - |W(P(\Gamma)| = k$. By the second equality, the graph $\Gamma$ has no decoration. Every external black vertex has exactly one dashed edge.
\end{proof}

\begin{example}
Recall that when $n$ and $j$ are even, 
\begin{align*}
H^{top}(PGC_{n,j})(k=1,g=1) & = \mathbb{Q}
(
\begin{tikzpicture}[x=1pt,y=1pt,yscale=1,xscale=1, baseline=-3pt, line width =1pt]
\draw  [fill={rgb, 255:red, 0; green, 0; blue, 0 }  ,fill opacity=1 ] (0,0) circle (2);
\draw  [fill={rgb, 255:red, 0; green, 0; blue, 0 }  ,fill opacity=1 ] (50,0) circle (2);
\draw  [dash pattern = on 3pt off 3 pt] (0,0) .. controls (20,5) and (30,5)  .. (50,0);
\draw  (0,0) .. controls (20,-5) and (30,-5)  .. (50,0);
\end{tikzpicture}).
\end{align*}
We have 
\begin{align*}
H^{top}(DGC_{n,j})(k=1,g=1) & = \mathbb{Q}
(
\begin{tikzpicture}[x=1pt,y=1pt,yscale=1,xscale=1, baseline=-3pt, line width =1pt]
\draw  [fill={rgb, 255:red, 0; green, 0; blue, 0 }  ,fill opacity=1 ] (0,0) circle (2);
\draw  [fill={rgb, 255:red, 0; green, 0; blue, 0 }  ,fill opacity=1 ] (50,0) circle (2);
\draw  [dash pattern = on 3pt off 3 pt] (0,0) .. controls (20,5) and (30,5)  .. (50,0);
\draw  (0,0) .. controls (20,-5) and (30,-5)  .. (50,0);
\end{tikzpicture}
+
\begin{tikzpicture}[x=1pt,y=1pt,yscale=1,xscale=1, baseline=-3pt, line width =1pt]
\draw  [fill={rgb, 255:red, 0; green, 0; blue, 0 }  ,fill opacity=1 ] (0,0) circle (2);
\draw (0,0) node [anchor = south] {$z$};
\end{tikzpicture}
),
\end{align*}
where $z= [sa] \in Z_{n,j} $ and $a \in A_{n,j} $ is the element of degree $n-1$ represented by the graph
 \begin{tikzpicture}[x=1pt,y=1pt,yscale=1,xscale=1, baseline=-3pt, line width =1pt]
\draw  [fill={rgb, 255:red, 0; green, 0; blue, 0 }  ,fill opacity=1 ] (0,0) circle (2);
\draw  [fill={rgb, 255:red, 0; green, 0; blue, 0 }  ,fill opacity=1 ] (50,0) circle (2);
\draw  [dash pattern = on 3pt off 3 pt] (0,0) .. controls (20,5) and (30,5)  .. (50,0);
\draw  (0,0) .. controls (20,-5) and (30,-5)  .. (50,0);
\end{tikzpicture}.
\end{example}

\begin{example}
Recall that when $n$ and $j$ are odd, 
\begin{align*}
H^{top}(PGC_{n,j})(k=2,g=1) & = \mathbb{Q}
(
 \graphE{} + 2 \graphDb{} +\frac{1}{2}\graphDc{} 
). \\
\end{align*}
We have 
\begin{align*}
H^{top}(DGC_{n,j})(k=2,g=1)  = \mathbb{Q} 
( &\graphE{} + 2 \graphDb{} +\frac{1}{2}\graphDc{} \\
+ &  2
 \begin{tikzpicture}[x=1pt,y=1pt,yscale=-0.3,xscale=0.3,baseline=-85pt, line width=1pt]
\draw [dash pattern={on 3pt off 2pt}] [-stealth]  (90,270)..controls (130,220)..(165,265) ;
%solid edges
\draw [-stealth] (90, 270)--(165,270) ;
\draw [-stealth](170, 270)--(245,270) ;
%black vertex
\draw  [fill={rgb, 255:red, 0; green, 0; blue, 0 }  ,fill opacity=1 ] (90,270) circle (5);
\draw (90, 290) node {$1$};
\draw  [fill={rgb, 255:red, 0; green, 0; blue, 0 }  ,fill opacity=1 ] (170,270) circle (5);
\draw (170, 290) node {$2$};
\draw [fill={rgb, 255:red, 0; green, 0; blue, 0 }  ,fill opacity=1 ] (250,270) circle (5);
\draw (250, 290) node {$3$};
\draw (250, 270) node [anchor = south] {$z^{\prime}$};
\end{tikzpicture}
 +
\begin{tikzpicture}[x=1pt,y=1pt,yscale=-0.3,xscale=0.3,baseline=-85pt, line width=1pt]
\draw [dash pattern={on 3pt off 2pt}] [-stealth]  (90,270)..controls (170,220)..(245,265) ;
%solid edges
\draw [-stealth] (90, 270)..controls (185,350).. (330,275);
\draw [-stealth](250, 270)--(325,270) ;
%black vertex
\draw  [fill={rgb, 255:red, 0; green, 0; blue, 0 }  ,fill opacity=1 ] (90,270) circle (5);
\draw (90, 290) node {$1$};
\draw [fill={rgb, 255:red, 0; green, 0; blue, 0 }  ,fill opacity=1 ] (250,270) circle (5);
\draw (250, 290) node {$2$};
\draw [fill={rgb, 255:red, 0; green, 0; blue,0 } ,fill opacity=1] (330,270) circle (5);
\draw (330, 295) node {$3$};
\draw (330, 270) node [anchor = south] {$z^{\prime}$};
\end{tikzpicture}
), \\
\end{align*}
where $z^{\prime} = [s a^{\prime}] \in Z_{n,j} $ and $a^{\prime} \in A_{n,j} $ is the element of degree $n-j$ represented by the graph
 \begin{tikzpicture}[x=1pt,y=1pt,yscale=1,xscale=1, baseline=-3pt, line width =1pt]
\draw  [fill={rgb, 255:red, 0; green, 0; blue, 0 }  ,fill opacity=1 ] (0,0) circle (2);
\draw (0,0) node [anchor = south] {$1$};
\draw  [fill={rgb, 255:red, 0; green, 0; blue, 0 }  ,fill opacity=1 ] (50,0) circle (2);
\draw (50,0) node [anchor = south] {$2$};
\draw  [dash pattern = on 3pt off 3 pt, -Stealth] (0,0) -- (50,0);
\end{tikzpicture}.

\end{example}
 
\section{The modified configuration space integrals}
\label{The configuration space integrals with iterated integrals}

From now on, the coefficient of graph complexes is considered to be $\mathbb{\mathbb{R}}$.
In this section,  we first give a map of graded vector spaces 
\[
I: PGC_{n,j} \rightarrow \Omega_{dR}(\overline{\mathcal{K}}_{n,j}). 
\]
The map $I$ is given by configuration space integrals in the same way as Bott \cite{Bot}, Cattaneo, Rossi \cite{CR}, Sakai and Watanabe \cite{Sak, SW, Wat 1} give. 
Unfortunately, it is unknown whether these configuration space integrals give a cochain map, due to potential obstructions called \textit{hidden faces}.  However, we can show that $I$ is a cochain map ``up to homotopy''. Namely, we give another graph complex $DGC_{n,j}$ quasi-isomorphic to $PGC_{n,j}$, from which we can construct a cochain map.  See Theorem \ref{cochainmapuptohomotopy}.

\subsection{Some remarks on fiber integrals}
We review some facts on fiber integrals. 
\begin{theorem}[Stokes' theorem]
\label{Stokes' theorem}
\
Let $\pi: E \rightarrow B$ be an oriented bundle with fiber $F$. Let $\omega$ be a form on $E$. Then we have
\[
d \pi_{\ast} \omega = \pi_{\ast} d \omega + (-1)^{|\pi_{\ast} \omega|} {\pi_{\partial}}_{\ast} \omega.
\]
Here, the boundary of the fiber $F$ of $E$ is oriented by the outward normal convention. 
\end{theorem}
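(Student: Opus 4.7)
The plan is to reduce the statement to the classical Stokes theorem applied fiberwise, being careful with the sign coming from the degree of $\pi_\ast \omega$. First I would work locally on the base: by a partition of unity argument, it suffices to prove the identity over a trivializing open set $U \subset B$ with $\pi^{-1}(U) \cong U \times F$. Over such a trivialization, any form $\omega$ on $\pi^{-1}(U)$ decomposes as a sum of terms of the form $\pi^\ast \alpha \wedge \beta$, where $\alpha \in \Omega^\ast(U)$ and $\beta \in \Omega^\ast(F)$ (extended naturally to $U \times F$), so by linearity I may assume $\omega$ has this particular form.

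Next I would compute both sides directly for $\omega = \pi^\ast \alpha \wedge \beta$. Writing $r = \dim F$ and $p = \deg \beta$, the fiber integral $\pi_\ast \omega$ vanishes unless $p \geq r$; when $p = r$ (so $\partial F$ gives codimension $r-1$), one has $\pi_\ast \omega = \alpha \cdot \int_F \beta$, and more generally $\pi_\ast(\pi^\ast \alpha \wedge \beta) = (-1)^{|\alpha|(p-r)} \alpha \wedge \pi_\ast \beta$ with $|\pi_\ast \omega| = |\alpha| + p - r$. Applying the exterior derivative on $B$ and using that $d$ commutes with pullback, I get
\begin{equation*}
d \pi_\ast \omega \;=\; d\alpha \wedge \pi_\ast \beta \;+\; (-1)^{|\alpha|} \alpha \wedge d(\pi_\ast \beta),
\end{equation*}
up to the sign above. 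The first term equals $\pi_\ast(d\pi^\ast \alpha \wedge \beta)$ (again up to a sign). For the second term I would invoke the classical Stokes theorem on the fiber, which gives $d(\pi_\ast \beta) = \pi_\ast(d\beta) + (-1)^{p-r+1}(\pi_\partial)_\ast \beta$ or equivalently, in signed form, $d(\pi_\ast \beta) - (-1)^{p-r}\pi_\ast(d\beta) = \pm (\pi_\partial)_\ast \beta$, with the sign determined by the outward normal convention on $\partial F$.

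Assembling these pieces and recollecting signs, the bulk contributions combine into $\pi_\ast(d\omega)$, while the boundary contribution assembles into $\pm (\pi_\partial)_\ast \omega$. The final step is a careful sign bookkeeping to verify that the boundary coefficient is exactly $(-1)^{|\pi_\ast \omega|}$; this uses the identity $|\pi_\ast \omega| = |\omega| - r$ and the fact that the fiberwise Stokes sign pairs with the Koszul sign from pulling $d$ past $\pi^\ast \alpha$.

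The main obstacle is purely notational: keeping the sign conventions consistent between (i) the orientation of $\partial F$ via the outward normal, (ii) the Koszul sign rule for $d(\pi^\ast \alpha \wedge \beta)$, and (iii) the formula $\pi_\ast(\pi^\ast \alpha \wedge \beta) = (-1)^{|\alpha|(\deg \beta - r)} \alpha \wedge \pi_\ast \beta$. No genuinely new idea is required beyond ordinary Stokes; the result is standard (see, e.g., Bott--Tu), and the purpose of stating it here is to fix the conventions to be used in the configuration space integral computations of Section~\ref{The configuration space integrals with iterated integrals}.
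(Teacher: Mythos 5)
The paper does not prove this statement at all: it appears in the review subsection on fiber integrals and is recalled as a standard fact (of Bott--Tu type), with only the double push-forward formula in that subsection receiving an argument. So there is no ``paper proof'' to match; judged on its own, your sketch is the standard and correct route, with two points you should tighten. First, the reduction step is stated too strongly: a general form on a trivialization $U\times F$ is \emph{not} a finite sum of pure products $\pi^{\ast}\alpha\wedge\beta$ with $\alpha\in\Omega^{\ast}(U)$, $\beta\in\Omega^{\ast}(F)$; you must allow coefficient functions depending on both variables, i.e.\ work with terms $f\,\pi^{\ast}dx_I\wedge dt_J$, $f\in C^{\infty}(U\times F)$. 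This is exactly where the theorem lives: for such a term $df$ splits into a base part (which produces $d\pi_{\ast}\omega$ against the top fiber-degree terms $|J|=r$) and a fiber part (which, for $|J|=r-1$, is handled by classical Stokes on $F$ and produces the boundary term), whereas in a pure tensor product $\pi_{\ast}\beta$ is a constant and the identity degenerates; your ``by linearity'' reduction therefore needs this repair, though it is routine. Second, make explicit that terms of top fiber degree $|J|=r$ contribute nothing to $(\pi_{\partial})_{\ast}\omega$ (an $r$-form in fiber directions pulls back to zero on the $(r-1)$-dimensional $\partial F$), and that terms with $|J|<r-1$ die everywhere, so the boundary contribution comes solely from $|J|=r-1$; with $|\pi_{\ast}\omega|=|\omega|-\dim F$ and the outward-normal orientation of $\partial F$ the sign then comes out as $(-1)^{|\pi_{\ast}\omega|}$ as claimed. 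With these adjustments your argument is complete and is the proof the paper implicitly relies on.
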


\begin{lemma}[Compatibility with pullback]
Let $\pi: E \rightarrow B$ be an oriented bundle with fiber $F$. Let $\omega$ be a form on $E$ and let $\theta$ be a form on $B$. Then we have
\[
\pi_{\ast}(\pi^{\ast} \theta \wedge \omega) = \theta \wedge \pi_{\ast} \omega.
\]
Note that from this lemma, we have $\pi_{\ast}(\omega \wedge \pi^{\ast} \theta) = (-1)^{|\theta||F|}\pi_{\ast} \omega \wedge \theta $.
\end{lemma}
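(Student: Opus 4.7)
The plan is to establish this as a standard projection formula (push-pull formula) for fiber integration. Since the asserted equality is between forms on $B$ and is pointwise in nature, I would first reduce to a local statement via a partition of unity on $B$. Pulling the partition back along $\pi$ and using linearity of $\pi_{\ast}$, we may assume $\pi\colon E \to B$ is a trivial bundle $E = U \times F$ with $\pi$ the projection to the first factor, and $U \subset B$ an open set on which we have coordinates.

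Next, I would decompose $\omega$ by bi-degree with respect to base and fiber directions: $\omega = \sum_{p+q = |\omega|} \omega^{p,q}$, where $\omega^{p,q}$ has degree $p$ in base directions and $q$ in fiber directions. Recall that $\pi_{\ast}$ annihilates components of fiber degree less than $\dim F$, so only the component with $q = \dim F$ contributes. By linearity it suffices to treat the case $\omega = g \cdot (\pi^{\ast}\alpha)\wedge \beta$, where $g \in C^{\infty}(E)$, $\alpha \in \Omega^{\ast}(B)$, and $\beta$ is a wedge product of fiber 1-forms of total degree $\dim F$.

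For this model case, observe that $\pi^{\ast}\theta \wedge \omega = g \cdot \pi^{\ast}(\theta \wedge \alpha)\wedge \beta$, since $\pi^{\ast}$ is multiplicative and $\pi^{\ast}$-forms graded-commute cleanly with $\pi^{\ast}$-forms. Because $\pi^{\ast}\theta$ contains no fiber-direction covector, it passes through the fiber integral, giving
\[
\pi_{\ast}(\pi^{\ast}\theta \wedge \omega) = \theta \wedge \alpha \cdot \Bigl(\int_F g\,\beta\Bigr) = \theta \wedge \pi_{\ast}\omega,
\]
which is the desired identity. The sign in the corollary then follows purely formally: $\omega \wedge \pi^{\ast}\theta = (-1)^{|\theta||\omega|}\pi^{\ast}\theta \wedge \omega$, applying the main identity gives $(-1)^{|\theta||\omega|}\theta \wedge \pi_{\ast}\omega$, and finally commuting $\theta$ back past $\pi_{\ast}\omega$, which has degree $|\omega|-|F|$, produces the total sign $(-1)^{|\theta||\omega|+|\theta|(|\omega|-|F|)} = (-1)^{|\theta||F|}$. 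The only thing to be careful about is the bookkeeping of signs and the choice of orientation convention on $F$, but there is no real obstacle since everything reduces to a bi-graded Fubini argument.
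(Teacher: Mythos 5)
Your argument is correct, and it is worth noting that the paper itself does not prove this lemma at all: it is stated without proof in the subsection reviewing fiber integrals, as a standard projection formula to be quoted later (e.g.\ in the proof of the double push-forward formula). So there is no ``paper proof'' to compare against; your localization-plus-Fubini argument is the standard one and fills the gap legitimately. The steps all check out: reduction to a trivializing chart by a partition of unity $\{\rho_i\}$ on $B$ pulled back to $E$ (the sum $\omega=\sum_i(\pi^{\ast}\rho_i)\,\omega$ is locally finite, so linearity of $\pi_{\ast}$ applies); the observation that wedging with $\pi^{\ast}\theta$ does not change the fiber bidegree, so only the component of $\omega$ of fiber degree $\dim F$ contributes on both sides; the model computation for $\omega=g\,(\pi^{\ast}\alpha)\wedge\beta$; and the sign bookkeeping for the ``Note,'' where $(-1)^{|\theta||\omega|}(-1)^{|\theta|(|\omega|-|F|)}=(-1)^{|\theta||F|}$ agrees with the paper's statement. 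Two small points you should make explicit if this were written out in full: (i) for $\pi_{\ast}$ to be defined one needs the fiber to be compact or the form to have fiberwise compact support (in the paper's application the fibers are compactified configuration spaces, so this is fine), and (ii) the absence of a sign in the model case depends on the convention that $\pi_{\ast}\bigl(\pi^{\ast}\alpha\wedge g\,\beta\bigr)=\alpha\cdot\int_F g\,\beta$ with the fiber volume factor written on the right; since the paper uses the same convention (base forms pass through $\pi_{\ast}$ from the left), your computation is consistent with it.
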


\begin{prop}[Compatibility in a pullback diagram]
\label{Compatibility in pullback diagram}
Consider the following pullback diagram:
\begin{center}
\begin{tikzpicture}[auto]
\node (c) at (-2, 1) {$\overline{E}$}; \node (d) at (2, 1) {$E_2$}; 
\node (a) at (-2, -1) {$E_1$}; \node (b) at (2, -1) {$B$};

\draw[->] (c)  to node {$f_2$}  (d);
\draw[->] (a) to node {$\pi_1$} (b);
\draw [->] (c) to node  {$f_1$} (a);
\draw [->] (d) to  node {$\pi_2$} (b);
\end{tikzpicture}
\end{center}
where $E_1$ and $E_2$ are oriented bundles over $B$ with fibers $F_1$ and $F_2$ respectively. Note that $\overline{\pi}: \overline{E} \rightarrow B$ is an oriented bundle with fiber $F_1 \times F_2$. Let $\omega$ be a form on $E_2$. Then we have
\[
(f_1)_{\ast} (f_2)^{\ast} \omega = (\pi_1)^{\ast} (\pi_2)_{\ast} \omega.
\]

\end{prop}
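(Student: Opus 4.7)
The plan is to reduce the identity to a purely local computation in simultaneous trivializations of the two bundles, where both sides become integrations along $F_2$ and the equality follows from Fubini's theorem.

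First I would observe that the outer square is not only a pullback of topological spaces, but that both $f_1: \overline{E} \to E_1$ and $f_2: \overline{E} \to E_2$ are themselves oriented fiber bundles. Concretely, the fiber of $f_1$ over a point $e_1 \in E_1$ is canonically identified with $\pi_2^{-1}(\pi_1(e_1)) \cong F_2$, and similarly the fiber of $f_2$ is $F_1$. This is crucial because it means that $(f_1)_\ast$ on the left-hand side genuinely is a fiber integration over copies of $F_2$, matching the fiber integration $(\pi_2)_\ast$ appearing on the right-hand side.

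Next, since both $(f_1)_\ast (f_2)^\ast \omega$ and $(\pi_1)^\ast (\pi_2)_\ast \omega$ are forms on $E_1$, and since fiber integration commutes with restriction to open subsets of the total space, it suffices to verify the identity after restricting to $\pi_1^{-1}(U)$ for a sufficiently small $U \subset B$. Choose $U$ so that $\pi_1^{-1}(U) \cong U \times F_1$ and $\pi_2^{-1}(U) \cong U \times F_2$ as oriented bundles; then $\overline{\pi}^{-1}(U) \cong U \times F_1 \times F_2$, and in this model $f_1, f_2, \pi_1, \pi_2$ are just the canonical projections. Using coordinates $(u, x, y)$ on $U \times F_1 \times F_2$, the form $\omega$ depends only on $(u, y)$ and decomposes as $\omega = \alpha + \beta$, where $\beta$ is the sum of terms of maximal vertical degree in $dy$ (so that $(\pi_2)_\ast \omega = \int_{F_2} \beta$) and $\alpha$ collects the terms of lower vertical degree, which contribute zero after integration over $F_2$.

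Finally, I would compute both sides in this trivialization: $(\pi_1)^\ast (\pi_2)_\ast \omega$ is obtained by first integrating $\beta$ over $y \in F_2$ to get a form on $U$, then pulling it back along the projection $U \times F_1 \to U$, giving $\int_{F_2}\beta(u,y)$ viewed as independent of $x$. On the other hand, $(f_2)^\ast \omega$ is just the same expression for $\omega$ now interpreted on $U \times F_1 \times F_2$, and $(f_1)_\ast$ integrates over the $y$ variable, yielding the same form. The only non-routine point is verifying that the orientation conventions agree; this is immediate from the product orientation on $F_1 \times F_2$ and the compatible trivializations, so I do not expect any essential obstacle.
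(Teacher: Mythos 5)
Your argument is correct. Note that the paper itself states this proposition without proof, as one of several standard facts about fiber integration collected in the preliminaries (only the double push-forward formula is proved there, by quoting this proposition), so there is no proof in the paper to compare against; your local-trivialization-plus-Fubini argument is the standard way to establish it. The key structural point you identify — that $f_1\colon \overline{E}\to E_1$ is itself an oriented bundle with fiber $F_2$, canonically identified with $\pi_2^{-1}(\pi_1(e_1))$, so that $(f_1)_\ast$ really is integration over the same fibers as $(\pi_2)_\ast$ — is exactly what makes the statement true, and the reduction to a simultaneous trivializing neighborhood $U\subset B$ (obtained by intersecting trivializing opens for $E_1$ and $E_2$) is legitimate because both sides are forms on $E_1$ and fiber integration is local over the base. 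The only points worth stating a bit more carefully are that the orientation of the $f_1$-fibers is the one induced from $F_2$ via the canonical identification (so that the two fiber integrations use the same orientation), and that the paper's convention $\pi_\ast(\pi^\ast\theta\wedge\omega)=\theta\wedge\pi_\ast\omega$ amounts to a fiber-last convention, under which the base-change identity indeed carries no sign; your closing remark about the product orientation covers this adequately.
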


\begin{prop}[Double push forward formula]
\label{Double push forward}
In the pullback diagram in Proposition \ref{Compatibility in pullback diagram}, let  $\omega_1$ and $\omega_2$ be forms on $E_1$ and $E_2$ respectively.
Then we have 
\begin{align*}
& \int_{F_1} \int_{F_2} (f_1^{\ast} \omega_1 \wedge f_2^{\ast} \omega_2) = (\pi_1)_{\ast} (f_1)_{\ast} (f_1^{\ast} \omega_1 \wedge f_2^{\ast} \omega_2) \\ 
= & (-1)^{|F_1||\int_{F_2}\omega_2|} \int_{F_1} \omega_1 \wedge \int_{F_2} \omega_2 =  (-1)^{|F_1||{\pi_2}_{\ast} \omega_2|}({\pi_1}_{\ast} \omega_1 \wedge {\pi_2}_{\ast} \omega_2) 
\end{align*}
\end{prop}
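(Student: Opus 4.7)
The plan is to compute the iterated push-forward by factoring $\overline{\pi} \colon \overline{E} \to B$ as $\pi_1 \circ f_1$ (equivalently $\pi_2 \circ f_2$) and applying in turn the projection formula (Lemma on compatibility with pullback) and the compatibility in the pullback diagram (Proposition~\ref{Compatibility in pullback diagram}). No delicate analysis is needed beyond keeping track of signs, since everything reduces to two applications of the projection formula sandwiching one identity coming from the pullback square.

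First I would push forward along $f_1 \colon \overline{E} \to E_1$, whose fibers are the fibers of $\pi_2$ (by the pullback property), i.e.\ copies of $F_2$. Applying the projection formula with $\pi = f_1$, $\theta = \omega_1$, $\omega = f_2^{\ast}\omega_2$, one obtains
\[
(f_1)_{\ast}\bigl(f_1^{\ast}\omega_1 \wedge f_2^{\ast}\omega_2\bigr) = \omega_1 \wedge (f_1)_{\ast} f_2^{\ast}\omega_2.
\]
Now Proposition~\ref{Compatibility in pullback diagram} gives $(f_1)_{\ast} f_2^{\ast}\omega_2 = \pi_1^{\ast}(\pi_2)_{\ast}\omega_2$, so
\[
(f_1)_{\ast}\bigl(f_1^{\ast}\omega_1 \wedge f_2^{\ast}\omega_2\bigr) = \omega_1 \wedge \pi_1^{\ast}(\pi_2)_{\ast}\omega_2.
\]

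Next I would push forward the result along $\pi_1 \colon E_1 \to B$, whose fiber is $F_1$. Applying the projection formula in the form of the remark (with the pullback factor on the right),
\[
(\pi_1)_{\ast}\bigl(\omega_1 \wedge \pi_1^{\ast}(\pi_2)_{\ast}\omega_2\bigr) = (-1)^{|F_1|\,|(\pi_2)_{\ast}\omega_2|}\,(\pi_1)_{\ast}\omega_1 \wedge (\pi_2)_{\ast}\omega_2.
\]
Composing these two steps and recognising $\overline{\pi}_{\ast} = (\pi_1)_{\ast} (f_1)_{\ast}$, we obtain the desired identity. The first equality in the statement is then just the definition of $\int_{F_1}\int_{F_2}$ as the iterated fiber integral $\overline{\pi}_{\ast}$.

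The only real point to watch is the sign. The degree of $\pi_1^{\ast}(\pi_2)_{\ast}\omega_2$ equals $|(\pi_2)_{\ast}\omega_2| = |\omega_2| - |F_2|$, and when we commute this form past $\omega_1$ inside $\pi_1$ and then strip off $\pi_1^{\ast}$, the projection formula produces precisely the factor $(-1)^{|F_1|\,|(\pi_2)_{\ast}\omega_2|}$ stated. Hence the main obstacle is purely bookkeeping, not geometry.
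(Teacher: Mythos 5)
Your proposal is correct and follows essentially the same route as the paper's proof: apply the projection formula along $f_1$, replace $(f_1)_{\ast}f_2^{\ast}\omega_2$ by $\pi_1^{\ast}(\pi_2)_{\ast}\omega_2$ via the pullback-diagram compatibility, then push forward along $\pi_1$ using the signed form of the projection formula to get the factor $(-1)^{|F_1||(\pi_2)_{\ast}\omega_2|}$. The sign bookkeeping you describe matches the paper's computation exactly.
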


\begin{proof}
By using Proposition \ref{Compatibility in pullback diagram} and \ref{Double push forward}, we have
\begin{align*}
&(\pi_1)_{\ast} (f_1)_{\ast} (f_1^{\ast} \omega_1 \wedge f_2^{\ast} \omega_2) \\
= & (\pi_1)_{\ast} (\omega_1 \wedge (f_1)_{\ast} (f_2)^{\ast} \omega_2) \\
= & (\pi_1)_{\ast} (\omega_1 \wedge (\pi_1)^{\ast} (\pi_2)_{\ast}  \omega_2) \\
= & (-1)^{|F_1||{\pi_2}_{\ast} \omega_2|}  ({\pi_1}_{\ast} \omega_1 \wedge {\pi_2}_{\ast} \omega_2).
\end{align*}
\end{proof}

\subsection{The configuration space integrals $I: PGC_{n,j} \rightarrow \Omega_{dR}^{\ast}(\overline{\mathcal{K}}_{n,j})$} 

We define a map of graded vector spaces $I: PGC_{n,j} \rightarrow \Omega_{dR}^{\ast}(\overline{\mathcal{K}}_{n,j})$ by configuration space integrals. Note that the data of paths of immersions $\overline{\mathcal{K}}_{n,j}$ has, are not used in the definition of $I$ in this section. So we can give $I: PGC_{n,j} \rightarrow \Omega_{dR}^{\ast}(\mathcal{K}_{n,j})$. 
%For convenience, we recall some notation already introduced in Part \ref{Construction of the simplest 2-loop cocycle}.

\begin{notation}
We write $C_{k}(\mathbb{R}^n)$ for the configuration space of $k$ points in $\mathbb{R}^n$. 
\end{notation}

Although $C_{k}(\mathbb{R}^n)$ is an open manifold, there exists a canonical compactification $\overline{C}_{k}(\mathbb{R}^n)$ of $C_{k}(\mathbb{R}^n)$, called the \textit{Fulton--Macpherson compactification}.

Recall that $\overline{\mathcal{K}}_{n,j}$ consists of a family of immersions $\{\overline{\psi}_u\}_{u \in [0,1]}$, ${\overline{\psi}}_{u} \in \text{Imm}(\mathbb{R}^j, \mathbb{R}^n)$ such that ${\overline{\psi}}_{0} \in \mathcal{K}_{n,j}$ and ${\overline{\psi}}_{1}$ is the trivial immersion. We use the following bundle $E_{s,t}$ to define the configuration space integral associated to a plain graph with $s$ black vertices and $t$ white vertices. 

\begin{definition} [\textit{Configuration space bundles}]
\label{defofconfigurationspacebundle}
$E_{s,t} = E_{s,t}(\mathbb{R}^j, \mathbb{R}^n)$ is the bundle over $\overline{\mathcal{K}}_{n,j}$ defined by the pullback
\begin{center}
\begin{tikzpicture}[auto]
\node (c) at (3, 4) {$C_{s+t}(\mathbb{R}^n)$};
\node (b) at (-3, 2.5) {$\overline{\mathcal{K}}_{n,j} \times C_{s}(\mathbb{R}^j)$};
\node (a) at (3, 2.5) {$C_{s}(\mathbb{R}^n)$};
\node (f) at (-3,4) {$E_{s,t}$};

\draw [->] (f) to node {$$} (c);
\draw [->] (f) to node {$$}  (b);

\draw [->] (c) to node {restriction} (a);
\draw [->] (b) to node {evaluation at $u=0$} (a);
\end{tikzpicture}
\end{center}
The typical fiber (at $\overline{\psi} \in \overline{K}_{n,j}$) of the projection $E_{s,t} \rightarrow \overline{\mathcal{K}}_{n,j}$ is written by $C_{s,t} = C_{s,t}(\overline{\psi})$. $C_{s,t}$ is the space of configurations of $(s+t)$ points in $\mathbb{R}^n$ such that the first $s$ vertices are images of $s$ points on $\mathbb{R}^j$.
\end{definition}

Let $\Gamma$ be a labeled plain graph with $s$ black vertices
%\tikzset{every picture/.style={line width=1pt, xscale=0.5pt, yscale = 0.5pt}}  
%\begin{tikzpicture}
%\draw (0, 0) circle (0.2) [fill={rgb, 255:red, 0; green, 0; blue, 00}, fill opacity =1.0] node [anchor = south, font =\small] {$(-j)$}; 
%\end{tikzpicture}
and $t$ white vertices
%\tikzset{every picture/.style={line width=1pt, xscale=0.5pt, yscale = 0.5pt}}  
%\begin{tikzpicture}
%&\draw (0, 0) circle (0.2) node [anchor = south, font = \small] {$(-n)$}  ; 
%\end{tikzpicture}.
Then, each oriented dashed (resp. solid) edge $e$ gives a map
\[
P_e : E_{s,t} \rightarrow S^{n-1}\quad (\text{resp}.\ P_e : E_{s,t} \rightarrow S^{j-1}).
\]
by assigning the direction from the initial point to the end point. Thus, we have the direction map
\[
P_{\Gamma} = \prod_e  P_e  : E_{s,t} \rightarrow (S^{j-1})^{E_{\eta}(\Gamma)} \times (S^{n-1})^{E_{\theta}(\Gamma)}.
\]
Below, we consider $\Omega^{\ast}_{dR}(\overline{\mathcal{K}}_{n,j})$ as the mapping space of simplicial sets $\mathbf{Sset}(Sing^{\infty}_{\ast}(\overline{\mathcal{K}}_{n,j}), \Omega^{\ast}_{dR}(\Delta))$. Namely, a $k$-form of $\overline{\mathcal{K}}_{n,j}$ assigns a $k$-form on $\Delta^m$ for each singular smooth $m$-simplex $f: \Delta^m \rightarrow \overline{\mathcal{K}}_{n,j}$.

\begin{definition}[Configuration space integrals]
Define a form $I(\Gamma) \in \Omega_{dR} (\overline{\mathcal{K}}_{n,j})$ as follows. For a simplex $f: \Delta_m \rightarrow \overline{\mathcal{K}}_{n,j}$, $I(\Gamma)(f)$ is given by the fiber integral
\[
I(\Gamma)(f) =   \pi_{\ast} \omega_f(\Gamma) = \int_{\overline{C}_{s,t}} \omega_f(\Gamma) \in \Omega_{dR}(\Delta^m),
\]
where $\omega_f(\Gamma)$ is the pullback of the standard volume forms by the direction map;
\[
\omega_f(\Gamma) = (P(\Gamma)\circ f) ^\ast (\bigwedge \omega_{S^{j-1}} \wedge \bigwedge \omega_{S^{n-1}}). 
\].

\begin{center}
\begin{tikzpicture}[auto]
\node (c) at (4, 8) {$E_{s,t}$}; \node (d) at (10, 8) {$(S^{j-1})^{E_{\eta}(\Gamma)} \times (S^{n-1})^{E_{\theta}(\Gamma)}$}; 
\node (b) at (-2, 6) {$\Delta^m$}; \node (a) at (4, 6) {$\overline{\mathcal{K}}_{n,j}$};
\node (f) at (-2,8) {$f^{\ast} E_{s,t}$};

\draw [->] (f) to node {$f$} (c);
\draw [->] (f) to node {$\pi$}  (b);

\draw[->] (c)  to  node {$P_\Gamma = \prod_e  P_e$}(d);
\draw [->] (c) to node {$\pi$} (a);
\draw [->] (b) to node {$f$: a smooth simplex } (a);
\end{tikzpicture}
\end{center}
\end{definition}

Unfortunately, as mentioned before, it is unknown that the map of graded vector spaces
\[
I: PGC \rightarrow \Omega_{dR}(\overline{\mathcal{K}}_{n,j})
\]
gives a cochain map. 
More precisely, by Stokes' theorem (Theorem \ref{Stokes' theorem}), we have
\[
(-1)^{|\Gamma|+1} d I (\Gamma)  = \int_{\partial \overline{C}_{s,t}} \omega(\Gamma) = \sum_{\substack{S\subseteq V(\Gamma)\cup\infty  \\  |S| \geq 2}}\ \int_{\widetilde{C}_S} \omega(\Gamma), 
\]
where $\widetilde{C}_S$ is the space of configurations such that the vertices of $S$ are infinitely close \footnote{We follow the inward normal convention for orientations of boundaries of configuration spaces.}. The precise definition of $\widetilde{C}_S$ is given in subsection \ref{Hidden and infinite faces}.

\begin{definition}
Let $\Gamma$ be a (connected) plain graph. Let $S\subset \{1, \dots, s, s+1, \dots, s+t, \infty\} = V(\Gamma) \cup \infty $ and $ |S| \geq 2$. We classify the faces $\widetilde{C}_S$  into three types as follows.
% ($\Gamma_S$ is the subgraph of $\Gamma$ generated by vertices of $S$).
\begin{itemize}
\item   \textit{Infinite faces}: $\infty \in S$.
\item   \textit{Principal faces}: $|S| = 2$ and $\infty \notin S$.
\item   \textit{Hidden faces}: $ |S| \geq 3$, $\infty \notin S$. 
% \item   \textit{Anomalous  faces}: $ |S| \geq 3$ and $S = V(\Gamma)$.
\end{itemize}
(In this section, we regard the anomalous face $S = V(\Gamma)$ as hidden faces. )
\end{definition}

\begin{theorem}%\cite[Theorem 5.11]{Sak}
Let $\Gamma$ be an admissible plain graph without double edges or loop edges. Let $S\subset V(\Gamma)$ and $|S|= 2$. Then the integral $\int_{\widetilde{C}_{S}} \omega(\Gamma)$ survives only when $S = e$ for some edge $e$ of $\Gamma$. Moreover, if $e$ is neither a chord nor a multiple edge,  we have
\[
(-1)^{|\Gamma|+1} \int_{\widetilde{C}_{e}} \omega(\Gamma) =  \int_{C_{V(\Gamma/e)}} \omega(d_e\Gamma). \]
Here $C_{V(\Gamma/e)}$ is the configuration space of vertices of $\Gamma/e$. The graph $d_e\Gamma $ is defined by $d_e\Gamma = \sigma(e) \Gamma/e$, which appears in the differential of the plain graph complex. See Definition \ref{diffofPGC}.
\end{theorem}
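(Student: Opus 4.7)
The plan is to realize each principal face $\widetilde{C}_{\{p,q\}}$ as a sphere bundle over the collapsed configuration space $C_{V(\Gamma/\{p,q\})}$, and then reduce $\int_{\widetilde{C}_{\{p,q\}}} \omega(\Gamma)$ to a fiber integral using the push-forward formula of Proposition \ref{Double push forward}.

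First I would identify the face geometrically. Using the standard blow-up description of the Fulton--MacPherson compactification, $\widetilde{C}_{\{p,q\}}$ is a sphere bundle over $C_{V(\Gamma/\{p,q\})}$ whose fiber coordinate is the unit direction along which $p$ and $q$ collide. The fiber is $S^{j-1}$ when $p,q$ are both external black vertices (the collision happens along the embedded $\mathbb{R}^j$) and $S^{n-1}$ otherwise (the collision happens in ambient $\mathbb{R}^n$). Under this identification, for every edge $f \in E(\Gamma)$ with at least one endpoint outside $\{p,q\}$, the direction map $P_f$ extends smoothly to this face and factors through the projection $\widetilde{C}_{\{p,q\}} \to C_{V(\Gamma/\{p,q\})}$: $P_f$ only depends on the collapsed location, not on the sphere coordinate. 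Only edges with both endpoints in $\{p,q\}$ can depend on the fiber.

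Next I would split $\omega(\Gamma)|_{\widetilde{C}_{\{p,q\}}} = \omega_{\mathrm{fib}} \wedge \omega_{\mathrm{base}}$, where $\omega_{\mathrm{base}}$ is pulled back from $C_{V(\Gamma/\{p,q\})}$ and $\omega_{\mathrm{fib}}$ is the wedge of $P_e^*\omega_{S^{n-1}}$ or $P_e^*\omega_{S^{j-1}}$ over the edges $e$ with both endpoints in $\{p,q\}$. If no edge joins $p$ to $q$, then $\omega_{\mathrm{fib}}=1$ and integrating a pullback form over a positive-dimensional sphere vanishes; this establishes the first assertion of the theorem. If $p,q$ are both black and connected by a chord $e$, the fiber is $S^{j-1}$ while $P_e^*\omega_{S^{n-1}}$ restricted to it is the pullback of $\omega_{S^{n-1}}|_{S^{j-1}}=0$, so the integral is zero; an analogous dimension count disposes of multiple edges, whose two wedged fiber-forms have total degree strictly larger than the fiber dimension. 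When $e$ is the unique edge joining $p,q$ and is neither a chord nor part of a multiple edge, the restriction of $P_e$ to the fiber is tautologically the identity sphere map, so $\int_{\mathrm{fiber}} P_e^*\omega_{\mathrm{sph}}=1$ by our normalization; the double push-forward formula then yields $\int_{\widetilde{C}_{\{p,q\}}} \omega(\Gamma) = \pm\int_{C_{V(\Gamma/e)}} \omega(\Gamma/e)$.

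The main obstacle is to verify that this sign is exactly $(-1)^{|\Gamma|+1}\sigma(e)$, equivalently that the fiber integration produces the prefactor $(-1)^{\deg V(\Gamma/e)}\Phi(e)\tau(e)\eta(e)$. Three sources must combine correctly: (i) the Koszul rearrangement required to pull $\omega_{\mathrm{fib}}$ to the position that matches the prescribed ordering of $o(\Gamma/e)$, which should produce $\tau(e)\Phi(e)$ in the manner of Remark \ref{signexplanation}; (ii) the inward-normal convention for $\partial\overline{C}_{s,t}$ compared to the product orientation of the sphere bundle; and (iii) the edge-orientation factor $\eta(e)$ capturing whether $e$ points from the smaller to the larger vertex label. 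Once these are assembled, the identity $\omega(d_e\Gamma)=\sigma(e)\omega(\Gamma/e)$ yields the stated equation $(-1)^{|\Gamma|+1}\int_{\widetilde{C}_e} \omega(\Gamma) = \int_{C_{V(\Gamma/e)}} \omega(d_e\Gamma)$. This is a routine but lengthy Koszul-sign exercise.
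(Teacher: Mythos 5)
Your handling of the two statements the theorem actually makes is essentially the paper's own argument: identify the principal face as $\widetilde{C}_{e}\cong C_{V(\Gamma/e)}\times S$ with $S=S^{j-1}$ or $S^{n-1}$ according to the type of collision, observe that the direction map of the collapsing edge restricts to the tautological identity on the sphere factor while all other edge maps are basic, integrate out the sphere, and package the Koszul reordering into $\sigma(e)$ exactly as in Remark \ref{signexplanation}; the vanishing for $S$ not spanning an edge (integrand basic, fiber a positive-dimensional sphere) is also correct. Both you and the paper leave the final sign bookkeeping as a routine check, which is acceptable.

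However, two of your assertions are wrong and should be removed. First, the dichotomy for the collision sphere is ``both endpoints black'' versus ``at least one endpoint white'': internal black vertices also lie on the image of the embedding, so a solid edge with an internal black endpoint still collapses along $\mathbb{R}^j$ with fiber $S^{j-1}$. As literally written (``both \emph{external} black''), your rule would mis-assign the fiber $S^{n-1}$ to such solid edges and the equality you are proving would then fail by a degree count, so this needs to be stated correctly. Second, and more substantively, your claimed vanishing of the chord and multiple-edge faces is false. On such a face the dashed-edge direction map does not restrict to the standard inclusion $S^{j-1}\subset S^{n-1}$; it becomes $v\mapsto d\overline{\psi}_x(v)/|d\overline{\psi}_x(v)|$, which depends on the base point and on the $1$-jet of the embedding, so the fiber integral picks out the component with $j-1$ fiber legs and produces a generally non-zero form pulled back from $\text{Inj}(\mathbb{R}^j,\mathbb{R}^n)$ (this is precisely $J$ of the two-vertex graphs that survive in $A_{n,j}$ by relation (5) of Definition \ref{defofA}). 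Indeed, these chord and multiple-edge faces are exactly the obstruction terms retained in the displayed formula following the theorem, and the decorated graph complex is built to cancel them; if they vanished, most of the paper would be unnecessary. The theorem does not claim their vanishing, so your proof of the stated assertions survives once these extra claims and the fiber slip are corrected, but as written they contradict the rest of the paper.
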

\begin{proof}
Let $S = S^{j-1}$ or $S = S^{n-1}$, depending on whether $e$ is solod or dashed. Then we have
\begin{align*}
(-1)^{|\Gamma|+1} \int_{\widetilde{C}_e} \omega(\Gamma) &= (-1)^{|\Gamma|+1} (-1)^{deg(V(\Gamma/e))} \Phi(e) \tau(e) \eta(e) \int_{C_{V(\Gamma/e)} \times S} \omega(\Gamma/e) \wedge \omega_S\\
& = \int_{C_{V(\Gamma/e)}} \omega(d_e\Gamma).
\end{align*}
\end{proof}

As we see in Lemma \ref{infinitefacecontributions}, the contribution of infinite faces vanishes (under the assumption of Theorem \ref{cochainmapuptohomotopy} below). So we have
\[
dI(\Gamma) - I(d \Gamma) =  (-1)^{|\Gamma|+1} (\sum_{\substack{S\subseteq V(\Gamma)  \\  |S| \geq 3}} +  \sum_{\substack{S = V(e) \\ e:\ \text{chord or multiple}}})  \int_{\widetilde{C}_S} \omega(\Gamma). 
\]
The obstructions on the right-hand side are called \textit{hidden face contributions}. Some hidden faces vanish by symmetries and rescalings of the faces. Other faces are canceled by introducing correction terms. 

We may interpret adding correction terms as replacing graph complexes. In the rest of the paper, we show the following. 

\begin{theorem}[Theorem \ref{main theorem 1 on hidden faces}]
\label{cochainmapuptohomotopy}
When  $j \geq 3$ or $j=1$, there exists a zigzag
\[
PGC_{n,j} \xleftarrow [p]{\simeq}  DGC_{n,j} \xrightarrow[\overline{I}]{}\Omega^{\ast}_{dR}(\overline{\mathcal{K}}_{n,j}), 
\]
of cohain maps, which is compatible with the map $I$. It holds when $j = 2$, if we restrict the graph complexes to the case $g \leq 3$. 
\end{theorem}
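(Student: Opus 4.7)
The plan is to build $\overline{I}$ by combining the usual Bott--Cattaneo--Rossi configuration space integrals on the plain part of a decorated graph with Chen's iterated integrals on the decorations, and then to verify the cochain property through a careful Stokes' theorem analysis of the boundary of the compactified configuration space bundle. Since the projection $p\colon DGC_{n,j}\to PGC_{n,j}$ is already known to be a quasi-isomorphism by Theorem \ref{maintheoremrestate}, and compatibility with the unmodified $I$ is automatic (the trivial decoration $1\in A_{n,j}$ is sent to $1$ by the iterated-integral map), the statement reduces to constructing $\overline{I}$ and proving $d\overline{I}=\overline{I}\circ d_{DGC}$.

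First I would construct an auxiliary dg algebra morphism $\rho\colon A_{n,j}\to\Omega^{\ast}_{dR}(\text{Inj}(\mathbb{R}^j,\mathbb{R}^n))$. For each generating generalized plain graph $\gamma$, I set $\rho(\gamma)$ equal to the fiber integral, over the infinitesimal configuration space attached to $\gamma$ in the sense of Sakai--Watanabe \cite{SW}, of the standard direction forms on the edges of $\gamma$. The rescaling, symmetry, augmentation, and degree-bound relations of Definition \ref{defofA} correspond precisely to the Kontsevich vanishing relations of such face integrals, and the shift by $j+1$ in the normalized degree reflects the quotient by the $(j+1)$-dimensional translation-and-scaling group acting on the face. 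Provided $A^0_{n,j}\cong\mathbb{R}$ (which holds exactly in the cases allowed by the hypothesis, by Corollary \ref{cordegree0}), Proposition \ref{factiteratedintegral1} then yields a dg algebra morphism
\[
J\colon Z_{n,j}=A_{n,j}\otimes_\tau BA_{n,j}\longrightarrow \Omega^{\ast}_{dR}\!\left(P(\text{Inj}(\mathbb{R}^j,\mathbb{R}^n),\bullet,\iota)\right).
\]
I would then define $\overline{I}$ on a decorated graph $\Gamma=P(\Gamma)\otimes(z_1\otimes\cdots\otimes z_l)$ as the fiber integral along $E_{s,t}\to\overline{\mathcal{K}}_{n,j}$ of the wedge $\omega(P(\Gamma))\wedge\bigwedge_{i=1}^l \mathrm{ev}_i^{\ast}J(z_i)$, where $\mathrm{ev}_i$ reads off the path in $\text{Inj}(\mathbb{R}^j,\mathbb{R}^n)$ traced out by the derivative of the family of immersions at the preimage of the $i$-th external vertex.

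The core identity $d\overline{I}=\overline{I}(d_H+d_V)$ is established by applying Stokes' theorem (Theorem \ref{Stokes' theorem}). The vertical boundary, arising from the simplices that parametrize the iterated integrals, reproduces $\overline{I}(d_V\Gamma)$ by Chen's classical calculation, which assembles the three pieces $d_A$, $d_{BA}$, and $d_\tau$ of the bar differential. The fiber boundary $\partial\overline{C}_{s,t}$ decomposes into faces $\widetilde{C}_S$ indexed by $S\subset V(P(\Gamma))\cup\{\infty\}$: principal faces $|S|=2$ not involving $\infty$ give the edge-contraction summands of $d_H$ exactly as in \cite{Bot, CR, Sak}; infinite faces $\infty\in S$ vanish by the standard codimension and symmetry arguments of Bott--Taubes and Sakai (the counterpart of Lemma \ref{infinitefacecontributions}); and hidden faces $|S|\geq 3$ factor, via the double push-forward formula (Proposition \ref{Double push forward}), as the integral over the quotient $\overline{C}_{V(\Gamma/S)}$ of the forms for $\Gamma/S$, wedged with $\mathrm{ev}_{s_1}^{\ast}\rho(\Gamma_S)$. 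Recognizing $\rho(\Gamma_S)\wedge J(D(\Gamma))$ as $J$ applied to $\Gamma_S\cdot_{s_1} D(\Gamma)$ inside $Z_{n,j}$ identifies the hidden-face contribution with $\overline{I}(\Gamma/S\cdot\Gamma_S)$, which is the remaining summand in $d_H$.

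The hard part will be the sign bookkeeping and the handling of hidden faces whose collapsing subgraph $\Gamma_S$ would lie outside $A_{n,j}$. The rescaling relations of Definition \ref{rescaling relation} (killing graphs with a cut vertex) and the symmetry relations of Definition \ref{symmetry relation} must be matched term by term with the $\mathbb{R}^+\ltimes\mathbb{R}^j$-equivariance of the face integrals; this is the reason the sign $\sigma(S)$ in the horizontal differential $d_H$ was engineered as it was, and checking that $\sigma(S)$ matches the orientation of $\widetilde{C}_S$ in $\partial\overline{C}_{s,t}$ is the delicate combinatorial heart of the proof. The restriction to $j\geq 3$ (or $g\leq 3$ when $j=2$) is unavoidable: it is precisely the range in which Proposition \ref{Degree-zero elements prop} guarantees $A^0_{n,j}\cong\mathbb{R}$, without which Chen's theorem is not applicable and $J$ is ill-defined. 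Outside this range, an extension of $A_{n,j}$ in degree zero would be required, and this the paper explicitly leaves to future work.
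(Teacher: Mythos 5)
Your proposal follows essentially the same route as the paper's own proof: a dg algebra map $J$ on $A_{n,j}$ via fiber integrals over normalized (translation/rescaling-quotiented) configuration spaces whose relations account for the vanishing symmetries, Chen's iterated integrals on $Z_{n,j}=A_{n,j}\otimes_{\tau}BA_{n,j}$ using $A^{0}_{n,j}\cong\mathbb{R}$, the map $\overline{I}$ obtained by wedging the plain-part direction forms with the pullbacks of the decoration forms along the derivative-path maps at external black vertices, and a Stokes argument in which principal and hidden faces assemble into $\overline{I}(d_{H}\Gamma)$, the bar differential gives $\overline{I}(d_{V}\Gamma)$, and infinite faces vanish. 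The only minor imprecision is that the range restriction ($j\geq 3$ or $j=1$, resp.\ $g\leq 3$ when $j=2$) is used not only for $A^{0}_{n,j}\cong\mathbb{R}$ but also for the vanishing of infinite-face contributions (Lemma \ref{infinitefacecontributions}), though both rest on the same bound on the number of internal black vertices.
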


Note that we already showed that the projection $PGC_{n,j} \xleftarrow [p]{\simeq}  DGC_{n,j}$ is a quasi-isomorphism in Theorem \ref{maintheoremrestate}.

\subsection{Hidden and infinite faces}
\label{Hidden and infinite faces}
We recall the definition of principal and hidden faces $\widetilde{C}_S$ $(|S| \geq 2)$ and infinite faces $\widetilde{C}_{S\cup \infty}$ $(|S| \geq 1)$. 
%Some parts of this subsection are a repetition of Part \ref{Construction of the simplest 2-loop cocycle}. See  \cite{Bot, CCL, Sak} for more detailed explanations.

We only handle the case where the subset $S\subset V(\Gamma)=\{1, \dots, s, s+1, \dots, s+t\}$ has at least one black vertex. The case $S$ includes no black vertex is described in a similar way; replace $\text{Inj}(\mathbb{R}^j , \mathbb{R}^n)$ below with a single point.  However, we can observe that such faces without black vertices vanish as we see in Proposition \ref{welldefofJ}.  
 
We define the space $\widetilde{E}_S(\mathbb{R}^j, \mathbb{R}^n)$ as the pullback of the following diagram.  The configuration space $\widetilde{C}_S(\overline{\psi})$ is the fiber  of the projection $\widetilde{E}_S(\mathbb{R}^j, \mathbb{R}^n) \rightarrow \overline{\mathcal{K}}_{n,j}$ at an embedding $\overline{\psi}$ (modulo immersions).
\begin{center}
\begin{tikzpicture}[yscale=0.6,xscale=0.6]
\node (c) at (-2, 2) {$\widetilde{E}_S(\mathbb{R}^j, \mathbb{R}^n)$}; \node (d) at (5, 2) {$D_S(\mathbb{R}^j, \mathbb{ R}^n)$}; 
\node (a) at (-2, -1) {$E_{V/S}(\mathbb{R}^j, \mathbb{R}^n)$}; \node (b) at (5, -1) {$\text{Inj}(\mathbb{R}^j, \mathbb{R}^n)$};
\node (e) at (-2, -4) {$\overline{\mathcal{K}}_{n,j}$}; 

\draw[->] (c)  to (d);
\draw[->] (a) to node [anchor = north] {$D$} (b);
\draw [->] (c) to  (a);
\draw [->] (d) to  node [anchor = west] {$b_S$} (b);
\draw [->] (a) to  node [anchor = east] {$\pi$} (e); 
\end{tikzpicture}
\end{center}
Here, {$\text{Inj}(\mathbb{R}^j, \mathbb{R}^n)$} is the space of linear injective maps from $\mathbb{R}^j$ to $\mathbb{R}^n$. 
$E_{V/S}(\mathbb{R}^j, \mathbb{R}^n)$ is the bundle over $\mathcal{K}_{n,j}$ whose fiber is the configuration space $C_{V/S}$ of $V(\Gamma)/S$. $D_S(\mathbb{R}^j, \mathbb{R}^n)$  is the bundle over $\text{Inj}(\mathbb{R}^j, \mathbb{R}^n)$ with the ``normalized configuration space'' of $S$ as the fiber (Definition  \ref{normalizedconfigurationspacebundle}). 
$D$  is a map from $E_{V/S}(\mathbb{R}^j, \mathbb{R}^n)$  to $\text{Inj} (\mathbb{R}^j ,\mathbb{R}^n) $ which assigns the differential of embeddings at the collapsed point $S/S$. 

On the other hand, the space $\widetilde{E}_{S\cup \infty}(\mathbb{R}^j, \mathbb{R}^n)$ is identified with
\[
\widetilde{E}_{S\cup \infty}(\mathbb{R}^j, \mathbb{R}^n) = E_{V \setminus S}(\mathbb{R}^j, \mathbb{R}^n) \times B_{S\cup \infty}(\iota)
\]
Here, $B_{S\cup \infty}(\iota)$ is the fiber of $D_{S\cup \infty}(\mathbb{R}^j, \mathbb{R}^n)$ at the standard inclusion $\iota: \mathbb{R}^j \rightarrow \mathbb{R}^n$, where $\mathbb{R}^j$ and $\mathbb{R}^n$ are identified with $T_{\infty} S^j$ and $T_{\infty} S^n$ respectively via the map 
\[
\mathbb{R}^n \rightarrow S^n \setminus 0 = (\mathbb{R}^n \setminus 0) \cup \infty ,   \quad v \mapsto - \frac{v}{||v||^2}.
\]

\begin{definition}[\textit{Normalized configuration space bundle}]
\label{normalizedconfigurationspacebundle}
Let $E_{s,t}$ be the configuration space bundle over $\text{Inj}(\mathbb{R}^j, \mathbb{R}^n)$ defined as in Definition \ref{welldefofJ}.
Assume $s \geq 1$. Then the group $G= \mathbb{R}^j  \rtimes \mathbb{R}$ of translations and rescalings of $\mathbb{R}^j$ acts on $E_{s,t}$ fiberwise. Define the \textit{normalized configuration space bundle} $D_{s,t}$ by 
\[
D_{s,t} \coloneqq E_{s,t} / G
\]
Write $B_{s,t}$ for the typical fiber $C_{s,t}/G$ of $D_{s,t}$. Note that the dimension of  $D_{s,t}$ is equal to  $\text{dim}(E_{s,t}) - (j+1)$.
\end{definition}

Let us describe the codimension-one face of $B_{V} = B_{s,t}$. It is decomposed as 
\[
\bigsqcup_{\substack{T \subsetneq V \\  |T| \geq 2}} \widetilde{B}_T, 
\]
where $\widetilde{B}_T$ is described below. 
Let $D_{V} = D_{s,t}$ be the normalized configuration space bundle.  The space $\widetilde{D}_T(\mathbb{R}^j, \mathbb{R}^n)$ is defined as the pullback of the following diagram. The configuration space $\widetilde{B}_T(I)$ is the fiber  of the projection $\widetilde{D}_T(\mathbb{R}^j, \mathbb{R}^n) \rightarrow \text{Inj}(\mathbb{R}^j, \mathbb{R}^n)$ at an injective map $I$.

\begin{center}
\begin{tikzpicture}[yscale=0.6,xscale=0.6]
\node (c) at (-2, 2) {$\widetilde{D}_T(\mathbb{R}^j, \mathbb{R}^n)$}; \node (d) at (5, 2) {$D_T$}; 
\node (a) at (-2, -1) {$D_{V/T}(\mathbb{R}^j, \mathbb{R}^n)$}; \node (b) at (5, -1) {$\text{Inj}(\mathbb{R}^j, \mathbb{R}^n)$};

\draw[->] (c)  to (d);
\draw[->] (a) to node [anchor = north] {$b_{V/T}$} (b);
\draw [->] (c) to  (a);
\draw [->] (d) to  node [anchor = west] {$b_T$} (b);
\end{tikzpicture}
\end{center}
We have $\widetilde{B}_T = B_{V/T} \times B_T$.

\begin{lemma}
\label{infinitefacecontributions}
Let $\Gamma$ be an admissible plain graph. Then, the integral over any infinite face
\[
\int_{\widetilde{C}_{S \cup \infty}} \omega(\Gamma) \quad (S\subset V(\Gamma), |S| \geq 1)
\]
vanishes when $ j\geq 3$ or $j = 1$. It holds even when $j = 2$, if $g(\Gamma) \leq 3$
\end{lemma}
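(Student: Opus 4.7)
The plan is to split the integral via the product structure $\widetilde{E}_{S\cup\infty}(\mathbb{R}^j,\mathbb{R}^n)=E_{V\setminus S}(\mathbb{R}^j,\mathbb{R}^n)\times B_{S\cup\infty}(\iota)$ and then reduce vanishing to the degree-zero classification in the hidden face dg algebra $A_{n,j}$. First I would apply the double pushforward formula (Proposition~\ref{Double push forward}) along the two projections from $\widetilde{E}_{S\cup\infty}$. The direction forms $P_e^{\ast}(\omega_S)$ for edges $e$ of $\Gamma$ with both endpoints in $V\setminus S$ pull back from $E_{V\setminus S}$; those with both endpoints in $S\cup\infty$ pull back from $B_{S\cup\infty}(\iota)$ after rescaling at infinity; and for a mixed edge, the limiting direction in the infinite face is, after rescaling, determined by the endpoint in $V\setminus S$ together with the direction to $\infty$, so it also pulls back from $E_{V\setminus S}$. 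Crucially, the $B_{S\cup\infty}(\iota)$–factor is independent of the embedding $\overline{\psi}\in\overline{\mathcal{K}}_{n,j}$ because every embedding agrees with the standard inclusion $\iota$ outside a disk. The fiber integral therefore factors, up to sign, as
\[
\int_{\widetilde{C}_{S\cup\infty}}\omega(\Gamma)\;=\;\pm\!\int_{C_{V\setminus S}}\omega_{\mathrm{out}}\;\cdot\;\alpha_S,\qquad \alpha_S:=\int_{B_{S\cup\infty}(\iota)}\omega_{\mathrm{in}},
\]
and it suffices to show that the scalar $\alpha_S$ vanishes.

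Next I would identify $\alpha_S$ with the evaluation of a connected generalized plain graph $\Gamma_S^{\infty}$, obtained from $\Gamma_S$ by adjoining the vertex $\infty$ together with the edges from $S$ to $\infty$, viewed as an element of $A_{n,j}$. This is exactly the setup of Section~\ref{The hidden face dg algebra $A_{n,j}$}: the rescaling relations of Definition~\ref{rescaling relation} and the symmetry relations of Definition~\ref{symmetry relation} encode the fiberwise symmetries of $B_{S\cup\infty}(\iota)$, and the $(j{+}1)$- or $(n{+}1)$-shift in Definition~\ref{normalized degree} is precisely the dimension of the group quotient that defines $B_{S\cup\infty}(\iota)$. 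In particular, the mismatch between $\deg(\omega_{\mathrm{in}})$ and $\dim B_{S\cup\infty}(\iota)$ equals $[\Gamma_S^{\infty}]$, so by dimension counting $\alpha_S$ can be nonzero only when $[\Gamma_S^{\infty}]=0$ in $A_{n,j}$.

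Finally I would invoke Corollary~\ref{cordegree0}: under the hypothesis $j\geq 3$, $j=1$, or $j=2$ with $g(\Gamma)\leq 3$, the only admissible connected graphs of normalized degree zero with at least three vertices vanish in $A_{n,j}$, so the only surviving cases are scalar multiples of $\emptyset$, $\dashededgea{}{}{}$, $\dashededgec{}{}{}$, or (for $j\geq 2$) $\solidedgee{}{}{}$. These correspond to $|S|=1$, i.e.\ a single vertex $v$ connected to $\infty$ by one dashed or solid edge. In these remaining cases $\alpha_S$ is an integral of a direction form on $S^{n-1}$ or $S^{j-1}$ over a one-point space, and vanishes either by the antipodal involution of the sphere (the direction form is anti-invariant) or directly by the rescaling relation (\textit{cf.\ }Definition~\ref{rescaling relation}) that is already built into $A_{n,j}$. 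The main obstacle I anticipate is step one: tracking carefully how the direction forms of edges straddling $S$ and $V\setminus S$ behave under the blow-up that defines the infinite face, and checking that the factorization through $B_{S\cup\infty}(\iota)$ is compatible — as a scalar in $A_{n,j}$ — with its rescaling and symmetry relations. The restriction $g\leq 3$ for $j=2$ enters only through Proposition~\ref{Degree-zero elements prop}, so it needs no separate treatment.
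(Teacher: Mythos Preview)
Your overall plan---factor the integral via the product structure of the infinite face and then kill the scalar $\alpha_S$ by a degree argument---matches the paper. But the identification of $\alpha_S$ with an element of $A_{n,j}$ is wrong, and this is where your argument breaks.

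For a \emph{finite} hidden face one quotients by translations and rescalings of $\mathbb{R}^j$, which accounts for the $(j{+}1)$-shift in Definition~\ref{normalized degree}. For the \emph{infinite} face $B_{S\cup\infty}(\iota)$ the point $\infty$ is fixed, so only the $\mathbb{R}$-rescaling acts and the shift is $+1$, not $(j{+}1)$ or $(n{+}1)$. Concretely, the paper writes the contracted graph as $\Gamma_{S\cup\ast}$ with an extra vertex $\ast$ of degree~$0$ (not a black or white vertex), and the degree of $\alpha_S$ is
\[
(n{-}1)|E_\theta(\Gamma_{S\cup\ast})|+(j{-}1)|E_\eta(\Gamma_{S\cup\ast})|-n|W(\Gamma_{S\cup\ast})|-j|B(\Gamma_{S\cup\ast})|+1,
\]
which is off from the normalized degree in $A_{n,j}$ by exactly $j$ (or $n$). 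Hence Corollary~\ref{cordegree0} does not apply as stated; you must redo the estimate of Proposition~\ref{Degree-zero elements prop} with this modified shift and with the special vertex $\ast$. The paper does precisely this, re-invoking Proposition~\ref{numberofinternal} on the number of non-cut internal black vertices to bound the defect.

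Your treatment of $|S|=1$ is also off. The surviving degree-zero two-vertex graphs in $A_{n,j}$ are not what arises here: when $S=\{v\}$, the graph $\Gamma_{S\cup\ast}$ carries \emph{all} edges of $\Gamma$ incident to $v$, not just one. The correct argument is simply that admissibility of $\Gamma$ forces $v$ to have a dashed edge (if external) or at least three solid edges (if internal), and either way the degree above is strictly positive. Finally, for $j=2$ with $g(\Gamma_{S\cup\ast})=3$ the paper needs one extra observation---that in the extremal case $\ast$ must carry a dashed edge---which does not fall out of Corollary~\ref{cordegree0} and has to be checked directly.
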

\begin{proof}
First, by Proposition \ref{Double push forward} and the description of $\widetilde{C}_{S \cup \infty}$, we have
\begin{align*}
&\int_{\widetilde{C}_{S \cup \infty}} \omega(\Gamma) \\
= & \pm   \int_{C_{V \setminus S}} \omega(\Gamma_{V\setminus S})\int_{B_{S \cup \infty}} \omega(\Gamma_{S\cup \ast}), 
\end{align*}
where $\Gamma_{S\cup \ast} =(\Gamma \cup \ast) /((V \setminus S) \cup \ast)$. See Figure \ref{infinitefacegraph}. The point $\ast$ is considered to have degree $0$.  The right integral $\int_{B_{S \cup \infty}} \omega(\Gamma_{S\cup \ast})$ gives a form on a point. Its degree is
\begin{align*}
&(n-1)|E_{\theta}(\Gamma_{S\cup \ast})| + (j-1)|E_{\eta}(\Gamma_{S\cup \ast})| - n|W(\Gamma_{S\cup \ast})| -j |B(\Gamma_{S\cup \ast})| +1 \\
=& k(\Gamma_{S\cup \ast})(n-j-2) + (j-1)g(\Gamma_{S\cup \ast}) + l(\Gamma_{S\cup \ast}) +1.
\end{align*}
Here, 
\begin{align*}
k(\Gamma_{S\cup \ast}) &= |E_{\theta}(\Gamma_{S\cup \ast})| - |W(\Gamma_{S\cup \ast})|,\\
g(\Gamma_{S\cup \ast}) & = |E_{\theta}(\Gamma_{S\cup \ast})| + |E_{\eta}(\Gamma_{S\cup \ast})| - |W(\Gamma_{S\cup \ast})| - |B(\Gamma_{S\cup \ast})|, \\
l (\Gamma_{S\cup \ast}) & = 2 |E_{\theta}(\Gamma_{S\cup \ast})| - 3 |W(\Gamma_{S\cup \ast})| - |B(\Gamma_{S\cup \ast})|.
\end{align*}
The remaining argument is similar to the proof of Proposition \ref{Degree-zero elements prop}. We show $\int_{B_{S \cup \infty}} \omega(\Gamma_{S\cup \ast}) = 0$ by showing that the degree of $\int_{B_{S \cup \infty}} \omega(\Gamma_{S\cup \ast}) $ must be positive if it survives after symmetries and rescalings. 
If $|S| =  1$, since the original graph is admissible, we can assume that $\Gamma_{S\cup \ast}$ is a graph with one black vertex and at least one dashed edge or at least three solid edges attached to it. Then, the degree is positive. 
If $|S| \geq 2$, we can assume that vertices of $\Gamma_{S\cup \ast}$ of negative defect are only internal black vertices of valency $\geq 3$, which are possible when $j\neq 1$. 
Let $L$ be the number of these internal black vertices. Assume they are not cut vertices. If $g(\Gamma_{S\cup \ast}) = 0$, no internal black vertices are allowed (because of rescalings) and the degree of $\int_{B_{S \cup \infty}} \omega(\Gamma_{S\cup \ast})$ is positive since $l \geq 0$.  If $g(\Gamma_{S\cup \ast}) \geq 1$, we have 
\[
L \leq 2(g(\Gamma_{S\cup \ast})-1),
\]
as we have seen in subsection \ref{Degree-zero elements}. This means $-(j-1)(g(\Gamma_{S\cup \ast})-1) \leq l$ when $j\geq 3$. As a consequence, we have that the degree of $\int_{\widetilde{C}_{S \cup \infty}} \omega(\Gamma)$ is positive when $j\geq 3$. 
Consider the case $j=2$. The degree of $\int_{\widetilde{C}_{S \cup \infty}} \omega(\Gamma)$ is positive if 
\[
g(\Gamma_{S\cup \ast}) + 1 \geq 2(g(\Gamma_{S\cup \ast})-1),
\]
that is, when $g(\Gamma_{S\cup \ast}) \leq  3$. We include $=$ by the following observation:
When $L = 2(g(\Gamma_{S\cup \ast})-1)$, the vertex $\ast$ must have a dashed edge. Otherwise, at least one of the vertex $\ast$ and $L$ internal black vertices must be a cut vertex.

\begin{figure}[htpb]
   \centering
    \includegraphics [width =7cm] {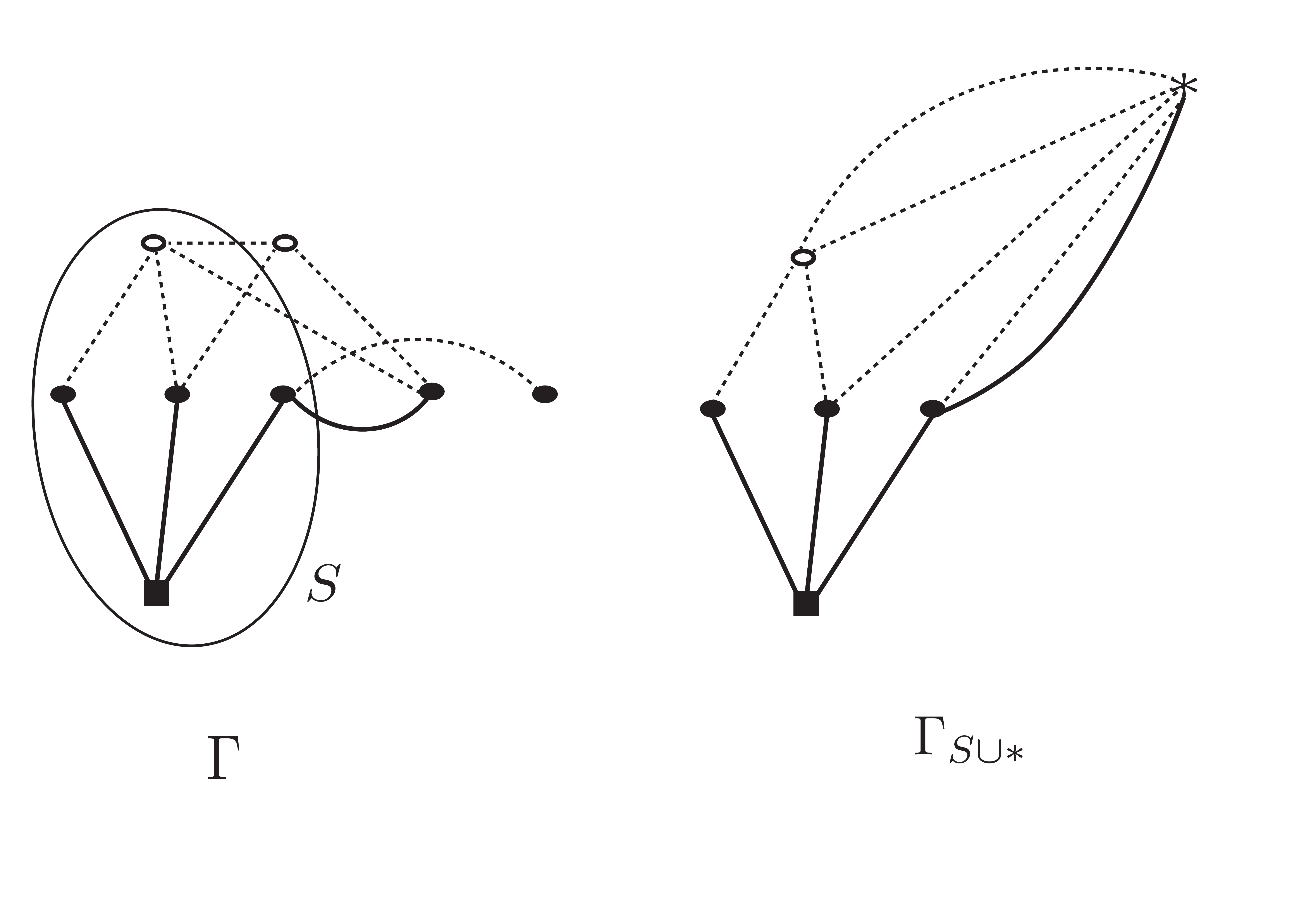}
    \caption{Example of $\Gamma_{S\cup \ast}$. $g(\Gamma_{S\cup \ast})=5$. $k(\Gamma_{S\cup \ast})=5$. $l(\Gamma_{S\cup \ast})=5$.}
    \label{infinitefacegraph}
\end{figure}
\end{proof}

 \subsection{The infinitesimal  integrals $J: A_{n,j} \rightarrow \Omega_{dR}^{\ast}\text{Inj}(\mathbb{R}^j, \mathbb{R}^n)$}
Recall that $\text{Inj}(\mathbb{R}^j, \mathbb{R}^n)$ is the space of  linear injective maps from $\mathbb{R}^j$ to $\mathbb{R}^n$.
We define the map of dg algebras $J: A_{n,j}  \rightarrow \Omega_{dR}^{\ast} \text{Inj}(\mathbb{R}^j, \mathbb{R}^n)$ by configuration space integrals. 

Let $\Gamma$ be a generalized plain graph with $s$ black vertices and $t$ white vertices. Assume $\Gamma$ has no loop edges.. Obviously, the direction map $F_{\Gamma}$ from the bundle $E_{s,t}$ over $\text{Inj}(\mathbb{R}^j, \mathbb{R}^n)$, factors through the normalized bundle $D_{s,t}$. So we have 
\[
F_{\Gamma}: D_{s,t} \longrightarrow (S^{j-1})^{\times |E_{\eta}(\Gamma)|} \times (S^{n-1})^{\times |E_{\theta}(\Gamma)|}.
\]
By pulling back standard volume forms of spheres, we have the differential form $\omega(\Gamma)$ on $D_{s,t}$.

\begin{definition}
The integral $J: A_{n,j} \rightarrow \Omega_{dR}^{\ast} \text{Inj}(\mathbb{R}^j, \mathbb{R}^n)$ is defined by the fiber integral
\[
J(\Gamma) = \pi_{\ast} \omega(\Gamma) = \int_{B_{s,t}} \omega(\Gamma).
\]
for a connected graph $\Gamma$. We set J$(\Gamma) = 0$, if $\Gamma$ has a loop edge. The correspondence $J$ is extended to $A$ by setting
\[
J(\Gamma \cdot \Gamma^{\prime}) = J(\Gamma) \wedge J(\Gamma^{\prime})
\]
\end{definition}

Recall that the algebra $A_{n,j}$ has several relations. See Definition \ref{defofA}.

\begin{prop}
\label{welldefofJ}
The integral $J$ is well-defined. 
\end{prop}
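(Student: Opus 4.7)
The plan is to check that the fiber-integral formula $J(\Gamma)=\pi_*\omega(\Gamma)$ respects each of the defining relations (0)--(6) of Definition \ref{defofA}; once this is established for connected graphs, the multiplicative extension $J(\Gamma\cdot\Gamma')=J(\Gamma)\wedge J(\Gamma')$ automatically gives a well-defined graded algebra map. I would begin with the easy relations. For (0), a direct dimension count shows that $\pi_*\omega(\Gamma)$ is a form of degree exactly $[\Gamma]$ on $\text{Inj}(\mathbb{R}^j,\mathbb{R}^n)$: for graphs with a black vertex, $\dim B_{s,t}=sj+tn-(j+1)$, so $(n-1)|E_\theta|+(j-1)|E_\eta|-\dim B_{s,t}=[\Gamma]$. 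The upper bound $nj-1$ comes from the additional $\mathbb{R}^+$-scaling of $\mathbb{R}^n$ under which $\omega(\Gamma)$ is invariant, so $J(\Gamma)$ descends to $\text{Inj}/\mathbb{R}^+$ which has dimension $nj-1$. Relation (1) follows from antipodality ($\omega_{S^{n-1}}$ picks up $(-1)^n$, and similarly for $S^{j-1}$) together with graded commutativity of $\wedge$, and (6) is just graded commutativity applied at the algebra level. Relation (2) splits by parity of the edge degree: even degree gives $\omega\wedge\omega=0$ directly, while odd degree is canceled by the orientation relation exchanging the two parallel edges. Relation (5) is the normalization $\int_{S^{n-1}}\omega_{S^{n-1}}=\int_{S^{j-1}}\omega_{S^{j-1}}=1$, which identifies the three listed two-vertex integrals with $1\cdot\emptyset$.

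The substantive content lies in relations (3) and (4), which are instances of Kontsevich-style vanishing lemmas going back to \cite{Kon 1}. For (3), a graph with $|V|\geq 3$ and no external black vertex has no dashed edges attached to a black vertex, so $\omega(\Gamma)$ carries no $\text{Inj}$-dependence and $J(\Gamma)$ is a constant. The remaining translation/scaling symmetries of the fiber that are not already quotiented out produce a free action on the integration domain, forcing the integral to vanish by the same sort of dimensional counting used in Proposition \ref{Degree-zero elements prop}. For the cut-vertex case of (4), if $\Gamma=\Gamma_1\cup_v\Gamma_2$, then after normalizing the configuration at $v$ the two subconfigurations decouple, and an additional $\mathbb{R}^+$-rescaling of one component (together with translations, when $v$ is white and the dangling component is purely white) acts freely on $B_{s,t}$ while preserving $\omega(\Gamma)$; the standard Kontsevich argument then gives $J(\Gamma)=0$.

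For the symmetry relations in (4), I would construct an explicit involution $\sigma$ of $B_S$ that swaps the two distinguished boundary vertices $v_1, v_2$ and reflects the interior $S\setminus\{v_1,v_2\}$. The pullback $\sigma^*\omega(\Gamma)$ equals $\omega(\overline{\Gamma})$ multiplied by a sign that collects $(-1)^n$ (resp.\ $(-1)^j$) from each dashed (resp.\ solid) edge of $\Gamma_S$ whose direction map is inverted, and $(-1)^n$ (resp.\ $(-1)^j$) from each interior white (resp.\ black) vertex of $\Gamma_S$ that is reflected; matching this against the sign in Definition \ref{symmetry relation} is a bookkeeping exercise analogous to the derivation of $\sigma(e)$ in subsection \ref{Notation on orientations and signs}. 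The hard part will be precisely tracking these signs together with the orientation conventions on $B_S$ and on $\Gamma_S\subset\Gamma$; in particular, for the cut-vertex vanishing one must confirm that the free $\mathbb{R}^+$-action preserves rather than reverses the orientation of $B_{s,t}$, so that the integral really collapses. Once these sign verifications are done, well-definedness of $J$ follows.
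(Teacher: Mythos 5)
Your handling of relations (1), (5), (6), of the cut-vertex part of (4) (free rescaling of one of the two components obtained by removing the cut vertex), and of the symmetry part of (4) (the point symmetry of the subconfiguration about the midpoint of $v_1,v_2$, with the Jacobian signs $(-1)^n$, $(-1)^j$ per reflected white/black vertex and per reversed dashed/solid edge) is in substance the same as the paper's argument, and your descent of $J(\Gamma)$ to $\text{Inj}(\mathbb{R}^j,\mathbb{R}^n)/\mathbb{R}^+$ for the truncation in relation (0) is a reasonable way to make explicit a point the paper leaves implicit. Two corrections, one minor and one substantive. Minor: for double edges in relation (2), the parity argument is the wrong way around (it is \emph{odd}-degree forms that square to zero), and the orientation relation does not by itself show that the \emph{integral} vanishes; the uniform reason is that the two parallel edges have the same direction map up to the antipodal map, so their wedge is pulled back from a single sphere and has degree $2(n-1)>n-1$ (resp.\ $2(j-1)>j-1$), hence is zero.

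The substantive gap is relation (3) when $j=2$. A connected graph without external black vertices is either all white or consists only of internal black vertices joined by solid edges. In the all-white case, and in the internal-black case with $j\geq 3$, your dimension count is exactly what the paper does: the form is constant in the $\text{Inj}$-direction and has positive normalized degree, hence vanishes. But for $j=2$ the count fails: for a connected graph with only internal black vertices, $[\Gamma]=(j-1)(g-1)-|B|+(j+1)$, so for $j=2$ a closed trivalent graph with $|B|=2(g-1)$ has $[\Gamma]=4-g$, which is zero already for $g=4$ (six trivalent vertices, no cut vertices, so Proposition \ref{numberofinternal} gives nothing). These degree-zero integrals are the classical Kontsevich weights of closed graphs in the plane, and there is no residual free noncompact symmetry to exploit: the whole group $\mathbb{R}^j\rtimes\mathbb{R}$ has already been divided out in $B_{s,t}$, and the leftover rotation action is compact and does not force vanishing. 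The paper handles exactly this case by invoking Kontsevich's vanishing theorem \cite[Theorem 6.5]{Kon 3}, which is a genuinely nontrivial input and not a "dimensional counting" or symmetry argument; your proposal as written does not cover it (note that this is needed for well-definedness of $J$ on all of $A_{n,2}$, independently of the later restriction to $g\leq 3$).
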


\begin{proof}
First, observe that the relations on rescalings and symmetries are defined so that they reflect rescalings and symmetries of configuration space integrals. 
For the rescaling relations, we consider the rescaling of configuration spaces which acts on one of the two components obtained by removing the cut vertex. 
For the symmetry relations, we consider the central symmetry of configuration spaces with respect to the middle point between $v_1$ and $v_2$.
Thus, the integrals $J$ vanish on these relations. 

Let $\Gamma$ include no black vertex. Suppose the valency of each white vertex is greater than or equal to three. Then, the form $J(\Gamma) = \int_{B_{s,t}} \omega(\Gamma) $ is given by a pullback of a form of positive degree on a single point. In fact, if the order of $\Gamma$ is $k$, the degree must be greater than or equal to $ k(n-3)+(n+1)$.
Since $n \geq 3$, we have $J(\Gamma) = 0$  for a dimensional reason. 

Similarly, let $\Gamma$ include no external black vertices and no white vertices. Suppose the valency of each internal black vertex is greater than or equal to three. Then, if $j \geq 3$, $J(\Gamma) =0$ for the same reason. Moreover, if $j = 2$, the integral $J(\Gamma)$ of this graph vanishes by Kontsevich's argument in \cite[Theorem 6.5] {Kon 3}.
\end{proof}

\begin{prop}
The integral $J: A_{n,j} \rightarrow \Omega_{dR}^{\ast} \text{Inj}(\mathbb{R}^j, \mathbb{R}^n)$ is a map of dg algebras. 
\end{prop}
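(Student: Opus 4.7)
The plan is to verify the two defining properties of a dg algebra morphism: compatibility with the product and compatibility with the differential. Multiplicativity $J(\Gamma \cdot \Gamma') = J(\Gamma) \wedge J(\Gamma')$ is true by definition (the fiber of $D_{s+s', t+t'}$ over $\text{Inj}(\mathbb{R}^j,\mathbb{R}^n)$ is not literally the product $B_{s,t} \times B_{s',t'}$, but the integral splits because the form $\omega(\Gamma \cdot \Gamma')$ only involves direction maps within each component; more precisely, one uses the forgetful projections to $D_{s,t}$ and $D_{s',t'}$, applies Proposition~\ref{Double push forward}, and uses that the diagonal faces have measure zero). So the substantial content is $d\circ J = J\circ d$.

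First I would apply Stokes' theorem (Theorem~\ref{Stokes' theorem}) to $J(\Gamma) = \pi_\ast \omega(\Gamma)$. Since $\omega(\Gamma)$ is a pullback of products of volume forms on spheres along $F_\Gamma$, it is closed, so
\[
d J(\Gamma) = (-1)^{|J(\Gamma)|+1} \pi^\partial_\ast \omega(\Gamma) = (-1)^{|J(\Gamma)|+1}\sum_{\substack{S\subsetneq V(\Gamma)\\ |S|\geq 2}} \int_{\widetilde{B}_S}\omega(\Gamma).
\]
Using the description $\widetilde{B}_S = B_{V/S}\times B_S$ and Proposition~\ref{Double push forward}, each boundary contribution factorizes as (up to sign) $J(\Gamma/S)\wedge J(\Gamma_S)$, with the sign matching $\sigma(S)$ from the definition of $d_{A_{n,j}}$; this is precisely the sign bookkeeping prescribed by the ordered set $-\mu\sqcup o(\Gamma)$ permuted into $E(\Gamma/S)\sqcup\overline{V(\Gamma/S)}\sqcup \iota_{s_1}\sqcup \iota\sqcup(E(\Gamma_S)\sqcup \overline{V(\Gamma_S)\setminus s_1}\sqcup \mu)$, because $\iota_{s_1}\sqcup\iota$ exactly accounts for the rescaling-translation group $G$ that is quotiented out in forming $B_S$.

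Next I would show that all faces $\widetilde{B}_S$ \emph{not} corresponding to a term in $d\Gamma$ contribute zero. These are of three types. (i) Faces where $\Gamma_S$ has a double or loop edge: then $\omega(\Gamma_S)$ contains a square of a pulled-back top form and vanishes pointwise. (ii) Faces where $\Gamma_S$ or $\Gamma/S$ has a cut vertex (rescaling relations) or a reflectional symmetry across a pair of vertices (symmetry relations): these vanish by the same rescaling/involution arguments used to verify well-definedness of $J$ in Proposition~\ref{welldefofJ}. (iii) Anomalous-type faces where $\Gamma_S$ has no external black vertex, or where both $\Gamma_S$ and $\Gamma/S$ have too few edges for the form to be of top degree: these vanish for the same dimensional reasons as in Proposition~\ref{welldefofJ}, combined with Kontsevich's vanishing lemma for all-internal graphs when $j=2$. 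Crucially, these are precisely the configurations sent to zero in $A_{n,j}$ by relations (0)--(5), so the remaining faces match $J(d\Gamma)$ term by term.

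The main obstacle will be the sign matching: one must carefully track how the outward normal convention on $\partial B_{s,t}$, the fiber-product orientation on $B_{V/S}\times B_S$, the ordering of edges and vertices in the labeling of $\Gamma$, and the degree shift $(j+1)$ (or $(n+1)$) in the normalized degree all conspire to give exactly $\sigma(S) = (-1)^{[\Gamma]+1}\tau(S)\Phi(S)(-1)^{\overline{\deg}(V(\Gamma/S))(1+[\Gamma_S])}$. The cleanest route is to verify it on a single edge-contraction face (matching the plain-graph sign computation of Remark~\ref{signexplanation}) and then reduce the general case to this one using the ordered-set description of $\sigma(S)$ given in the remark after the definition of $d_{A_{n,j}}$; the $\iota, \iota_{s_1}, \mu$ placeholders in that ordered set were chosen precisely so that the Stokes boundary sign equals the algebraic sign.
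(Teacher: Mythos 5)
Your proposal is correct and follows essentially the same route as the paper: multiplicativity holds by definition, and the cochain property comes from Stokes' theorem on the normalized configuration-space bundle, with each codimension-one face $\widetilde{B}_S = B_{V/S}\times B_S$ factorized by the double push-forward formula into $\sigma(S)\, J(\Gamma/S)\wedge J(\Gamma_S)$, the sign being checked against the ordered-set description of $\sigma(S)$. Your separate vanishing discussion for faces whose graphs die under relations (0)--(5) is subsumed by this factorization together with the already-established well-definedness of $J$ (Proposition \ref{welldefofJ}), which is exactly how the paper treats those terms implicitly.
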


\begin{proof}
By definition, $J$ preserves products. We show that $J$ is a cochain map. Let $\Gamma$ be a connected plain graph. Then we have 
\begin{align*}
d J(\Gamma) &= (-1)^{|\Gamma|+1}\sum_{\substack{S \subsetneq V(\Gamma) \\ |S| \geq 2}} \int_{\widetilde{B}_S} \omega(\Gamma) \\
& = \sigma(S) \sum_{\substack{S \subsetneq V(\Gamma) \\ |S| \geq 2}} \int_{B_{V/S}} \omega(\Gamma/S) \int_{B_S} \omega(\Gamma_S) \\
& = \sigma(S) \sum_{\substack{S \subsetneq V(\Gamma) \\ |S| \geq 2}} J(\Gamma/S) J(\Gamma_S) \\
& = \sum_{\substack{S \subsetneq V(\Gamma) \\ |S| \geq 2}} J(d_S \Gamma) = J(d \Gamma).
\end{align*}
%Note that the boundary of $B_{s,t}$ does not have the face where all vertices of $\Gamma$ are infinitely close: $S = V(\Gamma)$.
\end{proof}

\subsection{The iterated integrals $\overline{J}: A_{n,j} \otimes_{\tau} BA_{n,j}   \rightarrow \Omega_{dR}^{\ast} P(\text{Inj}(\mathbb{R}^j, \mathbb{R}^n), \bullet, \iota)$}
\label{subsection iterated integrals}

Write $P(\text{Inj}(\mathbb{R}^j, \mathbb{R}^n), \bullet, \iota)$ for the space of paths of $\text{Inj}(\mathbb{R}^j, \mathbb{R}^n)$  whose terminal point is fixed at the standard linear injective map $\iota: \mathbb{R}^j \subset \mathbb{R}^n$. 

\begin{definition}
The integrals $\overline{J}: A_{n,j} \otimes_{\tau} BA_{n,j} \rightarrow \Omega_{dR}^{\ast} P(\text{Inj}(\mathbb{R}^j, \mathbb{R}^n), \bullet, \iota)$ is defined by the iterated integrals
\begin{align*}
a_0 [s a_1 | s a_2 | \dots | s a_k] &\mapsto J(a_0)  \int  J(a_1)J(a_2) \dots J(a_k) \\
& =  \pm J(a_0) \int _{\Delta_k} ev^{\ast}(J(a_1)\wedge J(a_2)\wedge\dots\wedge J(a_k)) \\
\end{align*}
Here, $\Delta^k$ is the $k$-simplex
\[
\Delta^k = \{0 \leq t_1 \leq t_2 \leq  \dots \leq t_k \leq 1\}
\]
and $ev$ is the evaluation map
\[
ev: P(\text{Inj}(\mathbb{R}^j, \mathbb{R}^n), \bullet, \iota) \times \Delta^k \rightarrow (\text{Inj}(\mathbb{R}^j, \mathbb{R}^n))^{\times k}.
\]
The sign $ \pm $ is taken so that
\[
\pm J(a_1)\wedge J(a_2)\wedge\dots\wedge J(a_k) = \iota_{t_1} J(a_1)\wedge \iota_{t_2} J(a_2)\wedge\dots\wedge \iota_{t_k} J(a_k)\ (dt_1 \wedge \dots \wedge dt_k), 
\]
where $\iota_{t_i}$ is the interior product by the vector field $\frac{\partial}{\partial t_i}$. 
\end{definition}

\begin{rem}
Let $\mathcal{A}_{n,j}$ be the subalgebra of $\Omega_{dR}^{\ast}(\text{inj}(\mathbb{R}^j, \mathbb{R}^n))$ defined as the image of $J: A_{n,j} \rightarrow \Omega_{dR}^{\ast}(\text{Inj}(\mathbb{R}^j, \mathbb{R}^n))$. Then, the above map $\overline{J}$ is the composition of 
\[
J \otimes BJ : A_{n,j} \otimes_{\tau} B A_{n,j} \rightarrow \mathcal{A}_{n,j} \otimes_{\tau} B \mathcal{A}_{n,j}
\]
with the usual iterated integrals
\[
\mathcal{A}_{n,j} \otimes_{\tau} B \mathcal{A}_{n,j} \rightarrow P(\text{Inj}(\mathbb{R}^j, \mathbb{R}^n), \bullet, \iota).
\]
\end{rem}

The following proposition is a fundamental result of Chen's iterated integrals (See Proposition \ref{factiteratedintegral1}). Racall that when $j \geq 3$ or $j=1$, we have $(A _{n,j})^0 = \mathbb{R}$ and the augmentation ideal $\overline{A}_{n,j}$ of $A _{n,j}$ is equal to the positive degree truncation $(A _{n,j})^{>0 }$. See Cororally \ref{cordegree0}.
\begin{prop}
\label{cochainmapfromZ}
If $j \geq 3$  or $j=1$, 
\[
\overline{J}: Z _{n,j} =  A_{n,j} \otimes_{\tau} BA _{n,j} \rightarrow \Omega_{dR}^{\ast} P(\text{Inj}(\mathbb{R}^j, \mathbb{R}^n), \bullet, \iota)
\]
 is a map of dg algebras. If $j = 2$, the restriction of $\overline{J} $ to $Z_{n,j}(g \leq 3)$ gives a cochain map. 
\end{prop}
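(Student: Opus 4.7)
The plan is to factor $\overline{J}$ through its image as
\[
Z_{n,j} \xrightarrow{J\otimes BJ} \mathcal{A}_{n,j}\otimes_{\tau} B\mathcal{A}_{n,j} \xrightarrow{\;\;\mathrm{Chen}\;\;} \Omega_{dR}^{\ast}P(\mathrm{Inj}(\mathbb{R}^j,\mathbb{R}^n),\bullet,\iota),
\]
where $\mathcal{A}_{n,j}:=J(A_{n,j})$ is the image dg subalgebra of $\Omega_{dR}^{\ast}\mathrm{Inj}(\mathbb{R}^j,\mathbb{R}^n)$ and the second arrow is the standard iterated integral map. Since $J$ is already a morphism of dg algebras, the induced map $J\otimes BJ$ between bar complexes is automatically a morphism of dg algebras, so everything reduces to applying Proposition \ref{factiteratedintegral1} to $\mathcal{A}_{n,j}$. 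The target manifold $\mathrm{Inj}(\mathbb{R}^j,\mathbb{R}^n)$ is connected, since it deformation retracts onto the Stiefel manifold $V_j(\mathbb{R}^n)$, which is connected for $n>j$, so the manifold hypothesis of that proposition is met.

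For $j\geq 3$ or $j=1$, Corollary \ref{cordegree0} gives $A_{n,j}^0=\mathbb{R}\cdot\emptyset$, and since $J$ preserves degrees with $J(\emptyset)=1$, one has $\mathcal{A}_{n,j}^0=J(A_{n,j}^0)=\mathbb{R}$. Proposition \ref{factiteratedintegral1} then applies directly to $\mathcal{A}_{n,j}$ and, precomposed with the dg algebra map $J\otimes BJ$, yields $\overline{J}$ as a morphism of dg algebras.

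The case $j=2$ is more delicate because $A_{n,2}^0$ may be strictly larger than $\mathbb{R}$ once graphs of first Betti number $g\geq 4$ are allowed, so Proposition \ref{factiteratedintegral1} cannot be invoked on $\mathcal{A}_{n,2}$ globally. I would first check that $Z_{n,2}(g\leq 3)$ is a subcomplex: each of $d_A$, $d_{BA}$, $d_{\tau}$ preserves the total first Betti number (the first acts within each entry while the latter two merge adjacent entries by multiplication), so the restriction makes sense. Within this subcomplex every entry $a_i$ satisfies $g(a_i)\leq 3$, and Corollary \ref{cordegree0} now guarantees that degree-$0$ entries are scalar. Inspecting the Stokes-theorem proof of Proposition \ref{factiteratedintegral1}, the hypothesis $A^0=\mathbb{R}$ is used only to ensure that the boundary contributions at $t_1=0$ and $t_k=1$ of the iterated-integral simplex vanish (any such contribution involves evaluation of a positive-degree factor at a single degree-$0$ piece, which must therefore be constant). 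These contributions are local in the bar length and see only the individual entries $a_i$, so the vanishing still goes through once entries are restricted to $g\leq 3$. Hence $\overline{J}|_{Z_{n,2}(g\leq 3)}$ satisfies the cochain identity $d\overline{J}=\overline{J}\,d_{Z}$, albeit only as a cochain map and not as an algebra map, because the shuffle product can push $g$ above $3$. This verification in the $j=2$ branch is the main technical obstacle, since one must argue that truncating by a filtration that is not closed under multiplication still permits Chen's Stokes calculation to close up; once this is done, the proposition follows.
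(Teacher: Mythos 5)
Your proposal is correct and follows essentially the same route as the paper: factor $\overline{J}$ through the image subalgebra $\mathcal{A}_{n,j}\subset\Omega_{dR}^{\ast}\text{Inj}(\mathbb{R}^j,\mathbb{R}^n)$, apply Proposition \ref{factiteratedintegral1} using Corollary \ref{cordegree0} for the degree-zero hypothesis, and for $j=2$ restrict to $g\le 3$, where the bar differentials preserve the $g$-filtration and the degree-zero statement still holds (only the cochain, not the algebra, property being claimed there). The one cosmetic slip is that the face $t_1=0$ of the simplex does not vanish but produces the $d_\tau$ term; it is only the fixed-endpoint face whose vanishing needs the positive-degree condition on bar entries, and since that condition is per-entry it indeed survives the $g\le 3$ truncation, so your argument stands as written.
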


\subsection{The modified configuration space integrals $\overline{I}: \text{DGC}_{n,j} \rightarrow \Omega_{dR}^{\ast}(\overline{\mathcal{K}}_{n,j})$}

We define the integrals $\overline{I}: \text{DGC}_{n,j} \rightarrow \Omega_{dR}^{\ast}(\overline{\mathcal{K}}_{n,j})$ by combining configuration space integrals $I$ of plain parts and iterated integrals $\overline{J}$ of decoration parts. We show that $\overline{I}$ is a cochain map (Theorem \ref{main theorem 1 on hidden faces}). 

By definition of the space $\overline{\mathcal{K}}_{n,j}$ of long embeddings modulo immersions,
we have a map
\[
\overline{\mathcal{K}}_{n,j} \rightarrow P(\text{Imm}(\mathbb{R}^j, \mathbb{R}^n), \bullet, \iota).
\]
Let $\Gamma$ be a decorated graph with $s$ black vertices and $t$ white vertices. Then, by assigning the derivative at the $i$th $(i = 1, \dots, |B_{e}(P(\Gamma))|)$ external black vertex, we have a map
\[
Q_i : E_{s,t}  \rightarrow P(I_{n,j}) = P(\text{Inj}(\mathbb{R}^j, \mathbb{R}^n), \bullet, \iota).
\]
for $(i = 1, \dots, |B_{e}(P(\Gamma))|)$. We combine the original direction map associated with the plain part $P(\Gamma)$ with the maps $Q_i \ (i = 1, \dots |B_{e}(P(\Gamma))|)$, and we have a map
\[
F_\Gamma: E_{s,t} \longrightarrow  (S^{j-1})^{\times |E_{\eta}(P(\Gamma))|} \times  (S^{n-1})^{\times |E_{\theta}(P(\Gamma))|} \times  (P(I_{n,j}))^{\times |B_{e}(P(\Gamma))|}.
\]
 So we have the following diagram.

\centering
\begin{tikzpicture}[yscale=0.6,xscale=0.6]
\centering
\node (c) at (2, 4) {$E_{s,t}$}; \node (d) at (14, 4) {$(S^{j-1})^{\times |E_{\eta}(P(\Gamma))|} \times  (S^{n-1})^{\times |E_{\theta}(P(\Gamma))|} \times  (P(I_{n,j}))^{\times |B_{e}(P(\Gamma))|}$}; 
\node (a) at (2, 0) {$\overline{\mathcal{K}}_{n,j}$};
\node (f) at (-2,4) {$C_{s,t}$};

\draw[->] (c)  to  node [anchor = south] {$F_{\Gamma}$}(d);
\draw [->] (c) to node [anchor = east] {$\pi$} (a);
\draw [->] (f) to (c);
\end{tikzpicture}

\begin{definition}
Let $\Gamma$ be a decorated graph with $s$ black vertices and $t$ white vertices. We define the configuration space integral associated with $\Gamma$ by 
\[
\overline{I}(\Gamma) = \pi_{ \ast} \omega(\Gamma) = \int_{C_{s,t}} \omega(\Gamma).
\]
Here the form  $\omega(\Gamma)$ is defined by 
\[
\omega(\Gamma) = F_{\Gamma}^{\ast} (\bigwedge \omega_{j-1} \wedge \bigwedge \omega_{n-1} \wedge \bigwedge \overline{J}(D_i(\Gamma))),
\]
where $D_i(\Gamma)$ is the $i$-th decoration of the decorated graph $\Gamma$.
\end{definition}

\begin{theorem}[Theorem \ref{main theorem 1 on hidden faces}]
\label{main theorem 1 on hidden faces2}
If $j \geq 3$, $\overline{I}$ is a cochain map. If $j = 2$, the restriction of $\overline{I} $ to $DGC_{n,j}(g \leq 3)$ gives a cochain map.
\end{theorem}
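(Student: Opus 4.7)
The plan is to apply Stokes' theorem to the fiber integral $\overline{I}(\Gamma) = \pi_{\ast}\omega(\Gamma)$ and show that every term on the right-hand side is accounted for by either $d_V$ or $d_H$ of $DGC_{n,j}$. Concretely, I will write
\[
(-1)^{|\Gamma|+1}\bigl(d\overline{I}(\Gamma) - \overline{I}(d_V \Gamma)\bigr) = \int_{\partial \overline{C}_{s,t}} \omega(\Gamma),
\]
using the fact that the interior derivative $d\omega(\Gamma)$ hits only the iterated-integral factors $\overline{J}(D_i(\Gamma))$ (the sphere volume forms are closed), and that $d\overline{J} = \overline{J}\,d_{Z}$ by Proposition \ref{cochainmapfromZ}. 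The hypothesis $j\geq 3$ (or $j=2$, $g\leq 3$) is used here to guarantee $A_{n,j}^0 = \mathbb{R}$ via Corollary \ref{cordegree0}, which is what makes $\overline{J}$ a cochain map.

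Next I would decompose $\partial \overline{C}_{s,t}$ into principal, hidden and infinite faces. Infinite faces vanish by Lemma \ref{infinitefacecontributions} (again using the dimensional hypothesis on $j$ and $g$), so only faces $\widetilde{C}_S$ indexed by subsets $S \subset V(\Gamma)$ with $|S|\geq 2$ contribute. For each such $S$, the Fulton--MacPherson face splits as a fiber product
\[
\widetilde{C}_S \cong C_{V/S} \times_{\mathrm{Inj}(\mathbb{R}^j,\mathbb{R}^n)} B_S,
\]
where $B_S$ is the normalized configuration space of Definition \ref{normalizedconfigurationspacebundle}. Applying the double push-forward formula (Proposition \ref{Double push forward}) and factorizing the form $\omega(\Gamma)$ along this splitting, the integral over $\widetilde{C}_S$ factors as a configuration-space integral for the contracted graph $\Gamma/S$ times a fiber integral over $B_S$ of the subgraph $\Gamma_S$; the latter is precisely $J(\Gamma_S)$, viewed as a form on $\mathrm{Inj}(\mathbb{R}^j,\mathbb{R}^n)$ pulled back via the derivative at the collapsed vertex. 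This is exactly the geometric manifestation of the operation $\Gamma/S \cdot \Gamma_S$ in $DGC_{n,j}$: the subgraph $\Gamma_S$ becomes a new factor in the decoration at $s_1$ via the action $\cdot_{s_1}$ of $A_{n,j}$ on $Z^{\otimes l}$, while all decorations originally sitting on the collapsed external vertices get multiplied together by $m_S$ (compatible with the path-space evaluation, because after collapse all those derivative maps become equal).

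The sign $\sigma(S)$ in Definition \ref{diffofPGC}/the $d_H$-formula was chosen precisely to match the Stokes-theorem sign $(-1)^{|\Gamma|+1}$ times the orientation convention on the codimension-one face of $\overline{C}_{s,t}$; this is recorded in the compatibility statement
\[
(-1)^{|\Gamma|+1}\int_{\widetilde{C}_{S}} \omega(\Gamma) = \sigma(S)\,\overline{I}(\Gamma/S \cdot \Gamma_S),
\]
which is the defining property of the horizontal differential. Summing over all admissible $S$ (including $|S|=2$, which recovers the principal-face contribution that in the plain setting would give $I \circ d_{PGC}$), we obtain $\overline{I}(d_H \Gamma)$. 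Combined with the interior term, this yields $d\overline{I} = \overline{I}(d_H + d_V) = \overline{I}\circ d_{DGC}$, as required.

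The main obstacle I expect is bookkeeping: first, the careful sign comparison between the Stokes orientation on $\widetilde{C}_S$ (with inward/outward normal conventions), the rearrangement of edges and vertices needed to factor $\omega(\Gamma)$ as $\omega(\Gamma/S)\wedge\omega(\Gamma_S)$, the $A_{n,j}$-action sign $(-1)^{|a|(|z_i|+\cdots+|z_l|)}$ in $\cdot_{s_1}$, and the Koszul signs produced by pulling iterated integrals through push-forwards; second, verifying that subtle or ``anomalous'' faces (chords, multiple edges, and the case $S = V(\Gamma)$) that are \emph{excluded} from $E^a(\Gamma)$ in $PGC_{n,j}$ but \emph{included} in the subgraph sum of $d_H$ actually contribute correctly, either because they survive through a nontrivial $\Gamma_S$-decoration or vanish by symmetry/rescaling relations in $A_{n,j}$ (Definitions \ref{rescaling relation}, \ref{symmetry relation}); these relations are exactly what forces the problematic faces to drop out of the Stokes formula, so compatibility with the relations in $A_{n,j}$ (already checked for $J$ in Proposition \ref{welldefofJ}) is the geometric reason the whole setup closes up.
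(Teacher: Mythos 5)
Your proposal follows essentially the same route as the paper's proof: Stokes' theorem for the fiber integral, identifying the interior term with $\overline{I}(d_V\Gamma)$ via Proposition \ref{cochainmapfromZ} (which is where $j\geq 3$, or $j=2$ with $g\leq 3$, enters through $A^0_{n,j}\cong\mathbb{R}$), matching the boundary faces $\widetilde{C}_S$ with $\sigma(S)\,\overline{I}(\Gamma/S\cdot\Gamma_S)$ via the fiber-product splitting and double push-forward so that their sum is $\overline{I}(d_H\Gamma)$, and discarding infinite faces by (the argument of) Lemma \ref{infinitefacecontributions}. This matches the paper's argument, including the role of the sign convention for $\sigma(S)$ and the treatment of the problematic faces through the decoration mechanism.
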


\begin{proof}
Recall that 
\begin{align*}
d \overline{I} (\Gamma) &= \int_{\overline{C}_{s,t}} d \omega (\Gamma) + (-1)^{|\Gamma|+1} \int_{\partial \overline{C}_{s,t}} \omega (\Gamma)
\end{align*}
Here, 
\begin{align*}
(-1)^{|\Gamma|+1} \int_{\partial \overline{C}_{s,t}} \omega (\Gamma) 
&= (-1)^{|\Gamma|+1} \sum_S \int_{\tilde{C}_S} \omega(\Gamma) \\
&=  (-1)^{|\Gamma|+1 }\sum_S  (-1)^{deg(P(\Gamma/S)} \tau(S) \Phi(S) \int_{C_{V(P(\Gamma/S))}} \omega(\Gamma/S) \wedge (\int_{B_S} \omega(\Gamma_S)) \\ 
& = \sigma(S) \int_{C_{V(P(\Gamma/S))}} \omega(\Gamma/S \cdot \Gamma_S)\\
& = \overline{I}(d_H(\Gamma)).
\end{align*}
On the other hand, by Proposition \ref{cochainmapfromZ}, $d \omega (\Gamma) = \omega(d_V \Gamma)$. 
So we have
\begin{align*}
d \overline{I} (\Gamma)
& = \overline{I}(d_V \Gamma) + \overline{I}(d_H \Gamma) \\
& = \overline{I}(d \Gamma).
\end{align*}
In the above, we use the fact that contributions of infinite faces vanish. This fact can be shown similarly to Lemma \ref{infinitefacecontributions}, considering that all non-trivial decorations have positive degree.

\end{proof}

\begin{rem}
\label{remark to definition of the correction term}
Let $\Gamma$ be a BCR graph of defect $0$. 
Then, the correction term $\overline{c}(\Gamma)$ in \cite{Yos 1} coincides with
\[
(-1)^{|\Gamma|+1} \ \overline{I}
(
\begin{tikzpicture}[x=1pt,y=1pt,yscale=1,xscale=1, baseline=-3pt, line width =1pt]
\draw  [fill={rgb, 255:red, 0; green, 0; blue, 0 }  ,fill opacity=1 ] (0,0) circle (2);
\draw (0, -2) node [anchor = north] {$[s\Gamma]$};
\end{tikzpicture}
),
\] the configuration space integral associated with a decorated graph whose plain part is one external black vertex. Note that $r = |\Gamma| +1$ and $d = [\Gamma] (= \overline{deg}(\Gamma))$.
We have
\[
d\overline{I}
(
\begin{tikzpicture}[x=1pt,y=1pt,yscale=1,xscale=1, baseline=-3pt, line width =1pt]
\draw  [fill={rgb, 255:red, 0; green, 0; blue, 0 }  ,fill opacity=1 ] (0,0) circle (2);
\draw (0, -2) node [anchor = north] {$[s\Gamma]$};
\end{tikzpicture}
)
=
\overline{I}
(d_V
\begin{tikzpicture}[x=1pt,y=1pt,yscale=1,xscale=1, baseline=-3pt, line width =1pt]
\draw  [fill={rgb, 255:red, 0; green, 0; blue, 0 }  ,fill opacity=1 ] (0,0) circle (2);
\draw (0, -2) node [anchor = north] {$[s\Gamma]$};
\end{tikzpicture}
)
=
-\overline{I}
(
\begin{tikzpicture}[x=1pt,y=1pt,yscale=1,xscale=1, baseline=-3pt, line width =1pt]
\draw  [fill={rgb, 255:red, 0; green, 0; blue, 0 }  ,fill opacity=1 ] (0,0) circle (2);
\draw (0, -2) node [anchor = north] {$[s(d\Gamma)]$};
\end{tikzpicture}
)
-
\overline{I}
(
\begin{tikzpicture}[x=1pt,y=1pt,yscale=1,xscale=1, baseline=-3pt, line width =1pt]
\draw  [fill={rgb, 255:red, 0; green, 0; blue, 0 }  ,fill opacity=1 ] (0,0) circle (2);
\draw (0, -2) node [anchor = north] {$\Gamma$};
\end{tikzpicture}
), 
\]
when all the hidden face contributions vanish. On the other hand, 
\[
(-1)^{|\Gamma|} \ \overline{I}
(
\begin{tikzpicture}[x=1pt,y=1pt,yscale=1,xscale=1, baseline=-3pt, line width =1pt]
\draw  [fill={rgb, 255:red, 0; green, 0; blue, 0 }  ,fill opacity=1 ] (0,0) circle (2);
\draw (0, -2) node [anchor = north] {$\Gamma$};
\end{tikzpicture}
)
\]
is canceled with the contribution of the anomalous face in $d (r^{\ast}I(\Gamma))$. 
\end{rem}

\begin{example}
Let $n$ and $j$ are odd, and let $j\geq 3$. 
Let $H = \sum_i \Gamma_i$ be the $2-$-loop BCR graph cocycle of order 3 in \cite {Yos 1}. 
Then, the linear combination
\[
H^{\prime} = \sum_i \Gamma_i +  \sum_i (-1)^{|\Gamma_i|+1}
\begin{tikzpicture}[x=1pt,y=1pt,yscale=1,xscale=1, baseline=-3pt, line width =1pt]
\draw  [fill={rgb, 255:red, 0; green, 0; blue, 0 }  ,fill opacity=1 ] (0,0) circle (2);
\draw (0, -2) node [anchor = north] {[$s \Gamma_i]$};
\end{tikzpicture}
\]
is a cocycle of the decorated graph complex. Hence $\overline{I}(H^{\prime})$ gives a cocycle of $\overline{\mathcal{K}}_{n,j}$.  Note that $\overline{I}(H^{\prime})$ coincides with $r^{\ast} I(H) + \overline{c}(H)$. 
\end{example}

\end{document}